\numberwithin{equation}{section}
\newcommand{\set}[1]{\left\{#1\right\}}
\newcommand{\n}[1]{\left\|#1\right\|}
\newtheorem{lemma}{Lemma}[section]
\newtheorem{proposition}[lemma]{Proposition}
\newtheorem{theorem}{Theorem}
\theoremstyle{definition}
\newtheorem{definition}[lemma]{Definition}
\theoremstyle{remark}
\newtheorem{remark}[lemma]{Remark}
\newtheorem{hypothesis}{Hypothesis}
\DeclareMathOperator{\ext}{ext}
\DeclareMathOperator{\rest}{rest}
\DeclareMathOperator{\re}{Re}
\DeclareMathOperator{\im}{Im}
\DeclareMathOperator{\sign}{sign}
\DeclareMathOperator{\pv}{p.v.}
\DeclareMathOperator{\WF}{WF}
\DeclareMathOperator{\DN}{DN}
\DeclareMathOperator{\D}{D}
\let\oldtocsection=\tocsection
\let\oldtocsubsection=\tocsubsection
\renewcommand{\tocsection}[2]{\hspace{0em}\oldtocsection{#1}{#2}}
\renewcommand{\tocsubsection}[2]{\hspace{1em}\oldtocsubsection{#1}{#2}}
\title{Orr--Sommerfeld equation and complex deformation}
\author{Malo J\'ez\'equel}
\email{malo.jezequel@math.cnrs.fr}
\address{CNRS, Univ. Brest, UMR6205, Laboratoire de Math{\'e}matiques de Bretagne Atlantique, France}
\author{Jian Wang}
\email{wangjian@ihes.fr}
\address{Institut des Hautes {\'E}tudes Scientifiques, Bures-sur-Yvette, France}
\begin{document}

\thispagestyle{empty}

\begin{abstract}
    For shear flows in a 2D channel, we define resonances near regular values of the shear profile for the Rayleigh equation under an analyticity assumption. This is done via complex deformation of the interval on which the Rayleigh equation is considered. We show such resonances are inviscid limits of the eigenvalues of the corresponding Orr--Sommerfeld equation. 
\end{abstract}

\maketitle
{
\hypersetup{linkcolor=NavyBlue}
\tableofcontents
}
\thispagestyle{empty}

\newpage

\section{Introduction}

\captionsetup[subfigure]{labelformat=parens, labelsep=space}
\renewcommand{\thesubfigure}{\Alph{subfigure}}

\begin{figure}[b]
   \centering
   \begin{subfigure}{0.45\textwidth}
       \centering
       \includegraphics[width=\textwidth]{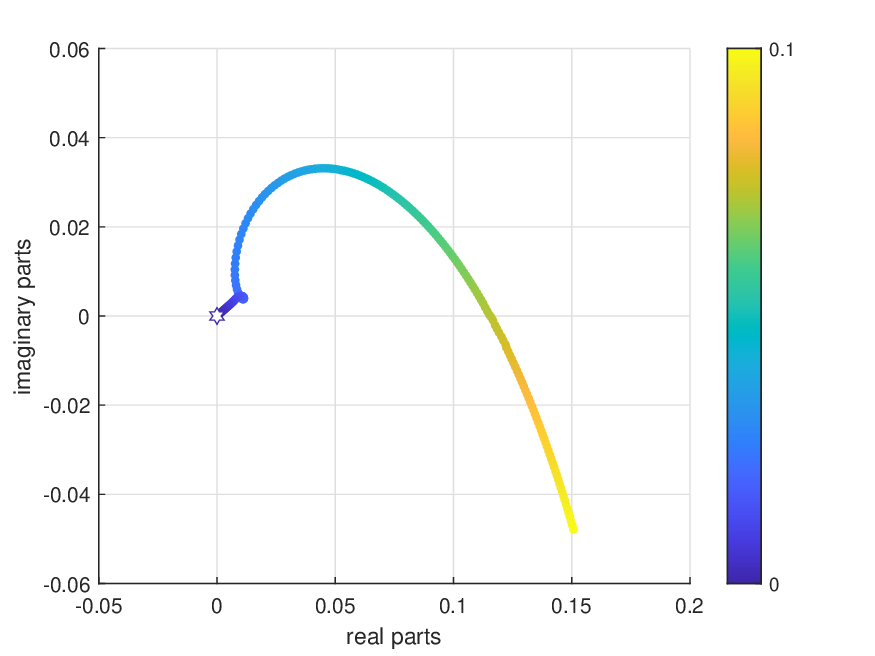}
       \caption{Viscous perturbations of a resonance}
       \label{fig:cos_unstable}
   \end{subfigure}
   \begin{subfigure}{0.45\textwidth}
       \centering
       \includegraphics[width=\textwidth]{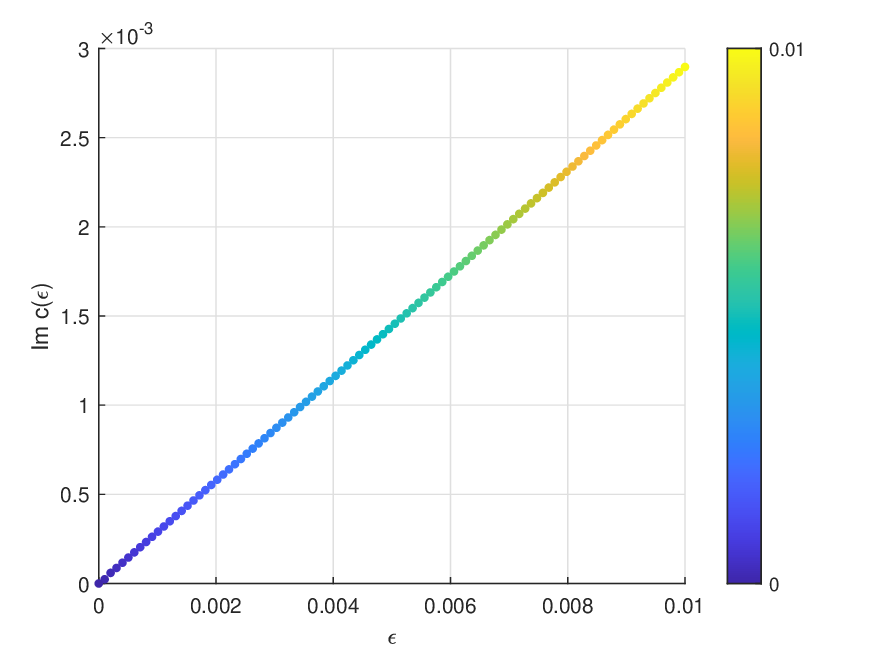}
       \caption{Imaginary parts of $c(\epsilon)$}
       \label{fig:cos_unstable_im}
   \end{subfigure}
   \caption{Shear profile $U(x)=\cos(0.7\pi x)$, $x\in [-1,1]$ with $\alpha=\frac{\sqrt{6}\pi}{5}$. This shear flow has a simple resonance $c_0=0$ for this choice of $\alpha$ (see Theorem \ref{theorem:limit} for the notion of resonance and \S \ref{subsection:example_segment} for a discussion of this example). (A) Numerical computation of the viscous perturbations $c(\epsilon)$ of the resonance (snowflake) for $\epsilon:=R^{-\frac12}\in [0, 0.1]$ predicted by Theorem~\ref{theorem:limit}. For small $\epsilon$, numerics suggest that the resonance becomes an unstable eigenvalue (which is confirmed in \S \ref{subsection:example_segment}). 
   (B) Imaginary parts of $c(\epsilon)$, for $\epsilon\in [0, 0.01]$.
   Both figures are colored according to $\epsilon$. The method used for computation is described in Appendix \ref{section:matlab}.}
    \label{fig:cos}
\end{figure}

Let $a < b $ be real numbers and $U$ be a $C^\infty$ function from $[a,b]$ to $\mathbb{R}$. Let $\alpha > 0$. For $c \in \mathbb{C}$ and $R > 0$, let us consider the Orr--Sommerfeld equation
\begin{equation}\label{eq:orr_sommerfeld}
   \frac{i}{\alpha R} (\partial_x^2 - \alpha^2)^2 \psi + (U- c )(\partial_x^2 - \alpha^2) \psi - U'' \psi = 0
\end{equation}
on $[a,b]$. This is an equation of order $4$, and we will consequently supplement it with boundary conditions
\begin{equation*}
    \psi(a) = \psi(b) = \partial_x \psi(a) = \partial_x \psi(b) = 0.
\end{equation*}
This equation appears in fluid mechanics when studying the stability of the shear profile $(0, U(x))$ as a stationary solution of Navier--Stokes equation (maybe including an external force) in the channel $[a,b] \times \mathbb{R}$ with Reynolds number $R$. Indeed, linearizing Navier--Stokes equation near the shear profile, taking a Fourier transform in time and in the second spatial variable and finally rewriting the equation in term of the stream function of the flow, we end up with \eqref{eq:orr_sommerfeld}, see for instance \cite[\S 3.1.2]{schmid_henningson_book}. It is then natural to ask for which value of $c$ the equation \eqref{eq:orr_sommerfeld} admits a non-trivial solution $\psi$. The sign of the imaginary part of such a $c$ is of particular interest, as a solution with $\im c > 0$ corresponds to an unstable (i.e. exponentially growing in time) solution of the linearized Navier--Stokes equation.

The main result of this paper (Theorems \ref{theorem:limit} and \ref{theorem:resonances} below) is a description when $R$ goes to $+ \infty$ of the set of $c$ for which \eqref{eq:orr_sommerfeld} has a non-trivial solution, in the neighbourhood of certain real parameters. To do so, we are naturally led to consider Rayleigh equation on $[a,b]$:
\begin{equation}\label{eq:rayleigh}
    (U- c)(\partial_x^2 - \alpha^2) \psi - U'' \psi = 0
\end{equation}
supplemented with the boundary condition $\psi(a) = \psi(b) = 0$. We are mostly interested in the situation in which $c$ is closed to a point $c_0$ that belongs to the range of $U$. Notice that at $c = c_0$ the equation \eqref{eq:rayleigh} is not elliptic, which makes the study of this equation considerably more complicated. 

We will bypass this difficulty using a method based on complex deformation that is greatly inspired by the complex scaling method in scattering theory (see \cite[\S 2.7, \S 4.5, \S 4.7]{dyatlov_zworski_book} and references therein). The main idea is very simple: we will replace the segment $[a,b]$ by a curve obtained as a small perturbation of $[a,b]$ in the complex plane on which the equation \eqref{eq:rayleigh} becomes elliptic. Naturally, to do so we need $U$ to be analytic near the places where we want to deform $[a,b]$. The existence of such a deformation of $[a,b]$ is then guaranteed when $c_0$ is a regular value of $U$ (and not a boundary value of $U$, since we want to let the extremity of $[a,b]$ fixed). Let us point out that the complex deformation must be chosen carefully in order to ensure the convergence of the solutions to Orr--Sommerfeld equation to solutions of Rayleigh equation when $R$ goes to $+ \infty$.

We need then to understand how the Orr--Sommerfeld equation degenerates into the Rayleigh equation on our deformation of $[a,b]$. Even if Rayleigh equation is elliptic on this deformation, this is not a simple question as Orr--Sommerfeld equation is of order $4$ while Rayleigh equation is only of order $2$. The difficulty mostly appears when dealing with the extra boundary condition that is required for Orr--Sommerfeld equation. We tackle this issue by using a 1D version of the Vishik--Lyusternik method \cite{vishik_lyusternik_OG}.

\subsection{Main results}

Since we want to study the asymptotic $R \to +\infty$, we will introduce the new variable $\epsilon = {R}^{-\frac12}$. Hence, for $c \in \mathbb{C}$ and $\epsilon > 0$, we introduce the operator
\begin{equation*}
P_{c,\epsilon} = i\alpha^{-1} \epsilon^2(\partial_x^2 - \alpha^2)^2 + (U - c) (\partial_x^2 - \alpha^2) - U''.
\end{equation*}
For $\epsilon > 0$, let $\Sigma_{\epsilon}$ be the set of points $c\in \mathbb C$ such that 
$$P_{c,\epsilon} : H^4(a,b) \cap H^2_0(a,b) \to L^2(a,b)$$ 
is not invertible.

\begin{remark}\label{remark:sigmaepsilon_as_spectrum}
We will see (Proposition \ref{proposition:elliptic_eigenvalues}) that $\Sigma_{\epsilon}$ is a discrete subset of $\mathbb{C}$. Moreover, one may define a notion of multiplicity for the elements of $\Sigma_{\epsilon}$. Consider the inverse of the operator $\partial_x^2 - \alpha^2 : H^2(a,b) \cap H_0^1(a,b) \to L^2(a,b)$. One may check that $(\partial_x^2 - \alpha^2)^{-1}$ is given by the formula:
\begin{equation*}
\begin{split}
    (\partial_x^2 - \alpha^2)^{-1} f(x) = & - \frac{\sinh(\alpha (x-a))}{\alpha \sinh( \alpha(b-a))} \int_x^b \sinh(\alpha(b - t)) f(t) \mathrm{d}t \\ & - \frac{\sinh(\alpha(b-x))}{\alpha \sinh( \alpha(b-a))} \int_a^x \sinh(\alpha (t-a)) f(t)\mathrm{d}t.
\end{split}
\end{equation*}
Introduce then for $c \in \mathbb{C}$ and $\epsilon > 0$ the operator
\begin{equation*}
Q_{c,\epsilon} = P_{c,\epsilon} (\partial_x^2 - \alpha^2)^{-1} = i\alpha^{-1} \epsilon^2 (\partial_x^2 - \alpha^2) + (U-c) - U'' (\partial_x^2 - \alpha^2)^{-1} = Q_{0,\epsilon} - c.
\end{equation*}
Then $\Sigma_{\epsilon}$ is the spectrum of the unbounded operator $Q_{0,\epsilon}$ on $L^2(a,b)$ with domain $\set{ u \in H^2(a,b) : ((\partial_x^2 - \alpha^2)^{-1} u)'(a) = ((\partial_x^2 - \alpha^2)^{-1} u)'(b) = 0}$. Since $Q_{0,\epsilon}$ is elliptic, the spectrum $\Sigma_{\epsilon}$ is made of isolated eigenvalues with finite (algebraic) multiplicities. We define in this way a notion of multiplicity for the elements of $\Sigma_{\epsilon}$. An equivalent definition is given in \S \ref{subsection:basic_properties}.
\end{remark}

We will relate the asymptotic of $\Sigma_{\epsilon}$ as $\epsilon$ goes to $0$ to the operator 
\begin{equation*}
P_{c,0} = (U- c) (\partial_x^2 - \alpha^2) - U''.
\end{equation*}
We will also consider the associated operator
\begin{equation*}
Q_{c,0} = U- c - U'' (\partial_x^2 - \alpha^2)^{-1}.
\end{equation*}

\begin{hypothesis}\label{hypothesis}
We will study $\Sigma_{\epsilon}$ near a point $c_0 \in \mathbb{R}$ under the following hypotheses:
\begin{itemize}
\item $c_0$ is a regular value of $U$, i.e. if $x \in [a,b]$ is such that $U(x) = c_0$ then $U'(x) \neq 0$,
\item $c_0$ is not a boundary value of $U$, i.e. $c_0 \notin \set{U(a), U(b)}$,
\item $U$ is analytic on a neighbourhood of $U^{-1}(\set{c_0})$.
\end{itemize}
\end{hypothesis}

Our first result is then the following:

\begin{theorem}\label{theorem:limit}
Let $c_0 \in \mathbb{R}$. Assume that $c_0$ satisfies hypothesis \ref{hypothesis}. Then there is $\delta > 0$ and a discrete subset $\mathcal{R}$ of $(c_0 - \delta,c_0 + \delta) + i (- \delta, + \infty)$ such that for every $c \in (c_0 - \delta,c_0 + \delta) + i (- \delta, + \infty)$:
\begin{enumerate}[label=(\roman*)]
\item if $c$ does not belong to $\mathcal{R}$ then there are $\epsilon_0,\nu > 0$ such that for every $\epsilon \in (0,\epsilon_0)$ there is no element of $\Sigma_\epsilon$ in the disk $\mathbb{D}(c,\nu)$ of center $c$ and radius $\nu$;\label{item:no_resonance}
\item if $c \in \mathcal{R}$, then there is an integer\footnote{We will say that $d$ is the multiplicity of $c$ as an element of $\mathcal{R}$.} $d \geq 1$ and $\epsilon_0,\nu > 0$ such that for every $\epsilon \in (0,\epsilon_0)$ there are precisely $d$ elements $c_1(\epsilon),\dots,c_d(\epsilon)$ of $\Sigma_{\epsilon}$, counted with multiplicities, in $\mathbb{D}(c,\nu)$. Moreover, for $j = 1,\dots,d$ we have $c_j(\epsilon) \underset{\epsilon \to 0}{\to} c$. In addition, for every symmetric polynomial $f \in \mathbb{C}[X_1,\dots,X_d]$ the map $\epsilon \mapsto f(c_1(\epsilon),\dots,c_d(\epsilon))$ has a Taylor expansion at $\epsilon = 0$.\label{item:expansion_resonance}
\end{enumerate}
\end{theorem}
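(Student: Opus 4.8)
The plan is to build the set $\mathcal{R}$ and establish the counting/convergence statements via a Grushin-type (Schur complement) reduction combined with the complex deformation machinery announced in the introduction. First I would fix the complex deformation $\gamma$ of $[a,b]$ on which the Rayleigh operator $P_{c,0}$ becomes elliptic for $c$ in a small box $B_\delta := (c_0-\delta,c_0+\delta) + i(-\delta,+\infty)$ around $c_0$ — its existence uses precisely the three bullets of Hypothesis~\ref{hypothesis} (regular value, not a boundary value, analyticity near $U^{-1}(c_0)$). On this contour, transfer both $P_{c,\epsilon}$ and $P_{c,0}$ to deformed operators $P_{c,\epsilon}^\gamma$, $P_{c,0}^\gamma$ acting on function spaces over $\gamma$; the key point, to be proven carefully, is that the eigenvalues of $P_{c,\epsilon}$ in $B_\delta$ (i.e. the part of $\Sigma_\epsilon$ we care about) coincide with the eigenvalues of $P_{c,\epsilon}^\gamma$, so that we may work on $\gamma$ where ellipticity of the limit is available.

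The second step is a meromorphic Fredholm / Vishik--Lyusternik analysis of $P_{c,\epsilon}^\gamma$ as $\epsilon \to 0$. Since $P_{c,\epsilon}^\gamma$ is a singular perturbation (order $4$ with small leading coefficient $i\alpha^{-1}\epsilon^2$ degenerating to the order-$2$ operator $P_{c,0}^\gamma$), I would use the 1D Vishik--Lyusternik construction to produce, for each $c \in B_\delta$ and small $\epsilon$, an approximate right inverse of $P_{c,\epsilon}^\gamma$ built from $(P_{c,0}^\gamma)^{-1}$ (which exists and is holomorphic in $c$ wherever $P_{c,0}^\gamma$ is invertible, again by ellipticity on $\gamma$) plus boundary-layer correctors handling the extra boundary condition $\partial_x\psi(a)=\partial_x\psi(b)=0$. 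This yields a holomorphic (in $c$) finite-rank Grushin problem $\begin{pmatrix} P_{c,\epsilon}^\gamma & R_-\\ R_+ & 0\end{pmatrix}$, invertible for all small $\epsilon$, whose effective Hamiltonian $E_{-+}(c,\epsilon)$ is a holomorphic family of small matrices with $E_{-+}(c,\epsilon) \to E_{-+}(c,0)$ locally uniformly and with a full asymptotic (Taylor) expansion in $\epsilon$, and such that $c \in \Sigma_\epsilon \cap B_\delta \iff \det E_{-+}(c,\epsilon) = 0$, with multiplicities matching. Define $\mathcal{R} := \{ c \in B_\delta : \det E_{-+}(c,0) = 0\}$; one must check $\det E_{-+}(\cdot,0) \not\equiv 0$ (so $\mathcal{R}$ is discrete) — this should follow because for $c$ with $\im c$ large the operator $P_{c,0}^\gamma$, and hence $P_{c,\epsilon}^\gamma$, is invertible by a direct a priori estimate, so $\det E_{-+}(c,0) \ne 0$ there.

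With this in hand, parts \ref{item:no_resonance} and \ref{item:expansion_resonance} follow from standard perturbation theory for the scalar holomorphic function $g(c,\epsilon) := \det E_{-+}(c,\epsilon)$. For $c_* \notin \mathcal{R}$, $g(c_*,0)\neq 0$, so by continuity $g(c,\epsilon)\neq 0$ on a small disk $\mathbb{D}(c_*,\nu)$ for $\epsilon < \epsilon_0$, giving \ref{item:no_resonance}. For $c_* \in \mathcal{R}$ of multiplicity $d$ (the order of vanishing of $g(\cdot,0)$ at $c_*$), choose $\nu$ so that $c_*$ is the only zero of $g(\cdot,0)$ in $\overline{\mathbb{D}(c_*,\nu)}$ and $g(\cdot,0)$ is nonvanishing on the boundary circle; by uniform convergence $g(\cdot,\epsilon)$ is nonvanishing on that circle for $\epsilon<\epsilon_0$, and Rouché's theorem gives exactly $d$ zeros $c_1(\epsilon),\dots,c_d(\epsilon)$ (with multiplicity) inside, all tending to $c_*$. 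The symmetric-function statement follows because the elementary symmetric functions of the $c_j(\epsilon)$ are, up to normalization, contour integrals $\frac{1}{2\pi i}\oint_{\partial \mathbb{D}(c_*,\nu)} c^k \frac{\partial_c g(c,\epsilon)}{g(c,\epsilon)}\,\mathrm{d}c$ of a family depending holomorphically on $c$ and admitting a Taylor expansion in $\epsilon$ (uniformly on the circle, where $g$ stays bounded away from $0$), hence so does the integral; any symmetric polynomial is a polynomial in these.

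The main obstacle I anticipate is the second step: constructing the Vishik--Lyusternik boundary-layer correctors on the deformed contour with enough uniformity to conclude that the Grushin problem is \emph{invertible with holomorphic, $\epsilon$-expandable effective Hamiltonian}, and simultaneously verifying that deformation genuinely captures all of $\Sigma_\epsilon$ near $c_0$ — i.e. that the deformed and undeformed eigenvalue problems agree in $B_\delta$. Establishing the requisite uniform (in $c \in B_\delta$, $\epsilon$ small) a priori estimates for $P_{c,\epsilon}^\gamma$, including near the "turning" behaviour where $U-c$ vanishes along $\gamma$, is where the real work lies; everything downstream is soft complex analysis.
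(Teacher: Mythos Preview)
Your outline is essentially correct and matches the paper's strategy: complex deformation to make $P_{c,0}$ elliptic, verification that $\Sigma_\epsilon$ is unchanged under deformation, Vishik--Lyusternik boundary layers to absorb the extra Neumann condition, and a finite-dimensional reduction whose determinant detects $\Sigma_\epsilon$. The paper carries out the finite-dimensional step slightly differently: rather than a Grushin/Schur complement, it builds the spectral projector
\[
\Pi_{c_1,\epsilon} = -\frac{1}{2\pi i}\int_{\partial\mathbb{D}(c_1,\nu)} (\partial_x^2-\alpha^2)P_{c,\epsilon}^{-1}\,\mathrm{d}c
\]
for the operator $Q_{0,\epsilon}=P_{0,\epsilon}(\partial_x^2-\alpha^2)^{-1}$, shows its rank is constant near $\epsilon=0$, and then proves that the matrix $\mathcal{Q}(\epsilon)$ of $Q_{0,\epsilon}$ on $\operatorname{ran}\Pi_{c_1,\epsilon}$ (in a suitable basis) admits a Taylor expansion in $\epsilon$; the characteristic polynomial of $\mathcal{Q}(\epsilon)$ then gives exactly your symmetric-function statement. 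The two reductions are interchangeable, and your Rouch\'e/contour-integral argument for the symmetric functions is equivalent to computing $\det(X-\mathcal{Q}(\epsilon))$. A technical point worth noting: the paper obtains the uniform-in-$\epsilon$ Fredholm control not via a direct Grushin box but by first proving that the \emph{compact perturbation} $P_{c,\epsilon}+U''$ is uniformly invertible (Proposition~\ref{proposition:invertible_compact_perturbation}), which cleanly separates the singular-perturbation analysis from the spectral theory.

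One clarification: your last paragraph worries about ``turning behaviour where $U-c$ vanishes along $\gamma$'', but this does not occur --- the deformation $\gamma$ is chosen precisely so that $U-c$ is nonvanishing on $\gamma$ for all $c$ in the box $B_\delta$ (this is Lemma~\ref{lemma:non_zero_function} in the paper). So $P_{c,0}^\gamma$ is genuinely uniformly elliptic and there is no turning-point analysis to do on $\gamma$; the real analytic content, as you also correctly identify, is the uniform control of the order-4-to-order-2 degeneration and the boundary-layer construction. The paper handles the latter by an explicit WKB quasimode near each endpoint (Proposition~\ref{proposition:boundary_layers}), and the $\epsilon$ (rather than $\epsilon^2$) Taylor expansion in the theorem comes precisely from these layers.
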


The definition of $\mathcal R$ is given in \S \ref{subsection:definition_resonances}.
By analogy with \cite{dyatlov_zworski_viscosity,galkowski_zworski_viscosity}, we will sometimes refer to the elements of $\mathcal{R}$ as ``resonances'' for Rayleigh equation. A similar theorem holds for the Orr--Sommerfeld equation on the circle $x\in \mathbb R/L \mathbb Z$, $L>0$. The situation is actually slightly simpler, since only even powers of $\epsilon$ appear in the asymptotic expansions for the symmetric functions of $c_1(\epsilon),\dots,c_d(\epsilon)$. See \S \ref{subsection:circle_case} for details.

\begin{remark}
If $g$ is a function from an interval $(0,\epsilon_0)$ with $\epsilon_0 > 0$ to a Banach space $\mathcal{B}$, we say that $g$ has a Taylor expansion at $\epsilon = 0$, if there is a sequence $(g_k)_{k \geq 0}$ of elements of $\mathcal{B}$ such that for every integer $N \geq 0$ we have
\begin{equation*}
g(\epsilon) \underset{\epsilon \to 0}{=} \sum_{k = 0}^N \epsilon^k g_k + \mathcal{O}(\epsilon^{N+1}).
\end{equation*}
\end{remark}

\begin{remark}
The regularity property for $c_1(\epsilon),\dots,c_d(\epsilon)$ at $\epsilon = 0$ in Theorem \ref{theorem:limit} just describes the standard way a multiple eigenvalue (eventually) splits into several eigenvalues. Equivalently, we can state that the polynomials
\begin{equation*}
\prod_{j = 1}^d (X - c_j(\epsilon))
\end{equation*}
has a Taylor expansion at $\epsilon = 0$. Notice that if $d = 1$, then we have a single element $c_1(\epsilon)$ of $\Sigma_\epsilon$, with an asymptotic expansion
\begin{equation*}
    c_1(\epsilon) \underset{\epsilon \to 0}{\simeq} \sum_{k \geq 0} a_k \epsilon^k \ \text{ with } \ a_0 = c_1.
\end{equation*}
A formula for the coefficient $a_1$ is given in Proposition \ref{proposition:first_order_perturbation}.
\end{remark}

\begin{remark}\label{remark:rates}
One could be surprised by the fact that this is an asymptotic expansion in power of $\epsilon$ that appears in Theorem \ref{theorem:limit}, as, reintroducing the Reynolds number $R$, it corresponds to powers of $R^{-1/2}$ (instead of the original $R^{-1}$). This is due to the use of boundary layers in the proof of Theorem \ref{theorem:limit}. These are very specific solutions to the equation $P_{c,\epsilon} \psi = 0$, that behave near the boundary points of $[a,b]$ like $x \mapsto e^{ - \frac{x}{\epsilon}}$ near $0^+$. These boundary layers are central in the Vishik--Lyusternik method that we use to prove Theorem \ref{theorem:limit}, see \S \ref{subsection:LVmethod}.
In the circle case however, since there is no boundary condition to deal with, we do not need to work with boundary layers and, consequently, we get an asymptotic expansion in even powers of $\epsilon$ in Theorem \ref{theorem:limit} in that case, see \S \ref{subsection:circle_case}. See also Figure \ref{fig:sin3x}.
\end{remark}

Our second result allows to identify resonances with non-negative imaginary parts. To do so, we will use the following definition. 

\begin{definition}\label{definition:resonant_states}
For $c \in \mathbb{R}$, we introduce a set of distributions $\Omega(c)$ on $(a,b)$. We say that a distribution $\psi \in \mathcal{D}'(a,b)$ belongs to $\Omega(c)$ if
\begin{itemize}
\item the restriction of $\psi$ to $(a,b) \setminus U^{-1}(\set{c})$ is $C^\infty$ and extends as a $C^\infty$ function to $[a,b] \setminus U^{-1}(\set{c})$ with $\psi(a) = \psi(b) = 0$;
\item the wave front set of $\psi$ is contained in $$\{ (x,\xi) \in (a,b) \times \mathbb{R} : U(x) = c, U'(x) \xi < 0\};$$
\item there is an integer $n \geq 0$ such that $Q_{c,0}^n P_{c,0} \psi = 0$.
\end{itemize}
\end{definition}

With this definition, we can state:
\begin{theorem}\label{theorem:resonances}
Under the assumption of Theorem \ref{theorem:limit}, and up to making $\delta$ smaller:
\begin{enumerate}[label=(\roman*)]
\item if $c \in (c_0 - \delta, c_0 + \delta) + i (0, + \infty)$ then $c \in \mathcal{R}$ if and only if $c$ is an eigenvalue of $Q_{0,0}$ on $L^2$ (i.e. if $P_{c,0} : H^2(a,b) \cap H_0^1(a,b) \to L^2(a,b)$ is not invertible) and its multiplicities as an element of $\mathcal{R}$ and as an eigenvalue of $Q_{0,0}$ coincide.\label{item:resonances_easy}
\item if $c \in (c_0 - \delta,c_0 + \delta)$, then $\Omega(c)$ is finite dimensional and the multiplicity of $c$ as an element of $\mathcal{R}$ is the dimension\footnote{In particular, $c \notin \mathcal{R}$ if and only if $\Omega(c) = \set{0}$.} of $\Omega(c)$.\label{item:resonances_real}
\end{enumerate}
\end{theorem}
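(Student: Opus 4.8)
The plan is to identify the deformation‑theoretic set $\mathcal{R}$ with a concrete object in each case by a ``deform / undeform'' correspondence. Recall from \S\ref{subsection:definition_resonances} that $\mathcal{R}$ is defined through a curve $\Gamma$ in $\mathbb{C}$ that agrees with $[a,b]$ outside a neighbourhood $V$ of $U^{-1}(\set{c_0})$ and along which the Rayleigh operator $P_{c,0}$ becomes elliptic, the multiplicity of $c\in\mathcal{R}$ being the algebraic multiplicity of $c$ as an eigenvalue of the associated deformed operator, which (as in Remark \ref{remark:sigmaepsilon_as_spectrum}) has discrete spectrum with finite multiplicities. The engine will be the observation that any solution of $Q_{c,0}^n P_{c,0}\psi = 0$ is holomorphic wherever $U$ is analytic and $U - c$ does not vanish, by analytic elliptic regularity for $P_{c,0}$ applied along the generalized eigenvector chain. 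Thus a generalized eigenvector on $\Gamma$ extends holomorphically across $V$ and restricts to the real segment, while conversely a distribution on $(a,b)$ with the right structure extends holomorphically onto $\Gamma$; both the finiteness of $\dim\Omega(c)$ and the multiplicity count will follow once these two operations are shown to be mutually inverse linear bijections between the appropriate generalized eigenspaces.

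For \ref{item:resonances_easy} I would first note that when $\im c > 0$ one has $U(x)-c\neq 0$ for all $x\in[a,b]$, since $U$ is real valued, so $P_{c,0}$ is already elliptic on $[a,b]$ and no deformation is needed. Given a generalized eigenvector $\psi$ of the deformed operator at $c$, it is smooth on the part of $\Gamma$ equal to $[a,b]$ and extends holomorphically across the part inside $V$, and its restriction $\tilde\psi$ to $[a,b]$ lies in $H^2(a,b)\cap H^1_0(a,b)$ (the endpoints $a,b$ are fixed by the deformation, so $\tilde\psi(a)=\tilde\psi(b)=0$) with $Q_{c,0}^n P_{c,0}\tilde\psi = 0$. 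Conversely, any $\tilde\psi\in H^2(a,b)\cap H^1_0(a,b)$ with $Q_{c,0}^n P_{c,0}\tilde\psi = 0$ is $C^\infty$ up to the boundary by elliptic regularity and analytic on $V$ by analytic elliptic regularity, hence extends holomorphically and restricts to $\Gamma$. I would then check that these two maps are inverse to each other (a holomorphic function is determined by its restriction to the real arc) and send $\ker Q_{c,0}^n$ to $\ker Q_{c,0}^n$ for each $n$, so that algebraic multiplicities agree.

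For \ref{item:resonances_real} I would fix real $c\in(c_0-\delta,c_0+\delta)$ and, shrinking $\delta$, assume $c$ is a regular value of $U$, that $U$ is analytic near $U^{-1}(\set{c})$, and that $U^{-1}(\set{c})\subset V$. The curve $\Gamma$ is deformed near a point $x_\ast\in U^{-1}(\set{c_0})$ into the lower (resp.\ upper) half‑plane when $U'(x_\ast)>0$ (resp.\ $<0$), so that $\im(U-c)<0$ along the corresponding arc; this is exactly the sign convention that matches the wave front condition $U'(x)\xi<0$ of Definition \ref{definition:resonant_states}. Given a generalized eigenvector $\psi$ on $\Gamma$, I would extend it holomorphically across the arcs in $V$ and set $r(\psi)\in\mathcal{D}'(a,b)$ to be its distributional boundary value onto $(a,b)$ from that side of $\Gamma$, glued with $\psi$ itself away from $V$; then $r(\psi)$ is $C^\infty$ away from $U^{-1}(\set{c})$, vanishes at $a$ and $b$, has wave front set contained in $\set{U(x)=c,\ U'(x)\xi<0}$ (the Fourier characterization of a one‑sided holomorphic boundary value, matched to the deformation direction), and satisfies $Q_{c,0}^n P_{c,0}\,r(\psi)=0$, so $r(\psi)\in\Omega(c)$, and $r$ is injective by unique continuation of holomorphic functions. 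In the other direction, given $\psi\in\Omega(c)$, near each $x_\ast\in U^{-1}(\set{c})$ I would use the one‑sided wave front condition together with the equation $Q_{c,0}^n P_{c,0}\psi=0$ — elliptic with analytic coefficients off the real axis — to produce a function $e(\psi)$ holomorphic on a genuine complex neighbourhood of the relevant arc of $\Gamma$ and restricting to $\psi$ on $\mathbb{R}$; gluing with $\psi$ away from $V$ gives an element of the generalized eigenspace of the deformed operator at $c$ with the right boundary conditions. Finally I would verify that $r$ and $e$ are mutually inverse, so that $\Omega(c)$ is isomorphic to this generalized eigenspace, hence finite dimensional, with dimension equal by definition to the multiplicity of $c$ as an element of $\mathcal{R}$.

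The main obstacle will be the construction of $e$ in \ref{item:resonances_real}: upgrading the hypothesis that $\psi$ has $C^\infty$ wave front set inside the half‑line $\set{U'\xi<0}$ near $x_\ast$ and solves $Q_{c,0}^n P_{c,0}\psi=0$, to the conclusion that $\psi$ is the restriction to $\mathbb{R}$ of a function holomorphic on a fixed complex neighbourhood of the arc of $\Gamma$ through $x_\ast$. The one‑sided $C^\infty$ wave front condition alone only yields holomorphic extension into an arbitrarily thin sector in a microlocal sense; one must feed in the ODE — elliptic with analytic coefficients away from the real axis — to propagate analyticity off the axis up to $\Gamma$. Near the degeneracy point, where $U-c$ vanishes to first order, this calls for a careful local analysis: after an analytic change of variable straightening $U$, a Frobenius/turning‑point study of the Rayleigh ODE at a simple zero of $U-c$. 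The remaining points — that $r$ and $e$ are inverse and respect the filtrations by powers of $Q_{c,0}$, so that algebraic and not merely geometric multiplicities match — are more routine.
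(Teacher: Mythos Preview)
Your proposal is correct and follows essentially the same route as the paper: constructing mutually inverse maps between the generalized eigenspace $\mathfrak{S}_{c,0}(\overline{M}_{a,b})$ on the deformed contour and $\Omega(c)$ (resp.\ the undeformed generalized eigenspace when $\im c>0$) via holomorphic extension and one-sided boundary values, with the hard step being exactly the one you flag. The paper carries out that hard step by first upgrading the $C^\infty$ wave front hypothesis to an analytic one using \cite[Theorem~2]{galkowski_zworski_hypoellipticity}, then invoking \cite[Theorem~6.5]{sjostrand82} to get a one-sided holomorphic extension, and handling the turning point via the Frobenius analysis of \S\ref{subsection:ODE_lemma} (Lemmas~\ref{lemma:fundamental_solutions}--\ref{lemma:log_generalized_eigenvectors}), which also supplies the polynomial-in-$\log$ bound needed for the boundary value in the reverse direction to exist as a distribution.
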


\begin{remark}
Let us comment on the definition of $\Omega(c)$. The point is to select certain solutions to Rayleigh equation that might not be smooth on $U^{-1}(\set{c})$ but still satisfies some regularity property, expressed in terms of wave front set \cite[Definition 8.1.2]{hormander_book1}. A more explicit description of the behaviour of the elements of $\Omega(c)$ near $U^{-1}(\set{c})$ is given in \S \ref{subsection:description_resonant_states}. We also explain in \S \ref{subsection:generalized_eigenvalue} that relation between the space $\Omega(c)$ and the notion of generalized embedded eigenvalue of Wei, Zhang and Zhao \cite{generalized_eigenvalue_segment}. By analogy with \cite{dyatlov_zworski_viscosity, galkowski_zworski_viscosity}, we will sometimes call the elements of $\Omega(c)$ ``resonant states''.

Finally, we ask for $Q_{c,0}^n P_{c,0} \psi = 0$ for some $n \geq 0$ (which makes sense using the integral formula for $(\partial_x^2 - \alpha^2)^{-1}$) instead of just $P_{c,0} \psi = 0$. We do so in order to take into account the potential presence of a Jordan block at $c$ for the operator $Q_{0,0}$ (on some space that will appear in the proof of Theorem \ref{theorem:limit}).
\end{remark}

\begin{remark}
As mentioned above, it is particularly interesting to know if there are elements in $\Sigma_\epsilon$ with positive imaginary parts. Indeed, such an element of $\Sigma_\epsilon$ yields a solution to the linearized Navier--Stokes equation that grows exponentially fast. The most interesting case is consequently when there is no element of $\mathcal{R}$ with positive imaginary part, but there is a real $c \in \mathcal{R}$. One can then wonder whether the resonance $c$ is going to be deformed into an element of $\Sigma_\epsilon$ with positive or non-positive imaginary part. This is determined by the first term with non-zero imaginary part in the asymptotic expansion from Theorem \ref{theorem:limit} (assuming for instance that $c$ is simple, and that there is such a term). In Figures \ref{fig:cos} and \ref{fig:cos_07pi_045}, we report numerically computed viscous perturbations of resonances for the flow $U(x)=\cos(0.7\pi x)$, $x\in [-1,1]$. In both cases, resonances with non-positive imaginary parts seem to be deformed into unstable eigenvalues in $\Sigma_{\epsilon}$ for some small $\epsilon$. The numerical method we used for these computations are described in Appendix \ref{section:matlab}.
\end{remark}

\begin{figure}[t]
   \centering
   \begin{subfigure}{0.45\textwidth}
       \centering
       \includegraphics[width=\textwidth]{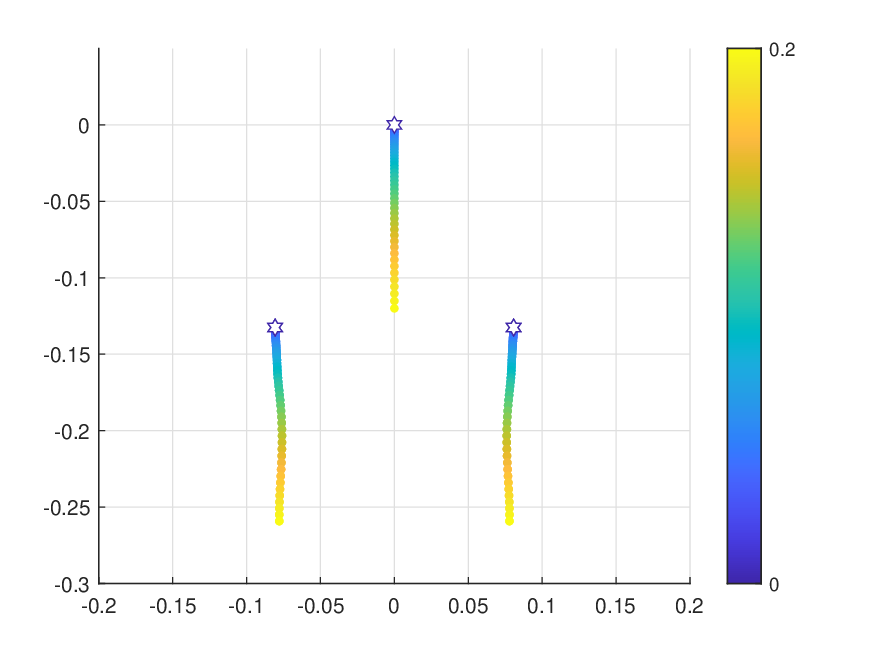}
       \caption{Viscous perturbations of resonances}
       \label{fig:sinusoid_circle}
   \end{subfigure}
   \begin{subfigure}{0.45\textwidth}
       \centering
       \includegraphics[width=\textwidth]{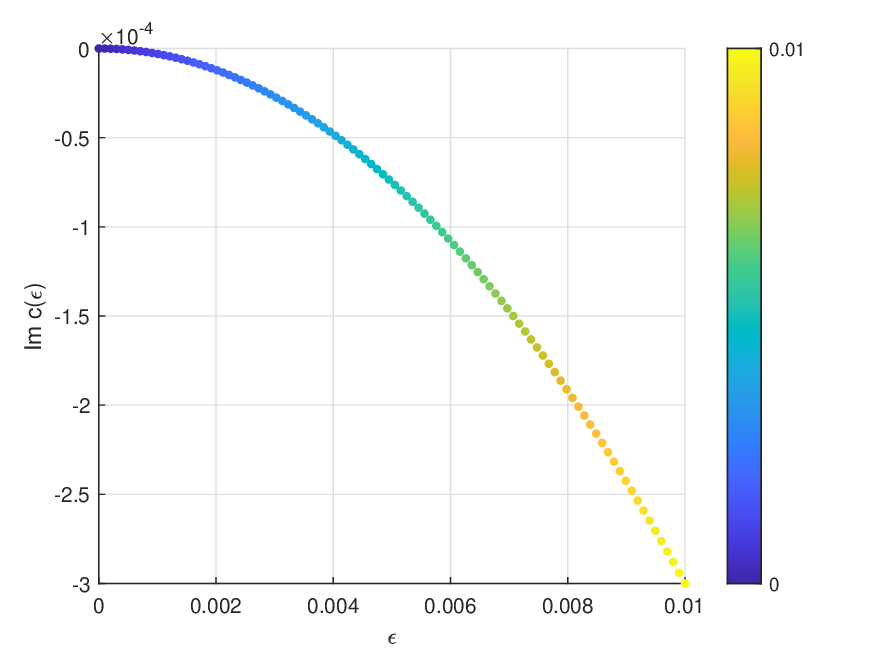}
       \caption{Imaginary parts of $c(\epsilon)$}
       \label{fig:sinusoid_circle_im}
   \end{subfigure}
   \caption{Shear profile $U(x)=\sin(3x)$, $x\in \mathbb R/2\pi\mathbb Z$, $\alpha=3$. For this flow and this choice of $\alpha$, $c_0=0$ is a simple resonance (see \S \ref{subsection:example_circle}). There are also two numerically computed resonances near $0$. (A) Numerical computation of the viscous perturbations of the resonances (snowflakes) with $\epsilon\in [0,0.2]$.
   (B) Imaginary parts of $c(\epsilon)$, the viscous perturbation of $0$, for $\epsilon\in [0, 0.01]$. Both figures are colored according to $\epsilon$.}
   \label{fig:sin3x}
\end{figure}

\subsection{Context}

There is a vast literature dedicated to the study of linear stability of shear flows, going back at least to the work of Kelvin, Reynolds, Rayleigh, Orr, Sommerfeld and Heisenberg. One may look at \cite{orr_part1,orr_part2,heisenberg} and references therein for a glimpse of the discussions on this topic back then. A textbook presentation of this subject may be found in \cite{lin_hydrodynamic_stability,drazin_reid,schmid_henningson_book}. The topic is also discussed in the survey \cite{Yud_03}, including some open problems.

Several authors investigated the way solutions to Orr--Sommerfeld equation behave as the Reynolds number goes to $+ \infty$ (one may refer for instance to the exposition in the book of Lin \cite[Chapter 8]{lin_hydrodynamic_stability}). This problem was a motivation for the development of the linear turning point theory, see the book of Wasow \cite[I.1.3]{linear_turning_point}. The behavior of the solutions to Orr--Sommerfeld equation in a complex neighbourhood of a turning point (a point in $U^{-1}(\set{c})$ in our notation) is often discussed (see for instance \cite[Chapter 8]{lin_hydrodynamic_stability}, this is also, in a more general context, a central topic in \cite{linear_turning_point}). It is a well-known fact that if $c$ belongs to the range of $U$ and $x_0 \in(a,b)$ is such that $U(x_0) = c$ and $U'(x_0) \neq 0, U''(x_0) \neq 0$ then \eqref{eq:rayleigh} admits a solution near $x_0$ with a singularity of the kind $(x - x_0) \log(x -x_0)$ (see e.g. \cite[p.20]{schmid_henningson_book}). One can then wonder which branch of the logarithm should be used in this solution. If we see \eqref{eq:rayleigh} as a limit of the same equation for $c$ with small positive or negative imaginary parts, then we end up with two different determinations of the logarithm. It is explained in \cite[Chapter 8]{lin_hydrodynamic_stability} that one should consider the limit of $c$ with positive imaginary part in order to retrieve solutions that are limits of solutions of the Orr--Sommerfeld equation. This is why $c$'s with positive imaginary parts play a specific role in Theorems \ref{theorem:limit} and \ref{theorem:resonances}. Notice indeed that in Theorem \ref{theorem:resonances}, the resonances with positive imaginary parts are just the $c$'s in the upper half-plane for which there is a smooth solution to \eqref{eq:rayleigh} (with Dirichlet boundary condition), while the same result a priori does not hold for $c$ with negative imaginary part. Let us point out that the wave front set condition in Definition \ref{definition:resonant_states} has precisely the effect of imposing a specific determination of the logarithm in the solution of \eqref{eq:rayleigh} that are not smooth (in a way that is coherent with the analysis in \cite[Chapter 8]{lin_hydrodynamic_stability}). Equivalently, it imposes on which side of the real-axis the elements of $\Omega(c)$ continues analytically near each turning point (see \S \ref{subsection:description_resonant_states} for details).

There are references in fluid mechanics literature with approaches that are related to the complex deformation method that we use here. In \cite{rosencrans_sattinger_spectrum}, Rosencrans and Sattinger consider the solution $\psi_c$ to \eqref{eq:rayleigh} with $c \notin U([a,b])$ such that $\psi_c(a) = 0$ and $\psi_c'(a) = 1$. Solving Rayleigh equation on a deformation of the segment, they prove that, under some conditions on $U$, for every $x \in [a,b]$ the map $c \mapsto \psi_c(x)$ may be continued analytically across all intervals in the range of $U$ that do not contain $U(a), U(x)$ or a singular value of $U$. Notice that this continuation may be performed both from above and below the real axis but that they only consider the continuation from above for the reasons exposed in \cite[Chapter 8]{lin_hydrodynamic_stability} and recalled in the previous paragraph. They work similarly with the solution $\phi_c$ to \eqref{eq:rayleigh} (still for $c$ outside the range of $U$) such that $\phi_c(b) = 0$ and $\phi_c'(b) = 1$. Hence, they find that the Wronskian determinant of $\psi_c$ and $\phi_c$ may be continued across intervals in the range of $U$ that avoids $U(a),U(b)$ and singular values of $U$. We will explain in \S \ref{subsection:jost_functions} how the resonances (the set $\mathcal{R}$) from Theorem \ref{theorem:limit} may be retrieved from the analytic continuation of the Wronskian determinant across the range of $U$ from above. The analytic continuation of this Wronskian determinant is also used by Stepin \cite{stepin_rayleigh} in order to study the spectrum of Rayleigh equation.

The idea of integrating Rayleigh equation on a complex path whose choice is imposed by the will to get solutions that are inviscid limits of solutions of Orr--Sommerfeld equation (which is basically what we call the complex deformation method here) is also present in the physics literature (see for instance the paper by Tatsumi, Gotoh and Ayukawa \cite{tatsumi_gotoh_ayukawa_1966} or, discussing a related problem, \cite{gotoh_i_1968, gotoh_nakata_1969, gotoh_numata_1969, nakata_1970}).

In order to understand how the Neumann boundary condition ``disappears'' as the Reynolds number goes to $+\infty$, we apply a one-dimensional version of Vishik--Lyusternik method \cite{vishik_lyusternik_OG}. This method is based on the introduction of ``boundary layer'': solutions to \eqref{eq:orr_sommerfeld} that are concentrated near the boundary points of $[a,b]$ (in a region of size $\epsilon = R^{-1/2}$). More precisely, they behave near the boundary points like $x \mapsto e^{-x/\epsilon}$ near $0^+$. The term ``boundary layer'' is borrowed from physics terminology \cite{boundary_layer_book}. It was introduced by Prandtl in order to study Navier--Stokes equation in the low viscosity regime.

By analogy with \cite{dyatlov_zworski_viscosity,galkowski_zworski_viscosity}, we call the (generalized) eigenvectors, associated to the resonances from Theorem~\ref{theorem:limit}, ``resonant states''. For a real parameter $c$ in \eqref{eq:rayleigh} that satisfies hypothesis~\ref{hypothesis}, the resonant states are just the element of $\Omega(c)$, see Theorem~\ref{theorem:resonances}. In this context, the resonant states are exactly the ``eigenvectors'' that appear in the notion of ``embedding eigenvalue'' used in \cite{generalized_eigenvalue_segment} by Wei, Zhang and Zhao to study linear inviscid damping (see \cite[Definition 5.1]{generalized_eigenvalue_segment}). This notion has been adapted to Rayleigh equation on the circle by Beckie, Chen and Jia in \cite[Definition 1.1]{generalized_eigenvalue_circle} under the name ``generalized embedded eigenvalue''. The coincidence between these notions from \cite{generalized_eigenvalue_segment,generalized_eigenvalue_circle} and the real resonances in Theorem \ref{theorem:limit} is proved in \S \ref{subsection:generalized_eigenvalue}.

Our paper has also been motivated by the works of Grenier, Guo and Nguyen \cite{grenier_guo_nguyen_2016_duke, grenier_guo_ngyuyen_2016_aim}, where they showed unstable modes appear in the inviscid limit for characteristic boundary layer flows or symmetric analytic shear flows in a finite channel. We remark that the unstable modes constructed in \cite{grenier_guo_nguyen_2016_duke, grenier_guo_ngyuyen_2016_aim} are out of the scope of Theorem \ref{theorem:limit}, as hypothesis \ref{hypothesis} contains the assumption that $c_0$ is not a boundary value of $U$. Notice also that in \cite{grenier_guo_nguyen_2016_duke, grenier_guo_ngyuyen_2016_aim}, the parameter $\alpha$ depends on $R$, but this is something that we could probably include in our analysis. The optimality of the $(\alpha,R)$ region for which \cite{grenier_guo_ngyuyen_2016_aim} constructs unstable modes is discussed by Almog and Helffer in \cite{almog2022stability}.

As already mentioned, our approach is very similar to the method of complex scaling from scattering theory \cite[\S 2.7, \S 4.5, \S 4.7]{dyatlov_zworski_book}. Consequently, our result may be considered similar to \cite[Theorem 1]{zworski_scattering_viscosity}. One may also relate Theorem \ref{theorem:limit} to the viscosity limit statements in \cite{dyatlov_zworski_viscosity,galkowski_zworski_viscosity}. The approach in these papers is very similar to ours except that they work with more complicated spaces than those obtained by complex deformation (except in certain exceptional cases, see \cite[Appendix B]{galkowski_zworski_viscosity}). However, they do not have to deal with boundary conditions.

In \cite{quantized_riemann_surfaces}, Galtsev and Shafarevich study the spectrum of the operator $- h^2 \partial_x^2 + i \cos (x)$ on the circle as the parameter $h$ goes to $0$. This reference only deals with a very specific case, but they are able to describe the asymptotic of the spectrum in a larger part of the complex plane than we do. A similar study is carried out by Shkalikov and Tumanov in \cite{shkalikov_tumanov} for the problem
$- \epsilon y''(z) + P(z,\lambda) y(z) = 0$
on a segment $[a,b]$ with $y(a)= y(b) = 0$. Here, $\epsilon> 0$ goes to $0$ and $P(z,\lambda)$ is a polynomial in $z$ with coefficients that are analytic in $\lambda$. 
This problem is thought as a simpler version of Orr--Sommerfeld equation. Both references \cite{quantized_riemann_surfaces} and \cite{shkalikov_tumanov} rely on the study of the Stokes lines associated to the corresponding equation.

The complex deformation method can likely be applied to other spectral problems in fluid mechanics (see \cite[Appendix B]{galkowski_zworski_viscosity} for another recent application). According to \cite[(9.69)]{Vallis_2017}, the linear equation describing the baroclinic instability is 
\begin{equation} \begin{cases}\label{baroclinic}
    \left( (U-c)(\partial_y^2+\partial_zF\partial_z-k^2) + Q_y \right)\psi=0, \ & (y,z)\in (-1, 1)\times (0,H), \\
    (U-c)\partial_z\psi-\partial_z U \psi=0, \ & (y,z)\in (-1,1)\times \{0,H\}, \\
    \psi=0, \ & (y,z)\in \{-1,1\}\times (0,H).
\end{cases} \end{equation}
Here $U, F, Q_y$ are given functions depending on $(y,z)$ and $k, H$ are positive real numbers. In particular, $U$ is the background flow and $F>0$. We refer to \cite[\S 9]{Vallis_2017} and \cite[\S 7]{pedlosky_1987} for the physical background. A similar 2D equation is used to describe the secondary instability of G\"ortler vortices, see \cite[(2.11)--(2.12)]{Hall_Horseman_1991} and \cite[(2)--(5)]{yu_liu_1991} --- the later takes viscosity into account.
From a PDE point of view, \eqref{baroclinic} lose ellipticity near $U^{-1}(\{c_0\})$ for a given $c_0$ in the range of $U$. If such $c_0$ is not a boundary value of $U$ and $\nabla U|_{U^{-1}(\{c_0\})}\neq 0$, then one can design a complex deformation that makes the operator in \eqref{baroclinic} elliptic. This should allow one to study the Fredholm property of \eqref{baroclinic} and consequently define resonances for baroclinic flows. However, extra efforts are needed to deal with the boundary conditions, and one needs to consider a related viscous problem in order to establish which complex deformation can produce relevant resonances.

\subsection{Structure of the paper}

We begin with a presentation of the complex deformation method, which is the core of the proof of Theorems \ref{theorem:limit} and \ref{theorem:resonances}, in \S \ref{section:complex_scaling}. We explain then in \S \ref{section:analysis_hyperfunctions} how we are going to apply it in the context of Orr--Sommerfeld and Rayleigh equations. Some technical estimates (uniform Fredholm estimates for $P_{c,\epsilon}$ as $\epsilon$ goes to $0$) are proved in \S \ref{section:technical_estimates}. We apply then the Vishik--Lyusternik method in \S \ref{section:definition_resonances} in order to prove Theorem~\ref{theorem:limit}. In \S \ref{section:description_resonant_states}, we describe the elements of $\Omega(c)$ (from Definition \ref{definition:resonant_states}) and prove Theorem~\ref{theorem:resonances}. We relate the resonances from Theorem \ref{theorem:limit} with other notions that can be found in the literature in \S \ref{section:alternative_characterizations}. In \S \ref{section:further_results}, we explain how to adapt Theorems \ref{theorem:limit} and \ref{theorem:resonances} to study Orr--Sommerfeld equation on the circle and how to study the variation of resonances when the parameter $\alpha$ and $U$ are changed. Finally, we give some examples in \S \ref{section:examples}.

This paper also contains two appendices. In Appendix \ref{section:FBI_transform}, we give a description of the spaces defined using complex deformation in terms of a real-analytic FBI transform, in the spirit of \cite{FBI_bonthonneau_jezequel,galkowski_zworski_viscosity,galkowski_zworski_hypoellipticity,bonthonneau_guillarmou_jezequel, asymptotically_hyperbolic_jezequel}. Matlab codes used for numerical simulations illustrating this paper are given in Appendix \ref{section:matlab}.

\vspace{5pt}
\noindent
{\bf Acknowledgements.} The authors would like to thank Franck Sueur for his interest in this project and his comments on the manuscript. We would like to thank Jeremy L. Marzuola and Xinyu Zhao for useful suggestions on numerical methods.
We also thank Maciej Zworski and Semyon Dyatlov for helpful discussions.  
The first author benefits from the support of the French government “Investissements d’Avenir” program integrated to France 2030, bearing the following reference ANR-11-LABX-0020-01.

\section{Complex deformation}\label{section:complex_scaling}

In this section, we explain the main idea of the method of complex deformation that we will use in the proof of Theorems \ref{theorem:limit} and \ref{theorem:resonances}. This approach is greatly inspired by the complex scaling method in scattering theory: the interested reader may refer to \cite[\S 2.7, \S 4.5, \S 4.7]{dyatlov_zworski_book} and references therein for an introduction to complex scaling. One may also refer to \cite[Appendix B]{galkowski_zworski_viscosity} for the implementation of a similar method on the torus.

In \S \ref{subsection:generalities}, we list some generalities of the complex deformation method, working on the circle. We explain then in \S \ref{subsection:complex_scaling_segment} how we will rely on the circle case to work on the segment. Finally, in \S \ref{subsection:inverse_laplace}, we prove that certain operators are invertible on spaces defined using complex deformation (a result that will be useful later).

\subsection{Generalities}\label{subsection:generalities}

It is convenient to work on a closed manifold, we will consequently embed $[a,b]$ isometrically in the circle $\mathbb{S}^1 = \mathbb{R}/L \mathbb{Z}$, where $L$ is a number larger than $b-a$, and start by explaining how to work on $\mathbb{S}^1$. Let us fix a compact subset $K$ of $\mathbb{S}^1$ and consider a $C^\infty$ function $m$ on $\mathbb{S}^1$, supported in $K$. Define the curve
\begin{equation}\label{eq:complex_perturbation}
M = \set{ x + i m(x) : x \in \mathbb{S}^1} \subseteq \mathbb{C}/ L \mathbb{Z}.
\end{equation}
We think of $M$ as a small perturbation of the circle $\mathbb{S}^1$. The basic idea in our approach is to replace the original manifold we are working on (in our case, the circle) by a complex deformation of it (here, $M$). In order to justify this approach, let us explain how distributions on $M$ may be seen as some kind of ``generalized distributions'' (similar to hyperfunctions) on the circle. This point of view does not help proving Theorems \ref{theorem:limit} and \ref{theorem:resonances}, and one can ignore it while reading the proof of Theorems \ref{theorem:limit} and \ref{theorem:resonances}. However, we think that it is still interesting to discuss this point of view here, as it is a way to explain why it is reasonable to use the complex deformation method. Moreover, this point of view is also useful in Appendix \ref{section:FBI_transform}.

For $r > 0$, we denote by $\widetilde{\mathcal{B}}_{K,r}$ the space of $C^\infty$ functions $f$ on $\mathbb{S}^1$ such that the restriction of $f$ to $\set{x \in \mathbb{S}^1 : d(x,K) < r}$ is also the restriction of a bounded holomorphic function (that we will also denote by $f$) on $\set{ x \in \mathbb{C} / L \mathbb{Z} : d(x,K) < r}$\footnote{To lift any ambiguity, let us mention that we endow $\mathbb{C}/ L \mathbb{Z}$ with the distance $$d(x,y) = \inf_{\substack{\tilde{x},\tilde{y} \in \mathbb{C} \\ \mathfrak{p}(\tilde{x}) = x, \mathfrak{p}(\tilde{y}) = y} } |\tilde{x} - \tilde{y}|,$$ where $\mathfrak{p}$ denotes the canonical projection $\mathbb{C} \to \mathbb{C} / L \mathbb{Z}$.}. We endow $\widetilde{\mathcal{B}}_{K,r}$ with the locally convex topology defined by the semi-norms:
\begin{equation*}
    f \mapsto \sup_{x \in \mathbb{S}^1} |f^{(k)}(x)|, \ k \in \mathbb{N} \ \  \text{and} \ \ f \mapsto \sup_{\substack{x \in \mathbb{C}/ L \mathbb{Z} \\ d(x,K) < r}} |f(x)|.
\end{equation*}
We let then $\mathcal{B}_{K,r}$ be the closure of trigonometric polynomials in $\widetilde{\mathcal{B}}_{K,r}$ and $\mathcal{B}_{K,r}'$ be the topological dual of $\mathcal{B}_{K,r}$. The reason for introducing the space $\mathcal{B}_{K,r}'$ is the following: in order to prove Theorems \ref{theorem:limit} and \ref{theorem:resonances}, we need to work with objects that are not distributions. A natural approach would be to work with hyperfunctions (i.e. linear functionals on real-analytic functions). However, looking at the assumptions of Theorems \ref{theorem:limit} and \ref{theorem:resonances}, we see that we intend to work with functions that are \emph{a priori} not real-analytic everywhere, which forbids the use of hyperfunctions. To bypass this difficulty, we use the elements of $\mathcal{B}_{K,r}$ as test functions, defining the space $\mathcal{B}_{K,r}'$ whose elements may be thought as ``hyperfunctions on the circle that are distributions away from $K$''. The reason for which we restrict to the closure of trigonometric polynomials in the definition of $\mathcal{B}_{K,r}$ is because in this way the injection of $\mathcal{B}_{K,r}$ in $C^\infty(\mathbb{S}^1)$ defines by duality\footnote{We use the standard $1$-form on the circle to identify $C^\infty(\mathbb{S}^1)$ with the space of smooth densities on $\mathbb{S}^1$, hence identifying the dual of $C^\infty(\mathbb{S}^1)$ with $\mathcal{D}'(\mathbb{S}^1)$.} an injection of $\mathcal{D}'(\mathbb{S}^1)$ in $\mathcal{B}_{K,r}'$.

Let $r> 0$ and assume that $\n{m}_\infty < r$. We explain now how this assumption allows to identify the space of distributions $\mathcal{D}'(M)$ on $M$ with a subset of $\mathcal{B}_{K,r}'$. Notice that $M$ is a compact subset of 
\begin{equation*}
\mathcal{W}_r = \mathbb{S}^1 \cup \set{ x \in \mathbb{C} / L \mathbb{Z} : d(x,K) < r}.
\end{equation*}
Now, if $f$ is an element of $\mathcal{B}_{K,r}$, then it has an extension to $\mathcal{W}_r$, holomorphic in its interior, that we still denote by $f$. It follows from Cauchy's formula that the map $f \mapsto f_{|M}$ is bounded from $\mathcal{B}_{K,r}$ to $C^\infty(M)$. Hence, if $\psi \in \mathcal{D}'(M)$, we may define an element $\iota(\psi)$ of $\mathcal{B}_{K,r}'$ by
\begin{equation}\label{eq:identification_hyperfunctions}
\iota(\psi) (f) = \psi( (f\mathrm{d}z)_{|M}) \textup{ for } f \in \mathcal{B}_{K,r}.
\end{equation}
In this formula, $\mathrm{d}z$ is the holomorphic $1$-form on $\mathbb{C}/ L \mathbb{Z}$ that comes from the translation invariant $1$-form $\mathrm{d}z$ on $\mathbb{C}$. Using the orientation on $M$ given by the identification with $\mathbb{S}^1$ from \eqref{eq:complex_perturbation}, the $1$-form $(f\mathrm{d}z)_{|M}$ identifies with a density, so that the right hand side in \eqref{eq:identification_hyperfunctions} makes sense. In the following, we will use the notation
\begin{equation*}
\iota(\psi)(f) = \int_M \psi(z) f(z) \mathrm{d}z.
\end{equation*}

\begin{lemma}
Assume that $\n{m}_\infty < r$. Then, $\iota$ defines an injection from $\mathcal{D}'(M)$ to $\mathcal{B}_{K,r}'$. Moreover, if $\psi,f \in \mathcal{B}_{K,r}$, we have
\begin{equation}\label{eq:justification_identification}
\iota(\psi_{|M})(f) = \int_{\mathbb{S}^1} \psi(x) f(x) \mathrm{d}x.
\end{equation}
\end{lemma}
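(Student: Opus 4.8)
The plan is to establish the two assertions — injectivity of $\iota$ and the identity \eqref{eq:justification_identification} — separately, starting with the easier identity and using it to bootstrap injectivity. For \eqref{eq:justification_identification}, I would take $\psi, f \in \mathcal{B}_{K,r}$ and unwind the definition \eqref{eq:identification_hyperfunctions}: since $\psi \in \mathcal{B}_{K,r}$, its restriction $\psi_{|M}$ is a genuine smooth function, so $\iota(\psi_{|M})(f)$ is the honest integral $\int_M \psi(z) f(z)\,\mathrm{d}z$ of the holomorphic extensions of $\psi$ and $f$ along the curve $M$. The point is that $z \mapsto \psi(z) f(z)$ is holomorphic on $\set{x \in \mathbb{C}/L\mathbb{Z} : d(x,K) < r}$, which contains both $\mathbb{S}^1$ (outside $K$ the integrand equals the restriction of a smooth function, and $M$ agrees with $\mathbb{S}^1$ there anyway since $\operatorname{supp} m \subseteq K$) and $M$; moreover $M$ is homotopic to $\mathbb{S}^1$ inside this region via the family of curves $x \mapsto x + its\,m(x)$, $s \in [0,1]$, which stays in the region of holomorphy because $\n{m}_\infty < r$. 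Hence Cauchy's theorem (Stokes' theorem for the closed $1$-form $\psi f\,\mathrm{d}z$) gives $\int_M \psi f\,\mathrm{d}z = \int_{\mathbb{S}^1} \psi(x) f(x)\,\mathrm{d}x$, which is exactly \eqref{eq:justification_identification}. Strictly speaking one should first check this for $\psi, f$ trigonometric polynomials (where everything is manifestly entire) and then pass to the limit, using that the restriction maps to $C^\infty(M)$ and to $C^\infty(\mathbb{S}^1)$ are continuous on $\mathcal{B}_{K,r}$ and that trigonometric polynomials are dense there by definition.

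For injectivity of $\iota$, suppose $\psi \in \mathcal{D}'(M)$ satisfies $\iota(\psi) = 0$, i.e. $\psi\big((f\,\mathrm{d}z)_{|M}\big) = 0$ for every $f \in \mathcal{B}_{K,r}$. I want to conclude $\psi = 0$ as a distribution on $M$, which amounts to showing that the test $1$-forms $(f\,\mathrm{d}z)_{|M}$, $f \in \mathcal{B}_{K,r}$, are dense in $C^\infty(M)$ (identified with smooth densities on $M$ via the parametrization coming from \eqref{eq:complex_perturbation}). Equivalently, writing $M = \set{x + im(x)}$ parametrized by $x \in \mathbb{S}^1$, the $1$-form $(f\,\mathrm{d}z)_{|M}$ pulls back to $f(x + im(x))(1 + im'(x))\,\mathrm{d}x$; since $1 + im'(x)$ is a nowhere-vanishing smooth function, it suffices to show that $\set{x \mapsto f(x + im(x)) : f \in \mathcal{B}_{K,r}}$ is dense in $C^\infty(\mathbb{S}^1)$. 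This in turn follows because $\mathcal{B}_{K,r}$ contains all trigonometric polynomials and the map $f \mapsto f(\,\cdot + im(\,\cdot\,))$ sends $e^{2\pi i n x/L}$ to $e^{2\pi i n x/L} e^{-2\pi n m(x)/L}$; a cleaner route is to observe directly that for any $g \in C^\infty(M)$ and any $\delta > 0$, Runge-type approximation (or just truncating a Fourier series of a suitable extension) produces a trigonometric polynomial $f$ with $\n{f_{|M} - g}_{C^\infty(M)} < \delta$, because $M$ is a smooth simple closed curve in the annulus $\set{d(x,K)<r}$ on which holomorphic functions approximate smooth functions in $C^\infty$ norm. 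Any of these yields $\psi = 0$.

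The main obstacle I anticipate is the density step underpinning injectivity: one needs that restriction to $M$ of the closure of trigonometric polynomials in $\widetilde{\mathcal{B}}_{K,r}$ — not merely all holomorphic functions on the neighbourhood — is $C^\infty(M)$-dense, and this is exactly where the choice to take $\mathcal{B}_{K,r}$ as the closure of trigonometric polynomials (rather than all of $\widetilde{\mathcal{B}}_{K,r}$) matters and must be used carefully. The argument should run by taking $g \in C^\infty(M)$, transporting it to $\mathbb{S}^1$ via the parametrization, noting that the map $x \mapsto x + im(x)$ is a diffeomorphism onto $M$, extending the resulting function on $\mathbb{S}^1$ to a smooth function on a thin complex neighbourhood, and mollifying/convolving with a holomorphic kernel (or truncating its Fourier expansion weighted appropriately) to get a trigonometric polynomial close to it in all the semi-norms defining $\widetilde{\mathcal{B}}_{K,r}$; the hypothesis $\n{m}_\infty < r$ guarantees $M$ lies in the interior of $\mathcal{W}_r$ so Cauchy estimates control $C^\infty(M)$ norms by the sup norm on $\set{d(x,K)<r}$. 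Everything else — the homotopy/Cauchy argument for \eqref{eq:justification_identification}, continuity of the relevant restriction maps — is routine once this density input is in hand.
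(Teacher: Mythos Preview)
Your proof is correct and follows essentially the same path as the paper. For \eqref{eq:justification_identification} the paper does exactly the contour deformation you describe, using the homotopy $(t,x)\mapsto x+itm(x)$ within the region where $\psi f$ is holomorphic. For injectivity, the paper also reduces to showing that restrictions $f_{|M}$ of trigonometric polynomials are dense in $C^\infty(M)$, and carries this out in two clean steps: first it invokes an approximation theorem (Forstneri\v{c}, \cite[Theorem 3.5.4]{forstneric_book}) to approximate an arbitrary $g\in C^\infty(M)$ in $C^\infty(M)$ by functions holomorphic on a neighbourhood of $M$, and then identifies $\mathbb{C}/L\mathbb{Z}$ with $\mathbb{C}\setminus\{0\}$ via $z\mapsto e^{2i\pi z/L}$ to apply the classical Runge theorem, yielding trigonometric polynomials that approximate uniformly on a neighbourhood of $M$ (hence in $C^\infty(M)$ by Cauchy estimates). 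This is precisely the density step you flagged as the main obstacle; the paper's argument replaces your sketched mollification/Fourier-truncation route by these two off-the-shelf theorems, which avoids the pitfall that naive Fourier truncation of the pullback to $\mathbb{S}^1$ need not converge on $M$ (the factors $e^{-2\pi n m(x)/L}$ are unbounded in $n$).
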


\begin{proof}
Start by noticing that, if $\psi,f \in \mathcal{B}_{K,r}$ then the definition \eqref{eq:identification_hyperfunctions} yields
\begin{equation*}
\iota(\psi_{|M})(f) = \int_M \psi(z) f(z) \mathrm{d}z,
\end{equation*}
where we recall that we identify $\psi$ and $f$ with their extensions to $\mathcal{W}_r$. Notice then that the homotopy $(t,x) \mapsto x + i t m(x)$ between $\mathbb{S}^1$ and $M$ only deform the circle within the interior of $\mathcal{W}_r$, where $\psi$ and $f$ are holomorphic. Hence, we may shift contour in the formula above to get \eqref{eq:justification_identification}.

Let us prove that $\iota$ is injective. Let $\psi \in \mathcal{D}'(M)$ be such that $\iota(\psi) = 0$. Let $\gamma$ be a smooth density on $M$ and let $f$ be the $C^\infty$ function on $M$ such that $\gamma = f \mathrm{d}z_{|M}$. The function $f$ may be approximated \cite[Theorem 3.5.4]{forstneric_book} in $C^\infty(M)$ by holomorphic functions on a neighbourhood of $M$. Identifying $\mathbb{C}/ L \mathbb{Z}$ with $\mathbb{C} \setminus \set{0}$ using the holomorphic map $z \mapsto e^{\frac{2i \pi}{L}z}$ and applying Runge Theorem, we may approximate each of these functions by trigonometric polynomials (uniformly on a neighbourhood of $M$). Hence, $f$ may be approximated by trigonometric polynomials in $C^\infty(M)$, and it follows that $\psi(\gamma) = 0$. Since $\gamma$ is arbitrary, we find that $\psi = 0$, and thus that $\iota$ is injective.
\end{proof}

We will sometimes drop the notation $\iota$ and identifies $\mathcal{D}'(M)$ with a subset of $\mathcal{B}_{K,r}'$. Now, let $P$ be a differential operator on $\mathbb{S}^1$ with coefficients in $\mathcal{B}_{K,r}$:
\begin{equation*}
P = \sum_{k = 0}^N a_k(x) \partial_x^k.
\end{equation*}
It follows from Cauchy's formula that, for any $r' \in (0,r)$, the operator $P$ maps $\mathcal{B}_{K,r}$ into $\mathcal{B}_{K,r'}$ continuously, and so does its formal adjoint $P^\top$. Hence, we may define the action of $P$ from $\mathcal{B}_{K,r'}'$ to $\mathcal{B}_{K,r}'$ by duality
\begin{equation*}
P \psi(f) = \psi (P^\top f) \textup{ for every } \psi \in \mathcal{B}_{K,r'}' \textup{ and } f \in \mathcal{B}_{K,r}.
\end{equation*}
The main point of the complex deformation method is that $P$ actually maps $\mathcal{D}'(M)$ into itself, with a particularly simple description in this case. Let us define a differential operator $P_M$ on $M$ in the following way. On the open set $\mathbb{S}^1 \setminus K \subseteq M$, the operator $P_M$ agrees with $P$. Notice that the coefficients of $P$ are holomorphic on the interior of $\mathcal{W}_r$. To $P$, we can consequently associate a holomorphic differential operator on the interior of $\mathcal{W}_r$:
\begin{equation*}
P_{\textup{hol}} = \sum_{k = 0}^N a_k(z) \partial_z^k.
\end{equation*}
Now, if $\psi \in C^\infty(M)$ is supported in $\set{ x + i m(x) : x \in \mathbb{S}^1, d(x,K) < r}$, let $\tilde{\psi}$ be a pseudo-analytic extension\footnote{The function $\tilde{\psi} : \mathbb{C}/ L \mathbb{Z} \to \mathbb{C}$ coincides with $\psi$ on $M$ and $\bar{\partial} \tilde{\psi}$ vanishes at infinite order on $M$. The use of pseudo-analytic extensions is a conveniency here, we could define $P_M$ in a merely algebraic way.} of $\psi$ and define $P_M \psi = (P_{\textup{hol}} \tilde{\psi})_{|M}$. The result does not depend on the choice of the real analytic extension $\tilde{\psi}$ and coincides with the previous definition on $\set{ x + i m(x) : x \in \mathbb{S}^1, d(x,K) < r} \cap \mathbb{S}^1 \setminus K$. Hence, we defined the differential operator $P_M$ on the whole $M$. Notice that if we parametrize $M$ by $\mathbb{S}^1$ using the map $x \mapsto x + i m(x)$, then $P_M$ is given in these coordinates by the formula
\begin{equation*}
\sum_{k = 0}^N a_k(x + i m(x)) ((1+ i m'(x))^{-1} \partial_x)^k.
\end{equation*}
The point of this discussion is that, if $\psi \in \mathcal{D}'(M)$ then the action of $P$ on $\psi$ (as an element of $\mathcal{B}_{K,r'}'$) coincides with the action of $P_M$ on $\psi$ (as an element of $\mathcal{D}'(M)$). In algebraic terms:

\begin{proposition}\label{proposition:justification}
For every $\psi \in \mathcal{D}'(M)$, we have
\begin{equation*}
P\iota(\psi) = \iota(P_M \psi).
\end{equation*}
\end{proposition}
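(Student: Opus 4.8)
The plan is to reduce the identity to a pointwise (local) statement and then to a dual pairing computation, exploiting the fact that $P_M\psi$ was \emph{defined} on the open set $\mathbb{S}^1\setminus K$ to agree with $P$, and near $K$ to be the pullback of the holomorphic operator $P_{\textup{hol}}$ under the parametrization $x\mapsto x+im(x)$. Since both sides of the claimed equality are elements of $\mathcal{B}_{K,r}'$, it suffices to check that they agree against every test function $f\in\mathcal{B}_{K,r}$, i.e. that
\begin{equation*}
\int_M (P_M\psi)(z) f(z)\,\mathrm{d}z = \int_M \psi(z) (P^\top f)(z)\,\mathrm{d}z,
\end{equation*}
where on the right I have unwound the definition of the action of $P$ on $\mathcal{B}_{K,r'}'$ by duality, namely $P\iota(\psi)(f)=\iota(\psi)(P^\top f)$. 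So the whole statement comes down to an integration by parts \emph{on the curve $M$}: I must show that $P_M$ (the differential operator on $M$ obtained by complex deformation) is, up to the density identifications coming from $\mathrm{d}z$, the formal transpose of $P^\top$ restricted to $M$. Equivalently, $P_M$ on $M$ and $P$ on $\mathbb{S}^1$ have the ``same'' formal transpose once one transports via the holomorphic parametrization.

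Here is the order I would carry this out. First, by linearity in $\psi$ and a partition of unity on $M$ subordinate to the cover $\{\mathbb{S}^1\setminus K\}\cup\{x+im(x): d(x,K)<r\}$, reduce to two cases: (a) $\psi$ supported in $\mathbb{S}^1\setminus K$, and (b) $\psi$ supported in the deformed region. In case (a), $M$ and $\mathbb{S}^1$ coincide near $\mathrm{supp}\,\psi$, $P_M=P$ there, and the identity is the definition of the formal transpose $P^\top$ together with \eqref{eq:justification_identification}; nothing to prove. In case (b), parametrize $M$ by $x\mapsto z(x)=x+im(x)$. Writing $\psi(z(x))=:u(x)$, a pseudo-analytic extension $\tilde\psi$ of $\psi$ satisfies $P_{\textup{hol}}\tilde\psi$ restricted to $M$ equal to $\sum_k a_k(z(x))\big((1+im'(x))^{-1}\partial_x\big)^k u(x)$, by the chain rule (the $\bar\partial$ terms drop because $\bar\partial\tilde\psi$ vanishes to infinite order on $M$). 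The density $(P_M\psi\,\mathrm{d}z)_{|M}$ then pulls back to $\big(\sum_k a_k(z(x))((1+im'(x))^{-1}\partial_x)^k u(x)\big)(1+im'(x))\,\mathrm{d}x$, and similarly $f\mapsto (f\,\mathrm{d}z)_{|M}$ pulls back with the Jacobian $(1+im'(x))$. Now the pairing $\int_M(P_M\psi)f\,\mathrm{d}z$ becomes an honest integral over $\mathbb{S}^1$ in the variable $x$, and I integrate by parts there: each factor $(1+im'(x))^{-1}\partial_x$ that hits $u$ is moved onto the rest of the integrand, picking up a sign and the transpose of that first-order operator with respect to the measure $(1+im'(x))\,\mathrm{d}x$. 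The bookkeeping is designed precisely so that, after all integrations by parts, one recognizes $\sum_k (-1)^k((1+im'(x))^{-1}\partial_x)^k\big(a_k(z(x))(\cdot)\big)$ acting on $f$, weighted correctly — and this is exactly $(P^\top)_{\textup{hol}}$ pulled back to $M$, hence $(P^\top f)(z(x))$ up to the Jacobian. Reassembling gives $\int_M\psi\,(P^\top f)\,\mathrm{d}z=\iota(\psi)(P^\top f)=P\iota(\psi)(f)$, which is the claim.

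The one genuinely delicate point — the ``hard part'' — is justifying the holomorphic integration by parts/contour shift without boundary terms and checking that the formal transpose really commutes with the substitution $x\mapsto z(x)$. Concretely, the transpose of a holomorphic differential operator $P_{\textup{hol}}=\sum a_k(z)\partial_z^k$ on the complex domain is $\sum(-1)^k\partial_z^k\circ a_k(z)$, and one must confirm that pulling this back through $z=z(x)$ yields the same operator one gets by first pulling back $P_{\textup{hol}}$ to get $P_M$ and then taking the formal transpose with respect to $(1+im'(x))\,\mathrm{d}x$; this is a purely algebraic identity about conjugating $\partial_z$ by a diffeomorphism, but it must be stated carefully because the density factor $(1+im'(x))\,\mathrm{d}z/\mathrm{d}x$ is complex-valued, so ``formal transpose'' here means transpose of the bilinear (not sesquilinear) pairing, and no complex conjugation is involved. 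Since $M$ is a closed curve (it is the image of the circle), there are no boundary terms at all, so the integrations by parts are clean once the algebra is set up. I would also remark that by density it suffices to verify the identity for $\psi$ ranging over a space of smooth functions on $M$ (approximated by trigonometric polynomials, as in the previous lemma), which makes all the manipulations above literal integrals rather than distributional pairings, and then pass to the limit; but in fact the computation goes through verbatim for $\psi\in\mathcal{D}'(M)$ once one recalls that $P_M$ on distributions on $M$ is defined by duality from $P_M^\top$ on $C^\infty(M)$, so the whole argument is really the statement that $P_M^\top$ transported to $\mathbb{S}^1$ equals $(1+im')^{-1}\big(P^\top f\big)(z(x))(1+im')$ applied appropriately.
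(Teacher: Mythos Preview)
Your proposal is correct and follows essentially the same route as the paper: reduce to the dual pairing identity $\int_M (P_M\psi)f\,\mathrm{d}z=\int_M\psi(P^\top f)\,\mathrm{d}z$, recognize this as the claim that $(P^\top f\,\mathrm{d}z)_{|M}=P_M^\top(f\,\mathrm{d}z)_{|M}$, and verify it by iterated integration by parts in the global coordinate $x\mapsto x+im(x)$. The paper's proof is slightly more streamlined in that it dispenses with the partition of unity (the parametrization $x\mapsto x+im(x)$ is global, so the two cases you distinguish are handled at once) and invokes analytic continuation to identify $P^\top f$ on $\mathcal{W}_r$ with $\sum_k(-1)^k\partial_z^k(a_k f)$ rather than computing directly in the pulled-back coordinates; but the substance is the same.
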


\begin{proof}
Testing this relation against an element of $\mathcal{B}_{K,r}$, we see that we want to prove that
\begin{equation}\label{eq:actual_relation}
(P^{\top} f \mathrm{d}z)_{| M} = P_M^{\top} (f \mathrm{d}z)_{|M}
\end{equation}
for every $f \in \mathcal{B}_{K,r}$. Here, the adjoint $P_M^\top$ is defined using the pairing between smooth functions and densities on $M$. Start by noticing that we have
\begin{equation*}
P^{\top} f (z) = \sum_{k = 0}^N (-1)^k \partial_z^k(a_k(z) f(z)) \textup{ for } z \in \mathcal{W}_r.
\end{equation*}
This relation, \emph{a priori} only valid on $\mathbb{S}^1$, actually holds on $\mathcal{W}_r$ by the analytic continuation principle. The relation \eqref{eq:actual_relation} follows then by iterated integration by parts on $M$ (that can be performed in the coordinates $x \mapsto x + i m(x)$).
\end{proof}

Proposition \ref{proposition:justification} justifies to drop the notation $P_M$, as it ends up being just another name for the action of $P$ on $\mathcal{D}'(M)$, seen as a subset of $\mathcal{B}_{K,r'}'$ (for some $r'$ close to $r$). Another consequence of Proposition \ref{proposition:justification} is that, in order to study the operator $P$, it is legitimate to study its action on $\mathcal{D}'(M)$ (as defined above) instead of its action on $L^2(\mathbb{S}^1)$ for instance. If $M$ is well-chosen, the operator $P$ may be better behaved on $M$ than on $\mathbb{S}^1$. However, it would be pointless to do so if we could not relate the action of $P$ on $M$ and on $\mathbb{S}^1$ in a more concrete way than through Proposition \ref{proposition:justification}. When $P$ is elliptic, it is easily done, as explain in the following simple result. In this lemma, we use the notion of restriction of a holomorphic differential operator defined on a complex analytic manifold to a totally real\footnote{The totally real assumption is empty in dimension 1, but needed to generalize this definition to higher dimensions.} submanifold of maximal dimension. This restriction may be defined in this more general context as we did above in a particular case, using pseudo-analytic extensions.

\begin{lemma}\label{lemma:elliptic_extension}
Let $X$ be a connected complex analytic manifold of dimension $1$. Let $M$ be a connected real submanifold of dimension $1$ of $X$. Assume that the map\footnote{We write $\pi_1(M)$ and $\pi_1(X)$ for the fundamental groups respectively of $M$ and $X$.} $\pi_1(M) \to \pi_1(X)$ induced by the inclusion is surjective. Let $P$ be an elliptic holomorphic differential operator on $X$, and $\widetilde{P}$ the associated differential operator on $M$. Let $\psi$ be a $C^\infty$ function on $M$ and $f$ a holomorphic function on $X$. Assume that $\widetilde{P}\psi = f_{|M}$. Then $\psi$ has a holomorphic extension to $X$ that satisfies $P \psi = f$.
\end{lemma}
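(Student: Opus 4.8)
The plan is to prove this by a continuation argument: holomorphic extendability of $\psi$ is a local statement on $X$ once we know local extendability plus compatibility, so the key is to extend $\psi$ holomorphically across a neighbourhood of each point of $M$ and then propagate along $X$. First I would fix a point $p\in M$. Choose a small coordinate disk $D\subseteq X$ centered at $p$ in which $P$ is written as a holomorphic differential operator with holomorphic coefficients, and so that $M\cap D$ is a real-analytic arc through $p$ that is totally real (automatic in dimension one). On $D$ I would solve the Cauchy problem: elliptic regularity applied to $\widetilde P\psi=f_{|M}$ shows that $\psi$ is real-analytic on $M\cap D$ (this is the standard analytic hypoellipticity of elliptic operators with analytic coefficients, applied in one real dimension), hence $\psi$ extends holomorphically to a possibly smaller disk $D'\subseteq D$ as a function $\widehat\psi$. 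The operator $P_{\mathrm{hol}}\widehat\psi - f$ is holomorphic on $D'$ and its restriction to the totally real arc $M\cap D'$ vanishes, because that restriction is exactly $\widetilde P\psi - f_{|M}=0$; since a holomorphic function vanishing on a totally real submanifold of maximal dimension vanishes identically (uniqueness of holomorphic extension), we get $P_{\mathrm{hol}}\widehat\psi=f$ on $D'$.

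Next I would globalize. Cover $M$ by such disks $D'_\alpha$ with local holomorphic extensions $\widehat\psi_\alpha$ satisfying $P\widehat\psi_\alpha=f$. On overlaps $D'_\alpha\cap D'_\beta$ that meet $M$, the two extensions agree on $M\cap D'_\alpha\cap D'_\beta$ (both equal $\psi$ there), hence agree on the overlap by the totally-real uniqueness principle again; so they glue to a holomorphic function $\widehat\psi$ on a connected open neighbourhood $\Omega$ of $M$ in $X$ with $P\widehat\psi=f$ on $\Omega$. It remains to extend $\widehat\psi$ from $\Omega$ to all of $X$. Here I would use the hypothesis that $\pi_1(M)\to\pi_1(X)$ is surjective together with ellipticity of $P$: for any $q\in X$ pick a path $\gamma$ from a basepoint in $M$ to $q$; I want to continue $\widehat\psi$ analytically along $\gamma$ as a solution of $P\psi=f$. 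Near any point of $X$ the equation $P\psi=f$ has an analytic solution (again analytic hypoellipticity / local solvability of elliptic equations with analytic data) and the solution space has the structure of a sheaf of finite rank local systems of the form "solution $+$ kernel of $P$"; analytic continuation of $\widehat\psi$ along $\gamma$ is thus obstructed only by the monodromy of the local system of solutions of $P\psi=f$.

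To kill the monodromy I would argue: the difference of any two analytic continuations of $\widehat\psi$ around a loop based in $M$ is a holomorphic solution of $P u=0$ on a neighbourhood of that loop; but restricting this loop into $M$, the original $\psi$ is single-valued on $M$, so the continuation of $\widehat\psi$ along any loop inside $M$ returns to $\widehat\psi$ — i.e. the monodromy is trivial along loops contained in $M$. Since $\pi_1(M)\to\pi_1(X)$ is surjective, every loop in $X$ is homotopic to the image of a loop in $M$, and monodromy is a homotopy invariant of the flat connection on the (finite-rank) local system of local solutions; hence the monodromy of $\widehat\psi$ around every loop in $X$ is trivial. Therefore the analytic continuation of $\widehat\psi$ is well-defined on all of $X$, yielding a global holomorphic $\psi$ with $P\psi=f$ and $\psi_{|M}$ equal to the original function (by construction near $M$).

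I expect the main obstacle to be making the monodromy step rigorous in the right category: one must set up the sheaf of local holomorphic solutions of $P\psi=f$ as an affine space over the solution sheaf of $P u=0$, check that the latter is a local system (locally constant sheaf of finite-dimensional vector spaces) — which uses that $P$ is elliptic with holomorphic coefficients on the one-dimensional complex manifold $X$, so its holomorphic kernel is locally finite-dimensional and satisfies unique continuation — and then verify that "analytic continuation along a path" is precisely parallel transport for the associated flat connection, so that homotopy invariance and the surjectivity on $\pi_1$ can be combined. The totally-real uniqueness lemma (a holomorphic function vanishing on $M$ vanishes near $M$) is used repeatedly and is elementary in dimension one, but I would state it cleanly once at the start. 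Everything else — local analytic hypoellipticity, gluing, the homotopy-lifting of the covering associated to the local system — is standard and I would only cite it.
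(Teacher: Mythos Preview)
Your proposal is correct and follows essentially the same strategy as the paper: continue a local holomorphic solution of $P\psi=f$ along paths, use homotopy invariance of this continuation, and kill the monodromy via the surjectivity of $\pi_1(M)\to\pi_1(X)$ together with the single-valuedness of $\psi$ on $M$. The paper packages the argument slightly more concretely---it works directly with the parallel transport of $(d-1)$-jets given by the holomorphic Cauchy--Lipschitz theorem, and never invokes analytic hypoellipticity separately (the agreement with $\psi$ on $M$ comes from uniqueness in Cauchy--Lipschitz for $\widetilde P$ on $M$, not from first proving $\psi$ real-analytic)---but your local-system formulation is equivalent, and your identification of the main technical point (that the solution sheaf is a local system with homotopy-invariant parallel transport) is exactly what the paper's jet-transport construction establishes.
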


\begin{proof}
Let $d$ be the order of the differential operator $P$. For each $x \in X$, we let $E_x$ denote the space of $d-1$-jets of holomorphic functions at $x$ (that is the space of holomorphic functions on a neighbourhood of $x$ up to holomorphic functions vanishing at order $d$ at $x$). By the holomorphic version of the Cauchy--Lipschitz theorem, if $V$ is a simply connected open subset of $X$, $x \in V$ and $\mathfrak{j} \in E_x$, then there is a unique holomorphic function $v$ on $V$ such that $P v = f$ and whose $d-1$-jet at $x$ is $\mathfrak{j}$. Now, let us consider a continuous path $\gamma : [0,1] \to X$ joining two points $x$ and $y$. By compactness, we can find a sequence of times $0 = t_0 < t_1 < \dots <t_n = 1$ and a sequence of open disks $D_0,\dots D_{n-1}$ in $X$ such that for $k = 0,\dots,n-1$, the segment $\gamma([t_k,t_{k+1}])$ is contained in $D_k$. Consider a jet $\mathfrak{j} \in E_x$, and let $v_0$ satisfies $P v_0 = f$ on $D_0$ with jet $\mathfrak{j}$ at $x$. Use then the jet of $v_0$ at $\gamma(t_1)$ as an initial solution to define a solution $v_1$ to $Pv_1 = f$ on $D_1$. We keep going and end up with a holomorphic functions $v_{n-1}$ on $D_{n-1}$. We let $\mathfrak{P}_\gamma(\mathfrak{j}) \in E_y$ denote the $d-1$-jet of $v_{n-1}$ at $y$. It is standard, using uniqueness in the Cauchy--Lipschitz theorem, that $\mathfrak{P}_\gamma(\mathfrak{j})$ does not depend on the choice of times $t_0,\dots,t_n$ and of disks $D_0,\dots,D_{n-1}$. Actually, one may check that $\mathfrak{P}_\gamma(\mathfrak{j})$ only depends on $\gamma$ through its homotopy class.

Let us fix a point $x_0 \in M$. Notice that there is a unique element $\mathfrak{j}_0 \in E_{x_0}$ whose restriction to $M$ coincides with the jet of $\psi$ at $x_0$ as a $C^\infty$ function (because $M$ is totally real). For each $x \in M$, let $\gamma_x$ be a path from $x_0$ to $x$ and set $\psi(x) = \mathfrak{P}(\mathfrak{j}_0)(x)$. In order to conclude, we only need to prove that our value of $\psi(x)$ does not depend on the choice of the path $\gamma_x$. Indeed, it is then clear that the resulting function $\psi$ is a holomorphic solution to $P\psi = f$ (because it is locally given by such solutions) and that it coincides with $\psi$ on $M$ (because if $x \in M$, we can choose the path $\gamma_x$ to be in $M$).

Let us consequently consider a point $x \in M$ and two paths $\gamma_x$ and $\tilde{\gamma}_x$ going from $x_0$ to $x$. Let $\mathfrak{j} = \mathfrak{P}_{\gamma_x}(\mathfrak{j}_0)$ and $\tilde{\mathfrak{j}} = \mathfrak{P}_{\tilde{\gamma}_x}(\mathfrak{j}_0)$. Notice that $\tilde{\gamma}_x $ is homotopically equivalent to $\gamma_x \gamma_x^{-1} \tilde{\gamma}_x $. But $\gamma_x^{-1} \tilde{\gamma}_x$ is a loop based at $x_0 \in M$. Hence, by assumption it is homotopy equivalent to a loop $c$ in $M$. Hence, we have $\tilde{\mathfrak{j}} = \mathfrak{P}_{\gamma_x} \mathfrak{P}_c \mathfrak{j_0}$. By uniqueness in Cauchy--Lipschitz theorem we find that $\mathfrak{P}_c (\mathfrak{j}_0) = \mathfrak{j}_0$ and thus $\tilde{\mathfrak{j}} = \mathfrak{P}_{\gamma_x}(\mathfrak{j}_0) = \mathfrak{j}$. In particular, $\mathfrak{j}(x) = \tilde{\mathfrak{j}}(x)$, which ends the proof of the lemma.
\end{proof}

\subsection{Complex deformation of a segment}\label{subsection:complex_scaling_segment}

Recall that our goal is to study an operator defined on the segment $[a,b]$. To do so we embedded $[a,b]$ isometrically in a circle $\mathbb{S}^1 = \mathbb{R}/ L \mathbb{Z}$. Let us introduce, with the notations from \S \ref{subsection:generalities}, the set
\begin{equation*}
M_{a,b} = \set{x + i m(x) : x \in (a,b)}.
\end{equation*}
We will think of $\mathcal{D}'(M_{a,b})$ as a set of generalized distributions on $(a,b)$. Notice that $M_{a,b}$ is a domain with boundary in $M$ and we can consequently define the Sobolev spaces $H^k(M_{a,b}), k \in \mathbb{R}$ on $M_{a,b}$ as the space of restrictions of elements of $H^k(M)$ on $M_{a,b}$. If $P$ is a differential operator with coefficients in $\mathcal{B}_{K,r}$ (with $\n{m}_\infty < r$), then we will define the action of $P$ on $\mathcal{D}'(M_{a,b})$ as the action of the operator $P_M$ defined in \S \ref{subsection:generalities}.

When working on a segment, we will always assume that $K \subseteq [a,b]$, which implies that the point $a$ and $b$ are the boundary points of $M_{a,b}$ in $M$. Let us explain how, if the stronger condition $K \subseteq (a,b)$ holds and $r$ is strictly less than the distance between $K$ and $\set{a,b}$, we may identify the elements of $\mathcal{D}'(M_{a,b})$ with generalized distributions on $(a,b)$. Let $\mathcal{B}_{K,r}^c(a,b)$ be the space\footnote{The topology on this space is defined by taking the inductive limit (of locally convex topological vector spaces) over compact subset of $(a,b)$.} of elements of $\mathcal{B}_{K,r}$ that are supported in $(a,b)$. We can then use the formula \eqref{eq:identification_hyperfunctions} to define an injection of $\mathcal{D}'(M_{a,b})$ inside $(\mathcal{B}_{K,r}^c(a,b))'$ that makes the following diagram commutative (where the vertical arrows are the restriction maps): 
\begin{equation*}
\begin{tikzcd}
\mathcal{D}'(M) \arrow[r] \arrow[d] & \mathcal{B}_{K,r}' \arrow[d] \\
\mathcal{D}'(M_{a,b}) \arrow[r] &  (\mathcal{B}_{K,r}^c(a,b))'
\end{tikzcd}
\end{equation*}
Moreover, the action of differential operators on $\mathcal{D}'(M_{a,b})$ may be described as in the circle case.

Notice that we could have defined $M_{a,b}$ directly as a deformation of $(a,b)$ in $\mathbb{C}$, without embedding $[a,b]$ in the circle and introducing the deformation $M$ first. However, it will be useful in the following to work first on a closed manifold, hence the introduction of the circle.

Finally, let us mention that for $k > 1/2$, we will use the notation
\begin{equation*}
H^k_{\D}(M_{a,b}) = \set{ u \in H^k(M_{a,b}) : u(a) = u(b) = 0}
\end{equation*}
to denote the space of elements of $H^k_D(M_{a,b})$ that satisfy Dirichlet boundary condition. Similarly, when $k > 5/2$, we let
\begin{equation*}
H^k_{\DN}(M_{a,b}) = \set{ u \in H^k(M_{a,b}): u(a) = u(b) = \partial_x u(a) = \partial_x u(b) = 0}
\end{equation*}
be the space of elements of $H^k(M_{a,b})$ that satisfy both Dirichlet and Neumann boundary conditions.

\subsection{Inverting \texorpdfstring{$\partial_x^2-\alpha^2$}{d2-alpha2} on the deformed segment}\label{subsection:inverse_laplace}

Let $\alpha > 0$. We want now to invert the operator $\partial_x^2 - \alpha^2$ on the spaces defined through complex deformation. Here, we are using the notations from \S\S \ref{subsection:generalities}--\ref{subsection:complex_scaling_segment}. The action of $\partial_x^2 - \alpha^2$ on $\mathcal{D}'(M)$ and $\mathcal{D}'(M_{a,b})$ is also defined in \S\S \ref{subsection:generalities}--\ref{subsection:complex_scaling_segment}.

We start with a result on the circle:

\begin{lemma}\label{lemma:inverse_laplace_circle}
Let $\alpha > 0$ and $k \in \mathbb{R}$. The operator $\partial_x^2 - \alpha^2 : H^{k+2}(M) \to H^k(M)$ is invertible.
\end{lemma}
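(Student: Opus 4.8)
The plan is to reduce the invertibility of $\partial_x^2 - \alpha^2$ on $H^{k+2}(M)$ to the corresponding statement on the standard circle $\mathbb{S}^1$, where it is classical, using the complex-deformation machinery already set up. By Proposition~\ref{proposition:justification}, the action of $P = \partial_x^2 - \alpha^2$ on $\mathcal{D}'(M)$ agrees with the intrinsic differential operator $P_M$ on $M$; in the coordinate $x \mapsto x + i m(x)$ this reads $((1 + i m'(x))^{-1}\partial_x)^2 - \alpha^2$, which is an elliptic second-order operator on $\mathbb{S}^1$ with smooth coefficients (here $m$ is $C^\infty$ and $1 + i m'$ never vanishes since $m$ is real-valued). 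Hence $P : H^{k+2}(M) \to H^k(M)$ is Fredholm of index zero by standard elliptic theory, and it suffices to show it is injective.

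For injectivity I would invoke Lemma~\ref{lemma:elliptic_extension} with $X$ a tubular complex neighbourhood of $M$ inside $\mathbb{C}/L\mathbb{Z}$ (for instance the interior of $\mathcal{W}_r$, which deformation-retracts onto $\mathbb{S}^1$, so that $\pi_1(M) \to \pi_1(X)$ is an isomorphism, in particular surjective) and $P_{\mathrm{hol}} = \partial_z^2 - \alpha^2$, which is elliptic and holomorphic. If $u \in H^{k+2}(M)$ satisfies $P_M u = 0$, then elliptic regularity on $M$ makes $u$ smooth, and Lemma~\ref{lemma:elliptic_extension} (with $f = 0$) produces a holomorphic extension $\tilde u$ on $X$ with $(\partial_z^2 - \alpha^2)\tilde u = 0$. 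Restricting $\tilde u$ to $\mathbb{S}^1 \subseteq X$ gives a smooth function on the circle in the kernel of $\partial_x^2 - \alpha^2 : H^{k+2}(\mathbb{S}^1) \to H^k(\mathbb{S}^1)$; since $\alpha > 0$ this operator is invertible (its Fourier symbol $-n^2 - \alpha^2$ never vanishes), so $\tilde u \equiv 0$ on $\mathbb{S}^1$, hence on all of $X$ by analytic continuation, hence $u = 0$ on $M$. Therefore $P$ is injective, and being Fredholm of index zero it is invertible.

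The main obstacle is checking the hypotheses of Lemma~\ref{lemma:elliptic_extension} cleanly — specifically, arranging that $M$ sits inside a complex manifold $X$ on which $\partial_z^2 - \alpha^2$ is defined and for which $\pi_1(M) \to \pi_1(X)$ is surjective, and confirming that the abstract operator associated to $\partial_z^2 - \alpha^2$ on $M$ via pseudo-analytic extensions coincides with the action of $\partial_x^2 - \alpha^2$ on $\mathcal{D}'(M)$. Both points are essentially bookkeeping given the framework of \S\ref{subsection:generalities}: the first holds because $M$ is a graph over $\mathbb{S}^1$ inside the annular neighbourhood $\mathcal{W}_r$ and is isotopic to $\mathbb{S}^1$ there, and the second is exactly Proposition~\ref{proposition:justification}. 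A stylistically lighter alternative, avoiding Lemma~\ref{lemma:elliptic_extension} entirely, is to build the inverse by hand: the explicit Green's function for $\partial_x^2 - \alpha^2$ on the circle is holomorphic in the relevant variable, so contour-shifting its kernel onto $M$ (exactly as in the contour-shift argument used to prove \eqref{eq:justification_identification}) yields a bounded operator $H^k(M) \to H^{k+2}(M)$ which one checks, again by analytic continuation from $\mathbb{S}^1$, is a two-sided inverse of $P_M$. Either route works; I would present the first since the abstract tools are already in place.
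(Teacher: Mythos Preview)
Your proof is correct and follows essentially the same strategy as the paper: show $\partial_x^2 - \alpha^2$ is Fredholm of index zero on $M$, then prove injectivity by extending a kernel element holomorphically via Lemma~\ref{lemma:elliptic_extension}, restricting to $\mathbb{S}^1$, and invoking analytic continuation. The only differences are cosmetic: for the Fredholm step the paper uses a small semiclassical trick (the operator $h^2(\partial_x^2-\alpha^2)-1$ is semiclassically elliptic, hence invertible for small $h$, so $\partial_x^2-\alpha^2$ is a compact perturbation of an invertible operator) rather than citing elliptic theory directly, and for the extension step the paper simply takes $X=\mathbb{C}/L\mathbb{Z}$ --- cleaner than your tubular neighbourhood, since $\partial_z^2-\alpha^2$ has entire coefficients and $\mathbb{S}^1$ is then automatically contained in $X$.
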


\begin{proof}
Let us (artificially) introduce a small semiclassical parameter $h > 0$ and consider the semiclassical differential operator $A = h^2 (\partial_x^2 - \alpha^2) -1$ on $M$. If we use the parametrization $x \mapsto x + i m(x)$ of $M$ by $\mathbb{S}^1$, then the principal symbol of $A$ reads $(x,\xi) \mapsto - (1 + i m'(x))^2 \xi^2 - 1$, which is elliptic (because $(1+ i m'(x))^2$ stays away from $(- \infty,0]$ while $x$ goes over $\mathbb{S}^1$). Hence, for $h$ small enough, the operator $A : H^{k+2}(M) \to H^k(M)$ is invertible. Hence, $\partial_x^2 - \alpha^2= 1 + h^2 A$ is Fredholm of index zero. Thus, we only need to prove that $\partial_x^2 - \alpha^2$ is injective on $H^{k+2}(M)$.

Let $\psi \in H^{k+2}(M)$ be such that $(\partial_x^2 - \alpha^2)\psi = 0$. It follows from elliptic regularity that $\psi \in C^\infty(M)$. Applying Lemma \ref{lemma:elliptic_extension}, we find that $\psi$ is the restriction to $M$ of a holomorphic function, still denoted by $\psi$, on $\mathbb{C} / L \mathbb{Z}$. The extension also satisfies $(\partial_x^2 - \alpha^2) \psi = 0$. Hence, the restriction of $\psi$ to $\mathbb{S}^1$ must be zero, and by analytic continuation principle we find that $\psi = 0$.
\end{proof}

We have an analogue statement on the segment:

\begin{lemma}\label{lemma:inverse_laplace_segment}
Let $\alpha  > 0$ and $k > - 1/2$. The operator $\partial_x^2 - \alpha^2 : H^{k+2}_{\D}(M_{a,b}) \to H^{k}(M_{a,b})$ is invertible.
\end{lemma}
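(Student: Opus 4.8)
The plan is to mimic the strategy of Lemma \ref{lemma:inverse_laplace_circle}, reducing to a Fredholm-index argument plus injectivity. First I would establish that $\partial_x^2 - \alpha^2 : H^{k+2}_{\D}(M_{a,b}) \to H^k(M_{a,b})$ is Fredholm of index zero. The cleanest route: for $f \in H^k(M_{a,b})$ first extend $f$ to an element $\tilde f \in H^k(M)$, use Lemma \ref{lemma:inverse_laplace_circle} to find $u \in H^{k+2}(M)$ with $(\partial_x^2-\alpha^2)u = \tilde f$, restrict $u$ to $M_{a,b}$, and correct the boundary values $u(a), u(b)$ by subtracting an explicit homogeneous solution of $(\partial_x^2-\alpha^2)v = 0$ with prescribed values at $a$ and $b$. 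Such a $v$ exists and is unique on $M_{a,b}$: the two-dimensional space of holomorphic solutions of $w'' = \alpha^2 w$ restricts to $M_{a,b}$, and the $2\times 2$ evaluation matrix at $(a,b)$ is invertible (it is a Wronskian-type determinant, which by the holomorphic Cauchy--Lipschitz theorem used in Lemma \ref{lemma:elliptic_extension} cannot vanish since the only solution vanishing at both endpoints with the right jet would force $w\equiv 0$ — I should be a little careful and instead argue invertibility by noting the determinant varies continuously with $m$ and is nonzero at $m\equiv 0$, where it equals $\sinh(\alpha(b-a))/\alpha \neq 0$, possibly after shrinking $\|m\|_\infty$). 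This shows surjectivity onto a closed finite-codimension subspace; combined with the standard elliptic a priori estimate $\|u\|_{H^{k+2}(M_{a,b})} \lesssim \|(\partial_x^2-\alpha^2)u\|_{H^k} + \|u\|_{H^{k+1}}$ (which holds here since the operator is elliptic on the interior and the Dirichlet problem satisfies the Lopatinski--Shapiro condition), one gets the Fredholm property; the index is $0$ by a homotopy $m \mapsto t m$ to the flat case, where the operator is the classical invertible Dirichlet Laplacian.

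Next I would prove injectivity on $H^{k+2}_{\D}(M_{a,b})$. Suppose $(\partial_x^2-\alpha^2)\psi = 0$ with $\psi \in H^{k+2}_{\D}(M_{a,b})$. By interior elliptic regularity $\psi \in C^\infty$ up to the boundary. Apply Lemma \ref{lemma:elliptic_extension}: take $X$ to be a simply connected complex neighbourhood of $M_{a,b}$ in $\mathbb{C}$ on which $\psi$ (as a solution of an elliptic holomorphic ODE) extends holomorphically, with $(\partial_z^2-\alpha^2)\psi = 0$ there. On such a simply connected $X$ the holomorphic ODE $w'' = \alpha^2 w$ has only the solutions $w = A e^{\alpha z} + B e^{-\alpha z}$, so $\psi$ is of this form. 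The conditions $\psi(a) = \psi(b) = 0$ then force $A e^{\alpha a} + B e^{-\alpha a} = A e^{\alpha b} + B e^{-\alpha b} = 0$, hence $A = B = 0$ (the determinant is again $\sinh(\alpha(b-a))/\alpha \neq 0$), so $\psi = 0$. Fredholm of index zero plus injectivity yields invertibility.

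The main obstacle I anticipate is bookkeeping around the boundary: making the reduction-to-the-circle step genuinely produce a well-defined bounded inverse respecting the Dirichlet condition, and in particular checking that the endpoint-evaluation ("Wronskian") matrix for homogeneous solutions on the deformed curve $M_{a,b}$ is invertible. Everything else (ellipticity, index stability under the homotopy $tm$, elliptic regularity up to the boundary) is standard once phrased in the coordinate $x \mapsto x + i m(x)$, in which $\partial_x^2-\alpha^2$ becomes a genuinely elliptic second-order operator with smooth coefficients and nondegenerate principal symbol, exactly as exploited in the proof of Lemma \ref{lemma:inverse_laplace_circle}. One could alternatively give a fully explicit inverse by writing down the analogue of the integral kernel from Remark \ref{remark:sigmaepsilon_as_spectrum} with $\sinh(\alpha(x-a))$ replaced by the appropriate holomorphic solutions evaluated along $M_{a,b}$; this avoids the abstract Fredholm machinery entirely but requires verifying the resulting kernel maps $H^k$ to $H^{k+2}$, which is again routine. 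I would present the Fredholm argument as the main line and perhaps remark on the explicit formula.
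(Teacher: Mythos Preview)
Your proposal is correct and uses essentially the same two ingredients as the paper: (i) extend $f$ to $M$, invert there via Lemma~\ref{lemma:inverse_laplace_circle}, and correct the Dirichlet values by a linear combination of $e^{\pm\alpha z}$; (ii) for injectivity, use holomorphic Cauchy--Lipschitz to identify any homogeneous solution with an entire function $Ae^{\alpha z}+Be^{-\alpha z}$, whence the two boundary conditions force $A=B=0$.

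The differences are packaging rather than substance. The paper simply proves surjectivity and injectivity directly and stops; you wrap the same construction inside a Fredholm-of-index-zero argument (elliptic estimates, Lopatinski--Shapiro, homotopy $m\mapsto tm$), which is correct but redundant once step~(i) already gives full surjectivity rather than merely finite codimension. Your worry about the endpoint evaluation matrix is also moot: because the deformation keeps $a,b\in\mathbb{R}$ fixed, that $2\times2$ matrix is literally the undeformed one, with determinant $-2\sinh(\alpha(b-a))\neq 0$, independently of $m$---no continuity argument or shrinking of $\|m\|_\infty$ is needed. So your argument goes through, but can be shortened to match the paper's by dropping the Fredholm scaffolding and the hedging about the Wronskian.
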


\begin{proof}
Let us prove surjectivity first. Let $f \in H^{k}(M_{a,b})$. Let $f_0 \in H^k(M)$ be an extension of $f$ to $M$. Let then $\psi \in H^{k+2}(M_{a,b})$ be the restriction of $(\partial_x^2 - \alpha^2)^{-1} f_0$ to $M_{a,b}$. The function $\psi$ satisfies $(\partial_x^2 - \alpha^2) \psi = 0$, but we may not have $\psi(a) = \psi(b) = 0$. This is fixed by adding a linear combination of $z \mapsto e^{\alpha z}$ and $z \mapsto e^{ - \alpha z}$ to $\psi$. Indeed, we can find a linear combination of these functions with any values at $a$ and $b$ since the matrix
\begin{equation*}
\begin{pmatrix}
e^{\alpha a} & e^{- \alpha a} \\ e^{\alpha b} & e^{ - \alpha b}
\end{pmatrix}
\end{equation*}
is invertible.

Let us now prove injectivity. Let $\psi \in H^{k + 2}_{\D}(M_{a,b})$ be such that $(\partial_x^2 - \alpha^2) \psi = 0$. By elliptic regularity, $\psi \in C^\infty(\overline{M}_{a,b})$. Let then $\phi$ be the solution to $(\partial_x^2 - \alpha^2) \phi = 0$ on $\mathbb{C}$ such that $\phi(a) = 0$ and $\partial_x \phi(a) = \partial_x \psi(a)$. The function $\phi$ exists and is entire (it is just a linear combination of $z \mapsto e^{\alpha z}$ and $z \mapsto e^{ - \alpha z}$). By uniqueness in Cauchy--Lipschitz theorem, we find that the restriction of $\phi$ to $M_{a,b}$ is just $\psi$, and thus $\phi(b) = 0$, which imposes that $\phi = 0$, and thus $\psi = 0$.
\end{proof}

A similar reasoning yields:

\begin{lemma}\label{lemma:inverse_laplace_square}
Let $\alpha > 0$. Then for every $k > -5/2$, the operator $(\partial_x^2 - \alpha^2)^2 : H_{\DN}^{k+4}(M_{a,b}) \to H^{k}(M_{a,b})$ is invertible. 
\end{lemma}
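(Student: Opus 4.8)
The plan is to follow closely the proof of Lemma~\ref{lemma:inverse_laplace_segment}, establishing surjectivity and injectivity separately. The extra difficulty compared with that lemma is that the domain now carries four boundary conditions instead of two, so I will need to control the four quantities $u(a), u(b), \partial_x u(a), \partial_x u(b)$ using the four-dimensional space $V := \operatorname{span}_{\mathbb{C}}\set{e^{\alpha z}, e^{-\alpha z}, z e^{\alpha z}, z e^{-\alpha z}}$ of entire solutions of $(\partial_z^2 - \alpha^2)^2 v = 0$ on $\mathbb{C}$ (which, restricted to $M_{a,b}$, are annihilated by $(\partial_x^2-\alpha^2)^2$). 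The one new point I would isolate as a claim is that the evaluation map $L : V \to \mathbb{C}^4$, $v \mapsto (v(a), \partial_x v(a), v(b), \partial_x v(b))$, is an isomorphism. Since $\dim_{\mathbb{C}} V = 4$, it is enough to show $L$ is injective, and I expect this to be the main (indeed the only non-routine) obstacle.

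To prove the claim I would take $v \in V$ with $v(a) = v(b) = \partial_x v(a) = \partial_x v(b) = 0$, restrict it to the real segment $[a,b]$ (legitimate since $v$ is entire), and integrate by parts, using $(\partial_x^2-\alpha^2)^2 = \partial_x^4 - 2\alpha^2\partial_x^2 + \alpha^4$ and the vanishing of $v$ and $\partial_x v$ at $a$ and $b$ to discard every boundary term. This yields
\begin{equation*}
0 = \int_a^b \overline{v}\,(\partial_x^2-\alpha^2)^2 v \,\mathrm{d}x = \int_a^b |\partial_x^2 v|^2\,\mathrm{d}x + 2\alpha^2 \int_a^b |\partial_x v|^2 \,\mathrm{d}x + \alpha^4 \int_a^b |v|^2 \,\mathrm{d}x,
\end{equation*}
and since $\alpha > 0$ all three non-negative terms vanish, forcing $v \equiv 0$ on $[a,b]$ and hence, by analytic continuation, $v \equiv 0$.

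For \textbf{surjectivity}, given $f \in H^k(M_{a,b})$ I would extend it to some $f_0 \in H^k(M)$, apply Lemma~\ref{lemma:inverse_laplace_circle} twice to get $F := (\partial_x^2-\alpha^2)^{-2} f_0 \in H^{k+4}(M)$, and set $\psi_0 := F_{|M_{a,b}} \in H^{k+4}(M_{a,b})$, which solves $(\partial_x^2-\alpha^2)^2\psi_0 = f$ but generally fails the boundary conditions. Since $k+4 > 3/2$, the four traces of $\psi_0$ at $a$ and $b$ are well defined, so by the claim there is $v \in V$ realizing them; then $\psi := \psi_0 - v_{|M_{a,b}} \in H^{k+4}_{\DN}(M_{a,b})$ and $(\partial_x^2-\alpha^2)^2\psi = f$, as $(\partial_x^2-\alpha^2)^2$ annihilates $v$.

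For \textbf{injectivity}, given $\psi \in H^{k+4}_{\DN}(M_{a,b})$ with $(\partial_x^2-\alpha^2)^2\psi = 0$, elliptic regularity gives $\psi \in C^\infty(\overline{M}_{a,b})$; since $m$ vanishes near $a$ and $b$, the curve $M_{a,b}$ is a real interval near each endpoint, so I can let $\phi$ be the entire solution of $(\partial_z^2-\alpha^2)^2\phi = 0$ on $\mathbb{C}$ with the same $3$-jet at $a$ as $\psi$. By uniqueness in the Cauchy--Lipschitz theorem $\phi$ agrees with $\psi$ along $M_{a,b}$; in particular $\phi \in V$ with $\phi(a) = \phi(b) = \partial_x\phi(a) = \partial_x\phi(b) = 0$, whence $\phi = 0$ by the claim and therefore $\psi = 0$.
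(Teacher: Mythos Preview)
Your proof is correct and follows essentially the same approach the paper has in mind when it says ``a similar reasoning yields'' this lemma: extend to the circle and invoke Lemma~\ref{lemma:inverse_laplace_circle} for surjectivity, and reduce injectivity to the entire kernel via Cauchy--Lipschitz, exactly mirroring Lemma~\ref{lemma:inverse_laplace_segment} with the two-dimensional kernel replaced by the four-dimensional $V$. Your integration-by-parts verification that the evaluation map $L:V\to\mathbb{C}^4$ is injective is a clean way to avoid computing the $4\times4$ determinant directly; the paper does not spell this step out, so your argument is a nice addition rather than a departure.
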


\section{Deformation of the circle}\label{section:analysis_hyperfunctions}

We are now going to use tools from \S \ref{section:complex_scaling} to study Orr--Sommerfeld equation and progress toward the proof of Theorems \ref{theorem:limit} and \ref{theorem:resonances}. Consequently, we will use the notations from the introduction and fix the profile $U$ and the real number $\alpha > 0$. We let also $c_0 \in \mathbb{R}$ satisfy hypothesis \ref{hypothesis}. The parameters $U, \alpha$ and $c_0$ are globally fixed from now to the end of \S \ref{section:alternative_characterizations}.

In this section, we explain how to specify the setting from \S \ref{section:complex_scaling} in order to prove Theorems \ref{theorem:limit} and \ref{theorem:resonances}. The crucial point is the definition of the deformation $M$ of the circle, done in \S \ref{section:escape_function}. We state then some basic properties of the operators $P_{c,\epsilon}$'s acting on the spaces defined by complex deformation in \S \ref{subsection:basic_properties}.

\subsection{Choice of the perturbation}\label{section:escape_function}

\begin{figure}[b]
   \centering
   \includegraphics[width=0.7\textwidth]{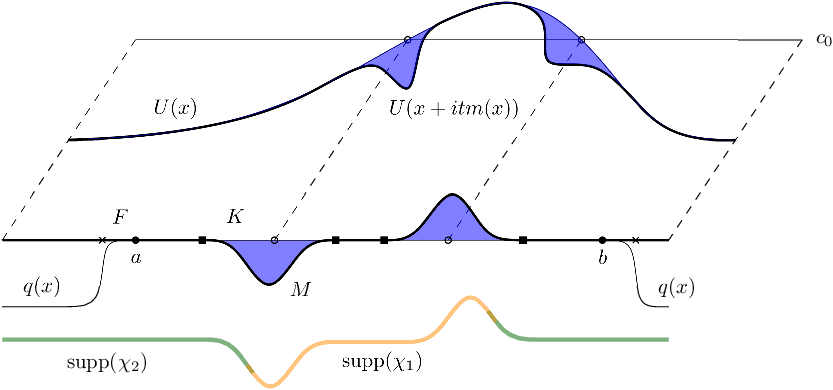}
   \caption{Illustration of the choice of perturbations. Blue shadows represent complex values. Functions $\chi_{1}$ and $\chi_2$ are used in the proof of Lemma \ref{lemma:invertibility_full_circle}. }
   \label{fig:contour}
\end{figure}

We will use the setting from \S \ref{section:complex_scaling}. We need consequently to fix the values of the parameters $K$ (a compact subset of the circle), $r$ (a positive number) and $m$ (a $C^\infty$ function supported in $K$).

Let us extend $U$ as a $C^\infty$ function on the circle. We let $K$ be any compact subset of $(a,b)$ such that $(U^{-1}(\set{c_0}) \cap [a,b]) \subseteq K$ and $U$ is analytic on a neighbourhood of $K$. We let then $r$ be small enough so that $U \in \mathcal{B}_{K,r}$ and $d(K,\set{a,b}) < r$. Let then $m_0 : \mathbb{S}^1 \to \mathbb{R}$ be any $C^\infty$ function supported in $K \setminus (U')^{-1}( \set{0})$ and such that $m_0(x) U'(x) \leq 0$ for every $x \in K$ and $m_0(x) U'(x) < 0$ for every $x \in U^{-1}(\set{c_0}) \cap [a,b]$. Define $m = \tau m_0$ for some $\tau > 0$ to be fixed later (in Lemma \ref{lemma:non_zero_function}). We could work with slightly more general deformations (in particular, we could deform the segment up to the extremities $a$ and $b$ as long as they are fixed) but the choice that we make here will make some statements simpler (typically Proposition \ref{proposition:boundary_layers}).

Let then $F$ be a closed neighbourhood of $[a,b]$ such that $U$ does not take the value $c_0$ on $F \setminus [a,b]$. Choose a $C^\infty$ function $q$ from $\mathbb{S}^1$ to $(- \infty,0]$, supported in $\mathbb{S}^1 \setminus [a,b]$ and such that 
\begin{equation*}
    \sup_{x \in \mathbb{S}^1 \setminus F} q(x) < 0.
\end{equation*}

For $c \in \mathbb{C}$ and $\epsilon \geq 0$, we extend $P_{c,\epsilon}$ as an operator of the circle by the formula
\begin{equation}\label{eq:Pextension_circle}
P_{c,\epsilon} = i\alpha^{-1} \epsilon^2(\partial_x^2 - \alpha^2)^2 + (U - c + iq)(\partial_x^2 - \alpha^2) - U''.
\end{equation}
Notice that the order of $P_{c,\epsilon}$ is not the same when $\epsilon = 0$ and $\epsilon > 0$. Notice also that the coefficients of $P_{c,\epsilon}$ are in $\mathcal{B}_{K,r}$, and thus the operator $P_{c,\epsilon}$ acts on $\mathcal{D}'(M)$ and $\mathcal{D}'(M_{a,b})$, as explained in \S \ref{section:complex_scaling}. The issue that appears when studying the action of the $P_{c,\epsilon}$'s on the circle is that, if $c$ belongs to the range of $U+ iq$, then the operator $P_{c,0}$ is not elliptic. The point of introducing a complex deformation $M$ of $\mathbb{S}^1$ is precisely to fix this issue: we want to construct $M$ such that the range of $U + iq$ on $M$ does not contain $c_0$. This is the point of the following lemma.

\begin{lemma}\label{lemma:non_zero_function}
If $\tau$ is small enough then there is $\delta > 0$ such that the following holds:
\begin{enumerate}[label=(\roman*)]
\item for every $x \in \mathbb{S}^1$, the argument of $(1 - i m'(x))/(1 + m'(x)^2)$ is in $( - \pi/4,\pi/4)$;
\item for every $x \in \mathbb{S}^1,t \in [0,1]$ and $c \in (c_0 - \delta,c_0 + \delta)$, we have $\im(U(x + i t m(x)) + i q(x + i tm(x))) \leq 0$ and if $U(x + i t m(x)) + i q(x + i t m(x)) = c$ then $U(x) = c$ and $t = 0$.\label{item:non_zero_ii}
\end{enumerate}
\end{lemma}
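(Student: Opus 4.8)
The plan is to verify the two items separately, treating $\tau$ as a free small parameter and extracting the $\delta$ at the end. For item (i), note that $m = \tau m_0$, so $m'(x) = \tau m_0'(x)$ and $\sup_{x}|m'(x)| \to 0$ as $\tau \to 0$. Writing $(1 - i m'(x))/(1 + m'(x)^2) = (1 - i\tau m_0'(x))/(1 + \tau^2 m_0'(x)^2)$, the denominator is real and positive, so the argument of this complex number equals the argument of $1 - i\tau m_0'(x)$, which is $-\arctan(\tau m_0'(x))$. Since $m_0$ is a fixed $C^\infty$ function on the compact circle, $\|m_0'\|_\infty < \infty$, so for $\tau$ small enough we have $|\arctan(\tau m_0'(x))| < \pi/4$ uniformly in $x$. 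This handles (i) and it is essentially immediate.

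For item (ii), the key computation is a first-order Taylor expansion in $t$ of the imaginary part. Fix $x \in \mathbb{S}^1$; since $m_0$ is supported in $K \setminus (U')^{-1}(\{0\})$ and $q$ is supported in $\mathbb{S}^1 \setminus [a,b]$ while $m_0$ is supported in $[a,b]$ (as $K \subseteq (a,b)$), the two functions have disjoint supports. If $x$ lies outside $\mathrm{supp}\, m_0$, then $m(x) = 0$, so $x + itm(x) = x$ is real, and $\im(U(x) + iq(x)) = q(x) \le 0$, with equality only when $x \in [a,b]$, in which case $q(x) = 0$ and $U(x + itm(x)) + iq(x+itm(x)) = U(x) \in \mathbb{R}$, so the equation $= c$ forces $U(x) = c$ and, trivially, $t = 0$ is the only relevant value (and indeed $m(x)=0$). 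If instead $x \in \mathrm{supp}\, m_0 \subseteq K \subseteq (a,b)$, then $q$ vanishes identically on a neighbourhood of $x + itm(x)$ for $|m|$ small (i.e. $\tau$ small), so $q(x + itm(x)) = 0$ and we only need to analyze $\im U(x + itm(x))$. Here $U$ is analytic near $K$, so for $\tau$ small enough that the segment $\{x + itm(x) : t \in [0,1]\}$ stays in the domain of analyticity, we may write
\begin{equation*}
\im U(x + it m(x)) = \im\bigl( U(x) + it m(x) U'(x) + O(t^2 m(x)^2)\bigr) = t m(x) U'(x) + O(\tau^2),
\end{equation*}
where the error is uniform in $x$ and $t \in [0,1]$ because $U''$ is bounded on a compact neighbourhood of $K$ and $|m(x)| = O(\tau)$. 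By construction $m(x) U'(x) = \tau m_0(x) U'(x) \le 0$, and in fact on the compact set $U^{-1}(\{c_0\}) \cap [a,b]$ we have the strict inequality $m_0(x) U'(x) < 0$; on $\mathrm{supp}\, m_0$ we likewise have $m_0(x)U'(x) \leq 0$. A compactness/continuity argument then shows $\im U(x + itm(x)) \le 0$ for all such $x$ and $t$ once $\tau$ is small enough. (One must be slightly careful that the $O(\tau^2)$ error does not overwhelm the possibly-small-but-nonpositive main term; this is fine because the main term is $\le 0$ and the error, while of size $\tau^2$, is dominated near the set where the main term is strictly negative, and elsewhere the main term is $\le 0$ so at worst the sum is $O(\tau^2)$ — but we need nonpositivity, not smallness.)

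This last subtlety is the main obstacle, and I would resolve it by using analyticity of $U$ more carefully rather than a crude Taylor bound: since $U$ is real-analytic near $K$, the map $(x,t) \mapsto \im U(x+itm_0(x)\tau)$ can be analyzed via the exact formula, or one observes that $\im U(x + is)$ as a function of real $s$ has derivative $\re U'(x) = U'(x)$ at $s = 0$ and, because $U$ is real on the real axis, $\im U(x+is)/s \to U'(x)$; combined with $m_0(x)U'(x) \le 0$ and strictness on $U^{-1}(\{c_0\})$, a uniform statement follows for $\tau$ small. For the final clause of (ii): suppose $U(x+itm(x)) + iq(x+itm(x)) = c$ with $c$ real and near $c_0$. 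Taking imaginary parts and using $\im \le 0$ with the analysis above, strict negativity of the main term near $U^{-1}(\{c_0\})$ forces $x$ to stay away from that set — but if $\delta$ is small, $c$ near $c_0$ real and $U(x) + \text{(small imaginary correction)} = c$ forces $x$ near $U^{-1}(\{c_0\})$, a contradiction unless $t = 0$ and $m(x) = 0$; then $x \in \mathbb{R}$, $q(x) = 0$ or $x \in [a,b]$, and $U(x) = c$. Choosing $\delta$ small enough (depending on $\tau$, $U$, $q$, $m_0$, $F$) to make these ``near'' statements quantitative completes the argument. The only genuinely delicate point is making the nonpositivity in (ii) uniform, and analyticity of $U$ is exactly what makes it work.
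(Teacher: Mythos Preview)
Your overall strategy matches the paper's: Taylor-expand $U$ along the imaginary direction and use the sign condition on $m_0 U'$. However, you correctly identify but do not resolve the main obstacle, and the missing ingredient is precisely the one the paper uses.

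The gap is in the nonpositivity step. You write the remainder as $O(t^2 m(x)^2)$ (correct) and the main term as $t m(x) U'(x) \le 0$. To conclude $\im U(x+itm(x)) \le 0$, you need the main term to dominate the remainder whenever $m(x) \ne 0$. The point you are missing is that $m_0$ is supported in $K \setminus (U')^{-1}(\{0\})$, so $|U'|$ is bounded below by some $\kappa > 0$ on $\mathrm{supp}\,m_0$; moreover $m_0 U' \le 0$ together with $m_0 \ne 0$ and $U' \ne 0$ forces $\sign(m_0) = -\sign(U')$, hence $m(x) U'(x) = -|m(x)|\,|U'(x)| \le -\kappa\,|m(x)|$. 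Then
\[
\im U(x+itm(x)) \;=\; t m(x) U'(x) + O(t^2 m(x)^2) \;\le\; t|m(x)|\bigl(-\kappa + C t|m(x)|\bigr),
\]
which is $\le 0$ (strictly when $t>0$ and $m_0(x)\ne 0$) as soon as $\tau \|m_0\|_\infty < \kappa/C$. This is exactly the paper's observation that ``$-\sign(m_0(x))U'(x)$ is uniformly bounded below whenever $m_0(x)\neq 0$''. Your proposed workaround via ``analyticity more carefully'' or $\im U(x+is)/s \to U'(x)$ does not help: it only reproduces the first-order term and gives no control on the remainder without the lower bound on $|U'|$.

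A second, smaller issue: when $m(x)=0$ you write ``trivially, $t=0$ is the only relevant value'', but in fact $x+itm(x)=x$ for every $t$, so you cannot conclude $t=0$. What you must show instead (and what the paper does) is that this case never produces a solution of $U(x)+iq(x)=c$ for $c$ close to $c_0$. Since $m_0$ is nonvanishing on $U^{-1}(\{c_0\})\cap[a,b]$, the compact set $\{x\in F: m_0(x)=0\}$ is disjoint from $U^{-1}(\{c_0\})$, so $|U(x)-c_0|$ is bounded below there, and choosing $\delta$ smaller than this bound rules out $U(x)=c$. (Your claim that $q(x)=0$ only on $[a,b]$ is also not quite right---$q$ may vanish on $F\setminus[a,b]$---but the same real-part argument, using $U\ne c_0$ on $F\setminus[a,b]$ by the definition of $F$, closes that case too.)
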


\begin{proof}
The first point is clear. To prove the second, let us consider a point of the form $x + i s m_0(x)$ with $x \in \mathbb{S}^1$ and $s \in [0,\tau]$. If $x \notin F$ then $m_0(x) = 0$ and $\im (U(x + i s m_0(x)) + i q(x+ i s m_0(x))) = q(x) < 0$, which implies in particular that $U(x+ i s m_0(x)) + i q(x + i s m_0(x)) \neq c$ for $c$ real. Let us consequently assume that $x \in F$ and notice that $\im (i q(x + i s m_0(x))) \leq 0$, so that we only need to estimate $U(x + i s m_0(x)) - c$, for $c$ real and close to $c_0$. We have
\begin{equation*}
U(x + i s m_0(x)) = U(x) + i s m_0(x) U'(x) + \mathcal{O}(s^2 m_0(x)^2),
\end{equation*}
so that
\begin{equation*}
\im( U (x + i s m_0(x)) - c) = s m_0(x) U'(x) + \mathcal{O}(s^2 m_0(x)^2),
\end{equation*}
and
\begin{equation*}
\re( U(x + i s m_0(x)) - c) = U(x) - c + \mathcal{O}(s^2).
\end{equation*}
Notice that the function $m_0$ has be chosen so that $- \sign(m_0(x)) U'(x)$ is uniformly bounded below whenever $m_0(x) \neq 0$. Hence, provided $\tau$ is small enough, we have $\im (U(x + i s m_0(x)) - c) \leq 0$, and the inequality is strict unless $s = 0$ or $m_0(x) = 0$. However, if $m_0(x) = 0$, then it follows from the definition of $m_0$ that $x$ is outside of a fixed neighbourhood of $U^{-1}(\set{c_0})$, and thus that the real part of $U(x + i s m_0(x)) - c$ is non-zero if $\tau$ is small enough and $c$ is close to $c_0$.
\end{proof}

From now on, we fix the value of $\tau$ so that Lemma \ref{lemma:non_zero_function} holds.

\subsection{Basic properties}\label{subsection:basic_properties}

Before going into more technical proofs, we can already prove some properties of the operators $P_{c,\epsilon}$ in Sobolev spaces on $M_{a,b}$. The main point is that, when $\epsilon > 0$, the operators $P_{c,\epsilon}$ are elliptic all along the deformation from $[a,b]$ to $\overline{M}_{a,b}$ and thus have the same properties on $[a,b]$ and $\overline{M}_{a,b}$, see Proposition \ref{proposition:elliptic_eigenvalues} below. A crucial consequence of this fact is that we can work on $\overline{M}_{a,b}$ instead of $[a,b]$ to study the behaviour of $\Sigma_\epsilon$ as $\epsilon$ goes to $0$.

We use this opportunity to explicit the notion of multiplicity for elements of $\Sigma_\epsilon$. For $\epsilon > 0$ and $c \in \mathbb{C}$ let $\mathfrak{S}_{c,\epsilon}([a,b])$ be the smallest subspace of $C^\infty([a,b])$ with the property that if $\psi \in C^\infty([a,b])$ and $\phi \in \mathfrak{S}_{c,\epsilon}([a,b])$ are such that $P_{c,\epsilon} \psi = (\partial_x^2 - \alpha^2) \phi$ and $\psi(a) = \psi(b) = \partial_x \psi(a) = \partial_x \psi(b) = 0$ then $\psi \in \mathfrak{S}_{c,\epsilon}([a,b])$. We let $\mathfrak{S}_{c,\epsilon}(\overline{M}_{a,b})$ be the smallest subset of $C^\infty(\overline{M}_{a,b})$ with the same property\footnote{The spaces $\mathfrak{S}_{c,\epsilon}([a,b])$ and $\mathfrak{S}_{c,\epsilon}(\overline{M}_{a,b})$ maybe interpreted as generalized eigenspaces for the operator $(\partial_x^2 - \alpha^2)^{-1} P_{c,\epsilon}$, see the proof of Proposition \ref{proposition:elliptic_eigenvalues}.}. We define the multiplicity of $c$ as an element of $\Sigma_\epsilon$ as the dimension of $\mathfrak{S}_{c,\epsilon}([a,b])$. This definition is motivated by the following result (and its proof), and coincides with the multiplicity of $c$ as an eigenvalue of $Q_{0,\epsilon}$.

\begin{proposition}\label{proposition:elliptic_eigenvalues}
Let $\epsilon > 0$ and $k > - 5/2$. The following holds:
\begin{enumerate}[label=(\roman*)]
\item the set $\Sigma_\epsilon$ is discrete;\label{item:sigma_discrete}
\item for every $c \in \mathbb{C}$, the operator $P_{c,\epsilon} : H^{k+4}_{\DN}(M_{a,b}) \to H^k(M_{a,b})$ is Fredholm of index zero, it is invertible if and only if $c \notin \Sigma_\epsilon$;\label{item:fredholm_complex}
\item for every $c \in \mathbb{C}$ the spaces $\mathfrak{S}_{c,\epsilon}([a,b])$ and $\mathfrak{S}_{c,\epsilon}(\overline{M}_{a,b})$ are finite dimensional, they have the same dimension, and this dimension is non-zero if and only if $c \in \Sigma_\epsilon$.\label{item:multiplicity_non_spectral}
\end{enumerate}
\end{proposition}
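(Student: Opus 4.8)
The plan is to reduce the whole statement to an elliptic spectral fact about the operator $Q_{0,\epsilon}$ acting on Sobolev spaces on $\overline{M}_{a,b}$, exactly mirroring the argument sketched for $[a,b]$ in Remark~\ref{remark:sigmaepsilon_as_spectrum}, and then to transfer everything between $[a,b]$ and $\overline{M}_{a,b}$ using the ellipticity of $P_{c,\epsilon}$ along the deformation. First I would recall from Lemma~\ref{lemma:inverse_laplace_square} that $(\partial_x^2-\alpha^2)^2 : H^{k+4}_{\DN}(M_{a,b}) \to H^k(M_{a,b})$ is invertible, so for $\epsilon>0$ the leading part $i\alpha^{-1}\epsilon^2(\partial_x^2-\alpha^2)^2$ of $P_{c,\epsilon}$ is an isomorphism of these spaces; the remaining terms $(U-c+iq)(\partial_x^2-\alpha^2)-U''$ have order $2<4$, hence define a compact operator $H^{k+4}_{\DN}(M_{a,b})\to H^k(M_{a,b})$ (after composing with the inverse of the leading part, using that $M_{a,b}$ is compact with smooth boundary and the inclusion $H^{k+4}\hookrightarrow H^{k+2}$ is compact). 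Therefore $P_{c,\epsilon}:H^{k+4}_{\DN}(M_{a,b})\to H^k(M_{a,b})$ is Fredholm of index zero for every $c\in\mathbb{C}$, which gives the first half of \ref{item:fredholm_complex}. Invertibility then holds off a set where the kernel is nontrivial.

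Next I would set up the spectral picture. Using the integral formula for $(\partial_x^2-\alpha^2)^{-1}$ — which makes sense on $M_{a,b}$ by Lemma~\ref{lemma:inverse_laplace_segment}, with the $\sinh$ kernel now evaluated at complex arguments $x+im(x)$ — define $Q_{c,\epsilon}=P_{c,\epsilon}(\partial_x^2-\alpha^2)^{-1}$, which equals $i\alpha^{-1}\epsilon^2(\partial_x^2-\alpha^2)+(U-c+iq)-U''(\partial_x^2-\alpha^2)^{-1}=Q_{0,\epsilon}-c$, a second-order elliptic operator on $M_{a,b}$ (ellipticity because $\epsilon>0$ and the principal symbol of $\partial_x^2-\alpha^2$ in the coordinate $x$ is $-(1+im'(x))^{-2}\xi^2$, which stays away from $0$). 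Viewing $Q_{0,\epsilon}$ as an unbounded operator on $L^2(M_{a,b})$ with domain $\{u\in H^2(M_{a,b}): ((\partial_x^2-\alpha^2)^{-1}u)'(a)=((\partial_x^2-\alpha^2)^{-1}u)'(b)=0\}$, the resolvent identity $P_{c,\epsilon}=Q_{c,\epsilon}(\partial_x^2-\alpha^2)$ together with Lemma~\ref{lemma:inverse_laplace_segment} shows that $P_{c,\epsilon}:H^{k+4}_{\DN}(M_{a,b})\to H^k(M_{a,b})$ is invertible iff $Q_{0,\epsilon}-c$ is; since $Q_{0,\epsilon}$ is elliptic with compact resolvent on the compact manifold-with-boundary $M_{a,b}$, its spectrum is a discrete set of eigenvalues of finite algebraic multiplicity. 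This discrete set is exactly $\Sigma_\epsilon$ after the transfer step below, giving \ref{item:sigma_discrete} and finishing \ref{item:fredholm_complex}.

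For \ref{item:multiplicity_non_spectral} I would identify $\mathfrak{S}_{c,\epsilon}([a,b])$ (resp.\ $\mathfrak{S}_{c,\epsilon}(\overline{M}_{a,b})$) with the algebraic generalized eigenspace $\bigcup_{n\ge 1}\ker((\partial_x^2-\alpha^2)^{-1}P_{c,\epsilon})^n$ on $[a,b]$ (resp.\ on $\overline{M}_{a,b}$): the defining closure property in the statement is precisely the statement that the space is invariant under the "divided" operator and is the smallest such, which is the standard description of a generalized eigenspace via the recursion $(\partial_x^2-\alpha^2)\phi_{j+1}=P_{c,\epsilon}\phi_j$. Finite-dimensionality and the fact that nonvanishing is equivalent to $c\in\Sigma_\epsilon$ then follow from the previous paragraph, since $(\partial_x^2-\alpha^2)^{-1}P_{c,\epsilon}$ is (a shift of) $(\partial_x^2-\alpha^2)^{-1}$ composed with a compact perturbation, equivalently the operator $Q_{0,\epsilon}-c$ conjugated by $(\partial_x^2-\alpha^2)$. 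The real content — and the step I expect to be the main obstacle — is showing the two spaces $\mathfrak{S}_{c,\epsilon}([a,b])$ and $\mathfrak{S}_{c,\epsilon}(\overline{M}_{a,b})$ have the same dimension, i.e.\ that deformation does not change the multiplicity. For this I would use that $P_{c,\epsilon}$ is elliptic along the entire homotopy $t\mapsto M^{(t)}_{a,b}:=\{x+itm(x)\}$ (each $P_{c,\epsilon}$ on $M^{(t)}_{a,b}$ has principal symbol $i\alpha^{-1}\epsilon^2(1+itm'(x))^{-4}\xi^4$, elliptic uniformly in $t\in[0,1]$), so the Fredholm index is locally constant in $t$ and, more importantly, generalized eigenfunctions persist under the deformation: by elliptic regularity any $\psi\in\mathfrak{S}_{c,\epsilon}$ is smooth, hence real-analytic near $K$ where the coefficients are analytic and (by a Cauchy--Kovalevskaya / holomorphic Cauchy--Lipschitz argument as in Lemma~\ref{lemma:elliptic_extension}, applied to the fourth-order equation with a solved generalized-eigenvalue hierarchy) extends holomorphically to a neighborhood of the homotopy, mapping $\mathfrak{S}_{c,\epsilon}([a,b])$ bijectively onto $\mathfrak{S}_{c,\epsilon}(\overline{M}_{a,b})$ while preserving the boundary conditions at the fixed endpoints $a,b$ and the Jordan-block structure. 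The delicate point is carrying the Jordan tower (the chain $\phi_0,\dots,\phi_n$) through the extension simultaneously so that the isomorphism is well-defined on the whole generalized eigenspace, not just on genuine eigenfunctions; once that is in place, equality of dimensions, hence the final clause of \ref{item:multiplicity_non_spectral}, follows, and in particular $\Sigma_\epsilon$ (defined on $[a,b]$) coincides with the spectrum of $Q_{0,\epsilon}$ on $M_{a,b}$, closing the loop with \ref{item:sigma_discrete}.
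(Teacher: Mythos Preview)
Your outline for \ref{item:fredholm_complex} and \ref{item:multiplicity_non_spectral} is essentially the paper's argument: Fredholm of index zero via Lemma~\ref{lemma:inverse_laplace_square} plus compactness of the order-$2$ remainder, and the transfer between $[a,b]$ and $\overline{M}_{a,b}$ by applying Lemma~\ref{lemma:elliptic_extension} inductively along the Jordan chain $\psi_0,\dots,\psi_n$ (this is exactly how the paper carries the tower through). So that part is fine.

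There is one real gap, in your treatment of \ref{item:sigma_discrete}. You write that $Q_{0,\epsilon}$ ``is elliptic with compact resolvent on the compact manifold-with-boundary $M_{a,b}$, [so] its spectrum is a discrete set.'' But $Q_{0,\epsilon}$ is non-self-adjoint, and ellipticity alone only gives you Fredholm of index zero for $Q_{0,\epsilon}-c$; analytic Fredholm theory then leaves open the degenerate alternative that the operator is \emph{never} invertible. You have not ruled this out, and nothing in your sketch does so: the transfer argument you defer to only identifies kernels on $[a,b]$ and $\overline{M}_{a,b}$, it does not by itself show either kernel is trivial for some $c$. The paper fills exactly this gap by working directly on $[a,b]$: it computes $\langle P_{c,\epsilon}\psi,\psi\rangle_{L^2(a,b)}$ explicitly by integration by parts, shows the imaginary part is coercive on $H^2_0(a,b)$ when $\im c$ is large, and invokes Lax--Milgram to get invertibility for such $c$. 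Once you have one invertible $c$ on $[a,b]$, discreteness of $\Sigma_\epsilon$ follows, and then your transfer argument identifies it with the non-invertibility set on $M_{a,b}$. You should either reproduce that coercivity computation, or give an alternative argument (e.g.\ a direct estimate on $M_{a,b}$ for $|c|$ large) that the resolvent set is nonempty.
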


\begin{proof}
The operator $P_{c,\epsilon}$ is elliptic on $[a,b]$ for every $c \in \mathbb{C}$. Hence, $c \mapsto P_{c,\epsilon}$ defines a holomorphic family of Fredholm operators from $H_{\DN}^4((a,b))$ to $L^2([a,b])$. Consequently, \ref{item:sigma_discrete} is a consequence of Fredholm analytic theory \cite[Appendix C]{dyatlov_zworski_book} if we can prove that there is $c \in \mathbb{C}$ such that $P_{c,\epsilon} : H_{\DN}^4(a,b) \to L^2([a,b])$ is invertible. From the ellipticity of $P_{c,\epsilon}$, we find that it is equivalent to prove that it is invertible as an operator from $H^2_0(a,b)$ to $H^{-2}(a,b)$. To do so, we will prove that the quadratic form $\psi \mapsto \langle P_{c,\epsilon} \psi , \psi \rangle_{L^2(a,b)}$ is coercive on $H^2_0(a,b)$: the invertibility of $P_{c,\epsilon}$ is then a standard consequence of Lax--Milgram Theorem. To prove coercivity, we use the explicit formula (obtained by integrating by parts) valid for $\psi \in H_0^2(a,b)$:
\begin{equation*}
\begin{split}
\langle P_{c,\epsilon} \psi , \psi \rangle_{L^2(a,b)} & = i\alpha^{-1} \epsilon^2 \n{(\partial_x^2 - \alpha^2)\psi}_{L^2}^2 + c \n{\partial_x \psi}_{L^2}^2 + c \alpha^2 \n{\psi}_{L^2}^2 \\ & \qquad - \int_a^b (U'' + \alpha^2 U) |\psi|^2 \mathrm{d}x - \int_a^b U |\partial_x \psi|^2 \mathrm{d}x - \int_a^b U' \partial_x \psi \bar{\psi}\mathrm{d}x.
\end{split}
\end{equation*}
Hence, there is a constant $C > 0$, depending on $\epsilon$, such that for $\im c > 0$, we have
\begin{equation*}
\im \langle P_{c,\epsilon} \psi , \psi \rangle_{L^2(a,b)} \geq C^{-1} \n{\psi}_{H^2}^2 + C^{-1} \im c \n{\psi}_{H^1}^2 - C \n{\psi}_{H^1}^2. 
\end{equation*}
Hence, if $\im c$ is large enough, we get $\im \langle P_{c,\epsilon} \psi , \psi \rangle_{L^2(a,b)} \geq C^{-1} \n{\psi}_{H^2}^2$ proving the coercivity of $\psi \mapsto \langle P_{c,\epsilon} \psi , \psi \rangle_{L^2(a,b)}$, and thus the invertibility of $P_{c,\epsilon}$. This ends the proof of \ref{item:sigma_discrete}.

Let us move to the proof of \ref{item:fredholm_complex}. Let $c \in \mathbb{C}$. Notice that $P_{c,\epsilon}$ coincides with the operator $i\alpha^{-1} \epsilon^2 (\partial_x^2 - \alpha^2)^2$ up to a compact operator $H_{\DN}^{k+4}(M_{a,b}) \to H^k(M_{a,b})$. Since $i\alpha^{-1} \epsilon^2 (\partial_x^2 - \alpha^2)^2 : H_{\DN}^{k+4}(M_{a,b}) \to H^k(M_{a,b})$ is invertible according to Lemma \ref{lemma:inverse_laplace_square}, we find that $P_{c,\epsilon} : H_{\DN}^{k+4}(M_{a,b}) \to H^k(M_{a,b})$ is Fredholm of index zero. In particular, it is invertible if and only if it is injective. Moreover, by elliptic regularity, any element in the kernel of $P_{c,\epsilon}$ in $H^{k+4}_{\DN}(M_{a,b})$ is actually in $C^\infty(\overline{M}_{a,b})$. Assume consequently that there is a non-zero $\psi \in C^\infty(\overline{M}_{a,b})$ such that $P_{c,\epsilon} \psi = 0$. Define the set 
\begin{equation*}
\mathcal{W}_r^{a,b} = [a,b] \cup \set{x \in \mathbb{C}/ L \mathbb{Z} : d(x,K) < r},
\end{equation*}
and recall that the coefficients of $P_{c,\epsilon}$ have extensions to $\mathcal{W}_r^{a,b}$, holomorphic in the interior of $\mathcal{W}_r^{a,b}$. Applying Lemma \ref{lemma:elliptic_extension} to each connected component of the interior of $\mathcal{W}_r^{a,b}$ the function $\psi$ has an extension to $\mathcal{W}_r^{a,b}$ which is holomorphic in the interior of $\mathcal{W}_r^{a,b}$. Still denoting by $\psi$ this extension, it follows from the analytic continuation principle that the restriction of $\psi$ to $[a,b]$ is non-zero and in the kernel of $P_{c,\epsilon}$. Moreover, it satisfies both Dirichlet and Neumann boundary conditions. Hence, we get $c \in \Sigma_\epsilon$. We just proved that if $P_{c,\epsilon} : H^{k+4}_{\DN}(M_{a,b}) \to H^k(M_{a,b})$ then $c \in \Sigma_\epsilon$. The reciprocal implication is obtained similarly (we start with an element of the kernel of $P_{c,\epsilon}$ on $[a,b]$ and then extend it up to $M_{a,b}$ using Lemma \ref{lemma:elliptic_extension}).

It remains to prove \ref{item:multiplicity_non_spectral}. Let $c_1 \in \mathbb{C}$ and consider $\psi \in \mathfrak{S}_{c_1,\epsilon}(\overline{M}_{a,b})$. By assumption, there is an integer $n \geq 0$ and smooth functions $\psi_0,\dots, \psi_n$ on $\overline{M}_{a,b}$ satisfying Dirichlet and Neumann boundary conditions and with $P_{c_1,\epsilon} \psi_0 = 0, P_{c_1,\epsilon} \psi_{k} = (\partial_x^2 - \alpha^2) \psi_{k-1}$ for $k = 1,\dots,n$ and $\psi_n = \psi$. Using Lemma \ref{lemma:elliptic_extension} inductively, we find that $\psi_0,\dots, \psi_n$ all have extensions to $\mathcal{W}_r^{a,b}$, holomorphic in the interior of $\mathcal{W}_r^{a,b}$. Since these extensions satisfy the same equations, we find that the restriction of $u$ to $[a,b]$ is an element of $\mathfrak{S}_{c_1,\epsilon}([a,b])$. We can do the same in the other direction, and thus this procedure define a linear isomorphism between $\mathfrak{S}_{c_1,\epsilon}(\overline{M}_{a,b})$ and $\mathfrak{S}_{c_1,\epsilon}([a,b])$, in particular they have the same dimension. Recall that $c_1 \in \Sigma_\epsilon$ if and only if $P_{c_1,\epsilon} : H^{k+4}_{\DN}(M_{a,b}) \to H^k(M_{a,b})$ is injective, and thus $\mathfrak{S}_{c_1,\epsilon}(\overline{M}_{a,b})$ is non-trivial if and only if $c_1 \notin \Sigma_\epsilon$.

It follows from \ref{item:fredholm_complex} and Fredholm analytic theory \cite[Appendix C]{dyatlov_zworski_book} if that $c \mapsto P_{c,\epsilon}^{-1}$ is a meromorphic family of operators from $H^k(M_{a,b})$ to $H^{k+4}_{\DN}(M_{a,b})$, with residues of finite rank. Let $A$ be the residue of $c \mapsto P_{c,\epsilon}^{-1}$ at $c = c_1$. Notice that we have $(c - (\partial_x^2 - \alpha^2)^{-1} P_{0,\epsilon})^{-1} = - P_{c,\epsilon}^{-1}(\partial_x^2 - \alpha^2)$, where $(\partial_x^2 - \alpha^2)^{-1} P_{0,\epsilon}$ is seen as an operator from $H^{k+4}_{\DN}(M_{a,b})$ to $H^{k+2}_{\D}(M_{a,b})$ (or as a closed operator on $H^{k+2}_{\D}(M_{a,b})$ with domain $H^{k+4}_{\DN}(M_{a,b})$). Hence, $-A(\partial_x^2 - \alpha^2)$ is the residue of the resolvent of $(\partial_x^2 - \alpha^2)^{-1} P_{0,\epsilon}$ at $c = c_1$. It is then standard \cite[III.6.5]{kato_perturbation_theory} that $-A(\partial_x^2 - \alpha^2)$ is a spectral projector for $(\partial_x^2 - \alpha^2)^{-1} P_{0,\epsilon}$. Now, if $\psi \in \mathfrak{S}_{c_1,\epsilon}(\overline{M}_{a,b})$, then $\psi$ is a generalized eigenvector for $(\partial_x^2 - \alpha^2)^{-1}P_{0,\epsilon}$ associated to the eigenvalue $c_1$. Hence, $\psi$ belongs to the range of $A$ (actually, we have $\psi = - A(\partial_x^2 - \alpha^2) \psi$, notice that $\psi$ satisfies Dirichlet boundary condition). Consequently, we find that $\mathfrak{S}_{c_1,\epsilon}(\overline{M}_{a,b})$ is contained in the range of $A$, and is thus finite dimensional.
\end{proof}

\begin{remark}\label{remark:nonspectral_spectral_theory}
Let $\epsilon > 0$ and $c_1 \in \mathbb {C}$. In the notations of the proof of Proposition \ref{proposition:elliptic_eigenvalues}, we proved that $\mathfrak{S}_{c_1,\epsilon}(\overline{M}_{a,b})$ is contained in the range of the operator $A$. Actually, since the range of $A$ is finite dimensional and a generalized eigenspace for $(\partial_x^2 - \alpha^2)^{-1}P_{0,\epsilon}$, we find that $\mathfrak{S}_{c_1,\epsilon}(\overline{M}_{a,b})$ is exactly the range of $A$.

Notice also that, using the relation $(c - Q_{0,\epsilon})^{-1} = - (\partial_x^2 - \alpha^2) P_{c,\epsilon}^{-1}$, we find that $ - (\partial_x^2 - \alpha^2) A$ is the spectral projector associated to the eigenvalue $c_1$ for the operator $Q_{0,\epsilon} = P_{0,\epsilon} (\partial_x^2 - \alpha ^2)^{-1}$ (with the boundary condition defined in Remark \ref{remark:sigmaepsilon_as_spectrum}). Hence, we retrieve that the multiplicity of $c_1$ as an element of $\Sigma_\epsilon$ is the multiplicity of $c_1$ as an eigenvalue of $Q_{0,\epsilon}$ on $\overline{M}_{a,b}$.
\end{remark}

\begin{remark}
It follows from Lemma \ref{lemma:non_zero_function} that there is $\delta > 0$ such that for every $c \in (c_0 - \delta,c_0 + \delta) + i (- \delta, + \infty)$ the function $U - c$ does not vanish on $\overline{M}_{a,b}$. Hence, the operator $P_{c,0}$ is elliptic on $\overline{M}_{a,b}$. Consequently, $c \mapsto P_{c,0}$ is a holomorphic family on $(c_0 - \delta, c_0 + \delta) + i (- \delta, + \infty)$ of Fredholm operators $H^{k+2}_{\D}(M_{a,b})\to H^k(M_{a,b})$ for every $k > -3/2$, and we can prove that there is a value of $c$ for which it is invertible. Hence, we get a meromorphic family of operators $c \mapsto P_{c,0}^{-1}$ on $(c_0 - \delta, c_0 + \delta) + i (- \delta, + \infty)$. The poles of this meromorphic family are the elements of the set $\mathcal{R}$ from Theorem \ref{theorem:limit}. However, we need to have more precise estimates before being able to prove Theorem~\ref{theorem:limit}: we need to prove in some sense that the $P_{c,\epsilon}$'s are ``uniformly Fredholm as $\epsilon$ goes to $0$''.
\end{remark}

\section{Technical estimates}\label{section:technical_estimates}

This section is dedicated to the proof of technical estimates required for the proof of Theorem \ref{theorem:limit}. We want to prove in some sense that there is $\delta > 0$ such that for $c \in (c_0 - \delta,c_0 + \delta) + i (- \delta, + \infty)$ the operator $P_{c,\epsilon}$ is ``uniformly Fredholm as $\epsilon$ goes to $0$''. This will be achieved by giving explicit bounds for the inverse of the operator $P_{c,\epsilon} + U''$, which is a compact perturbation of $P_{c,\epsilon}$, see Proposition \ref{proposition:invertible_compact_perturbation}. We start by working on the circle in \S \ref{subsection:modified_operator_circle} and then deduce a result on the segment in \S \ref{subsection:modified_operator_segment}.

\subsection{Inverting a modified operator on the circle.}\label{subsection:modified_operator_circle}

Recall that in this section we are using the notations that we introduced in the previous sections. We want to invert the operator $P_{c,\epsilon} + U''$ with uniform bounds as $\epsilon$ goes to $0$, where the extension of $P_{c,\epsilon}$ on the circle is defined by \eqref{eq:Pextension_circle}. It will be convenient to work instead with the operator $\widetilde{Q}_{c,\epsilon} = (P_{c,\epsilon} + U'')(\partial_x^2 - \alpha^2)^{-1}$, which is given explicitly by the formula
\begin{equation*}
   \widetilde{Q}_{c,\epsilon} = {i\alpha^{-1} \epsilon^2}(\partial_x^2 - \alpha^2) + U - c + i q
\end{equation*}
for $c \in  \mathbb{C}$ and $\epsilon \geq 0$. Once again, the coefficients of the $\widetilde{Q}_{c,\epsilon}$'s are in $\mathcal{B}_{K,r}$, and thus the operators $\widetilde{Q}_{c,\epsilon}$ act on $\mathcal{D}'(M)$, as explained in \S \ref{section:complex_scaling}. The main result of this subsection is the following invertibility bound:

\begin{lemma}\label{lemma:invertibility_full_circle}
There are $\epsilon_0,\delta > 0$ such that for every $k \in \mathbb{R}$ :
\begin{enumerate}[label=(\roman*)]
\item for every $c \in (c_0 - \delta, c_0 + \delta) + i (- \delta, + \infty)$, the operator $\widetilde{Q}_{c,0} : H^k(M) \to H^k(M)$ is invertible; \label{item:inverse_zero}
\item for every $c \in (c_0 - \delta, c_0 + \delta) + i (- \delta, + \infty)$ and $\epsilon \in (0,\epsilon_0]$ the operator $\widetilde{Q}_{c,\epsilon} : H^{k+2}(M) \to H^k(M)$ is invertible; \label{item:existence_inverse}
\item \label{item:bound_inverse} if $W$ is a compact subset of $(c_0 - \delta, c_0 + \delta) + i (- \delta, + \infty)$ then there is a constant $C > 0$ such that for every $c \in W, \epsilon \in (0,\epsilon_0]$ and $\psi \in H^{k+2}(M)$
\begin{equation*}
    \n{\psi}_{H^k(M)} + \epsilon^2 \n{\psi}_{H^{k+2}(M)} \leq C \n{\widetilde{Q}_{c,\epsilon} \psi}_{H^k(M)}.
\end{equation*}
\end{enumerate}
\end{lemma}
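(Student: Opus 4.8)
The plan is to treat $\widetilde{Q}_{c,\epsilon}$ as a small (in the semiclassical sense $\epsilon^2\leftrightarrow h$) elliptic perturbation of the multiplication operator $U-c+iq$, exactly in the spirit of the proof of Lemma \ref{lemma:inverse_laplace_circle}. The key geometric input is Lemma \ref{lemma:non_zero_function}\ref{item:non_zero_ii}: once we parametrize $M$ by $x\mapsto x+im(x)$, the zeroth order term of $\widetilde{Q}_{c,\epsilon}$ becomes $w(x):=U(x+im(x))+iq(x+im(x))-c$, and Lemma \ref{lemma:non_zero_function} guarantees that $\im w(x)\leq 0$ everywhere and, for $c$ in a small box around $c_0$, that $w(x)$ is bounded away from $0$ uniformly (the potential equals $c$ only if $U(x)=c$ and $m(x)=0$, but then $c$ real forces $\re w\neq 0$ by the argument in that lemma, and for $\im c>-\delta$ one controls $\im w$ as well). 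Shrinking $\delta$, we may assume $|w(x)|\geq c_W^{-1}>0$ for all $x\in\mathbb{S}^1$ and all $c$ in the compact set $W$.

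The first step is \ref{item:inverse_zero}: $\widetilde{Q}_{c,0}$ is just multiplication (in the $x$-coordinate, after the change of variables) by $w(x)$, which is a nonvanishing smooth function, hence invertible on every $H^k(M)$ with norm controlled by $\sup|w^{-1}|$ and finitely many derivatives of $w^{-1}$ — the latter bounded uniformly for $c\in W$. Next, for \ref{item:existence_inverse} and \ref{item:bound_inverse} I would run a semiclassical parametrix/ellipticity argument. Writing $A=i\alpha^{-1}\epsilon^2(\partial_x^2-\alpha^2)+w$, the rescaled principal symbol in the $(x,\xi)$ coordinates is $-i\alpha^{-1}(1+im'(x))^{-2}\xi^2+w(x)$ (after multiplying through by the nonvanishing Jacobian factor, or working directly with $P_M$); by Lemma \ref{lemma:non_zero_function}\ref{item:non_zero_ii} the first term lies in a closed sector in the closed lower half plane avoiding the origin (its argument being controlled by point (i) of that lemma), and $w(x)$ also has nonpositive imaginary part and is bounded away from $0$, so the sum is bounded away from $0$: the symbol is elliptic in the semiclassical sense, uniformly in $c\in W$ and $\epsilon\in(0,\epsilon_0]$. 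Standard semiclassical elliptic estimates (e.g. \cite[Chapter 4]{dyatlov_zworski_book}, adapted to the complex-deformed operator via the coordinates $x\mapsto x+im(x)$) then give the a priori bound
\begin{equation*}
\n{\psi}_{H^k(M)}+\epsilon^2\n{\psi}_{H^{k+2}(M)}\leq C\n{\widetilde{Q}_{c,\epsilon}\psi}_{H^k(M)}
\end{equation*}
with $C$ uniform over $c\in W$ and $\epsilon\in(0,\epsilon_0]$, which is \ref{item:bound_inverse}; the same estimate applied to the (formal) adjoint $\widetilde{Q}_{c,\epsilon}^\top$, whose symbol has the conjugated sector and is equally elliptic, gives the corresponding lower bound for the adjoint. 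Since $\widetilde{Q}_{c,\epsilon}:H^{k+2}(M)\to H^k(M)$ is Fredholm of index zero for $\epsilon>0$ (it is elliptic of order $2$ on a closed manifold), injectivity of both $\widetilde{Q}_{c,\epsilon}$ and its adjoint — immediate from the two a priori estimates — yields invertibility, proving \ref{item:existence_inverse}.

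The main obstacle is purely bookkeeping rather than conceptual: one must phrase the semiclassical elliptic estimate for an operator that is not self-adjoint and lives on the non-flat curve $M$, and in particular check that the constant $\epsilon_0$ can be taken uniform in $c$ over a compact set $W$ (this follows because the symbol bound is uniform, since $w$ and its derivatives are continuous in $c$ and $W$ is compact) and that the whole half-strip $(c_0-\delta,c_0+\delta)+i(-\delta,+\infty)$ is covered — here the point is that for $\im c$ large, $|w(x)|\geq \im c - O(1)$ grows, so the estimates are in fact easier there, and a single $\epsilon_0,\delta$ work throughout after a routine compactness-plus-large-$\im c$ splitting. A clean way to organize this is to prove the estimate first for $c$ in a fixed compact box and then separately for $\im c$ large by a direct integration-by-parts (pairing $\widetilde{Q}_{c,\epsilon}\psi$ with $\psi$ and taking imaginary parts, using $\im w\leq 0$ and $\re(i\alpha^{-1}\epsilon^2(\partial_x^2-\alpha^2)\psi,\psi)$ having a favorable sign), then patch.
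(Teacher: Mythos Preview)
Your approach is correct and genuinely different from the paper's. The paper does not invoke semiclassical ellipticity of the full symbol; instead it chooses a two-function partition of unity $\chi_1,\chi_2$ adapted to sectors of $w$, pairs $\widetilde{Q}_{c,\epsilon}\chi_j B\psi$ against $\chi_j B\psi$ with phases $e^{i\pi/4}$, $e^{3i\pi/4}$, applies the sharp G{\aa}rding inequality to the second-order part, and then combines the resulting estimate with the trivial identity $\epsilon^2\psi=-i\alpha(\partial_x^2-\alpha^2)^{-1}(\widetilde{Q}_{c,\epsilon}\psi-w\psi)$. This yields \ref{item:existence_inverse} directly (for $k=0$, hence all $k$), but the uniform bound \ref{item:bound_inverse} is obtained only via a contradiction/compactness argument (extract a sequence with $\epsilon_n\to\epsilon_\infty$ and derive a nontrivial kernel element). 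Your route is more conceptual: once the full symbol $p(x,\xi)=-i\alpha^{-1}(1+im'(x))^{-2}\xi^2+w(x)$ is seen to be nonvanishing and elliptic in $S^2$, the semiclassical parametrix gives invertibility and the quantitative bound simultaneously, with no compactness argument. What the paper's method buys is that the estimate is written directly in the classical Sobolev scale, whereas you must still translate the semiclassical bound $\|\psi\|_{H^{k+2}_\epsilon}\leq C\|\widetilde{Q}_{c,\epsilon}\psi\|_{H^k_\epsilon}$ into the stated mixed form $\|\psi\|_{H^k}+\epsilon^2\|\psi\|_{H^{k+2}}\leq C\|\widetilde{Q}_{c,\epsilon}\psi\|_{H^k}$; this is routine (for $k=0$ it is immediate, and for general $k$ one conjugates by a classical order-$k$ operator and checks that the symbol of the inverse lies in $S^0_{1,0}$ uniformly in $\epsilon$), but you should say a word about it.

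One genuine small gap: you assert ``$w(x)$ also has nonpositive imaginary part'', but Lemma~\ref{lemma:non_zero_function}\ref{item:non_zero_ii} only gives $\im\bigl(U(x+im(x))+iq(x+im(x))\bigr)\leq 0$, hence $\im w(x)\leq -\im c$. For $c$ in the strip with $-\delta<\im c<0$ this allows $\im w$ to be slightly positive, so the naive sector argument (both terms in the closed lower half-plane) fails as stated. The fix is easy and in the spirit of what you already sketch: since the leading term has imaginary part $\leq -c_3\xi^2$ (the argument of $(1+im')^{-2}$ lies in a compact subset of $(-\pi/2,\pi/2)$ by Lemma~\ref{lemma:non_zero_function}(i)), one has $\im p\leq -c_3\xi^2+\delta$, so $p\neq 0$ for $|\xi|^2>\delta/c_3$, while for $|\xi|^2\leq\delta/c_3$ the leading term is $O(\delta)$ and $|w|\geq d/2$ after shrinking $\delta$, so $|p|\geq d/2-O(\delta)>0$. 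Make this explicit and your argument is complete.
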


\begin{proof}
The operator $\widetilde{Q}_{c,0}$ is just the operator of multiplication by the smooth function $U - c + i q$. It follows from Lemma \ref{lemma:non_zero_function} that this function does not vanish on $M$ if $c \in (c_0 - \delta, c_0 + \delta) + i (- \delta, + \infty)$ with $\delta$ small enough, and \ref{item:inverse_zero} follows.

Let us fix any smooth volume $\mu$ on $M$, and use it to define the norms on Sobolev spaces on $M$. Let us choose a partition of unity on $M$ made of two functions $\chi_1$ and $\chi_2$ (see Figure \ref{fig:contour}) such that there is $\nu > 0$ for every $c \in (c_0 - \delta,c_0 + \delta) + i (- \delta, + \infty)$ and $x \in \mathbb{S}^1$ we have $\re (e^{ \frac{i\pi}{4}}(U(x) - c + i q(x)) \geq \nu ( 1 + |\im c|)$ if $x$ is in the support of $\chi_1$ and $\re (e^{ \frac{3i\pi}{4}}(U(x) - c + i q(x)) \geq \nu ( 1 + |\im c|)$ if $x$ is in the support of $\chi_2$. The existence of such a partition of unity is ensured by Lemma \ref{lemma:non_zero_function}, provided $\delta$ is small enough (this is the last time we need to eventually reduce the value of $\delta$ in this proof).

Let $B$ be an elliptic self-adjoint invertible pseudodifferential operator of order $k$ on $M$ (using that $M$ is diffeomorphic to a circle such an operator is easily constructed, e.g. as a Fourier multiplier). For $\psi \in H^{k+2}(M)$, let us compute
\begin{equation*}\begin{split}
\re \left( e^{\frac{i\pi}{4}} \langle \widetilde{Q}_{c,\epsilon} \chi_1 B\psi , \chi_1 B \psi \rangle_{L^2(M)} \right) = & \epsilon^2 \langle \re ( e^{i \frac{\pi}{4}} i\alpha^{-1} (\partial_x^2 - \alpha^2)) \chi_1 B \psi , \chi_1 B \psi \rangle_{L^2(M)} \\
& + \int_{M} \re( e^{i \frac{\pi}{4}} (U - c + i q)) |\chi_1 B \psi |^2 \mathrm{d}\mu.
\end{split}\end{equation*}
It follows from Lemma \ref{lemma:non_zero_function} that the principal symbol of the operator $\re ( e^{i \frac{\pi}{4}} i\alpha^{-1} (\partial_x^2 - \alpha^2))$ is non-negative\footnote{The real part of an operator is defined by $\re A = (A + A^*)/2$. We are using here the $L^2$ adjoint associated to the volume form $\mu$ on $M$.}. It follows from the sharp G{\aa}rding inequality \cite[Theorem 9.11]{zworski_book} that
\begin{equation*}
\left\langle \re \left( e^{i \frac{\pi}{4}} i\alpha^{-1} (\partial_x^2 - \alpha^2)\right) \chi_1 B \psi , \chi_1 B \psi \right\rangle _{L^2(M)} \geq - C \n{\chi_1 B \psi}_{H^{1}(M)} \n{\chi_1 B\psi}_{L^2(M)},
\end{equation*}
for some constant $C$ (that does not depend on $\epsilon$ or $c$). Thus, we have
\begin{equation*}\begin{split}
& \re \left( e^{\frac{i\pi}{4}} \langle \widetilde{Q}_{c,\epsilon} \chi_1 B\psi , \chi_1 B \psi \rangle_{L^2(M)} \right) \\
& \geq \nu (1 + |\im c|)\n{\chi_1 B \psi}_{L^2}^2 - C \epsilon^2 \n{\chi_1 B \psi}_{H^{1}(M)} \n{\chi_1 B\psi }_{L^2(M)}.
\end{split}\end{equation*}
Applying Cauchy--Schwartz inequality, we find that
\begin{equation*}
\n{\chi_1 B \psi }_{L^2(M)} \leq \frac{C}{1 + |\im c|} \left( \n{\widetilde{Q}_{c,\epsilon} \chi_1 B \psi}_{L^2(M)} + \epsilon^2 \n{\chi_1 B \psi }_{H^1(M)} \right),
\end{equation*}
for some constant $C > 0$ that does not depend on $c$ or $\epsilon$. We notice then that $\widetilde{Q}_{c,\epsilon} \chi_1 B \psi = \chi_1 B \widetilde{Q}_{c,\epsilon} \psi + [\widetilde{Q}_{0,\epsilon}, \chi_1 B] \psi$. Since $[\widetilde{Q}_{0,\epsilon}, \chi_1 B]$ is the sum of an operator of order $k-1$ (that does not depend on $\epsilon$) and $\epsilon^2$ times an operator of order $k +1$, we find, with some constant $C > 0$ that does not depend on $\epsilon$ or $c$, that
\begin{equation*}
\n{\chi_1 B \psi}_{L^2(M)} \leq \frac{C}{1 + |\im c|} \left( \n{\widetilde{Q}_{c,\epsilon} \psi}_{H^k(M)} + a_0\n{\psi}_{H^{k-1}(M)} + \epsilon^2 \n{\psi}_{H^{k+1}(M)} \right).
\end{equation*}
Here, $a_0 = 1$ if $k \neq 0$ and $a_0 = 0$ if $k = 0$ (indeed, in the case $k = 0$, we can take $B$ to be the identity, and $[\widetilde{Q}_{0,\epsilon},\chi_1]$ is just $\epsilon^2$ times an operator of order $1$). We prove in the same way an estimate for $\n{\chi_2 B \psi}_{L^2}$ and deduce that
\begin{equation}\label{eq:lower_derivatives}
\n{\psi}_{H^k(M)} \leq \frac{C}{1 + |\im c|} \left( \n{\widetilde{Q}_{c,\epsilon} \psi}_{H^k(M)} + a_0 \n{\psi}_{H^{k-1}(M)} + \epsilon^2 \n{\psi}_{H^{k+1}(M)} \right).
\end{equation}
Recall that the operator $\partial_x^2 - \alpha^2$ is invertible from $H^{k+2}(M)$ to $H^k(M)$, and thus
\begin{equation*}
\epsilon^2 \psi = - i \alpha (\partial_x^2 - \alpha^2)^{-1}\left(\widetilde{Q}_{c,\epsilon}\psi - (U - c + i q) \psi\right).
\end{equation*}
Hence, for some $C > 0$ that does not depend on $c$ or $\epsilon$, we have
\begin{equation}\label{eq:higher_derivatives}
\epsilon^2 \n{\psi}_{H^{k+2}(M)} \leq C \n{\widetilde{Q}_{c,\epsilon} \psi}_{H^k (M)} + C( 1 + |\im c|) \n{\psi}_{H^k(M)}.
\end{equation}

We are now ready to prove \ref{item:existence_inverse}. Notice that, for $\epsilon > 0$, the operator $\widetilde{Q}_{c,\epsilon}$ coincides with $- i\alpha^{-1} \epsilon^2 (\partial_x^2 - \alpha^2)$ up to an operator of order $0$. Since $\partial_x^2 - \alpha^2 : H^{k+2}(M) \to H^k(M)$ is invertible, we find that $\widetilde{Q}_{c,\epsilon} : H^{k+2}(M) \to H^k(M)$ is Fredholm of index zero. Hence, we only need to prove that it is injective in order to prove that it is invertible. Moreover, we only need to do it in the case $k = 0$ (by ellipticity, any element of the kernel would be in $C^\infty(M)$, and in particular in $H^2(M)$). So, let $\psi \in H^2(M)$ be such that $\widetilde{Q}_{c,\epsilon}\psi =0$. The estimates \eqref{eq:lower_derivatives} and \eqref{eq:higher_derivatives} become
\begin{equation*}
\n{\psi}_{L^2(M)} \leq \frac{C \epsilon^2}{1 + |\im c |}\n{\psi}_{H^{1}(M)} \textup{ and } \epsilon^2 \n{\psi}_{H^2(M)} \leq C (1 + |\im c |) \n{\psi}_{L^2(M)}.
\end{equation*}
Interpolating between these two estimates, we find that
\begin{equation*}
\epsilon \n{\psi}_{H^1(M)} \leq C \n{\psi}_{L^2(M)},
\end{equation*}
and thus, with a new constant $C > 0$, that does not depend on $c, \epsilon$ or $\psi$,
\begin{equation*}
\n{\psi}_{L^2(M)} \leq \frac{C \epsilon^2}{1 + |\im c |}\n{\psi}_{L^2(M)}.
\end{equation*}
Hence, if $\epsilon$ is small enough so that $C\epsilon^2/(1 + |\im c|) < 1$, we find that $\psi = 0$, proving \ref{item:existence_inverse}.

It remains to prove \ref{item:bound_inverse}. Let $W$ be a compact subset of $(c_0 - \delta,c_0 + \delta) + i (- \delta, + \infty)$. We will prove the result by contradiction. If \ref{item:bound_inverse} does not hold, then there are sequences $(\epsilon_n)_{n \geq 0}, (c_n)_{n \geq 0}$ and $(\psi_n)_{n \geq 0}$ of elements respectively of $(0,\epsilon_0], W$ and $H^{k+2}(M)$ such that $\n{\psi_n}_{H^k(M)} + \epsilon_n^2 \n{\psi_n}_{H^{k+2}(M)} = 1$ for every $n \geq 0$ but $\n{\widetilde{Q}_{c_n,\epsilon_n} \psi_n}_{H^k(M)}$ goes to $0$ as $n$ goes to $+ \infty$. Up to extracting, we may assume that $(\epsilon_n)_{n \geq 0}$ and $(c_n)_{n \geq 0}$ converge respectively to $\epsilon_\infty \in [0,\epsilon_0]$ and $c_\infty \in W$. 

Let us deal first with the case $\epsilon_\infty > 0$. It follows from $\n{\psi_n}_{H^k(M)} + \epsilon_n^2 \n{\psi_n}_{H^{k+2}(M)} = 1$ that $(\psi_n)_{n \geq 0}$ is uniformly bounded in $H^{k+2}(M)$. We can consequently extract a subsequence in order to assume that $(\psi_n)_{n \geq 0}$ converges in $H^{k+1}(M)$ to some distribution $\psi_\infty$. Notice that this limit satisfies $Q_{c_\infty, \epsilon_\infty} \psi_\infty = 0$, and thus $\psi_\infty = 0$ by \ref{item:existence_inverse}. It follows then from \eqref{eq:higher_derivatives} that $(\psi_n)_{n \geq 0}$ converges to $0$ in $H^{k+2}(M)$, which implies that $\n{\psi_n}_{H^k(M)} + \epsilon_n^2 \n{\psi_n}_{H^{k+2}(M)} = 1$ goes to $0$ as $n$ goes to $+ \infty$, a contradiction.

Let us deal now with the case $\epsilon_\infty = 0$. By an interpolation inequality, we get that, for some $C > 0$ that does not depend on $n$, we have 
$$\epsilon_n \n{\psi_n}_{H^{k+1}(M)} \leq C(\n{\psi}_{H^k(M)} + \epsilon_n^2 \n{\psi_n}_{H^{k+2}(M)}).$$
Plugging this inequality in \eqref{eq:lower_derivatives} and then applying \eqref{eq:higher_derivatives} to bound $\epsilon_n^2 \n{\psi_n}_{H^{k+2}(M)}$, we find that for $n$ large enough, we have, for some new $ C > 0$,
\begin{equation}\label{eq:better_low_derivatives}
\n{\psi_n}_{H^k} \leq C \left( \n{\widetilde{Q}_{c_n,\epsilon_n} \psi_n}_{H^k(M)} + \n{\psi_n}_{H^{k-1}(M)} \right),
\end{equation}
and plugging this estimate into \eqref{eq:higher_derivatives}, we also get
\begin{equation}\label{eq:better_high_derivatives}
\epsilon_n^2 \n{\psi_n}_{H^{k+2}(M)} \leq C \left( \n{\widetilde{Q}_{c_n,\epsilon_n} \psi_n}_{H^k(M)} + \n{\psi_n}_{H^{k-1}(M)} \right).
\end{equation}
Since $(\psi_n)_{n \geq 0}$ is uniformly bounded in $H^k(M)$, we can assume that it converges in $H^{k-1}(M)$ to a distribution $\psi_\infty \in \mathcal{D}'(M)$ that satisfies $Q_{c_\infty,0} \psi_\infty = 0$, and thus $\psi_\infty = 0$ by \ref{item:inverse_zero}. It follows then from \eqref{eq:better_low_derivatives} and \eqref{eq:better_high_derivatives} that $\n{\psi_n}_{H^k(M)} + \epsilon_n^2 \n{\psi_n}_{H^{k+2}(M)}$ goes to $0$ as $n$ goes to $+ \infty$, a contradiction.
\end{proof}

Let us translate Lemma \ref{lemma:invertibility_full_circle} into a statement about the operator $P_{c,\epsilon}$.

\begin{lemma}\label{lemma:invertibility_P_full_circle}
There are $\delta,\epsilon_0 > 0$ such that for every $k \in \mathbb{R}$ we have:
\begin{enumerate}[label=(\roman*)]
\item for every $c \in (c_0 - \delta, c_0 + \delta) + i (- \delta, + \infty)$ and $\epsilon \in (0,\epsilon_0)$ the operator $P_{c,\epsilon} + U'' : H^{k+4}(M) \to H^k(M)$ is invertible;
\item if $W$ is a compact subset of $(c_0 - \delta, c_0 + \delta) + i (- \delta, + \infty)$, there is a constant $C > 0$ such that for every $c \in W, \epsilon \in (0,\epsilon_0)$ and $\psi \in H^{k+4}(M)$
\begin{equation*}
    \n{\psi}_{H^{k+2}(M)} + \epsilon^2 \n{\psi}_{H^{k+4}(M)} \leq C \n{(P_{c,\epsilon} + U'') \psi}_{H^k(M)};
\end{equation*}
\item for every $c \in (c_0 - \delta, c_0 + \delta) + i (- \delta, + \infty)$, the operator $P_{c,0} + U'' : H^{k+2}(M) \to H^k(M)$ is invertible.
\end{enumerate}
\end{lemma}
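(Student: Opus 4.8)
The plan is to deduce all three statements from Lemma \ref{lemma:invertibility_full_circle} by exploiting the factorization $P_{c,\epsilon} + U'' = \widetilde{Q}_{c,\epsilon}(\partial_x^2 - \alpha^2)$ together with the invertibility of $\partial_x^2 - \alpha^2$ on the circle provided by Lemma \ref{lemma:inverse_laplace_circle}. I would take $\delta$ and $\epsilon_0$ to be exactly those furnished by Lemma \ref{lemma:invertibility_full_circle}.

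For the qualitative statements (i) and (iii), I would simply compose invertible maps. Fix $c \in (c_0 - \delta, c_0 + \delta) + i(-\delta, +\infty)$ and $k \in \mathbb{R}$. When $\epsilon \in (0,\epsilon_0)$, Lemma \ref{lemma:inverse_laplace_circle} gives that $\partial_x^2 - \alpha^2 : H^{k+4}(M) \to H^{k+2}(M)$ is invertible, and Lemma \ref{lemma:invertibility_full_circle}\ref{item:existence_inverse} gives that $\widetilde{Q}_{c,\epsilon} : H^{k+2}(M) \to H^k(M)$ is invertible; hence $P_{c,\epsilon} + U'' = \widetilde{Q}_{c,\epsilon}(\partial_x^2 - \alpha^2) : H^{k+4}(M) \to H^k(M)$ is invertible, which is (i). When $\epsilon = 0$, the same reasoning with $\partial_x^2 - \alpha^2 : H^{k+2}(M) \to H^k(M)$ invertible and $\widetilde{Q}_{c,0} : H^k(M) \to H^k(M)$ invertible (Lemma \ref{lemma:invertibility_full_circle}\ref{item:inverse_zero}) shows that $P_{c,0} + U'' : H^{k+2}(M) \to H^k(M)$ is invertible, giving (iii).

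For the quantitative bound (ii), given $\psi \in H^{k+4}(M)$ I would set $\phi = (\partial_x^2 - \alpha^2)\psi \in H^{k+2}(M)$, so that $(P_{c,\epsilon} + U'')\psi = \widetilde{Q}_{c,\epsilon}\phi$. Applying Lemma \ref{lemma:invertibility_full_circle}\ref{item:bound_inverse} with the compact set $W$ yields a constant $C > 0$, independent of $c \in W$ and $\epsilon \in (0,\epsilon_0)$, with $\n{\phi}_{H^k(M)} + \epsilon^2 \n{\phi}_{H^{k+2}(M)} \leq C \n{(P_{c,\epsilon} + U'')\psi}_{H^k(M)}$. Since $(\partial_x^2 - \alpha^2)^{-1}$ is bounded from $H^k(M)$ to $H^{k+2}(M)$ and from $H^{k+2}(M)$ to $H^{k+4}(M)$, with operator norms depending only on $k$ (and on the fixed curve $M$), we get $\n{\psi}_{H^{k+2}(M)} \leq C' \n{\phi}_{H^k(M)}$ and $\n{\psi}_{H^{k+4}(M)} \leq C' \n{\phi}_{H^{k+2}(M)}$; combining these with the previous display produces (ii) with a new constant that is still uniform over $c \in W$ and $\epsilon \in (0,\epsilon_0)$.

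There is no genuine obstacle in this lemma: it is essentially a bookkeeping corollary of Lemmas \ref{lemma:inverse_laplace_circle} and \ref{lemma:invertibility_full_circle}. The only points deserving minor care are matching the Sobolev indices correctly across the factorization $P_{c,\epsilon} + U'' = \widetilde{Q}_{c,\epsilon}(\partial_x^2 - \alpha^2)$, and verifying that the constant in (ii) inherits its uniformity in $c$ and $\epsilon$ both from Lemma \ref{lemma:invertibility_full_circle}\ref{item:bound_inverse} and from the fact that $(\partial_x^2 - \alpha^2)^{-1}$ on the fixed curve $M$ does not depend on $c$ or $\epsilon$.
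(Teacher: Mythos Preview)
Your proposal is correct and matches the paper's approach exactly: the paper's proof simply observes that all three points follow from Lemma~\ref{lemma:invertibility_full_circle} via the factorization $P_{c,\epsilon} + U'' = \widetilde{Q}_{c,\epsilon}(\partial_x^2 - \alpha^2)$ together with the invertibility of $\partial_x^2 - \alpha^2$ from Lemma~\ref{lemma:inverse_laplace_circle}. Your write-up is in fact more detailed than the paper's, which dispatches the lemma in a single sentence.
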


\begin{proof}
Recall from Lemma \ref{lemma:inverse_laplace_circle} that the operator $\partial_x^2 - \alpha^2: H^{k+4}(M) \to H^{k+2}(M)$ is invertible. Consequently, the first three points follow directly from Lemma \ref{lemma:invertibility_full_circle} using the factorization $P_{c,\epsilon} + U'' = \widetilde{Q}_{c,\epsilon}(\partial_x^2 - \alpha^2)$.
\end{proof}

\subsection{Inverting a modified operator on \texorpdfstring{$[a,b]$}{[a,b]}}\label{subsection:modified_operator_segment}

We want now to go back to the problem on $[a,b]$. The main result of this section is the following invertibility estimate which is central in the proof of Theorem \ref{theorem:limit}. We use here the notations introduced in \S \ref{subsection:complex_scaling_segment} to apply the complex deformation method on a segment.

\begin{proposition}\label{proposition:invertible_compact_perturbation}
There is $\delta > 0$ such that:
\begin{enumerate}[label=(\roman*)]
\item for every $c \in (c_0 - \delta,c_0 + \delta) + i( - \delta, + \infty)$ and every $k > - 3/2$ the operator $P_{c,0} + U'' : H^{k+2}_{\D}(M_{a,b}) \to H^k(M_{a,b})$ is invertible; \label{item:Fredholm_inviscid}
\item for every compact subset $W$ of $(c_0 - \delta,c_0 + \delta) + i( - \delta, + \infty)$ there is $\epsilon_0> 0$ such that for every $k > - 5/2, c \in W$ and $\epsilon \in (0,\epsilon_0)$, the operator $P_{c,\epsilon} + U'' : H^{k+4}_{\DN}(M_{a,b}) \to H^k(M_{a,b})$ is invertible; \label{item:Fredholm_viscous}
\item for every compact subset $W$ of $(c_0 - \delta,c_0 + \delta) + i( - \delta, + \infty)$ there is $\epsilon_0> 0$ such that for every $k \in (-3/2, - 1/2)$ there is $C > 0$ such that for every $c \in W, \epsilon \in (0,\epsilon_0)$ and $\psi \in H^{k+4}_{\DN}(M_{a,b})$ we have \label{item:Fredholm_bound}
\begin{equation*}
\n{\psi}_{H^{k+2}(M_{a,b})} + \epsilon^2 \n{\psi}_{H^{k+4}(M_{a,b})} \leq C \n{(P_{c,\epsilon} + U'')\psi}_{H^k(M_{a,b})}.
\end{equation*}
\end{enumerate}
\end{proposition}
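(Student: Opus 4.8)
The plan is as follows. Part \ref{item:Fredholm_inviscid} is immediate: on $\overline{M}_{a,b}$ the cut-off $q$ vanishes, so $P_{c,0}+U'' = (U-c)(\partial_x^2-\alpha^2)$; after shrinking $\delta$, Lemma \ref{lemma:non_zero_function} makes $U-c$ smooth and nowhere zero on $\overline{M}_{a,b}$, so multiplication by $U-c$ is an isomorphism of every $H^k(M_{a,b})$, while $\partial_x^2-\alpha^2 : H^{k+2}_{\D}(M_{a,b}) \to H^k(M_{a,b})$ is invertible by Lemma \ref{lemma:inverse_laplace_segment} (extended in the obvious way to the range $k > -3/2$ in which the Dirichlet trace is still defined), and composing gives \ref{item:Fredholm_inviscid}. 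For \ref{item:Fredholm_viscous} and \ref{item:Fredholm_bound}: for fixed $\epsilon > 0$ the operator $P_{c,\epsilon}+U''$ differs from $i\alpha^{-1}\epsilon^2(\partial_x^2-\alpha^2)^2$ — invertible $H^{k+4}_{\DN}(M_{a,b}) \to H^k(M_{a,b})$ by Lemma \ref{lemma:inverse_laplace_square} — by a strictly lower-order term, hence is Fredholm of index zero; so it suffices to produce, for $c \in W$ and $\epsilon$ below some $\epsilon_0$, a right parametrix $E_{c,\epsilon} : H^k(M_{a,b}) \to H^{k+4}_{\DN}(M_{a,b})$ with $(P_{c,\epsilon}+U'')E_{c,\epsilon} = \mathrm{Id} + R_{c,\epsilon}$, $\n{R_{c,\epsilon}}_{H^k \to H^k} \leq \tfrac12$, and $\n{E_{c,\epsilon}f}_{H^{k+2}(M_{a,b})} + \epsilon^2\n{E_{c,\epsilon}f}_{H^{k+4}(M_{a,b})} \lesssim \n{f}_{H^k(M_{a,b})}$; a Neumann series then gives invertibility (so \ref{item:Fredholm_viscous} for $k \in (-3/2,-1/2)$, and for all $k > -5/2$ by elliptic regularity) together with \ref{item:Fredholm_bound}.

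To build $E_{c,\epsilon}$, given $f \in H^k(M_{a,b})$ I would extend it to $\tilde f \in H^k(M)$ on the circle and take $\psi^{(0)} := \bigl((P_{c,\epsilon}+U'')^{-1}\tilde f\bigr)\big|_{M_{a,b}}$, the circle inverse being supplied by Lemma \ref{lemma:invertibility_P_full_circle}: then $(P_{c,\epsilon}+U'')\psi^{(0)} = f$ on $M_{a,b}$ and $\n{\psi^{(0)}}_{H^{k+2}} + \epsilon^2\n{\psi^{(0)}}_{H^{k+4}} \lesssim \n{f}_{H^k}$ by Lemma \ref{lemma:invertibility_P_full_circle}(ii), but $\psi^{(0)}$ misses the four boundary conditions at $a,b$. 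I then correct it by adding a combination of the two slow solutions $z \mapsto e^{\pm\alpha z}$ (which lie in $\ker(\partial_x^2-\alpha^2) \subseteq \ker(P_{c,\epsilon}+U'')$ and have $O(1)$ traces) and of two boundary layers, cut off to be supported near $a$ and near $b$, each a multiple of $x \mapsto \exp\bigl(-\sqrt{i\alpha(U(a)-c)}\,(x-a)/\epsilon\bigr)$ (resp. at $b$), normalized to have Neumann trace of order $1$, Dirichlet trace of order $\epsilon$, and exponentially small traces at the far endpoint. Imposing the four boundary conditions on $\psi := \psi^{(0)} + (\text{correction})$ gives a $4\times4$ linear system whose matrix tends, as $\epsilon \to 0$, to a block-triangular invertible limit built from $\bigl(\begin{smallmatrix} e^{\alpha a} & e^{-\alpha a} \\ e^{\alpha b} & e^{-\alpha b}\end{smallmatrix}\bigr)$ (invertible as in the proof of Lemma \ref{lemma:inverse_laplace_segment}) and two nonzero scalars, hence is uniformly invertible for $\epsilon$ small. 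A direct estimate — in which the concentration of the boundary layers at scale $\epsilon$ balances the possibly large coefficients needed to match the Neumann data — then shows that $\psi$ satisfies $\n{\psi}_{H^{k+2}} + \epsilon^2\n{\psi}_{H^{k+4}} \lesssim \n{f}_{H^k}$, while $(P_{c,\epsilon}+U'')\psi - f$ has $H^k$-norm $O(\epsilon\n{f}_{H^k})$, coming from the variation of $U$ near the endpoints in the model equation of the boundary layers (which makes $(P_{c,\epsilon}+U'')$ applied to the model exponential $O(\epsilon)$-small relative to it) together with exponentially small cut-off errors. Choosing $\epsilon_0$ small so that this is $\leq \tfrac12\n{f}_{H^k}$ yields the required $E_{c,\epsilon}$.

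The main obstacle is this boundary-layer analysis: one must check that $\exp(-\sqrt{i\alpha(U(a)-c)}\,(x-a)/\epsilon)$ is a genuine decaying boundary layer and that the matching system is uniformly well-conditioned. Both use the hypothesis that $c_0$ is not a boundary value of $U$, which forces $U(a)-c$ and $U(b)-c$ to stay in a fixed compact subset of $\mathbb{C}\setminus\set{0}$ avoiding the imaginary axis when $\delta$ is small — indeed $\re(U(a)-c) = U(a)-\re c$ is bounded away from $0$ — so that $\sqrt{i\alpha(U(a)-c)}$ has nonzero real part. Everything else (Fredholmness via Lemma \ref{lemma:inverse_laplace_square}, the interior piece via Lemma \ref{lemma:invertibility_P_full_circle}, the elliptic bootstrap in $k$, and Lemmas \ref{lemma:non_zero_function} and \ref{lemma:inverse_laplace_segment} for \ref{item:Fredholm_inviscid}) is routine given \S\ref{section:complex_scaling} and \S\ref{subsection:modified_operator_circle}; alternatively, \ref{item:Fredholm_bound} could be obtained by a contradiction argument in the spirit of the proof of Lemma \ref{lemma:invertibility_full_circle}\ref{item:bound_inverse}, handling the interior with a cut-off and the circle estimate, and the boundary with the same model.
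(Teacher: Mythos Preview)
Your approach is essentially the paper's: part \ref{item:Fredholm_inviscid} via the factorization $(U-c)(\partial_x^2-\alpha^2)$ together with Lemmas \ref{lemma:non_zero_function} and \ref{lemma:inverse_laplace_segment}; parts \ref{item:Fredholm_viscous}--\ref{item:Fredholm_bound} by restricting the circle inverse of Lemma \ref{lemma:invertibility_P_full_circle} and then correcting the four boundary values with two slow modes in $\ker(\partial_x^2-\alpha^2)$ plus two boundary layers, the $4\times4$ matching matrix having a block-triangular invertible limit. The one substantive difference is that you use the frozen-coefficient model $\exp\bigl(-\sqrt{i\alpha(U(a)-c)}\,(x-a)/\epsilon\bigr)$, which makes $(P_{c,\epsilon}+U'')(\text{layer})$ only $O(\epsilon)$-small rather than $O(\epsilon^\infty)$; the paper instead invokes the full WKB construction (Proposition \ref{proposition:boundary_layers}). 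Your shortcut is legitimate here, but the paper's $O(\epsilon^\infty)$ layers are not wasted: they are reused verbatim in \S\ref{subsection:LVmethod} for the Vishik--Lyusternik expansion, where arbitrary-order accuracy is essential.

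One technical point your sketch leaves implicit but which is the actual mechanism behind the ``balance'' in \ref{item:Fredholm_bound}: you need the interpolated trace bound of Lemma \ref{lemma:bound_Dirichlet}, applied to $(\psi^{(0)})'$, to get $|(\psi^{(0)})'(a)|\lesssim \epsilon^{k+1/2}\n{f}_{H^k}$. With only the crude bound $|(\psi^{(0)})'(a)|\lesssim \n{\psi^{(0)}}_{H^{k+4}}\lesssim\epsilon^{-2}\n{f}_{H^k}$, neither the $H^{k+2}$-estimate on the boundary-layer correction nor your $O(\epsilon)$ remainder closes. This lemma is also what singles out the range $k\in(-3/2,-1/2)$.
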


\begin{remark}
By ellipticity, the invertibility of $P_{c,0}+ U'' : H^{k+2}_{\mathrm{D}}(M_{a,b}) \to H^k(M_{a,b})$ and $P_{c,\epsilon} + U'' : H^{k+4}_{\DN}(M_{a,b}) \to H^k(M_{a,b})$ when $\epsilon > 0$ does not depend on the choice of $k$ (as long as it is large enough so that the boundary conditions make sense). However, if we want to get uniform invertibility estimates (as in \ref{item:Fredholm_bound}), we need to impose $k < - 1/2$. This is due to the presence of ``boundary layers'' in the inverse for $P_{c,\epsilon} + U''$ for $\epsilon > 0$ small. They are specific solutions to the equation $(P_{c,\epsilon} + U'')u = 0$, constructed in Proposition \ref{proposition:boundary_layers} below, that concentrate in a region of size $\epsilon$ near the extremities of $\overline{M}_{a,b}$. In order to get uniform estimates, we need to work in a space in which the boundary layers are not too large as $\epsilon$ goes to $0$, hence the restriction on $k$. Boundary layers will also appear in \S \ref{subsection:LVmethod} as they are central in the Vishik--Lyusternik method \cite{vishik_lyusternik_OG} that we use for the proof of Theorem \ref{theorem:limit}.
\end{remark}

Proposition \ref{proposition:invertible_compact_perturbation} requires some preparation. We start by stating a result that is useful in order to control boundary conditions.

\begin{lemma}\label{lemma:bound_Dirichlet}
Let $x_0 \in M$. Let $k \in (- \frac{3}{2}, \frac{1}{2})$. There is a constant $C > 0$ such that for every $\psi \in H^{k+2}(M)$ and $\epsilon \in (0,1)$ we have
\begin{equation}\label{eq:bound_Dirichlet}
    |\psi(x_0)| \leq C \epsilon^{k - \frac{1}{2}} \left(\n{\psi}_{H^k(M)} + \epsilon^2 \n{\psi}_{H^{k+2}(M)}\right).
\end{equation}
\end{lemma}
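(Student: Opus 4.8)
The plan is to interpolate between the trivial $\epsilon$-scaled Sobolev norm and the endpoint where the evaluation functional $\psi \mapsto \psi(x_0)$ is bounded, namely $H^s(M)$ for any $s > 1/2$. Concretely, since $k < 1/2$ the space $H^k(M)$ does not control point values, but $H^{k+2}(M)$ does (as $k+2 > 1/2$), and we must track the $\epsilon$-dependence of the resulting estimate. First I would record the two inputs: the Sobolev embedding $|\psi(x_0)| \leq C_0 \n{\psi}_{H^{s}(M)}$ valid for any fixed $s \in (1/2, k+2]$, and an interpolation inequality of the form $\n{\psi}_{H^s(M)} \leq C \n{\psi}_{H^k(M)}^{1-\theta}\n{\psi}_{H^{k+2}(M)}^{\theta}$ where $s = (1-\theta)k + \theta(k+2) = k + 2\theta$, so $\theta = (s-k)/2 \in (0,1)$.

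Next I would choose $s$ so that the $\epsilon$-powers work out. Write the right-hand side of \eqref{eq:bound_Dirichlet} as $A = \n{\psi}_{H^k(M)}$ and $\epsilon^2 B = \epsilon^2 \n{\psi}_{H^{k+2}(M)}$, so that $A + \epsilon^2 B$ is the quantity we are allowed to use. By the interpolation inequality and Young's inequality,
\begin{equation*}
\n{\psi}_{H^s(M)} \leq C A^{1-\theta} B^{\theta} = C \bigl(\epsilon^{-2\theta}\bigr)\bigl(A^{1-\theta}(\epsilon^2 B)^{\theta}\bigr) \leq C \epsilon^{-2\theta}\bigl(A + \epsilon^2 B\bigr),
\end{equation*}
using $A^{1-\theta}(\epsilon^2 B)^\theta \le (1-\theta) A + \theta \epsilon^2 B \le A + \epsilon^2 B$. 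Combining with the embedding gives $|\psi(x_0)| \leq C\epsilon^{-2\theta}(A + \epsilon^2 B)$. It remains to pick $s$ so that $-2\theta = k - \frac{1}{2}$, i.e. $\theta = \frac{1}{2}\bigl(\frac{1}{2} - k\bigr)$, which is admissible precisely because $k < \frac12$ forces $\theta > 0$ and $k > -\frac{3}{2}$ forces $\theta < 1$; the corresponding $s = k + 2\theta = \frac{1}{2} + (\tfrac12 - k)\cdot 0 $— let me recompute: $s = k + 2\theta = k + (\frac12 - k) = \frac12$. That is the borderline case, so instead I would take $\theta$ slightly larger than $\frac12(\frac12 - k)$, giving $s$ slightly above $1/2$ where the embedding holds, and absorb the harmless extra positive power of $\epsilon \le 1$ into the constant (since $\epsilon < 1$, a larger exponent only helps). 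This yields $|\psi(x_0)| \leq C\epsilon^{k-\frac12}(\n{\psi}_{H^k(M)} + \epsilon^2\n{\psi}_{H^{k+2}(M)})$ as claimed.

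The only mild subtlety is the endpoint issue just noted: the exponent matching exactly hits the non-embedding space $H^{1/2}$. I expect this to be the one point requiring care — resolved either by the slight-perturbation argument above (choose $s = \frac12 + \eta$ and note $\epsilon^{-2\theta} \le \epsilon^{k - 1/2}$ for the adjusted $\theta$ since $\epsilon<1$), or alternatively by using a Gagliardo--Nirenberg-type estimate $|\psi(x_0)| \le C\n{\psi}_{L^2}^{1/2}\n{\psi}_{H^1}^{1/2}$ on $M$ directly when $k$ is near the relevant range and then interpolating once more from $H^k$ and $H^{k+2}$ to $L^2$ and $H^1$. Everything else is standard interpolation on the compact manifold $M$ (diffeomorphic to a circle), for which the fixed-norm inequalities are classical and the constants are independent of $\epsilon$ and of $\psi$.
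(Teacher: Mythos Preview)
Your interpolation argument has a sign error at the crucial step. With $s = \tfrac12 + \eta$ you get $\theta = (s-k)/2 > (1/2-k)/2$, hence $-2\theta < k - \tfrac12$; since $\epsilon < 1$, this means $\epsilon^{-2\theta} \geq \epsilon^{k-1/2}$, not $\leq$. So pushing $s$ above $1/2$ makes the $\epsilon$-power \emph{worse}, and your argument only produces $|\psi(x_0)| \leq C_\eta\,\epsilon^{k-1/2-\eta}(\n{\psi}_{H^k}+\epsilon^2\n{\psi}_{H^{k+2}})$ for each $\eta>0$, with $C_\eta\to\infty$ as $\eta\to 0$. The exponent $k-\tfrac12$ is genuinely the endpoint, and the naive Sobolev embedding $H^s\hookrightarrow L^\infty$ cannot reach it. Your Gagliardo--Nirenberg alternative $|\psi(x_0)|\le C\n{\psi}_{L^2}^{1/2}\n{\psi}_{H^1}^{1/2}$ does give the sharp power, but only once you can interpolate both $L^2$ and $H^1$ between $H^k$ and $H^{k+2}$; this requires $k\in[-1,0]$, so it does not cover the full stated range $k\in(-\tfrac32,\tfrac12)$.

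The paper bypasses the endpoint issue entirely by a direct Fourier computation: identify $M$ with a circle, write $|\psi(x_0)|\le\sum_\ell|c_\ell(\psi)|$, apply Cauchy--Schwarz against the weight $w_\ell = 1+|\ell|^k+\epsilon^2|\ell|^{k+2}$, and then estimate $\sum_\ell w_\ell^{-2}$ by splitting at $|\ell|=\epsilon^{-1}$ to obtain $\mathcal{O}(\epsilon^{2k-1})$. This is effectively computing the dual norm of $\delta_{x_0}$ in the $\epsilon$-weighted space and captures exactly the endpoint behaviour that interpolation through a single $H^s$ misses.
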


\begin{proof}
Identifies $M$ with a circle (by any smooth diffeomorphism) and let $(c_\ell(\psi))_{\ell \in \mathbb{Z}}$ be the Fourier coefficients of $\psi$ under this identification. We have then
\begin{equation*}
    |\psi(x_0)| \leq \sum_{\ell \in \mathbb{Z}} |c_\ell(\psi)| \leq C \left( \sum_{\ell \in \mathbb{Z}} \frac{1}{(1 + |\ell|^{k} + \epsilon^2 |\ell|^{k+2})^2} \right)^{\frac{1}{2}} \left(\n{\psi}_{H^k(M)} + \epsilon^2 \n{\psi}_{H^{k+2}(M)}\right).
\end{equation*}
Splitting the sum into $\ell$'s such that $|\ell| \leq \epsilon^{-1}$ and $|\ell| > \epsilon^{-1}$, we find that 
\begin{equation*}
    \sum_{\ell \in \mathbb{Z}} \frac{1}{(1 + |\ell|^{k} + \epsilon^2 |\ell|^{k+2})^2} \underset{\epsilon \to 0}{=} \mathcal{O}(\epsilon^{2 k - 1}),
\end{equation*}
which ends the proof of the lemma.
\end{proof}

\begin{remark}
Notice that in Lemma \ref{lemma:bound_Dirichlet}, one can take $k > 1/2$, in which case the factor $\epsilon^{k- \frac{1}{2}}$ in \eqref{eq:bound_Dirichlet} is replaced by $1$. Indeed, using the notation from the proof, the function $\psi$ belongs to $H^k$ uniformly in $\epsilon$. In the case $k = 1/2$, one may check that the factor $\epsilon^{k-\frac{1}{2}}$ becomes $ 1+ |\log \epsilon|$.
\end{remark}

In order to deal with the degeneracy of Neumann boundary condition as $\epsilon$ goes to $0$, we will construct approximate solutions to $P_{c,\epsilon}u = 0$ called ``boundary layers''. Their main feature is that they are concentrated near the boundary points of $\overline{M}_{a,b}$ and have a Neumann boundary value which is significantly larger than their Dirichlet boundary value. More precisely:

\begin{proposition}\label{proposition:boundary_layers}
Let $c_1 \in \mathbb{C}$ be such that $U(a) \neq \re c_1$ and $U(b) \neq \re c_1$. Then there is $\nu > 0$ such that for every $c \in \mathbb{D}(c_1,\nu)$ and $\epsilon \in (0,1)$ there are $C^\infty$ functions $u_{c,\epsilon}, v_{c,\epsilon} : \overline{M}_{a,b} \to \mathbb{C}$ such that the following holds:
\begin{enumerate}[label=(\roman*)]
\item for every $c \in \mathbb{D}(c_1,\nu)$ and $\epsilon \in (0,1)$, we have $u_{c,\epsilon}(a)= v_{c,\epsilon}(b) = 1, u_{c,\epsilon} \equiv 0$ on a neighbourhood of $b$ and $v_{c,\epsilon} \equiv 0$ on a neighbourhood of $a$;\label{item:bl1}
\item for every $c \in \mathbb{D}(c_1,\nu)$, we have $\epsilon u_{c,\epsilon}'(a) \underset{\epsilon \to 0}{=} \sqrt{ i \alpha  (U(a) - c)} + \mathcal{O}(\epsilon)$ and $\epsilon v_{c,\epsilon}'(b) \underset{\epsilon \to 0}{=} \sqrt{ i \alpha (U(b) - c)} + \mathcal{O}(\epsilon)$, where the square roots have respectively a negative and a positive real part, and the convergence holds uniformly in $c$;\label{item:bl2}
\item for every $k \geq 0$, there is a constant $C > 0$ such that for every $c \in \mathbb{D}(c_1,\nu)$ and $\epsilon \in (0,1)$ we have\label{item:bl3}
\begin{equation*}
\n{u_{c,\epsilon}}_{H^k(M_{a,b})} \leq C \epsilon^{\frac{1}{2}-k} \textup{ and } \n{v_{c,\epsilon}}_{H^k(M_{a,b})} \leq C \epsilon^{\frac{1}{2}-k} ;
\end{equation*}
\item for every $c \in \mathbb{D}(c_1,\nu)$, we have\label{item:bl4}
\begin{equation*}
    (P_{c,\epsilon} + U'')(u_{c,\epsilon}) \underset{\epsilon \to 0}{=} \mathcal{O}(\epsilon^\infty) \textup{ and } (P_{c,\epsilon} + U'')(v_{c,\epsilon}) \underset{\epsilon \to 0}{=} \mathcal{O}(\epsilon^\infty) ,
\end{equation*}
where the bound is in $C^\infty(\overline{M}_{a,b})$ and uniform in $c$;
\item for every compact subset of $L$ of $\overline{M}_{a,b}$ that does not contain the point $a$, all derivatives of $u_{c,\epsilon}$ are $\mathcal{O}(\epsilon^\infty)$ uniformly in $L$ and in $c \in \mathbb{D}(c_1,\nu)$;\label{item:bl5}
\item for every compact subset of $L$ of $\overline{M}_{a,b}$ that does not contain the point $b$, all derivatives of $v_{c,\epsilon}$ are $\mathcal{O}(\epsilon^\infty)$ uniformly on $L$ and in $c \in \mathbb{D}(c_1,\nu)$.\label{item:bl6}
\end{enumerate}
\end{proposition}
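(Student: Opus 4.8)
The plan is to construct the boundary layer $u_{c,\epsilon}$ explicitly via a WKB-type ansatz localized near $a$, and then obtain $v_{c,\epsilon}$ by the symmetric construction near $b$; I will only describe $u_{c,\epsilon}$. Work in the coordinate $x$ parametrizing $\overline{M}_{a,b}$ near $a$, so that the operator $P_{c,\epsilon}+U''$ becomes, after dividing by $i\alpha^{-1}\epsilon^2$, a fourth-order operator whose leading part is $(\partial_x^2-\alpha^2)^2 - i\alpha\epsilon^{-2}(U-c)(\partial_x^2-\alpha^2)$. The key observation is that near $x=a$ the ``fast'' solutions behave like $e^{\mp \lambda(x)/\epsilon}$ where $\lambda$ solves the eikonal equation $\lambda'(x)^2 = i\alpha(U(x)-c)$; since $U(a)\neq \re c_1$ we may choose $\nu$ small so that $i\alpha(U(x)-c)$ stays in $\mathbb{C}\setminus(-\infty,0]$ for $x$ near $a$ and $c\in\mathbb{D}(c_1,\nu)$, and we pick the branch of the square root with negative real part near $x=a$, so $e^{-\lambda(x)/\epsilon}$ decays. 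I would seek an ansatz
\begin{equation*}
\tilde u_{c,\epsilon}(x) = e^{-\lambda(x)/\epsilon}\Big(\sum_{j=0}^{N}\epsilon^j a_j(x)\Big),
\end{equation*}
plug it into $(P_{c,\epsilon}+U'')$, and solve the resulting transport equations recursively: the $\epsilon^0$-order equation after the eikonal cancellation is a first-order ODE for $a_0$ which we normalize by $a_0(a)=1$, and each subsequent $a_j$ solves a first-order linear ODE with source built from $a_0,\dots,a_{j-1}$. Truncating at order $N$ gives $(P_{c,\epsilon}+U'')\tilde u_{c,\epsilon} = e^{-\lambda(x)/\epsilon}\mathcal{O}(\epsilon^{N-1})$ on a fixed neighborhood of $a$; all coefficients are analytic in $x$ (using the analyticity of $U$ near $K$, and the fact that $a$ is away from $K$ so $U$ is at least smooth there — actually only smoothness is needed near $a$) and jointly smooth in $c$.

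Next I would cut off: let $\chi$ be a smooth function equal to $1$ near $a$ and supported in a small neighborhood of $a$ disjoint from a neighborhood of $b$ and from $K$, and set $u_{c,\epsilon} = \chi\,\tilde u_{c,\epsilon}$ (renormalizing so that the value at $a$ is exactly $1$, which costs nothing since $\chi\equiv 1$ there). Then $u_{c,\epsilon}\equiv 0$ near $b$, giving \ref{item:bl1}. For \ref{item:bl2}, compute $\epsilon u_{c,\epsilon}'(a) = -\lambda'(a)a_0(a) + \mathcal{O}(\epsilon) = -\sqrt{i\alpha(U(a)-c)} + \mathcal{O}(\epsilon)$ with the negative-real-part branch; here one must double-check the sign conventions so that the root indeed has negative real part (this matches the decaying layer). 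For \ref{item:bl3}, the $H^k$ bound: the function $x\mapsto e^{-\lambda(x)/\epsilon}$ is concentrated in a region of size $\sim\epsilon$ near $a$ where $\re\lambda$ is comparable to $\re\lambda'(a)(x-a)$ (a positive multiple of $x-a$), so $\|e^{-\lambda/\epsilon}\|_{L^2}\sim\epsilon^{1/2}$ and each derivative costs a factor $\epsilon^{-1}$ (from differentiating the exponential), giving $\|u_{c,\epsilon}\|_{H^k}\lesssim \epsilon^{1/2-k}$, uniformly in $c$. For \ref{item:bl4}: on the region where $\chi\equiv 1$ we have $(P_{c,\epsilon}+U'')u_{c,\epsilon} = i\alpha^{-1}\epsilon^2 e^{-\lambda/\epsilon}\mathcal{O}(\epsilon^{N-1})$, which — using that $e^{-\lambda/\epsilon}$ times any fixed power of $\epsilon^{-1}$ is still $\mathcal{O}(\epsilon^\infty)$ in $C^\infty$ on any set staying a fixed distance from $a$, and $\mathcal{O}(\epsilon^{1/2-k})$ near $a$ — can be made $\mathcal{O}(\epsilon^\infty)$ in $C^\infty(\overline M_{a,b})$ by taking $N$ large; on the region where $\chi$ varies (a fixed distance away from $a$), the commutator $[P_{c,\epsilon}+U'',\chi]\tilde u_{c,\epsilon}$ involves $\tilde u_{c,\epsilon}$ and its derivatives there, each of which carries the exponentially small factor $e^{-\re\lambda(x)/\epsilon}$ with $\re\lambda(x)\geq \delta_0>0$, hence is $\mathcal{O}(\epsilon^\infty)$. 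The same exponential smallness away from $a$ gives \ref{item:bl5}, and \ref{item:bl6} is the mirror statement for $v_{c,\epsilon}$.

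The main obstacle, I expect, is bookkeeping the dependence of all estimates on $c$ uniformly over $\mathbb{D}(c_1,\nu)$ simultaneously with the $\epsilon$-asymptotics — in particular making sure $\re\lambda'(x)$ is bounded below uniformly in $c$ near $a$ (which is where the hypothesis $U(a)\neq\re c_1$ enters, guaranteeing $i\alpha(U(x)-c)$ is bounded away from $(-\infty,0]$ after shrinking $\nu$), and then tracking the $\epsilon$-weights through the cutoff and commutator terms to land the $\mathcal{O}(\epsilon^\infty)$ in $C^\infty(\overline M_{a,b})$ rather than merely in a weighted space. A secondary subtlety is that $\lambda$ and the $a_j$ are only defined near $a$, which is exactly why the cutoff $\chi$ is introduced; one must check that the neighborhood on which the WKB construction is valid can be taken independent of $c\in\mathbb{D}(c_1,\nu)$, which again follows from continuity of $U$ and compactness once $\nu$ is fixed small.
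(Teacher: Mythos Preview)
Your approach is the same as the paper's: a WKB ansatz near $a$ followed by a smooth cutoff. The paper isolates the WKB step as Lemma~\ref{lemma:small_solution} (citing Sj\"ostrand for the construction), then sets $u_{c,\epsilon}(x)=\theta(x)\,g_{c,\epsilon}(x)\,e^{\phi_c(x)/\epsilon}$ and checks (i)--(vi) exactly along the lines you describe.

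There is, however, one real gap. You truncate the amplitude at a finite order $N$ and then assert that the error ``can be made $\mathcal{O}(\epsilon^\infty)$ in $C^\infty(\overline{M}_{a,b})$ by taking $N$ large''. But the Proposition demands a \emph{single} function $u_{c,\epsilon}$, not a family indexed by $N$; and near $a$ the exponential factor does not help (it equals $1$ at $a$), so for fixed $N$ the quantity $(P_{c,\epsilon}+U'')u_{c,\epsilon}^{(N)}$ is only $\mathcal{O}(\epsilon^{N+1-k})$ in $C^k$ on a neighbourhood of $a$, never $\mathcal{O}(\epsilon^\infty)$. The standard fix is to Borel-sum the formal amplitude $\sum_{j\ge 0}\epsilon^j a_j$ into one smooth symbol $g_{c,\epsilon}$ having the full asymptotic expansion (uniformly in $c$); this is precisely what the WKB reference in the paper's Lemma~\ref{lemma:small_solution} delivers, and is mentioned explicitly in the footnote after Proposition~\ref{proposition:boundary_layers_bis}. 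With that step in place your verification of (i)--(vi) goes through. (The sign slip on $\lambda'$---you first take it with negative real part, which would make $e^{-\lambda/\epsilon}$ grow on $[a,b]$, and later correctly use positive real part---is pure bookkeeping, as you note yourself.)
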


The main technical ingredient in the proof of Proposition \ref{proposition:boundary_layers} is the following construction.

\begin{lemma}\label{lemma:small_solution}
Let $c_1 \in \mathbb{C}$ be such that $U(a) \neq \re c_1$. Then there is $\nu > 0$ such that for every $c \in \mathbb{D}(c_1,\nu)$ there is a $C^\infty$ function $\phi_{c} : (a - \nu, a + \nu) \to \mathbb{C}$ with $\phi_{c}(a) = 0$ and $\phi_{c}'(a)= \sqrt{i \alpha  (U(a) - c)}$, where we use the square root with negative real part, and for each $\epsilon \in (0,1)$ there is a $C^\infty$ function $g_{c,\epsilon} : (a-\nu,a + \nu)$ such that $g_{c,\epsilon}(a) = 1$ and
\begin{equation*}
    (P_{c,\epsilon} + U'') \left( g_{c,\epsilon} e^{\frac{\phi_{c}}{\epsilon}} \right) = e^{\frac{\phi_{c}}{\epsilon}} \mathcal{O}(\epsilon^\infty).
\end{equation*}
Here, the $\mathcal{O}$ holds in $C^\infty$ on $(a - \nu, a + \nu)$, uniformly in $c \in \mathbb{D}(c_1,\nu)$ and $\epsilon \in (0,1)$. Similarly, the functions $\phi_{c}$ and $g_{c,\epsilon}$ are $C^\infty$ uniformly in $c \in \mathbb{D}(c_1,\nu)$ and $\epsilon \in (0,1)$.
\end{lemma}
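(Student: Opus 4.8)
This is the classical WKB / boundary-layer construction, which I would carry out as follows. Since $K\subseteq(a,b)$ and $q$ vanishes on $[a,b]$, the deformation $m$ and the function $q$ both vanish near $a$ (on the side $x\geq a$, the only one relevant for $\overline{M}_{a,b}$), so there $P_{c,\epsilon}+U''$ coincides with the model operator $L_{c,\epsilon}:=i\alpha^{-1}\epsilon^{2}(\partial_{x}^{2}-\alpha^{2})^{2}+(U-c)(\partial_{x}^{2}-\alpha^{2})$ in the real variable $x$; I would run the construction for $L_{c,\epsilon}$ on a two-sided interval $(a-\nu,a+\nu)$. Looking for $u=g_{c,\epsilon}e^{\phi_{c}/\epsilon}$, the first step is to compute the conjugated operator $\mathcal{L}_{\epsilon}:=e^{-\phi_{c}/\epsilon}L_{c,\epsilon}e^{\phi_{c}/\epsilon}$. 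With $A:=(\partial_{x}+\epsilon^{-1}\phi_{c}')^{2}-\alpha^{2}=\epsilon^{-2}(\phi_{c}')^{2}+\epsilon^{-1}(2\phi_{c}'\partial_{x}+\phi_{c}'')+(\partial_{x}^{2}-\alpha^{2})$ one has $\mathcal{L}_{\epsilon}=i\alpha^{-1}\epsilon^{2}A^{2}+(U-c)A$, which is a polynomial $\sum_{m=-2}^{2}\epsilon^{m}\mathcal{L}_{m}$ in $\epsilon$ whose coefficients $\mathcal{L}_{m}$ are $\epsilon$-independent differential operators of order $\leq 4$.

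The coefficient $\mathcal{L}_{-2}$ is the multiplication operator $(\phi_{c}')^{2}\big(i\alpha^{-1}(\phi_{c}')^{2}+(U-c)\big)$, so $\mathcal{L}_{-2}\equiv 0$ amounts to the eikonal equation $(\phi_{c}')^{2}=i\alpha(U-c)$. I would solve this together with $\phi_{c}(a)=0$: since $U(a)\neq\re c_{1}$, the quantity $U(x)-c$ stays away from $0$ for $x$ near $a$ and $c$ near $c_{1}$, hence $i\alpha(U-c)$ has a holomorphic square root there; selecting the branch with negative real part at $a$ and integrating yields $\phi_{c}$, smooth in $x$ and holomorphic in $c$, with $\phi_{c}'(a)=\sqrt{i\alpha(U(a)-c)}$ as claimed.

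Next come the transport equations, and the decisive point is that $\mathcal{L}_{-1}$ is a genuine first-order operator: using the eikonal relation $i\alpha^{-1}(\phi_{c}')^{2}=-(U-c)$ one checks that its principal part equals $-2\phi_{c}'(U-c)\partial_{x}$, which is nonvanishing near $a$. Expanding $\mathcal{L}_{\epsilon}\big(\sum_{j\geq 0}\epsilon^{j}g_{j}\big)$ and using $\mathcal{L}_{-2}\equiv 0$, the vanishing of the coefficient of $\epsilon^{n}$ reads $\mathcal{L}_{-1}g_{n+1}=-\big(\mathcal{L}_{0}g_{n}+\mathcal{L}_{1}g_{n-1}+\mathcal{L}_{2}g_{n-2}\big)$ (with $g_{k}:=0$ for $k<0$). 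This is a hierarchy of first-order linear ODEs, solved by integration together with the initial conditions $g_{0}(a)=1$ and $g_{n}(a)=0$ for $n\geq 1$; the resulting $g_{j}$ are smooth in $x$ near $a$ and holomorphic in $c$.

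Finally I would Borel-sum the formal series $\sum_{j}\epsilon^{j}g_{j}$ into a function $g_{c,\epsilon}$ that is $C^{\infty}$ in $x$ uniformly in $(c,\epsilon)$ with $g_{c,\epsilon}\sim\sum_{j}\epsilon^{j}g_{j}$ in $C^{\infty}$ uniformly in $c$; dividing by the value $g_{c,\epsilon}(a)=1+\mathcal{O}(\epsilon^{\infty})$ arranges $g_{c,\epsilon}(a)=1$ exactly. By construction $\mathcal{L}_{\epsilon}g_{c,\epsilon}$ has vanishing Taylor series at $\epsilon=0$, so $\mathcal{L}_{\epsilon}g_{c,\epsilon}=\mathcal{O}(\epsilon^{\infty})$ in $C^{\infty}$ on $(a-\nu,a+\nu)$ uniformly in $c$, whence $(P_{c,\epsilon}+U'')(g_{c,\epsilon}e^{\phi_{c}/\epsilon})=e^{\phi_{c}/\epsilon}\mathcal{L}_{\epsilon}g_{c,\epsilon}=e^{\phi_{c}/\epsilon}\mathcal{O}(\epsilon^{\infty})$. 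The only slightly delicate part is the algebraic bookkeeping that identifies $\mathcal{L}_{-1}$ and confirms its principal symbol does not vanish — precisely the step where the eikonal equation is used; the remainder is routine ODE theory and Borel summation, with uniformity in $c$ following automatically from $U(a)\neq\re c_{1}$.
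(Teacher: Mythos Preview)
Your proof is correct and is essentially the same WKB approach as the paper's, just carried out in greater detail: the paper simply observes that $\epsilon^{2}(P_{c,\epsilon}+U'')$ is semiclassical with principal symbol $p(x,\xi)=i\alpha^{-1}\xi^{4}-(U(x)-c)\xi^{2}$, checks the nondegeneracy condition $\partial_{\xi}p(a,\xi_{0}(c))\neq 0$ at the relevant characteristic point $\xi_{0}(c)$, and then cites a standard WKB reference, whereas you explicitly derive the eikonal equation $(\phi_{c}')^{2}=i\alpha(U-c)$, verify by hand that the transport operator $\mathcal{L}_{-1}$ has nonvanishing principal part $-2\phi_{c}'(U-c)\partial_{x}$, and perform the Borel summation. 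The two arguments are the same construction at different levels of explicitness.
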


\begin{proof}
Notice that $\epsilon^2(P_{c,\epsilon} + U'')$ is a semi-classical differential operator for the small parameter $\epsilon$, with principal symbol $p(x,\xi) = i\alpha^{-1} \xi^4 - (U(x)-c)\xi^2$. Let $\xi_0(c)$ denote the square root with positive imaginary part of $- i \alpha (U(a) - c)$ (it exists if $c$ is close enough to $c_1$ because $- i \alpha (U(a) - c)$ remains away from $\mathbb{R}$). We have $p(a,\xi_0(c)) = 0$ and $\frac{\partial p}{\partial \xi}(a,\xi_0(c)) = 2 i \alpha^{-1} \xi_0^3(c) \neq 0$. Consequently, the functions $g_{c,\epsilon}$ and $\phi_{c}$ may be obtained by a standard WKB construction, see for instance \cite[\S 4.1]{sjostrand_non_self_adjoint}. Notice that the derivative of $\phi_c$ at $a$ is $i \xi_0(c)$ (we do not use the standard notation for the WKB construction and include the factor $i$ directly in the phase $\phi_c$), that is the square root of $i \alpha (U(a) - c)$ with negative real part.
\end{proof}

\begin{proof}[Proof of Proposition \ref{proposition:boundary_layers}]
We construct only the function $u_{c,\epsilon}$, the construction of $v_{c,\epsilon}$ is symmetric. Let $g_{c,\epsilon}$ and $\phi_c$ be as in Lemma \ref{lemma:small_solution}. Up to making $\nu$ smaller, we may assume that $m \equiv 0$ on $(a - \nu, a + \nu)$ and that $\re \phi_c'$ is negative on $(a - \nu,a + \nu)$. Let then $\theta$ be a $C^\infty$ supported in $(a- \nu, a + \nu)$ and such that $\theta \equiv 1$ near $a$. Define then $u_{c,\epsilon}$ by 
\begin{equation*}
u_{c,\epsilon}(x) = \begin{cases} \theta(x) g_{c,\epsilon}(x) e^{\frac{\phi_c(x)}{\epsilon}}, & \textup{ if } x \in [a,a+ \nu), \\
 0, & \textup{ if } x \in \overline{M}_{a,b} \setminus [a, a+ \nu). \end{cases}
\end{equation*}
The first two points follow directly from the definition of $u_{c,\epsilon}$. To prove \ref{item:bl3} just notice that, for $k \in \mathbb{N}$, the size of the $k$th derivative of $u_{c,\epsilon}$ near $a$ is comparable to the size of $\epsilon^{-k} e^{- \frac{x}{\epsilon}}$ near $0$ in $\mathbb{R}^+$, which proves the result when $k$ is an integer. The general case follows by interpolation.

The point \ref{item:bl4} follows from Lemma \ref{lemma:small_solution} near $a$. Away from $a$, the function $u_{c,\epsilon}$ and its derivatives are $\mathcal{O}(\epsilon^\infty)$ (either because it is identically zero or using that $\re \phi_c$ is negative on the support of $\theta$ except at $a$). This last remark also proves \ref{item:bl5}.
\end{proof}

We are now ready to prove Proposition \ref{proposition:invertible_compact_perturbation}.

\begin{proof}[Proof of Proposition \ref{proposition:invertible_compact_perturbation}]
Notice that the operator $P_{c,0} + U''$ is just $(U - c)(\partial_x^2 - \alpha^2)$ We know from the proof of \ref{item:inverse_zero} of Lemma \ref{lemma:invertibility_full_circle} that there is $\delta > 0$ such that for every $c \in (c_0 - \delta, c_0 + \delta) + i (- \delta,+ \infty)$ the function $U- c + iq$ does not vanish on $M$, and thus $U -c $ does not vanish on $\overline{M}_{a,b}$. Hence, the multiplication operator $U-c : H^{k}(M_{a,b}) \to H^{k}(M_{a,b})$ is invertible. We know from Lemma \ref{lemma:inverse_laplace_segment} that $\partial_x^2 - \alpha^2 : H^{k+2}_{\D}(M_{a,b}) \to H^k(M_{a,b})$ is invertible as well. Hence, $P_{c,0} + U'':  H^{k+2}_{\D}(M_{a,b}) \to H^k(M_{a,b})$ is invertible, proving \ref{item:Fredholm_inviscid}.

Let us now move to the proof of \ref{item:Fredholm_viscous}. Fix $k > - 5/2$ and $c_1 \in (c_0 - \delta,c_0 + \delta) + i (- \delta,+ \infty)$, where $\delta$ is given by Lemma \ref{lemma:invertibility_P_full_circle}. We also assume that $\delta$ is small enough so that $U(a)$ and $U(b)$ do not belong to $(c_0 - \delta,c_0 + \delta)$. Let $\nu > 0$ be small enough so that $\overline{\mathbb{D}}(c_1,\nu) \subseteq (c_0 - \delta,c_0 + \delta) + i (- \delta,+ \infty)$ and Proposition \ref{proposition:boundary_layers} applies. We want to invert for $P_{c,\epsilon} + U''$ for $c \in \mathbb{D}(c_1,\nu)$ and $\epsilon > 0$ small enough. Notice that $P_{c,\epsilon} + U'' - i\alpha^{-1} \epsilon^2 (\partial_x^2 - \alpha^2)^2$ is an operator of order $2$, and thus compact as an operator from $H^{k+4}_{\DN}(M_{a,b})$ to $H^k(M_{a,b})$. It follows from Lemma \ref{lemma:inverse_laplace_square} that $P_{c,\epsilon} + U''$ is Fredholm of index zero. Consequently, we only need to prove that $P_{c,\epsilon} + U'': H^{k+4}_{\DN}(M_{a,b}) \to H^k(M_{a,b})$ is surjective. 

Denote by $\rest_{k+4}$ the restriction operator from $H^{k+4}(M)$ to $H^{k+4}(M_{a,b})$ and by $\ext_k : H^k(M_{a,b}) \to H^k(M)$ a bounded extension operator. Let $\epsilon_0 > 0$ be given by Lemma \ref{lemma:invertibility_P_full_circle}. For $c \in \mathbb{D}(c_1,\nu)$ and $\epsilon \in (0,\epsilon_0)$, introduce the operator $W_{c,\epsilon}^0 : f \mapsto \mathrm{rest}_{k+4} \circ (P_{c,\epsilon}+ U'')^{-1}\circ \ext_k (f)$, where the inverse for $P_{c,\epsilon} + U''$ on the circle is given by Lemma \ref{lemma:invertibility_P_full_circle}. It is bounded from $H^k(M_{a,b})$ to $H^{k+4}(M_{a,b})$, and it is a right inverse for $P_{c,\epsilon} + U''$, but it does not take value in $H^{k+4}_{\DN}(M_{a,b})$ a priori. To fix that, we will use the functions $u_{c,\epsilon}$ and $v_{c,\epsilon}$ from Proposition \ref{proposition:boundary_layers}, as well as the functions $g_a$ and $g_b$ defined by $g_a(x) = e^{-  \alpha(x-a)}$ and $g_b(x) = e^{- \alpha(b-x)}$ for $x \in M_{a,b}$. A second attempt as an inverse for $P_{c,\epsilon} + U'' $ is going to be
\begin{equation*}
    W_{c,\epsilon}^1 : f \mapsto W_{c,\epsilon}^0 f - p_a(f) g_a - p_b(f) g_b - r_a(f) u_{c,\epsilon} - r_b(f) v_{c,\epsilon},
\end{equation*}
where the coefficients $p_a(f),p_b(f),r_a(f),r_b(f)$ must satisfy
\begin{equation}\label{eq:all_booundary_conditions}
\begin{pmatrix}
A & B \\ C & D
\end{pmatrix} \begin{pmatrix}
p_a(f) \\ p_b(f) \\ r_a(f) \\ r_b(f)
\end{pmatrix} = \begin{pmatrix}
W_{c,\epsilon}^0 f(a) \\ W_{c,\epsilon}^0 f(b) \\ \epsilon (W_{c,\epsilon}^0 f)'(a) \\ \epsilon (W_{c,\epsilon}^0 f)'(b)
\end{pmatrix}.
\end{equation}
The blocks $A,B,C,D$ are given by
\begin{equation*}\begin{split}
    A = & \begin{pmatrix}
    g_a(a) & g_b(a) \\ g_a(b) & g_b(b)
    \end{pmatrix} = \begin{pmatrix}
    1 & e^{ - \alpha(b-a)} \\ e^{- \alpha(b-a)} & 1
    \end{pmatrix}, \\
    B = & \begin{pmatrix}
    u_{c,\epsilon}(a) & v_{c,\epsilon}(a) \\ u_{c,\epsilon}(b) & v_{c,\epsilon}(b)
    \end{pmatrix} = I, \\
    C = & \epsilon \begin{pmatrix}
    g_a'(a) & g_b'(a) \\ g_a'(b) & g_b'(a)
    \end{pmatrix} = \epsilon \begin{pmatrix}
     - \alpha & \alpha e^{- \alpha(b-a)} \\ - \alpha e^{ - \alpha(b-a)} & \alpha
    \end{pmatrix} = \mathcal{O}(\epsilon), \\
    D = & \epsilon \begin{pmatrix}
    u_{c,\epsilon}'(a) & v_{c,\epsilon}'(a) \\
    u_{c,\epsilon}'(b) & v_{c,\epsilon}'(b)
    \end{pmatrix} = \begin{pmatrix} \epsilon u'_{c,\epsilon}(a) & 0 \\ 0 & \epsilon v'_{c,\epsilon}(b) \end{pmatrix}.
\end{split}\end{equation*}

Notice that the matrix $A$ is invertible. The matrix $D$ is diagonal and it follows from Proposition \ref{proposition:boundary_layers} that, for $\epsilon$ small enough, the diagonal terms of $D$ are bounded away from zero. Hence, $D$ is invertible (with uniformly bounded inverse). Thus, the matrix in \eqref{eq:all_booundary_conditions} is invertible when $\epsilon$ is small enough, which allows to define $p_a(f),p_b(f),r_a(f),r_b(f)$ as bounded linear forms on $H^k(M_{a,b})$. Notice also that the inverse of the matrix in \eqref{eq:all_booundary_conditions} is uniformly bounded as $\epsilon$ goes to $0$. In addition, it follows from our assumption that $k > - \frac{5}{2}$ that the operator $f \mapsto (W_{c,\epsilon}^0 f(a), W_{c,\epsilon}^0 f(b), \epsilon(W_{c,\epsilon}^0 f)'(a), \epsilon(W_{c,\epsilon}^0 f)'(b))$ is well-defined and bounded from $H^k(M_{a,b})$ to $\mathbb{C}^4$. Using Lemma \ref{lemma:invertibility_P_full_circle}, we find that the operator norm of this operator is $\mathcal{O}(\epsilon^{-2})$. 

The operator $W_{c,\epsilon}^1$ we defined maps $H^k(M_{a,b})$ into $H_{\DN}^{k+4}(M_{a,b})$. Moreover, we have that
\begin{equation*}
    (P_{c,\epsilon} + U'')W_{c,\epsilon}^1 = I  - (P_{c,\epsilon} + U'') u_{c,\epsilon} \otimes r_a - (P_{c,\epsilon} + U'') v_{c,\epsilon} \otimes r_b.
\end{equation*}
It follows then from Proposition \ref{proposition:boundary_layers} and the estimate on $r_a$ and $r_b$ we just got that the right hand side is $I + \mathcal{O}(\epsilon^\infty)$ acting on $H^k(M_{a,b})$, which proves that $P_{c,\epsilon} + U''$ has a right inverse for $\epsilon$ small, and thus is invertible. With this argument, it seems that how small $\epsilon$ has to be depend on $k$. However, since $P_{c,\epsilon}$ is elliptic, it is simultaneously invertible or not for all values of $k > - 5/2$.

It remains to prove \ref{item:Fredholm_bound}, so we assume $k \in ( - 3/2,- 1/2)$. We look at the inverse operator for $P_{c,\epsilon} + U''$ constructed above. Notice that the inverse for $(P_{c,\epsilon} + U'')W_{c,\alpha,\epsilon}^1$ is uniformly bounded on $H^k(M_{a,b})$ as $\epsilon$ goes to $0$ (because it is obtained by Neumann series). Hence, we only need to prove that there is $C> 0$ (locally bounded in $C$ and uniform as $\epsilon$ goes to $0$) such that
\begin{equation*}
\n{W_{c,\epsilon}^1 f}_{H^{k+2}(M_{a,b})} + \epsilon^2 \n{W_{c,\epsilon}^1 f}_{H^{k+4}(M_{a,b})} \leq C \n{f}_{\mathcal{H}^k(M_{a,b})}
\end{equation*}
for every $f \in H^{k}(M_{a,b})$. It follows from Lemma \ref{lemma:invertibility_P_full_circle} that this estimate is satisfied if we replace $W_{c,\epsilon}^1 f$ by $W_{c,\epsilon}^0 f$. Hence, we can focus on $p_a(f) g_a + p_b(f) g_b + r_a(f) u_{c,\epsilon} + r_b(f) v_{c,\epsilon}$.

In \eqref{eq:all_booundary_conditions}, the blocks $A$ and $D$ are uniformly invertible, the block $B$ is uniformly bounded and $C$ is $\mathcal{O}(\epsilon)$. Thus, we must have
\begin{equation*}
\begin{pmatrix}
A & B \\ C & D
\end{pmatrix}^{-1} = \begin{pmatrix}
\mathcal{O}(1) & \mathcal{O}(1) \\ \mathcal{O}(\epsilon) & \mathcal{O}(1)
\end{pmatrix}.
\end{equation*} 
It follows that, for $f \in \mathcal{H}_{\Lambda}^{k}(a,b)$, we have
\begin{equation*}
\max(|p_a(f)|,| p_b(f)|) \leq C(|W_{c,\epsilon}^0 f(a)| + |W_{c,\epsilon}^0 f(b)| + \epsilon |(W_{c,\epsilon}^0 f)'(a)| + \epsilon |(W_{c,\epsilon}^0 f)'(b)|)
\end{equation*}
and
\begin{equation*}
\max(|r_a(f)|,|r_b(f)|) \leq C\epsilon(|W_{c,\epsilon}^0 f(a)| + |W_{c,\epsilon}^0 f(b)| + |(W_{c,\epsilon}^0 f)'(a)| + |(W_{c,\epsilon}^0 f)'(b)|)
\end{equation*}
for some constant $C > 0$ that does not depend on $\epsilon$, and is locally bounded in $c$. It follows then from Lemma \ref{lemma:invertibility_P_full_circle} and the fact that $k +2 > 1/2$ that, for some $C > 0$ that does not depend on $\epsilon$, we have
\begin{equation*}
\max(|W_{c,\epsilon}^0 f(a)|, |W_{c,\epsilon}^0 f(b)|) \leq C \n{f}_{H^k(M_{a,b})}
\end{equation*}
and, using Lemma \ref{lemma:bound_Dirichlet},
\begin{equation*}
\max(|(W_{c,\epsilon}^0 f)'(a)|, |(W_{c,\epsilon}^0 f)'(b)|) \leq C \epsilon^{k+ \frac{1}{2}} \n{f}_{H^k(M_{a,b})}.
\end{equation*}
Thus, we have (using $k > - 3/2$)
\begin{equation*}
\max(|p_a(f)|,| p_b(f)|) \leq C \n{f}_{H^k(M_{a,b})}
\end{equation*}
and (using $k < - 1/2$) 
\begin{equation*}
\max(|r_a(f)|,|r_b(f)|) \leq C \epsilon^{\frac{3}{2}+k} \n{f}_{H^k(M_{a,b})}.
\end{equation*}

Using the estimate from \ref{item:bl3} in Proposition \ref{proposition:boundary_layers}, we finally find that
\begin{equation*}
\n{p_a(f) g_a + p_b(f) g_b + r_a(f) u_{c,\epsilon} + r_b(f) u_{c,\epsilon}}_{H^{k+2}(M_{a,b})} \leq C \n{f}_{H^k(M_{a,b})}
\end{equation*}
and
\begin{equation*}
\n{p_a(f) g_a + p_b(f) g_b + r_a(f) u_{c,\epsilon} + r_b(f) u_{c,\epsilon}}_{H^{k+4}(M_{a,b})} \leq C \epsilon^{-2} \n{f}_{H^{k}(M_{a,b})},
\end{equation*}
which concludes the proof \ref{item:Fredholm_bound}.
\end{proof}

\section{Resonances as inviscid limits}\label{section:definition_resonances}

This section is dedicated to the proof of Theorem \ref{theorem:limit}. In \S \ref{subsection:definition_resonances}, we define the set $\mathcal{R}$ from Theorem \ref{theorem:limit}, and prove that if $c$ is in the range of application of Proposition \ref{proposition:invertible_compact_perturbation} but not in $\mathcal{R}$ then $P_{c,\epsilon}$ is invertible on $\overline{M}_{a,b}$ for $\epsilon$ small enough. In \S \ref{subsection:LVmethod}, we give an asymptotic description of the inverse $P_{c,\epsilon}^{-1}$ as $\epsilon$ goes to $0$. Finally, we prove Theorem \ref{theorem:limit} in \S \ref{subsection:perturbation_resonances}.

From now on, and until the end of \S \ref{section:alternative_characterizations}, we let $\delta > 0$ be as in Proposition \ref{proposition:invertible_compact_perturbation}. Up to making $\delta$ smaller, we also assume that $\delta$ is as in Lemma \ref{lemma:non_zero_function} and that the following holds
\begin{itemize}
\item item \ref{item:non_zero_ii} in Lemma \ref{lemma:non_zero_function} is satisfied;
\item the function $U-c + i q$ does not vanish on $M$ for every $c \in (c_0 - \delta, c_0 + \delta) + i (- \delta, + \infty)$, which is possible thanks to Lemma \ref{lemma:non_zero_function};
\item for every $c \in (c_0 - \delta, c_0 + \delta)$, the number $c$ is distinct from $U(a)$ and $U(b)$ and $c$ is a regular value of $U_{|[a,b]}$.
\end{itemize}

\subsection{Definition of resonances}\label{subsection:definition_resonances}

Let us define for any $k > - 3/2$ the set
\begin{equation*}\begin{split}
\mathcal{R} = \{ c \in (c_0 - \delta, c_0 + \delta) + & i (- \delta, + \infty) :  \\
&  P_{c,0} : H^{k+2}_{\D}(M_{a,b}) \to H^k(M_{a,b}) \textup{ is not invertible} \}.
\end{split}\end{equation*}
The ellipticity of $P_{c,0}$ on $\overline{M}_{a,b}$ implies that $\mathcal{R}$ does not depend on the choice of $k$ in its definition. This is the set $\mathcal{R}$ from Theorem \ref{theorem:limit}. In particular, once Theorem \ref{theorem:limit} will be proven, we will find that the set $\mathcal{R}$ does not depend on the choice of the deformation from \S \ref{section:escape_function}, up to the value of $\delta$. See Figure \ref{fig:tau}. We start by checking that $\mathcal{R}$ is discrete.

\begin{figure}[t]
   \centering
   \begin{subfigure}{0.45\textwidth}
       \centering
       \includegraphics[width=\textwidth]{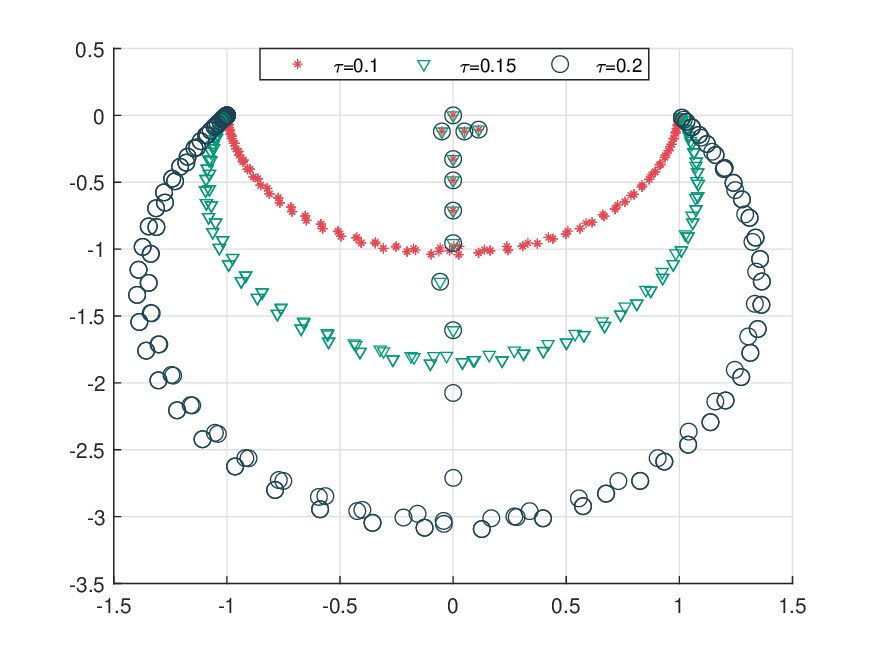}
       \caption{Resonances for a segment}
       \label{fig:sinusoid_segment_tau}
   \end{subfigure}
   \begin{subfigure}{0.45\textwidth}
       \centering
       \includegraphics[width=\textwidth]{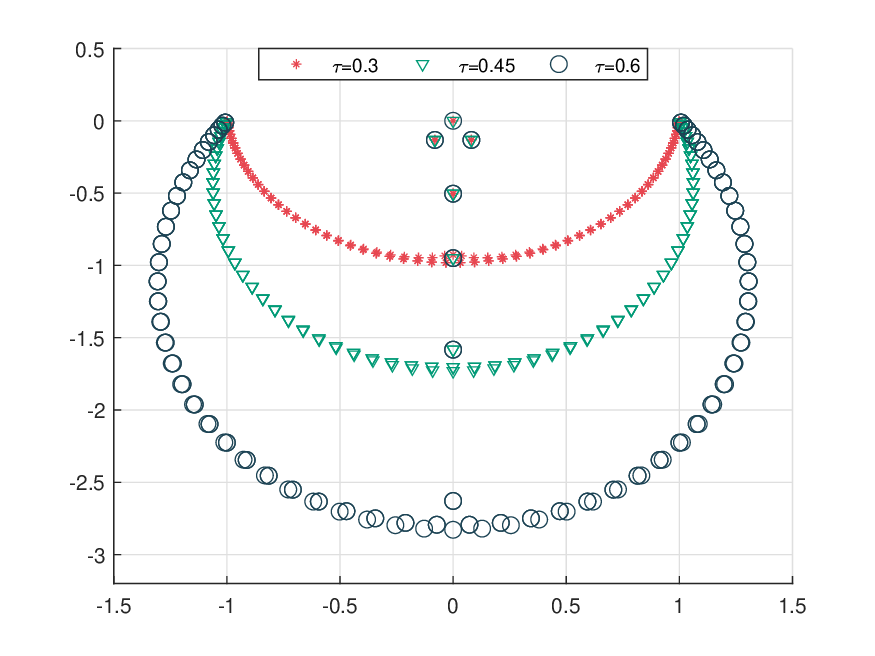}
       \caption{Resonances for a circle}
       \label{fig:sinusoid_circle_tau}
   \end{subfigure}
   \caption{Numerical computation of $\mathcal R$ with different choices of $\tau$ in the complex deformation (see \S \ref{section:escape_function} for the meaning of this parameter and Appendix \ref{section:matlab} for the numerical methods). Near $0$, the set $\mathcal R$ remains unchanged for different $\tau$. Lower curves in all colors are outside the scope of Theorem \ref{theorem:limit}. They correspond to values of the parameter $c$ for which Rayleigh equation is not elliptic on the spaces defined by complex deformation (this is the range of $U$ on the complex deformation). (A) $U(x)=\cos(3\pi x)$, $x\in [-1,1]$, $\alpha=\frac{\sqrt{35} \pi}{2}$. (B) $U(x)=\sin(3x)$, $x\in \mathbb R/2\pi\mathbb Z$, $\alpha=3$.
   }
   \label{fig:tau}
\end{figure}

\begin{lemma}\label{lemma:discrete_resonances}
The set $\mathcal{R}$ is discrete. For every $k > - 3/2$, the family of inverse operators $P_{c,0}^{-1} : H^k(M_{a,b}) \to H^{k+2}_{\D}(M_{a,b})$ is meromorphic in $c$. The coefficients of negative indexes in the Laurent expansion of $P_{c,0}^{-1}$ at a point of $\mathcal{R}$ have finite rank.
\end{lemma}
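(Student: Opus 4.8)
The plan is to obtain all three assertions from Fredholm analytic theory \cite[Appendix C]{dyatlov_zworski_book}, applied on the connected open set $\mathcal{U}_\delta := (c_0 - \delta, c_0 + \delta) + i(-\delta, +\infty)$ --- this is exactly the point anticipated in the last remark of \S\ref{subsection:basic_properties}. I need three inputs: that $c \mapsto P_{c,0}$ is holomorphic with values in the bounded operators $H^{k+2}_{\D}(M_{a,b}) \to H^k(M_{a,b})$, that each $P_{c,0}$ is Fredholm, and that $P_{c,0}$ is invertible for at least one $c \in \mathcal{U}_\delta$. Holomorphy is clear: fixing $k > -3/2$, the map $c \mapsto P_{c,0} = (U-c)(\partial_x^2 - \alpha^2) - U''$ is affine in $c$, hence entire, since $\partial_x^2 - \alpha^2$ maps $H^{k+2}_{\D}(M_{a,b})$ boundedly into $H^k(M_{a,b})$. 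For the Fredholm property I would write $P_{c,0} = (P_{c,0} + U'') - U''$: since $\delta$ is chosen so that $U - c$ does not vanish on $\overline{M}_{a,b}$ for $c \in \mathcal{U}_\delta$, the operator $P_{c,0} + U'' = (U-c)(\partial_x^2 - \alpha^2)$ is invertible by Proposition \ref{proposition:invertible_compact_perturbation}\ref{item:Fredholm_inviscid}, while $U''$ is compact as an operator $H^{k+2}_{\D}(M_{a,b}) \to H^k(M_{a,b})$ (multiplication by the smooth function $U''$ is bounded on $H^{k+2}$ and the inclusion $H^{k+2}(M_{a,b}) \hookrightarrow H^k(M_{a,b})$ is compact). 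A compact perturbation of an invertible operator is Fredholm of index zero, so $P_{c,0}$ is Fredholm of index zero on all of $\mathcal{U}_\delta$.

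To produce an invertible point I would use the factorisation $P_{c,0} = Q_{c,0}(\partial_x^2 - \alpha^2)$ with $Q_{c,0} = (U-c) - U''(\partial_x^2 - \alpha^2)^{-1}$; since $\partial_x^2 - \alpha^2 : H^{k+2}_{\D}(M_{a,b}) \to H^k(M_{a,b})$ is invertible by Lemma \ref{lemma:inverse_laplace_segment}, it suffices to invert $Q_{c,0}$ on $H^k(M_{a,b})$ for some admissible $c$. Taking $c = c_0 + iT$ with $T$ large, multiplication by $U - c$ is invertible on $H^k(M_{a,b})$ with $\n{(U-c)^{-1}} = \mathcal{O}(T^{-1})$ as $T \to +\infty$ (all derivatives of $1/(U-c)$ on $\overline{M}_{a,b}$ being $\mathcal{O}(T^{-1})$), whereas $U''(\partial_x^2 - \alpha^2)^{-1}$ is a fixed bounded operator on $H^k(M_{a,b})$; hence $Q_{c,0} = (U-c)\bigl(I - (U-c)^{-1}U''(\partial_x^2 - \alpha^2)^{-1}\bigr)$ is invertible by a convergent Neumann series once $T$ is large enough, and the corresponding $c$ lies in $\mathcal{U}_\delta$.

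With these three inputs, Fredholm analytic theory yields at once that $\mathcal{R}$ has no accumulation point in $\mathcal{U}_\delta$ (hence is discrete), that $c \mapsto P_{c,0}^{-1}$ is a meromorphic family of bounded operators $H^k(M_{a,b}) \to H^{k+2}_{\D}(M_{a,b})$ on $\mathcal{U}_\delta$, and that at each point of $\mathcal{R}$ the coefficients of negative index in the Laurent expansion of $P_{c,0}^{-1}$ are operators of finite rank; independence of $k > -3/2$ then follows from the ellipticity of $P_{c,0}$ on $\overline{M}_{a,b}$, as already noted right after the definition of $\mathcal{R}$. I do not expect any real obstacle: the entire content is bookkeeping of inputs already established in \S\ref{section:technical_estimates}, the only computation with any substance being the large-$\im c$ Neumann-series argument for the invertible point, which is the inviscid analogue of the bound used in the proof of Lemma \ref{lemma:invertibility_full_circle}\ref{item:inverse_zero}.
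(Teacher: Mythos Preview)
Your proof is correct and follows essentially the same route as the paper: compact perturbation of $P_{c,0}+U''$ (via Proposition~\ref{proposition:invertible_compact_perturbation}\ref{item:Fredholm_inviscid}) to get the Fredholm property, then the factorisation $P_{c,0}=Q_{c,0}(\partial_x^2-\alpha^2)$ together with Lemma~\ref{lemma:inverse_laplace_segment} to find an invertible point, and finally analytic Fredholm theory. The only cosmetic difference is that for the invertible point the paper simply observes $Q_{c,0}=Q_{0,0}-c$ with $Q_{0,0}$ bounded on $H^k(M_{a,b})$, so any $c$ with $|c|>\n{Q_{0,0}}$ works; your Neumann-series argument with $c=c_0+iT$ is a correct but slightly more elaborate variant of the same idea.
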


\begin{proof}
Choose any $k > - 3/2$. We know that $P_{c,0} + U'' : H^{k+2}_{\D}(M_{a,b}) \to H^k(M_{a,b})$ is invertible for every $c \in (c_0 - \delta, c_0 + \delta) + i (- \delta, + \infty)$, see Proposition \ref{proposition:invertible_compact_perturbation}. Since the multiplication operator $U'' : H^{k+2}_{\D}(M_{a,b}) \to H^k(M_{a,b})$ is compact, it follows that $c \mapsto P_{c,0}$ is a holomorphic family of Fredholm operator of index zero. The result is then a consequence of analytic Fredholm theory \cite[Appendix C]{dyatlov_zworski_book} if we can prove that there is $c \in (c_0 - \delta, c_0 + \delta) + i (-\delta, + \infty)$ such that $P_{c,0}$ is invertible. Using the factorization $P_{c,0} = Q_{c,0} (\partial_x^2 - \alpha^2)$ and Lemma \ref{lemma:inverse_laplace_segment}, we only need to prove that $Q_{c,0} = Q_{0,0} - c$ is invertible as an operator $H^k(M_{a,b}) \to H^{k}(M_{a,b})$ for some value of $c$. One only needs to take $c$ large enough since $Q_{0,0}$ is bounded on $H^k(M_{a,b})$.
\end{proof}

\begin{remark}\label{remark:multiplicity_resonances}
We will call the elements of $\mathcal{R}$ resonances. If $c_1 \in \mathcal{R}$, we can define resonant states associated to $c_1$: they are the elements of the range of the residue of $c \mapsto P_{c,\epsilon}^{-1}$ at $c = c_1$. Notice that the resonant states are elements of $C^\infty(\overline{M}_{a,b})$. Working as in \S \ref{subsection:basic_properties}, we find that the resonant states are exactly the element of $\mathfrak{S}_{c_1,0}(\overline{M}_{a,b})$ (this space is defined as in the case $\epsilon > 0$, dropping Neumann boundary condition). We will say more about them in \S \ref{section:description_resonant_states}. As in \S \ref{subsection:basic_properties}, one may check that the multiplicity of $c_1$ as an element of $\mathcal{R}$ is just the multiplicity of $c_1$ as an eigenvalue of $Q_{0,0}$.
\end{remark}

We will now prove that for $c \in (c_0 - \delta,c_0 + \delta) + i(- \delta, + \infty) \setminus \mathcal{R}$, the invertibility of the operator $P_{c,0}$ on $\overline{M}_{a,b}$ is preserved by the addition of the small but higher order operator $i\alpha^{-1} \epsilon^2(\partial_x^2 - \alpha^2)^2$. The main ingredient to deal with this singular perturbation is Proposition \ref{proposition:invertible_compact_perturbation} that allows us to reduce to the study of a family of bounded operators. Notice that Lemma \ref{lemma:invertibility_a_priori} with Proposition \ref{proposition:elliptic_eigenvalues} already proves \ref{item:no_resonance} from Theorem \ref{theorem:limit}.

\begin{lemma}\label{lemma:invertibility_a_priori}
Let $c_1 \in (c_0 - \delta, c_0 + \delta) + i (- \delta, + \infty)$ be such that $c_1 \notin \mathcal{R}$. Let $k \in (-3/2, - 1/2)$. There are $C,\epsilon_1 > 0$ such that for every $c \in \mathbb{D}(c_1,\epsilon_1)$ and $\epsilon \in [0,\epsilon_1)$ the operator $P_{c,\epsilon} : H^{k+4}_{\DN}(M_{a,b}) \to H^k(M_{a,b})$ (if $\epsilon > 0$) or $P_{c,0} : H^{k+2}_{\D}(M_{a,b}) \to H^k(M_{a,b})$ (if $\epsilon = 0$) is invertible. Moreover, for $f \in H^k(M_{a,b})$, we have
\begin{equation*}
\n{P_{c,\epsilon}^{-1} f}_{H^{k+2}(M_{a,b})} + \epsilon^2 \n{P_{c,\epsilon}^{-1} f}_{H^{k+4}(M_{a,b})} \leq C \n{f}_{H^{k}(M_{a,b})},
\end{equation*}
where it is understood that when $\epsilon = 0$ the term with a factor $\epsilon$ is set to $0$.
\end{lemma}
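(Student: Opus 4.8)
The plan is to reduce the statement, via the splitting $P_{c,\epsilon} = (P_{c,\epsilon}+U'') - U''$ and the uniform invertibility of $P_{c,\epsilon}+U''$ furnished by Proposition \ref{proposition:invertible_compact_perturbation}, to an a priori estimate that is obtained by a compactness/contradiction argument in the spirit of the proof of \ref{item:bound_inverse} in Lemma \ref{lemma:invertibility_full_circle}. Concretely, the heart of the proof will be to show: there are $C,\epsilon_1 > 0$ such that for every $c \in \mathbb{D}(c_1,\epsilon_1)$, every $\epsilon \in [0,\epsilon_1)$ and every $\psi$ in the relevant domain ($H^{k+4}_{\DN}(M_{a,b})$ if $\epsilon > 0$, $H^{k+2}_{\D}(M_{a,b})$ if $\epsilon = 0$) one has $\n{\psi}_{H^{k+2}(M_{a,b})} + \epsilon^2 \n{\psi}_{H^{k+4}(M_{a,b})} \leq C \n{P_{c,\epsilon}\psi}_{H^k(M_{a,b})}$. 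Once this is established, invertibility and the asserted bound follow quickly, so I will spend most of the effort on it.

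First I would set up the contradiction: if the estimate fails, there are $c_n \to c_1$, $\epsilon_n \in [0,1/n)$ (hence $\epsilon_n \to 0$) and $\psi_n$ in the domain with $\n{\psi_n}_{H^{k+2}} + \epsilon_n^2 \n{\psi_n}_{H^{k+4}} = 1$ but $\n{P_{c_n,\epsilon_n}\psi_n}_{H^k(M_{a,b})} \to 0$. The normalization makes $(\psi_n)$ bounded in $H^{k+2}(M_{a,b})$, so after extraction it converges weakly in $H^{k+2}$ and strongly in $H^{k+1}$ and $H^k$ (compact embedding on the bounded interval $M_{a,b}$) to some $\psi_\infty$; since $k+2 > 1/2$ the Dirichlet trace is weakly continuous, so $\psi_\infty \in H^{k+2}_{\D}(M_{a,b})$ (the Neumann trace, needing $k+1>1/2$, is not inherited, but this is irrelevant). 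Now write $(P_{c_n,\epsilon_n}+U'')\psi_n = P_{c_n,\epsilon_n}\psi_n + U''\psi_n$: the right-hand side converges to $U''\psi_\infty$ in $H^k(M_{a,b})$. On the other hand, passing $P_{c_n,\epsilon_n}\psi_n \to 0$ to the limit in $\mathcal{D}'(M_{a,b})$ and noting that $i\alpha^{-1}\epsilon_n^2(\partial_x^2-\alpha^2)^2\psi_n$ is $\mathcal{O}(\epsilon_n^2)$ in $H^{k-2}(M_{a,b})$, hence negligible, while the remaining second-order terms converge since $\psi_n \to \psi_\infty$ in $\mathcal{D}'$ and $c_n \to c_1$, I get $P_{c_1,0}\psi_\infty = 0$. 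Because $c_1 \notin \mathcal{R}$, the operator $P_{c_1,0} : H^{k+2}_{\D}(M_{a,b}) \to H^k(M_{a,b})$ is injective, so $\psi_\infty = 0$. Therefore $(P_{c_n,\epsilon_n}+U'')\psi_n \to 0$ in $H^k(M_{a,b})$, and Proposition \ref{proposition:invertible_compact_perturbation}\ref{item:Fredholm_bound} (supplemented, for the finitely many $n$ with $\epsilon_n = 0$, by \ref{item:Fredholm_inviscid} together with compactness of $\overline{\mathbb{D}}(c_1,\epsilon_1)$, with the convention that the $\epsilon^2$ term is dropped) forces $\n{\psi_n}_{H^{k+2}} + \epsilon_n^2\n{\psi_n}_{H^{k+4}} \to 0$, contradicting the normalization. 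Here one uses $k \in (-3/2,-1/2)$, which is exactly the hypothesis of the lemma and the range where Proposition \ref{proposition:invertible_compact_perturbation}\ref{item:Fredholm_bound} is available.

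With the a priori estimate in hand, the rest is routine. For $\epsilon > 0$, the operator $P_{c,\epsilon} : H^{k+4}_{\DN}(M_{a,b}) \to H^k(M_{a,b})$ is Fredholm of index zero by Proposition \ref{proposition:elliptic_eigenvalues}\ref{item:fredholm_complex}, and the estimate shows it is injective, hence invertible; for $\epsilon = 0$, $P_{c,0} = (P_{c,0}+U'') - U''$ is a compact perturbation of the invertible operator $P_{c,0}+U''$ (Proposition \ref{proposition:invertible_compact_perturbation}\ref{item:Fredholm_inviscid}), so it is Fredholm of index zero on $H^{k+2}_{\D}(M_{a,b}) \to H^k(M_{a,b})$, and injectivity again gives invertibility. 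The final bound on $P_{c,\epsilon}^{-1}$ is just the a priori estimate applied to $\psi = P_{c,\epsilon}^{-1}f$.

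The main obstacle is the passage to the limit in the contradiction argument: one must exploit that $(\psi_n)$ is bounded only in $H^{k+2}(M_{a,b})$ (uniformly in $\epsilon_n$), which is precisely enough both to annihilate the singularly-perturbed fourth-order term — the factor $\epsilon_n^2 \to 0$ absorbing the two-derivative loss — and to keep the Dirichlet trace under control because $k+2 > 1/2$; simultaneously, the upper constraint $k < -1/2$ is not an artifact but is imposed by the boundary-layer content of the inverse of $P_{c,\epsilon}+U''$, i.e. by Proposition \ref{proposition:invertible_compact_perturbation}\ref{item:Fredholm_bound}. Everything else (Fredholmness, compactness of embeddings and of multiplication by $U''$) is standard.
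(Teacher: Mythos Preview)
Your argument is correct and takes a genuinely different route from the paper. The paper factors $P_{c,\epsilon} = (I - U''(P_{c,\epsilon}+U'')^{-1})(P_{c,\epsilon}+U'')$ and then uses a resolvent identity to show directly that $(P_{c,\epsilon}+U'')^{-1}$ is close to $(P_{c_1,0}+U'')^{-1}$ as an operator on $H^k(M_{a,b})$ (with an explicit rate $\mathcal{O}(\epsilon^{k-k'} + |c-c_1|)$ for any $k' \in (-3/2,k)$), so that $I - U''(P_{c,\epsilon}+U'')^{-1}$ is invertible by Neumann series; the final bound then comes by composing this inverse with Proposition~\ref{proposition:invertible_compact_perturbation}\ref{item:Fredholm_bound}. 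Your compactness/contradiction argument, modelled on the proof of \ref{item:bound_inverse} in Lemma~\ref{lemma:invertibility_full_circle}, avoids the resolvent manipulation and interpolation step entirely and is arguably more elementary; the paper's approach, on the other hand, is constructive and yields a quantitative smallness of the perturbation, which is closer in spirit to the later Vishik--Lyusternik expansion. One small correction: you write ``the finitely many $n$ with $\epsilon_n = 0$'', but nothing prevents infinitely many $\epsilon_n$ from vanishing; simply pass to a subsequence with either all $\epsilon_n > 0$ (and apply Proposition~\ref{proposition:invertible_compact_perturbation}\ref{item:Fredholm_bound}) or all $\epsilon_n = 0$ (and use \ref{item:Fredholm_inviscid} together with the continuity in $c$ of $(P_{c,0}+U'')^{-1} = (\partial_x^2-\alpha^2)^{-1}(U-c)^{-1}$).
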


\begin{proof}
Let us start by writing
\begin{equation*}
    P_{c,\epsilon} = ( I - U'' (P_{c,\epsilon} + U'')^{-1})(P_{c,\epsilon} + U''),
\end{equation*}
where the second operator on the right maps $H^{k+4}_{\DN}(M_{a,b})$ (respectively $H^k_D(M_{a,b})$ when $\epsilon = 0$) into $H^{k}(M_{a,b})$ and the first one maps $H^{k}(M_{a,b})$ into itself. Since we know that $P_{c,\epsilon} + U''$ is invertible with this mapping property, we see that $P_{c,\epsilon}$ is invertible if and only if $ I - U'' (P_{c,\epsilon} + U'')^{-1} : H^{k}(M_{a,b}) \to H^{k}(M_{a,b})$ is invertible. In particular, we know that $I - U'' (P_{c_1,0} + U'')^{-1}$ is invertible. 

In this context, the resolvent identity becomes
\begin{equation}\label{eq:resolvent_identity}
\begin{split}
    & (P_{c_1,0} + U'')^{-1} - (P_{c,\epsilon} + U'')^{-1} \\
    & = i\alpha^{-1} \epsilon^2  (P_{c_1,0} + U'')^{-1}(\partial_x^2 - \alpha^2)^2 (P_{c,\epsilon} + U'')^{-1} \\ & \quad +  (c_1 - c)(P_{c_1,0} + U'')^{-1}(\partial_x^2 - \alpha^2) (P_{c,\epsilon} + U'')^{-1}.
\end{split}
\end{equation}
We start by estimating the first term in the right hand side. Let $k' \in (- 3/2,k)$. It follows from point \ref{item:Fredholm_bound} in Proposition \ref{proposition:invertible_compact_perturbation} and an interpolation estimate that the operator $\epsilon^{2 + k' - k} (\partial_x^2 - \alpha^2)^2 (P_{c,\epsilon} + U'')^{-1}$ is bounded uniformly in $\epsilon$ close to $0$ and $c$ close to $c_1$, as an operator from $H^{k}(M_{a,b})$ to $H^{k'}(M_{a,b})$. Since $(P_{c_1,0} + U'')^{-1}$ is bounded as an operator from $H^{k'}(M_{a,b})$  to $H^k(M_{a,b})$, we find that the first term in the right hand side of \eqref{eq:resolvent_identity} is of $\mathcal{O}(\epsilon^{k - k'})$ as an operator from $H^k(M_{a,b})$ to itself.

For the second term, we notice that $(\partial_x^2 - \alpha^2) (P_{c,\epsilon} + U'')^{-1}$ is bounded from $H^k(M_{a,b})$ to itself uniformly in $c$ and $\epsilon$, using point \ref{item:Fredholm_bound} in Proposition \ref{proposition:invertible_compact_perturbation} again. Since $(P_{c_1,0} + U'')^{-1}$ is bounded as an operator from $H^k(M_{a,b})$ to itself, we find that the second term is $\mathcal{O}(|c_1 - c|)$ as an operator from $H^k(M_{a,b})$ to itself.

Summing up, we find that $(P_{c_1,0} + U'')^{-1} - (P_{c,\epsilon} + U'')^{-1}$ is $\mathcal{O}(\epsilon^{k - k'} + |c_1 - c|)$ as an operator from $H^k(M_{a,b})$ to itself. Hence, if $\epsilon$ and $|c_1 - c|$ are small enough, we find that $I - U''(P_{c,\epsilon} + U'')^{-1}$ is invertible (by Neumann series), and thus that $P_{c,\epsilon} : H^{k+4}_{\DN}(M_{a,b}) \to H^k(M_{a,b})$ is invertible.

It remains to bound the operator norm of $P_{c,\epsilon}^{-1}$. To do so, we write
\begin{equation*}
    P_{c,\epsilon}^{-1} = (P_{c,\epsilon} + U'')^{-1} (I - U''(P_{c,\epsilon} + U'')^{-1})^{-1}.
\end{equation*}
The factor on the right is bounded uniformly, as an operator on $H^k(M_{a,b})$, when $\epsilon$ is small and $c$ near $c_1$ by the argument above. The factor on the left is dealt with using point \ref{item:Fredholm_bound} in Proposition \ref{proposition:invertible_compact_perturbation}.
\end{proof}

\subsection{Vishik--Lyusternik method}\label{subsection:LVmethod}

Let $c_1 \in (c_0 - \delta,c_0 + \delta) + i(- \delta,+ \infty) \setminus \mathcal{R}$. We want to get an approximation of $P_{c,\epsilon}^{-1}$ as $\epsilon$ goes to $0$ and for $c$ in a neighbourhood of $c_1$ (the existence of the inverse is guaranteed by Lemma \ref{lemma:invertibility_a_priori}). In order to deal with the additional boundary conditions that appear when $\epsilon > 0$, we apply the Vishik--Lyusternik method \cite{vishik_lyusternik_OG}. We start by constructing ``boundary layers'', i.e. quasimodes for $P_{c,\epsilon}$ supported near the boundary points of $\overline{M}_{a,b}$.

\begin{proposition}\label{proposition:boundary_layers_bis}
There is $\nu > 0$ such that for every $c \in \mathbb{D}(c_1,\nu)$ and $\epsilon \in (0,1)$ there are two $C^\infty$ functions $u_{c,\epsilon}^{\#}$ and $v^{\#}_{c,\epsilon}$ on $\overline{M}_{a,b}$ that satisfy points \ref{item:bl1}, \ref{item:bl2}, \ref{item:bl3}, \ref{item:bl5} and \ref{item:bl6} from Proposition \ref{proposition:boundary_layers}. The point \ref{item:bl4} from Proposition \ref{proposition:boundary_layers} is replaced by: for every $c \in \mathbb{D}(c_1,\nu)$, we have
\begin{equation*}
    P_{c,\epsilon}(u_{c,\epsilon}^{\#}) \underset{\epsilon \to 0}{=} \mathcal{O}(\epsilon^\infty) \textup{ and } P_{c,\epsilon}(v_{c,\epsilon}^{\#}) \underset{\epsilon \to 0}{=} \mathcal{O}(\epsilon^\infty) ,
\end{equation*}
where the bound is in $C^\infty(\overline{M}_{a,b})$ and uniform in $c$. For every $\epsilon \in (0,1)$, the functions $u_{c,\epsilon}^{\#}$ and $v^{\#}_{c,\epsilon}$ depend continuously on $c$ as elements of $C^\infty(\overline{M}_{a,b})$. Moreover, the quantities 
\begin{equation*}
\lambda_{c,\epsilon} \coloneqq  \frac{1}{\epsilon (u^{\#}_{c,\epsilon})'(a)} \textup{ and } \mu_{c,\epsilon} \coloneqq \frac{1}{\epsilon (v^{\#}_{c,\epsilon})'({b})}
\end{equation*}
have Taylor expansions at $\epsilon = 0$, uniformly in $c$.
\end{proposition}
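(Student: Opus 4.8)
The plan is to redo the WKB construction underlying Lemma~\ref{lemma:small_solution} and Proposition~\ref{proposition:boundary_layers}, now for $P_{c,\epsilon}$ instead of $P_{c,\epsilon}+U''$, and then read off the claimed Taylor expansions from the structure of the WKB amplitude. Since $U''$ enters $P_{c,\epsilon}$ only as a zeroth-order multiplication term, after multiplication by $\epsilon^2$ it contributes a term of size $\mathcal{O}(\epsilon^2)$ to the semiclassical operator $\epsilon^2 P_{c,\epsilon}$, hence leaves unchanged the semiclassical principal symbol $p(x,\xi)=i\alpha^{-1}\xi^4-(U(x)-c)\xi^2$, the eikonal equation, and the phase $\phi_c$; it only perturbs the transport equations at a subleading order. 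Exactly as in Lemma~\ref{lemma:small_solution}, there is therefore $\nu>0$ such that for $c\in\mathbb{D}(c_1,\nu)$ there is $\phi_c\in C^\infty((a-\nu,a+\nu))$ with $\phi_c(a)=0$ and $\phi_c'(a)=\sqrt{i\alpha(U(a)-c)}$ (negative real part), together with an amplitude which is a genuine asymptotic series $g_{c,\epsilon}\sim\sum_{j\ge0}\epsilon^j g_{c,j}$ in $C^\infty((a-\nu,a+\nu))$ with $g_{c,0}(a)=1$ and $g_{c,j}(a)=0$ for $j\ge1$, solving $P_{c,\epsilon}(g_{c,\epsilon}e^{\phi_c/\epsilon})=e^{\phi_c/\epsilon}\mathcal{O}(\epsilon^\infty)$. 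Choosing a cutoff $\theta$ supported in $(a-\nu,a+\nu)$, equal to $1$ near $a$, with $\nu$ small enough that $m\equiv0$ and $\re\phi_c'<0$ on $\mathrm{supp}\,\theta\setminus\{a\}$, we set $u_{c,\epsilon}^{\#}=\theta\, g_{c,\epsilon}\, e^{\phi_c/\epsilon}$ near $a$ and $u_{c,\epsilon}^{\#}\equiv0$ elsewhere on $\overline{M}_{a,b}$, and we build $v_{c,\epsilon}^{\#}$ symmetrically at $b$. Points~\ref{item:bl1}, \ref{item:bl3}, \ref{item:bl5}, \ref{item:bl6} and the bound $P_{c,\epsilon}(u_{c,\epsilon}^{\#})=\mathcal{O}(\epsilon^\infty)$ in $C^\infty(\overline{M}_{a,b})$ follow verbatim from the proof of Proposition~\ref{proposition:boundary_layers}, using $\re\phi_c'<0$ away from $a$ to handle the contributions of the cutoff.

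The continuity in $c$ is immediate: the eikonal ODE for $\phi_c$ and the linear transport ODEs for the $g_{c,j}$ have coefficients depending holomorphically on $c$, so $\phi_c$ and each $g_{c,j}$ depend holomorphically on $c$ in $C^\infty((a-\nu,a+\nu))$, and for fixed $\epsilon$ the function $u_{c,\epsilon}^{\#}$ is built from these by finitely many continuous operations; hence $c\mapsto u_{c,\epsilon}^{\#}$ and $c\mapsto v_{c,\epsilon}^{\#}$ are continuous into $C^\infty(\overline{M}_{a,b})$ for each fixed $\epsilon$.

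It remains to produce the Taylor expansions. Using $\phi_c(a)=0$, $\theta\equiv1$ near $a$, $g_{c,0}(a)=1$ and $g_{c,j}(a)=0$ for $j\ge1$,
\begin{equation*}
\epsilon(u_{c,\epsilon}^{\#})'(a)=\phi_c'(a)+\epsilon\, g_{c,\epsilon}'(a)\underset{\epsilon\to0}{\sim}\phi_c'(a)+\sum_{j\ge1}\epsilon^j g_{c,j-1}'(a),
\end{equation*}
which is an honest Taylor expansion in $\epsilon$, uniform in $c$; in particular $\epsilon(u_{c,\epsilon}^{\#})'(a)=\sqrt{i\alpha(U(a)-c)}+\mathcal{O}(\epsilon)$, giving point~\ref{item:bl2}. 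Its constant term $\phi_c'(a)=\sqrt{i\alpha(U(a)-c)}$ is bounded away from $0$ on $\mathbb{D}(c_1,\nu)$: since $U(a),U(b)\notin(c_0-\delta,c_0+\delta)$ (see the beginning of \S\ref{section:definition_resonances}) we have $U(a)\ne\re c_1$, so after shrinking $\nu$ the quantity $i\alpha(U(a)-c)$ stays in a compact subset of $\mathbb{C}\setminus(-\infty,0]$ for $c\in\mathbb{D}(c_1,\nu)$. Inverting a power series with invertible constant term, $\lambda_{c,\epsilon}=1/(\epsilon(u_{c,\epsilon}^{\#})'(a))$ has a Taylor expansion at $\epsilon=0$, uniformly in $c$; the argument for $\mu_{c,\epsilon}$ at $b$ is identical. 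The only point requiring care throughout is the bookkeeping of the first paragraph: one must track that the WKB amplitude is a bona fide asymptotic series in $\epsilon$, rather than merely a $C^\infty$ family, since this is exactly what turns the displayed formula into a genuine Taylor expansion. This is standard (\cite[\S4.1]{sjostrand_non_self_adjoint}); everything else is a routine repetition of the earlier arguments with the harmless lower-order term $U''$ carried along.
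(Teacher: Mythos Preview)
Your proof is correct and follows exactly the route the paper takes: the paper's own ``proof'' is a one-line remark that the construction is identical to Proposition~\ref{proposition:boundary_layers} (now with the harmless zeroth-order term $-U''$ carried along), together with a footnote pointing out that the Taylor expansions for $\lambda_{c,\epsilon},\mu_{c,\epsilon}$ come from the asymptotic-series structure of the WKB amplitude $g_{c,\epsilon}$ and that continuous dependence in $c$ survives Borel summation via \cite[Theorem~1.2.6]{hormander_book1}. Your phrase ``finitely many continuous operations'' slightly underplays the Borel-summation step needed for the continuity of $c\mapsto g_{c,\epsilon}$, but you flag the relevant subtlety in your final paragraph and this matches the paper's footnote.
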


The proof of Proposition \ref{proposition:boundary_layers_bis} is the same as Proposition \ref{proposition:boundary_layers}. The additional properties that we require for $u_{c,\epsilon}^{\#}$ and $v^{\#}_{c,\epsilon}$ follow readily from the proof\footnote{The Taylor expansions in $\epsilon$ for $\lambda_{c,\epsilon}$ and $\mu_{c,\epsilon}$ comes from the construction of $g_{c,\epsilon}$ in Lemma \ref{lemma:small_solution} as the sum of an asymptotic series of smooth functions. To see that the continuous dependence on $c$ goes through the summation of the asymptotic series, see \cite[Theorem 1.2.6]{hormander_book1} for instance.}.  Until the end of this subsection, we let $\nu > 0$ be small enough so that both Lemma \ref{lemma:invertibility_a_priori} and Proposition \ref{proposition:boundary_layers_bis} apply, and work with $c \in \mathbb{D}(c_1,\nu)$.

Now, if $f \in H^k(M_{a,b})$ with $k > 3/2$ and $\epsilon > 0$, let 
\begin{equation*}
    N_{c,\epsilon}f = \epsilon (\lambda_{c,\epsilon} f'(a) u^{\#}_{c,\epsilon} + \mu_{c,\epsilon} f'(b)  v^{\#}_{c,\epsilon}).
\end{equation*}
To approximate $P_{c,\epsilon}^{-1}$, we introduce a sequence of operators $(A_{c,\epsilon}^{(\ell)})_{\ell \geq 0}$ defined by the following inductive scheme:
\begin{equation}\label{eq:inductive_approximation}
    \begin{split}
        A_{c,\epsilon}^{(0)} & = P_{c,0}^{-1}, \\
        A_{c,\epsilon}^{(\ell+1)} & = P_{c,0}^{-1}\left( I - (P_{c,\epsilon} - P_{c,0})(I - N_{c,\epsilon}) A_{c,\epsilon}^{(\ell)}\right).
    \end{split}
\end{equation}
The idea behind this definition is the following: we use the same inductive scheme to invert the small perturbation $P_{c,\epsilon}$ of $P_{c,0}$ as in Neumann argument (or in the proof of the contraction mapping principle), but in addition we apply the operator $I - N_{c,\epsilon}$ at each step in order to satisfy (at least approximately) Neumann boundary condition. We still get a good approximation of $P_{c,\epsilon}^{-1}$ since the perturbation is made by elements of the kernel of $P_{c,\epsilon}$. We find by induction that for every $\ell \geq 0$ and $k > 1/2$ the operator $A_{c,\epsilon}^{(\ell)}$ is well-defined and bounded as an operator from $H^{k - 2 + 2 \ell}(M_{a,b})$ to $H^{k}(M_{a,b})$. Beware that we do not claim that this bound is uniform as $\epsilon$ goes to $0$. Indeed, due to the presence of boundary layers in the definition of $N_{c,\epsilon}$ the $A_{c,\epsilon}^{(\ell)}$'s have poor mapping properties as $\epsilon$ goes to $0$. In order to bypass this issue, we will give another description of the $A_{c,\epsilon}^{(\ell)}$'s.

To do so, let us introduce the operator $D$, define on $H^k(M_{a,b})$ for $k > 1/2$ by
\begin{equation*}
D(f)(x) = f(a) + \frac{x-a}{b-a}(f(b) - f(a)).
\end{equation*}
Introduce also the operator $E_{c,\epsilon} = \epsilon^{-1} D N_{c,\epsilon}$, which is explicitly given by the formula 
\begin{equation*}
E_{c,\epsilon} f (x) = f'(a) \lambda_{c,\epsilon} \frac{b-x}{b-a} + f'(b) \mu_{c,\epsilon} \frac{x-a}{b-a}.
\end{equation*}
The point of introducing the operators $D$ and $E_{c,\epsilon}$ is that
\begin{equation*}
P_{c,0}^{-1} P_{c,0} N_{c,\epsilon} = N_{c,\epsilon} - \epsilon (I - P_{c,0}^{-1} P_{c,0}) E_{c,\epsilon}.
\end{equation*}
This formula comes from the fact that if $f \in H^k(M_{a,b})$ with $k > 1/2$ then $(I-D)f \in H_{\D}^k(M_{a,b})$ and thus $P_{c,0}^{-1} P_{c,0} (I - D) f = (I- D) f$.

We can now rewrite the inductive scheme for the $A_{c,\epsilon}^{(\ell)}$'s as
\begin{equation*}
\begin{split}
A_{c,\epsilon}^{(\ell+1)}  = & P_{c,0}^{-1}\left( I - i\alpha^{-1} \epsilon^2 (\partial_x^2 - \alpha^2)^2 A_{c,\epsilon}^{(\ell)} \right) + \epsilon (I - P_{c,0}^{-1} P_{c,0}) E_{c,\epsilon} A_{c,\epsilon}^{(\ell)} \\ &   - N_{c,\epsilon} A_{c,\epsilon}^{(\ell)} + P_{c,0}^{-1} P_{c,\epsilon} N_{c,\epsilon} A_{c,\epsilon}^{(\ell)}.
\end{split}
\end{equation*}

This suggests to introduce a new sequence of operators $(B_{c,\epsilon}^{(\ell)})_{\ell \geq - 1}$ by
\begin{equation}\label{eq:formula_for_B}\begin{split}
    B_{c,\epsilon}^{(-1)} & =0, \ B_{c,\epsilon}^{(0)} = P_{c,0}^{-1}, \\
    B_{c,\epsilon}^{(\ell+1)} & = P_{c,0}^{-1}  + \left( \epsilon (I - P_{c,0}^{-1} P_{c,0}) E_{c,\epsilon} - i\alpha^{-1} \epsilon^2 P_{c,0}^{-1}(\partial_x^2 - \alpha^2)^2 \right) B_{c,\epsilon}^{(\ell)}
\end{split}\end{equation}
for $\ell \geq 0$. As above, we find that for every $\ell \geq 0$ and $k > 1/2$ the operator $B_{c,\epsilon}^{(\ell)}$ is well-defined and bounded as an operator from $H^{k - 2 + 2 \ell}(M_{a,b})$ to $H^{k}(M_{a,b})$.

We can now give an approximation for $P_{c,\epsilon}^{-1}$.

\begin{lemma}\label{lemma:description_approximation}
For every $\ell \geq 0$ and $k > \frac{1}{2}$, we have that:
\begin{enumerate}[label=(\roman*)]
\item the operator $B_{c,\epsilon}^{(\ell)}$ has a Taylor expansion at $\epsilon = 0$ as an operator from the space $H^{k - 2 + 2 \ell}(M_{a,b})$ to $H^{k}(M_{a,b})$; \label{item:expansion_partial}
\item the operator $A_{c,\epsilon}^{(\ell)} - B_{c,\epsilon}^{(\ell)} + N_{c,\epsilon} B_{c,\epsilon}^{(\ell - 1)}$ is of $\mathcal{O}(\epsilon^\infty)$ as an operator from the space $H^{k - 2 + 2 \ell}(M_{a,b})$ to $H^{k}(M_{a,b})$; \label{item:description_approximation}
\item if in addition $k < 3/2$, the operator $P_{c,\epsilon}^{-1} - A_{c,\epsilon}^{(\ell)}$ is of $\mathcal{O}(\epsilon^{\ell})$ as an operator from $H^{k + 2 \ell}(M_{a,b})$ to $H^{k}(M_{a,b})$. \label{item:quality_approximation}
\end{enumerate}
All these statements are uniform for $c \in \mathbb{D}(c_1,\nu)$.
\end{lemma}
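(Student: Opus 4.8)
The plan is to establish (i) and (ii) together by induction on $\ell$, and then to deduce (iii) from (i)--(ii) and the uniform bound of Lemma~\ref{lemma:invertibility_a_priori}. Uniformity in $c\in\mathbb{D}(c_1,\nu)$ is automatic, since every ingredient — $P_{c,0}^{-1}$, the boundary layers $u_{c,\epsilon}^{\#},v_{c,\epsilon}^{\#}$ and the scalars $\lambda_{c,\epsilon},\mu_{c,\epsilon}$ of Proposition~\ref{proposition:boundary_layers_bis}, and $P_{c,\epsilon}^{-1}$ via Lemma~\ref{lemma:invertibility_a_priori} — is uniformly controlled. The mapping properties claimed in (i)--(iii) follow by tracking Sobolev indices along the recursions \eqref{eq:inductive_approximation}--\eqref{eq:formula_for_B}: $P_{c,0}^{-1}$ gains two derivatives, $(\partial_x^2-\alpha^2)^2$ costs four, while $E_{c,\epsilon}$ and $N_{c,\epsilon}$ need their argument in some $H^s$ with $s>3/2$ but then output a smooth function; moreover each application of $E_{c,\epsilon}$, $N_{c,\epsilon}$ or $\partial_x^2-\alpha^2$ enlarges operator norms by at most a fixed negative power of $\epsilon$ (for $N_{c,\epsilon}$ this is $\mathcal{O}(\epsilon^{1/2-k})$ into $H^k$ by Proposition~\ref{proposition:boundary_layers_bis}), so that $A_{c,\epsilon}^{(\ell)}$ and $B_{c,\epsilon}^{(\ell)}$ blow up at most polynomially in $\epsilon^{-1}$ — enough to absorb any $\mathcal{O}(\epsilon^\infty)$ error.

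For (i), $B_{c,\epsilon}^{(0)}=P_{c,0}^{-1}$ is $\epsilon$-independent, and by \eqref{eq:formula_for_B} the operator $B_{c,\epsilon}^{(\ell+1)}$ is a composition of $\epsilon$-independent operators, explicit powers of $\epsilon$, the operator $E_{c,\epsilon}$ and $B_{c,\epsilon}^{(\ell)}$. Since $\lambda_{c,\epsilon}$ and $\mu_{c,\epsilon}$ have Taylor expansions at $\epsilon=0$ by Proposition~\ref{proposition:boundary_layers_bis}, so does $E_{c,\epsilon}$ as a bounded operator from $H^s(M_{a,b})$, $s>3/2$, into the finite-dimensional (hence smooth) space of affine functions; a composition of operator-valued maps each admitting a Taylor expansion again admits one, which closes the induction for (i).

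For (ii), the case $\ell=0$ is the identity $A_{c,\epsilon}^{(0)}-B_{c,\epsilon}^{(0)}+N_{c,\epsilon}B_{c,\epsilon}^{(-1)}=P_{c,0}^{-1}-P_{c,0}^{-1}+0=0$. For the inductive step, subtracting \eqref{eq:formula_for_B} from the rewritten scheme for $A_{c,\epsilon}^{(\ell+1)}$ and adding $N_{c,\epsilon}B_{c,\epsilon}^{(\ell)}$ gives
\begin{equation*}
\begin{split}
A_{c,\epsilon}^{(\ell+1)}-B_{c,\epsilon}^{(\ell+1)}+N_{c,\epsilon}B_{c,\epsilon}^{(\ell)} = {} & \left(-i\alpha^{-1}\epsilon^2 P_{c,0}^{-1}(\partial_x^2-\alpha^2)^2 + \epsilon(I-P_{c,0}^{-1}P_{c,0})E_{c,\epsilon}-N_{c,\epsilon}\right)\left(A_{c,\epsilon}^{(\ell)}-B_{c,\epsilon}^{(\ell)}\right) \\
& {}+ P_{c,0}^{-1}P_{c,\epsilon}N_{c,\epsilon}A_{c,\epsilon}^{(\ell)}.
\end{split}
\end{equation*}
Inserting the inductive hypothesis $A_{c,\epsilon}^{(\ell)}-B_{c,\epsilon}^{(\ell)}=-N_{c,\epsilon}B_{c,\epsilon}^{(\ell-1)}+\mathcal{O}(\epsilon^\infty)$, the $\mathcal{O}(\epsilon^\infty)$-terms remain $\mathcal{O}(\epsilon^\infty)$ (they are hit by operators growing only polynomially in $\epsilon^{-1}$), and $P_{c,0}^{-1}P_{c,\epsilon}N_{c,\epsilon}A_{c,\epsilon}^{(\ell)}=\mathcal{O}(\epsilon^\infty)$ because $P_{c,\epsilon}N_{c,\epsilon}=\mathcal{O}(\epsilon^\infty)$ (from $P_{c,\epsilon}u_{c,\epsilon}^{\#},P_{c,\epsilon}v_{c,\epsilon}^{\#}=\mathcal{O}(\epsilon^\infty)$ in Proposition~\ref{proposition:boundary_layers_bis}) while $A_{c,\epsilon}^{(\ell)}$ grows polynomially. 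There remains $\left(i\alpha^{-1}\epsilon^2 P_{c,0}^{-1}(\partial_x^2-\alpha^2)^2-\epsilon(I-P_{c,0}^{-1}P_{c,0})E_{c,\epsilon}+N_{c,\epsilon}\right)N_{c,\epsilon}B_{c,\epsilon}^{(\ell-1)}$, and since $N_{c,\epsilon}B_{c,\epsilon}^{(\ell-1)}h$ is a scalar combination of $u_{c,\epsilon}^{\#},v_{c,\epsilon}^{\#}$ with coefficients $\mathcal{O}(\epsilon)\n{h}$, it suffices to evaluate the bracketed operator on $u_{c,\epsilon}^{\#}$ and on $v_{c,\epsilon}^{\#}$. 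On $u_{c,\epsilon}^{\#}$, using $i\alpha^{-1}\epsilon^2(\partial_x^2-\alpha^2)^2 u_{c,\epsilon}^{\#}=P_{c,\epsilon}u_{c,\epsilon}^{\#}-P_{c,0}u_{c,\epsilon}^{\#}=-P_{c,0}u_{c,\epsilon}^{\#}+\mathcal{O}(\epsilon^\infty)$, the relations $u_{c,\epsilon}^{\#}(a)=1$, $u_{c,\epsilon}^{\#}(b)=0$ (so $Du_{c,\epsilon}^{\#}$ is the fixed affine function $\ell_a(x)=(b-x)/(b-a)$ and $u_{c,\epsilon}^{\#}-\ell_a\in H_{\D}$), $\epsilon\lambda_{c,\epsilon}(u_{c,\epsilon}^{\#})'(a)=1$, $(u_{c,\epsilon}^{\#})'(b)=\mathcal{O}(\epsilon^\infty)$, and the algebraic identity $P_{c,0}(I-P_{c,0}^{-1}P_{c,0})=0$, the three terms come out to $-u_{c,\epsilon}^{\#}+(I-P_{c,0}^{-1}P_{c,0})\ell_a+\mathcal{O}(\epsilon^\infty)$, $-(I-P_{c,0}^{-1}P_{c,0})\ell_a+\mathcal{O}(\epsilon^\infty)$ and $u_{c,\epsilon}^{\#}+\mathcal{O}(\epsilon^\infty)$, which cancel; the case of $v_{c,\epsilon}^{\#}$ is symmetric. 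This closes the induction for (ii).

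For (iii), by (i)--(ii) one has $A_{c,\epsilon}^{(\ell)}=B_{c,\epsilon}^{(\ell)}-N_{c,\epsilon}B_{c,\epsilon}^{(\ell-1)}+\mathcal{O}(\epsilon^\infty)$. Iterating \eqref{eq:formula_for_B} gives $B_{c,\epsilon}^{(\ell)}-B_{c,\epsilon}^{(\ell-1)}=\left(\epsilon(I-P_{c,0}^{-1}P_{c,0})E_{c,\epsilon}-i\alpha^{-1}\epsilon^2 P_{c,0}^{-1}(\partial_x^2-\alpha^2)^2\right)^{\ell}P_{c,0}^{-1}=\mathcal{O}(\epsilon^\ell)$ as an operator $H^{k+2\ell}(M_{a,b})\to H^{k+2}(M_{a,b})$; then, using $P_{c,0}P_{c,0}^{-1}=I$ and $P_{c,0}(I-P_{c,0}^{-1}P_{c,0})=0$, one gets $P_{c,\epsilon}B_{c,\epsilon}^{(\ell)}=I+i\alpha^{-1}\epsilon^2(\partial_x^2-\alpha^2)^2(B_{c,\epsilon}^{(\ell)}-B_{c,\epsilon}^{(\ell-1)})$, hence with $P_{c,\epsilon}N_{c,\epsilon}=\mathcal{O}(\epsilon^\infty)$ one obtains $P_{c,\epsilon}A_{c,\epsilon}^{(\ell)}=I+\mathcal{O}(\epsilon^{\ell+2})$ as an operator $H^{k+2\ell}(M_{a,b})\to H^{k-2}(M_{a,b})$. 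Next, $A_{c,\epsilon}^{(\ell)}$ takes values in the Dirichlet subspace, and (using $\epsilon\lambda_{c,\epsilon}(u_{c,\epsilon}^{\#})'(a)=1$, $(v_{c,\epsilon}^{\#})'(a)=0$ and symmetrically at $b$) its Neumann boundary values equal those of $B_{c,\epsilon}^{(\ell)}-B_{c,\epsilon}^{(\ell-1)}$ up to $\mathcal{O}(\epsilon^\infty)$, hence are $\mathcal{O}(\epsilon^\ell)$; so one subtracts from $A_{c,\epsilon}^{(\ell)}$ a correction operator $\theta_{c,\epsilon}^{(\ell)}$, valued in combinations of $u_{c,\epsilon}^{\#},v_{c,\epsilon}^{\#},g_a,g_b$ exactly as in the proof of Proposition~\ref{proposition:invertible_compact_perturbation}, such that $A_{c,\epsilon}^{(\ell)}-\theta_{c,\epsilon}^{(\ell)}$ satisfies both boundary conditions. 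Solving the associated $4\times4$ system (whose inverse is uniformly bounded with lower-left block $\mathcal{O}(\epsilon)$) against the data $(0,0,\mathcal{O}(\epsilon^{\ell+1}),\mathcal{O}(\epsilon^{\ell+1}))$, and using $P_{c,\epsilon}g_a=-U''g_a$, $P_{c,\epsilon}g_b=-U''g_b$, $P_{c,\epsilon}u_{c,\epsilon}^{\#},P_{c,\epsilon}v_{c,\epsilon}^{\#}=\mathcal{O}(\epsilon^\infty)$, one finds $\n{\theta_{c,\epsilon}^{(\ell)}}_{H^{k+2\ell}\to H^k}=\mathcal{O}(\epsilon^\ell)$ and $\n{P_{c,\epsilon}\theta_{c,\epsilon}^{(\ell)}}_{H^{k+2\ell}\to H^{k-2}}=\mathcal{O}(\epsilon^\ell)$ — and this is the only place where $k<3/2$ enters, to beat the $\mathcal{O}(\epsilon^{1/2-k})$ size of the boundary layers in $H^k$. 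Since $A_{c,\epsilon}^{(\ell)}-\theta_{c,\epsilon}^{(\ell)}$ satisfies both boundary conditions, $A_{c,\epsilon}^{(\ell)}-\theta_{c,\epsilon}^{(\ell)}=P_{c,\epsilon}^{-1}P_{c,\epsilon}(A_{c,\epsilon}^{(\ell)}-\theta_{c,\epsilon}^{(\ell)})$, whence
\begin{equation*}
P_{c,\epsilon}^{-1}-A_{c,\epsilon}^{(\ell)} = -P_{c,\epsilon}^{-1}\left(P_{c,\epsilon}A_{c,\epsilon}^{(\ell)}-I\right) + \left(P_{c,\epsilon}^{-1}P_{c,\epsilon}\theta_{c,\epsilon}^{(\ell)}-\theta_{c,\epsilon}^{(\ell)}\right),
\end{equation*}
and the uniform bound $\n{P_{c,\epsilon}^{-1}}_{H^{k-2}\to H^k}=\mathcal{O}(1)$ of Lemma~\ref{lemma:invertibility_a_priori} (applicable since $k-2\in(-3/2,-1/2)$) shows the right-hand side is $\mathcal{O}(\epsilon^\ell)$ as an operator $H^{k+2\ell}(M_{a,b})\to H^k(M_{a,b})$. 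I expect (iii) — specifically the handling of the Neumann condition, which forces the boundary-layer correction $\theta_{c,\epsilon}^{(\ell)}$ and the restriction $k<3/2$ — to be the main obstacle, the bookkeeping for (i)--(ii) being comparatively routine once the cancellations built on $P_{c,0}(I-P_{c,0}^{-1}P_{c,0})=0$ and the defining relations of $u_{c,\epsilon}^{\#},v_{c,\epsilon}^{\#},\lambda_{c,\epsilon},\mu_{c,\epsilon}$ are in place.
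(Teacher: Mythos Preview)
Your proof is correct and follows essentially the same route as the paper's: induction for (i), a recursion for $C_{c,\epsilon}^{(\ell)}=A_{c,\epsilon}^{(\ell)}-B_{c,\epsilon}^{(\ell)}+N_{c,\epsilon}B_{c,\epsilon}^{(\ell-1)}$ for (ii), and for (iii) the chain $B_{c,\epsilon}^{(\ell)}-B_{c,\epsilon}^{(\ell-1)}=\mathcal{O}(\epsilon^\ell)$, then $P_{c,\epsilon}A_{c,\epsilon}^{(\ell)}-I=\mathcal{O}(\epsilon^\ell)$, then a Neumann correction plus Lemma~\ref{lemma:invertibility_a_priori}.

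One simplification worth noting: in (iii) the paper does not solve a $4\times4$ system with $u_{c,\epsilon}^{\#},v_{c,\epsilon}^{\#},g_a,g_b$. Since $A_{c,\epsilon}^{(\ell)}$ already takes values in the Dirichlet space, it suffices to fix two \emph{fixed smooth} functions $h_a,h_b$ with $h_a(a)=h_b(b)=0$, $h_a'(a)=h_b'(b)=1$, and set $\widetilde{A}_{c,\epsilon}^{(\ell)}f=A_{c,\epsilon}^{(\ell)}f-(A_{c,\epsilon}^{(\ell)}f)'(a)h_a-(A_{c,\epsilon}^{(\ell)}f)'(b)h_b$. The correction is then $\mathcal{O}(\epsilon^\ell)$ in \emph{every} $H^m$ directly, without having to beat the $\epsilon^{1/2-k}$ size of boundary layers; the restriction $k<3/2$ is only needed at the very end, so that $k-2\in(-3/2,-1/2)$ and Lemma~\ref{lemma:invertibility_a_priori} applies. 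Your route via the $4\times4$ system works too, but this is lighter.
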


\begin{proof}
The proof of \ref{item:expansion_partial} is just an induction on $\ell$ using the definition of $B_{c,\epsilon}^{(\ell)}$.

To prove \ref{item:description_approximation}, introduce the operator $C_{c,\epsilon}^{(\ell)} = A_{c,\epsilon}^{(\ell)} - B_{c,\epsilon}^{(\ell)} + N_{c,\epsilon} B_{c,\epsilon}^{(\ell - 1)}$, and notice that it satisfies $C_{c,\epsilon}^{(0)} = 0$ and (we use the fact that $N_{c,\epsilon}$ is a projector to prove this formula)
\begin{equation*}
\begin{split}
C_{c,\epsilon}^{(\ell + 1)} = & \left(\epsilon( I - P_{c,0}^{-1} P_{c,0}) E_{c,\epsilon} - i \alpha^{-1} \epsilon^2 P_{c,0}^{-1} (\partial_x^2 - \alpha^2)^2 - N_{c,\epsilon}\right) C_{c,\epsilon}^{(\ell)} \\ &  + P_{c,0}^{-1} P_{c,\epsilon} N_{c,\epsilon} (A_{c,\epsilon}^{(\ell)} + B_{c,\epsilon}^{(\ell - 1)}).
\end{split}
\end{equation*}
One can then use this relation to prove the result by induction. Indeed, the operator in front of $C_{c,\epsilon}^{(\ell)}$ grows at most like a power of $\epsilon^{-1}$ (as an operator from $H^{k+2}(M_{a,b})$ to $H^{k}(M_{a,b})$) and so do $A_{c,\epsilon}^{(\ell)}$ and $B_{c,\epsilon}^{(\ell - 1)}$ (as operators from $H^{k + 2 \ell}(M_{a,b})$ to $H^{k + 2}(M_{a,b})$), while $P_{c,\epsilon} N_{c,\epsilon}$ is a $\mathcal{O}(\epsilon^\infty)$ from $H^{k+2}(M_{a,b})$ to $C^\infty(M_{a,b})$ (because the range of $N_{c,\epsilon}$ is contained in the span of the boundary layers from Proposition \ref{proposition:boundary_layers_bis}).

It remains to prove \ref{item:quality_approximation}. Let us start by noticing that for $\ell \geq 0$, we have
\begin{equation*}
B_{c,\epsilon}^{(\ell + 1)} - B_{c,\epsilon}^{(\ell)} = \epsilon \left( ( I - P_{c,0}^{-1} P_{c,0}) E_{c,\epsilon} - i\alpha^{-1}\epsilon^2 P_{c,0}^{-1} (\partial_x^2 - \alpha^2)^2\right) (B_{c,\epsilon}^{(\ell)} - B_{c,\epsilon}^{(\ell - 1)}).
\end{equation*}
Hence, we find by induction that 
\begin{equation*}
\n{B_{c,\epsilon}^{(\ell +1)} - B_{c,\epsilon}^{(\ell)}}_{H^{k + 2 \ell}(M_{a,b}) \to H^k(M_{a,b})} \underset{\epsilon \to 0}{=} \mathcal{O}(\epsilon^{\ell +1})
\end{equation*}
for $k > \frac{1}{2}$ and every integer $\ell \geq -1$. If $f \in H^{k + 2 \ell - 1}(M_{a,b})$, then
\begin{equation*}
A_{c,\epsilon}^{(\ell)} f = (A_{c,\epsilon}^{(\ell)} - B_{c,\epsilon}^{(\ell)} + N_{c,\epsilon} B_{c,\epsilon}^{(\ell - 1)}) f + (B_{c,\epsilon}^{(\ell)} - B_{c,\epsilon}^{(\ell - 1)}) f + ( I - N_{c,\epsilon}) B_{c,\epsilon}^{(\ell - 1)} f.
\end{equation*}
The first two terms are respectively $\mathcal{O}(\epsilon^\infty \n{f}_{H^{k + 2 \ell - 1}(M_{a,b})})$ and $\mathcal{O}(\epsilon^{\ell} \n{f}_{H^{k + 2 \ell - 1}(M_{a,b})})$ in $H^{k+1}(M_{a,b})$, and the derivative of the last one vanishes at $a$ and at $b$ (due to the definition of the operator $N_{c,\epsilon}$). Hence, we have
\begin{equation}\label{eq:control_neumann}
|(A_{c,\epsilon}^{(\ell)} f)'(a)| + |(A_{c,\epsilon}^{(\ell)} f)'(b)| \leq C_{k,\ell} \epsilon^\ell \n{f}_{H^{k + 2 \ell - 1}(M_{a,b})}.
\end{equation}
Now, let us write
\begin{equation}\label{eq:inductive_estimate}
\begin{split}
    P_{c,\epsilon} A_{c,\epsilon}^{(\ell+1)} - I 
    = & (P_{c,\epsilon} - P_{c,0})P_{c,0}^{-1} \left( I - P_{c,\epsilon}A_{c,\epsilon}^{(\ell)}\right) + (P_{c,\epsilon} - P_{c,0})P_{c,0}^{-1} P_{c,\epsilon} N_{c,\epsilon} A_{c,\epsilon}^{(\ell)} \\
    & + P_{c,\epsilon} ( I - P_{c,0}^{-1} P_{c,0}) N_{c,\epsilon} A_{c,\epsilon}^{(\ell)} \\
    = & i\alpha^{-1} \epsilon^2 (\partial_x^2 - \alpha^2)^2 P_{c,0}^{-1} \left( I - P_{c,\epsilon}A_{c,\epsilon}^{(\ell)}\right) + i\alpha^{-1} \epsilon^2 (\partial_x^2 - \alpha^2)^2 P_{c,0}^{-1} P_{c,\epsilon} N_{c,\epsilon} A_{c,\epsilon}^{(\ell)} \\
    & + \epsilon P_{c,\epsilon} ( I - P_{c,0}^{-1} P_{c,0}) E_{c,\epsilon} A_{c,\epsilon}^{(\ell)}.
\end{split}
\end{equation}
Here, we used the fact that $(I - P_{c,0}^{-1} P_{c,0}) A_{c,\epsilon}^{(\ell)} = 0$, since $A_{c,\epsilon}^{(\ell)}$ takes value in the range of $P_{c,0}^{-1}$. Hence, we find that (the constant $C$ may depend on $k$ and $\ell$)
\begin{equation*}
\n{P_{c,\epsilon} A_{c,\epsilon}^{(\ell+1)} - I}_{H^{k + 2 \ell}(M_{a,b}) \to H^{k-4}(M_{a,b})} \leq C \epsilon^2 \n{P_{c,\epsilon} A_{c,\epsilon}^{(\ell)} - I}_{H^{k + 2 \ell}(M_{a,b}) \to H^{k-2}(M_{a,b})} + C \epsilon^{\ell + 1}.
\end{equation*}
Here, we used the fact that $u_{c,\epsilon}^{\#}$ and $v_{c,\epsilon}^{\#}$ are almost in the kernel of $P_{c,\epsilon}$ to bound the second term in the last line of \eqref{eq:inductive_estimate} and we used \eqref{eq:control_neumann} to bound the last term. By induction, we find that for every $k > 1/2$ and $\ell \geq 0$, we have
\begin{equation}\label{eq:quality_approximation}
\n{P_{c,\epsilon} A_{c,\epsilon}^{(\ell)} - I}_{H^{k + 2 \ell - 2}(M_{a,b}) \to H^{k-4}(M_{a,b})} \underset{\epsilon \to 0}{=} \mathcal{O}(\epsilon^{\ell}).
\end{equation} 

Now, choose two $C^\infty$ functions $h_a$ and $h_b$ on $M_{a,b}$, supported close to $a$ and $b$ respectively, and such that $h_a(a) = h_b(b) = 0$ and $h_a'(a) = h_b'(b) = 1$. Define the operator $\widetilde{A}_{c,\epsilon}^{(\ell)}$ by
\begin{equation*}
\widetilde{A}_{c,\epsilon}^{(\ell)} f = A_{c,\epsilon}^{(\ell)} f - (A_{c,\epsilon}^{(\ell)} f)'(a) h_a - (A_{c,\epsilon}^{(\ell)} f)'(b) h_b.
\end{equation*}
It follows from \eqref{eq:control_neumann} that
\begin{equation}\label{eq:approximate_approximation}
\n{A_{c,\epsilon}^{(\ell)} - \widetilde{A}_{c,\epsilon}^{(\ell)}}_{H^{k + 2 \ell - 1}(M_{a,b}) \to H^m(M_{a,b})} \underset{\epsilon \to 0}{=} \mathcal{O}(\epsilon^\ell)
\end{equation}
for $k > 1/2$ and $m \in \mathbb{R}$. Recalling \eqref{eq:quality_approximation}, we find that
\begin{equation*}
\n{P_{c,\epsilon} \widetilde{A}_{c,\epsilon}^{(\ell)} - I}_{H^{k + 2 \ell - 1}(M_{a,b}) \to H^{k - 3}(M_{a,b})} \underset{\epsilon \to 0}{=} \mathcal{O}(\epsilon^{\ell}),
\end{equation*}
for $k > 1/2$. The advantage of $\widetilde{A}_{c,\epsilon}^{(\ell)}$ compared to $A_{c,\epsilon}^{(\ell)}$ is that it maps $H^{k + 2 \ell - 1}(M_{a,b})$ inside $H_{\DN}^{k + 1}(M_{a,b})$. Hence, recalling Lemma \ref{lemma:invertibility_a_priori} we find that, for $k \in (1/2, 3/2)$ and $\ell \geq 0$, 
\begin{equation*}
\n{\widetilde{A}_{c,\epsilon}^{(\ell)} - P_{c,\epsilon}^{-1}}_{H^{k + 2 \ell}(M_{a,b}) \to H^k(M_{a,b})} \underset{\epsilon \to 0}{=} \mathcal{O}(\epsilon^\ell),
\end{equation*}
and the result follows from \eqref{eq:approximate_approximation}.
\end{proof}

\subsection{Perturbation theory for resonances}\label{subsection:perturbation_resonances}

We have now at our disposal all the technical pieces required to prove Theorem \ref{theorem:limit}. Let $c_1 \in (c_0 - \delta,c_0 + \delta) +  i(- \delta, + \infty)$.  We do not assume anymore that $c_1 \notin \mathcal{R}$. The goal of this section is to describe $\Sigma_\epsilon$ near $c_1$ as $\epsilon$ goes to $0$. According to Lemma \ref{lemma:discrete_resonances}, there is $\nu > 0$ such that $\overline{\mathbb{D}}(c_1,\nu) \subseteq (c_0 - \delta, c_0  + \delta) + i (- \delta, + \infty)$ and $\overline{\mathbb{D}}(c_1,\nu) \setminus \set{c_1} \cap \mathcal{R} = \emptyset$. Thanks to Lemma \ref{lemma:invertibility_a_priori}, we may define for $\epsilon \geq 0$ small the finite rank operator
\begin{equation*}
    \Pi_{c_1,\epsilon} = - \frac{1}{2 i \pi} \int_{\partial \mathbb{D}(c_1,\nu)} (\partial_x^2 - \alpha^2) P_{c,\epsilon}^{-1} \mathrm{d} c.
\end{equation*}
Notice that this operator maps $H^{k}(M_{a,b})$ to itself for every $k > -1/2$, and $H^k(M_{a,b})$ to $H^{k+2}(M_{a,b})$ for every $k > -3/2$ when $\epsilon > 0$. Moreover, since it has finite rank, its range on $H^k(M_{a,b})$ does not depend on $k > -1/2$ (and it is also its range when acting on $C^\infty(\overline{M}_{a,b})$). Using the relation $P_{c,\epsilon} = Q_{c,\epsilon} (\partial_x^2 - \alpha^2)$, we find that
\begin{equation*}
        \Pi_{c_1,\epsilon} = \frac{1}{2 i \pi} \int_{\partial \mathbb{D}(c_1,\nu)} (c - Q_{0,\epsilon})^{-1} \mathrm{d} c.
\end{equation*}
It follows from \cite[III.6.5]{kato_perturbation_theory} that $\Pi_{c_1,\epsilon}$ is the spectral projector for $Q_{0,\epsilon}$ (with the domain defined in Remark \ref{remark:sigmaepsilon_as_spectrum}) associated to the spectrum within $\mathbb{D}(c_1,\nu)$, see also Remark~\ref{remark:nonspectral_spectral_theory}.

We start by studying the continuity of $\Pi_{c_1,\epsilon}$ at $\epsilon = 0$. We do not expect $\Pi_{c_1,\epsilon}$ to be smooth at $\epsilon = 0$ due to the presence of boundary layers in the asymptotic expansion for $P_{c,\epsilon}^{-1}$ as $\epsilon$ goes to $0$.

\begin{lemma}\label{lemma:regularity_projector}
There is $\epsilon_1 > 0$ such that for every $\epsilon \in (0,\epsilon_1)$ and $k,k' \in (-3/2, + \infty)$ the operator $\Pi_{c_1,\epsilon}$ is bounded from $H^{k}(M_{a,b})$ to $H^{k'}(M_{a,b})$. If $k,k' \in (-3/2,-1/2)$ then $\epsilon \mapsto \Pi_{c_1,\epsilon}$ is continuous at $\epsilon = 0$ where $\Pi_{c_1,\epsilon}$ is seen as an operator from $H^{k}(M_{a,b})$ to $H^{k'}(M_{a,b})$.
\end{lemma}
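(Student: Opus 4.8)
The plan is to reduce the statement to the asymptotic description of $P_{c,\epsilon}^{-1}$ obtained in Lemma~\ref{lemma:description_approximation} together with the uniform bounds of Lemma~\ref{lemma:invertibility_a_priori}, applied on the contour $\partial\mathbb{D}(c_1,\nu)$. First I would split the contour $\partial\mathbb{D}(c_1,\nu)$: since $\overline{\mathbb{D}}(c_1,\nu)\setminus\set{c_1}$ does not meet $\mathcal{R}$, the circle $\partial\mathbb{D}(c_1,\nu)$ lies entirely in $(c_0-\delta,c_0+\delta)+i(-\delta,+\infty)\setminus\mathcal{R}$, so Lemma~\ref{lemma:invertibility_a_priori} applies with a \emph{uniform} constant $C$ (after covering the compact circle by finitely many disks $\mathbb{D}(c_1',\epsilon_1')$ as in that lemma and taking $\epsilon_1$ to be the minimum of the corresponding radii). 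This immediately gives, for $k\in(-3/2,-1/2)$ and $\epsilon\in(0,\epsilon_1)$,
\begin{equation*}
\n{P_{c,\epsilon}^{-1}}_{H^k(M_{a,b})\to H^{k+2}(M_{a,b})}\le C,\qquad c\in\partial\mathbb{D}(c_1,\nu),
\end{equation*}
and composing with $\partial_x^2-\alpha^2:H^{k+2}(M_{a,b})\to H^k(M_{a,b})$ shows $(\partial_x^2-\alpha^2)P_{c,\epsilon}^{-1}$ is uniformly bounded on $H^k(M_{a,b})$ along the contour. Integrating over the contour of length $2\pi\nu$ then yields that $\Pi_{c_1,\epsilon}$ is bounded on $H^k(M_{a,b})$ uniformly in $\epsilon\in(0,\epsilon_1)$. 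The mapping property from $H^k$ to $H^{k'}$ for arbitrary $k,k'>-3/2$ follows because $\Pi_{c_1,\epsilon}$ has finite rank with range in $C^\infty(\overline M_{a,b})$ (it is the spectral projector for the elliptic operator $Q_{0,\epsilon}$, whose generalized eigenfunctions are smooth by elliptic regularity, as in Proposition~\ref{proposition:elliptic_eigenvalues}), and its finite-dimensional range is independent of the Sobolev index.

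For the continuity at $\epsilon=0$ I would write, for $c$ on the contour,
\begin{equation*}
(\partial_x^2-\alpha^2)P_{c,\epsilon}^{-1}-(\partial_x^2-\alpha^2)P_{c,0}^{-1}
=(\partial_x^2-\alpha^2)\bigl(P_{c,\epsilon}^{-1}-A^{(\ell)}_{c,\epsilon}\bigr)
+(\partial_x^2-\alpha^2)\bigl(A^{(\ell)}_{c,\epsilon}-P_{c,0}^{-1}\bigr),
\end{equation*}
where $A^{(\ell)}_{c,\epsilon}$ is the approximant from \eqref{eq:inductive_approximation}. Fix $\ell=1$. By Lemma~\ref{lemma:description_approximation}\ref{item:quality_approximation}, the first term on the right is $\mathcal{O}(\epsilon)$ from $H^{k+2}(M_{a,b})$ to $H^k(M_{a,b})$ for $k\in(1/2,3/2)$; but for our range $k\in(-3/2,-1/2)$ I instead use the identity $P_{c,\epsilon}^{-1}-A^{(\ell)}_{c,\epsilon}=P_{c,\epsilon}^{-1}(I-P_{c,\epsilon}A^{(\ell)}_{c,\epsilon})$ together with \eqref{eq:quality_approximation} (which holds down to $k-4$) and Lemma~\ref{lemma:invertibility_a_priori}, so that the difference $P_{c,\epsilon}^{-1}-A^{(\ell)}_{c,\epsilon}$ is $\mathcal{O}(\epsilon)$ from $H^{k'}(M_{a,b})$ to $H^{k'+2}(M_{a,b})$ for $k'\in(-3/2,-1/2)$ as well; applying $\partial_x^2-\alpha^2$ keeps it $\mathcal{O}(\epsilon)$ on $H^{k'}(M_{a,b})$. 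For the second term I use Lemma~\ref{lemma:description_approximation}\ref{item:description_approximation} to replace $A^{(\ell)}_{c,\epsilon}$ by $B^{(\ell)}_{c,\epsilon}-N_{c,\epsilon}B^{(\ell-1)}_{c,\epsilon}$ up to $\mathcal{O}(\epsilon^\infty)$, and then observe that $B^{(\ell)}_{c,\epsilon}\to P_{c,0}^{-1}$ as $\epsilon\to0$ by \ref{item:expansion_partial}, while $N_{c,\epsilon}B^{(\ell-1)}_{c,\epsilon}=N_{c,\epsilon}P_{c,0}^{-1}=\mathcal{O}(\epsilon)$ from $H^{k'}(M_{a,b})$ to $H^{k'}(M_{a,b})$ for $k'\in(-3/2,-1/2)$ — this is exactly where the restriction to negative Sobolev indices is used: by Lemma~\ref{lemma:bound_Dirichlet} the factor $(P_{c,0}^{-1}f)'(a)$ entering $N_{c,\epsilon}$ costs $\epsilon^{k'+1/2}$, the prefactor $\epsilon$ and $\lambda_{c,\epsilon}=\mathcal{O}(1)$ give another $\epsilon$, and the boundary layer $u^{\#}_{c,\epsilon}$ has $H^{k'}$-norm $\mathcal{O}(\epsilon^{1/2-k'})$ by \ref{item:bl3}, so altogether $\mathcal{O}(\epsilon^{1+\epsilon'})$ with $\epsilon'=k'+1>0$. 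Combining, $(\partial_x^2-\alpha^2)(A^{(\ell)}_{c,\epsilon}-P_{c,0}^{-1})\to0$ on $H^{k'}(M_{a,b})$, uniformly on the contour. Integrating over $\partial\mathbb{D}(c_1,\nu)$ gives $\n{\Pi_{c_1,\epsilon}-\Pi_{c_1,0}}_{H^k(M_{a,b})\to H^{k'}(M_{a,b})}\to0$ for $k,k'\in(-3/2,-1/2)$, where one first reaches $k'\to k'$ with the same index and then uses the finite-rank smoothing of $\Pi_{c_1,0}$ to pass from $H^{k}$ to $H^{k'}$.

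The main obstacle I anticipate is bookkeeping the Sobolev indices so that every estimate from Section~\ref{section:technical_estimates} and Lemma~\ref{lemma:description_approximation} is invoked in a range where it is actually valid: Lemma~\ref{lemma:description_approximation}\ref{item:quality_approximation} is stated only for $k<3/2$ and $k>1/2$, whereas here I need control on $H^{k'}$ with $k'\in(-3/2,-1/2)$, so the argument must route through the ``raw'' inverse $P_{c,\epsilon}^{-1}$ and the weak-norm identity $P_{c,\epsilon}^{-1}-A^{(\ell)}_{c,\epsilon}=P_{c,\epsilon}^{-1}(I-P_{c,\epsilon}A^{(\ell)}_{c,\epsilon})$ rather than quoting \ref{item:quality_approximation} directly, and the uniform bound of Lemma~\ref{lemma:invertibility_a_priori} (valid precisely for $k\in(-3/2,-1/2)$) is what makes this work. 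A minor point is that one must cover the compact contour $\partial\mathbb{D}(c_1,\nu)$ by finitely many of the neighbourhoods furnished by Lemma~\ref{lemma:invertibility_a_priori} to get a single $\epsilon_1$ and a single constant valid along the whole contour; this is routine once one notes that $c_1$ itself is excluded from the contour.
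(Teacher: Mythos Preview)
Your argument for the first claim (boundedness $H^k\to H^{k'}$ for each fixed $\epsilon>0$) is fine, and in fact the paper uses essentially the same finite-rank idea just after the lemma. But your argument for continuity at $\epsilon=0$ has two genuine gaps, and the paper proceeds by a quite different route.

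First, the algebraic identity $P_{c,\epsilon}^{-1}-A^{(\ell)}_{c,\epsilon}=P_{c,\epsilon}^{-1}(I-P_{c,\epsilon}A^{(\ell)}_{c,\epsilon})$ is false as stated. The inverse $P_{c,\epsilon}^{-1}$ lands in $H^{k+4}_{\DN}$, so $P_{c,\epsilon}^{-1}P_{c,\epsilon}u=u$ only when $u\in H^{k+4}_{\DN}$; but $A^{(\ell)}_{c,\epsilon}$ does \emph{not} satisfy Neumann boundary conditions (this is precisely why the paper introduces the corrected approximant $\widetilde A^{(\ell)}_{c,\epsilon}$ in the proof of Lemma~\ref{lemma:description_approximation}\ref{item:quality_approximation}). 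Second, and more seriously, you try to run the Vishik--Lyusternik approximants at indices $k'\in(-3/2,-1/2)$, but they are only set up for $k>1/2$: the operator $N_{c,\epsilon}$ requires evaluating $f'(a)$ and $f'(b)$, hence $f\in H^s$ with $s>3/2$. For $f\in H^{k'}$ with $k'<-1/2$, $P_{c,0}^{-1}f\in H^{k'+2}$ with $k'+2<3/2$, so $(P_{c,0}^{-1}f)'(a)$ is not even well-defined, and your invocation of Lemma~\ref{lemma:bound_Dirichlet} is a misreading --- that lemma trades regularity of an $\epsilon$-dependent function for powers of $\epsilon$, but $P_{c,0}^{-1}f$ is independent of $\epsilon$, so no decay can come from there.

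The paper avoids the approximants entirely for this lemma. It writes $P_{c,\epsilon}^{-1}=(P_{c,\epsilon}+U'')^{-1}R_{c,\epsilon}$ with $R_{c,\epsilon}=(I-U''(P_{c,\epsilon}+U'')^{-1})^{-1}$, expands $R_{c,\epsilon}$ as a partial Neumann series (the polynomial part of which is holomorphic in $c$ and drops out of the contour integral), and obtains the representation
\[
\Pi_{c_1,\epsilon}=-\frac{1}{2i\pi}\int_{\partial\mathbb{D}(c_1,\nu)}(\partial_x^2-\alpha^2)(P_{c,\epsilon}+U'')^{-1}\bigl(U''(P_{c,\epsilon}+U'')^{-1}\bigr)^{\ell}R_{c,\epsilon}\,\mathrm{d}c.
\]
Iterating $(P_{c,\epsilon}+U'')^{-1}$ gains $4\ell$ derivatives, which handles the first claim directly. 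For continuity, the key input is that $(P_{c,\epsilon}+U'')^{-1}$ is continuous at $\epsilon=0$ as a map $H^k\to H^s$ for $k\in(-3/2,-1/2)$ and $s<k+2$ (this was established inside the proof of Lemma~\ref{lemma:invertibility_a_priori} via Proposition~\ref{proposition:invertible_compact_perturbation}); plugging this into the representation with $\ell=1$ gives the result with no reference to boundary layers or to $N_{c,\epsilon}$.
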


\begin{proof}
Recall from the proof of Lemma \ref{lemma:invertibility_a_priori} the operator
\begin{equation*}
R_{c,\epsilon} \coloneqq ( I - U'' (P_{c,\alpha,\epsilon} + U'')^{-1})^{-1}
\end{equation*}
which is well-defined for $c \notin \Sigma_\epsilon$ (if $\epsilon > 0$) or $c \notin \mathcal{R}$ (if $\epsilon = 0$). For every $\ell \geq 0$, we have
\begin{equation*}
R_{c,\epsilon} = \sum_{p = 0}^{\ell-1} (U'' (P_{c,\epsilon} + U'')^{-1})^{p} + (U'' (P_{c,\epsilon} + U'')^{-1})^{\ell} R_{c,\epsilon}.
\end{equation*}
It follows from Proposition \ref{proposition:invertible_compact_perturbation} that the sum in the right hand side is holomorphic in $c$ on a neighbourhood of $\overline{\mathbb{D}}(c_1,\nu)$, and thus we have
\begin{equation}\label{eq:spectral_projector_gain_derivative}
\Pi_{c_1,\epsilon} = -\frac{1}{2 i \pi} \int_{\partial \mathbb{D}(c_1,\nu)} (\partial_x^2 - \alpha^2) (P_{c,\epsilon} + U'')^{-1} (U'' (P_{c,\epsilon} + U'')^{-1})^{\ell} R_{c,\epsilon} \mathrm{d}c.
\end{equation}
Since the operator $(P_{c,\epsilon} + U'')^{-1}$ maps $H^k(M_{a,b})$ to $H^{k+4}(M_{a,b})$ for every $k > - 3/2$, we find that $\Pi_{c_1,\epsilon}$ maps $H^{k}(M_{a,b})$ to $H^{k + 2 + 4 \ell}(M_{a,b})$ for every $k > - 3/2$ and $\ell \in \mathbb{N}$. This proves the first part of the lemma (beware that we do not claim any uniformity in $\epsilon$ in this first part).

Let us move to the second part of the lemma. Working as in the proof of Lemma~\ref{lemma:invertibility_a_priori}, we find that for every $k \in (- 3/2, - 1/2)$ and $s < k +2$, the map $\epsilon \mapsto (P_{c,\epsilon} + U'')^{-1}$ is continuous at $\epsilon = 0$, when $(P_{c,\epsilon} + U'')^{-1}$ is seen as an operator from $H^k(M_{a,b})$ to $H^{s}(M_{a,b})$. It implies in particular that $\epsilon \mapsto R_{c,\epsilon}$ is continuous at $\epsilon=  0$ as an operator on $H^{k}(M_{a,b})$. Using these continuity properties and \eqref{eq:spectral_projector_gain_derivative} with $\ell = 1$, we get the second part of the lemma.
\end{proof}

A consequence of Lemma \ref{lemma:regularity_projector} is that $\Pi_{c_1,\epsilon}$ is continuous at $\epsilon = 0$ as an endomorphism of $H^{-1}(M_{a,b})$, which implies that the ranks of $\Pi_{c_1,\epsilon}$ is constant near $\epsilon = 0$, see \cite[Problem 3.21 p.156]{kato_perturbation_theory}. Let $N$ denote the rank of $\Pi_{c_1,\epsilon}$ for $\epsilon$ near $0$, and let $f_1,\dots,f_N$ be a basis for the range of $\Pi_{c_1,0}$. Notice that Lemma \ref{lemma:regularity_projector} implies that $f_1,\dots,f_N$ belong to $C^\infty(\overline{M}_{a,b})$. Let us now choose $\chi_1,\dots, \chi_N$ in $C^\infty_c(M_{a,b})$ such that the matrix $(\langle f_i, \chi_j \rangle_{L^2(M_{a,b})})_{1 \leq i,j \leq N}$ is invertible. Let us focus on the fact that we choose the $\chi_j$'s supported away from $a$ and $b$. This is what we will use to get rid of the lack of smoothness of $\Pi_{c_1,\epsilon}$ at $\epsilon = 0$. Indeed, this lack of smoothness is produced by the boundary layers, that are $\mathcal{O}(\epsilon^\infty)$ away from the boundary points of $\overline{M}_{a,b}$. 

For $\epsilon > 0$ small, it follows from Lemma \ref{lemma:regularity_projector} that $\Pi_{c_1,\epsilon} f_1, \dots, \Pi_{c_1,\epsilon} f_N$ is a basis for the range of $\Pi_{c_1,\epsilon}$. Moreover, the range of $\Pi_{c_1,\epsilon}$ is a sum of generalized eigenspaces for $Q_{0,\epsilon}$, and in particular it is stable under the action of this operator. Let $\mathcal{Q}(\epsilon)$ denote the matrix of $Q_{0,\epsilon}$ acting on the range of $\Pi_{c_1,\epsilon}$ in the basis $\Pi_{c_1,\epsilon} f_1, \dots, \Pi_{c_1,\epsilon} f_N$. Namely, writing $\mathcal{Q}(\epsilon) = (q_{i,j}(\epsilon))_{1 \leq i,j \leq N}$, we have for $ i = 1,\dots,N$
\begin{equation*}
    Q_{0,\epsilon} \Pi_{c_1,\epsilon} f_i = \sum_{j = 1}^N q_{i,j}(\epsilon) \Pi_{c_1,\epsilon} f_j.
\end{equation*}
Introduce also the matrices 
\begin{equation*}
    M(\epsilon) = (\langle \Pi_{c_1,\epsilon} f_i, \chi_j \rangle)_{1 \leq i ,j \leq N} \textup{ and } L(\epsilon) = (\langle Q_{0,\epsilon} \Pi_{c_1,\epsilon} f_i, \chi_j \rangle)_{1 \leq i,j \leq N}.
\end{equation*}
It follows from Lemma \ref{lemma:regularity_projector} that $M(\epsilon)$ is invertible for $\epsilon$ small enough, and we have $\mathcal{Q}(\epsilon) = L(\epsilon) M(\epsilon)^{-1}$. Using this formula, we show that:

\begin{lemma}\label{lemma:existence_expansion}
The matrix $\mathcal{Q}(\epsilon)$ has a Taylor expansion at $\epsilon = 0$.
\end{lemma}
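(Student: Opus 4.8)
The plan is to prove Lemma~\ref{lemma:existence_expansion} via the identity $\mathcal{Q}(\epsilon) = L(\epsilon) M(\epsilon)^{-1}$, reducing everything to showing that the two finite matrices $M(\epsilon)$ and $L(\epsilon)$ have Taylor expansions at $\epsilon = 0$. Since matrix inversion is an analytic operation near an invertible matrix (and $M(0)$ is invertible), once we know $M(\epsilon)$ and $L(\epsilon)$ are Taylor-expandable it follows immediately that $\mathcal{Q}(\epsilon)$ is too. So the whole task is to analyze the scalar quantities $\langle \Pi_{c_1,\epsilon} f_i, \chi_j \rangle$ and $\langle Q_{0,\epsilon} \Pi_{c_1,\epsilon} f_i, \chi_j \rangle$ as $\epsilon \to 0$.

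The key point — and the reason the $\chi_j$'s were chosen supported away from $a$ and $b$ — is that against such test functions the boundary layers contribute only $\mathcal{O}(\epsilon^\infty)$, so the lack of smoothness of $\Pi_{c_1,\epsilon}$ at $\epsilon = 0$ does not see the pairing. Concretely, I would write $\Pi_{c_1,\epsilon}$ using the contour integral formula and plug in the approximation of $P_{c,\epsilon}^{-1}$ furnished by the Vishik--Lyusternik construction: for any $\ell$, Lemma~\ref{lemma:description_approximation}\ref{item:quality_approximation} gives $P_{c,\epsilon}^{-1} = A_{c,\epsilon}^{(\ell)} + \mathcal{O}(\epsilon^\ell)$ (on appropriate Sobolev scales), and Lemma~\ref{lemma:description_approximation}\ref{item:description_approximation} writes $A_{c,\epsilon}^{(\ell)} = B_{c,\epsilon}^{(\ell)} - N_{c,\epsilon} B_{c,\epsilon}^{(\ell - 1)} + \mathcal{O}(\epsilon^\infty)$, where $B_{c,\epsilon}^{(\ell)}$ has a genuine Taylor expansion at $\epsilon = 0$ by Lemma~\ref{lemma:description_approximation}\ref{item:expansion_partial}, and $N_{c,\epsilon}$ has range in the span of the boundary layers $u^{\#}_{c,\epsilon}, v^{\#}_{c,\epsilon}$. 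Thus, up to $\mathcal{O}(\epsilon^\ell)$, the operator $\Pi_{c_1,\epsilon}$ equals a contour integral of $(\partial_x^2 - \alpha^2)$ applied to $B_{c,\epsilon}^{(\ell)}$ and to $N_{c,\epsilon} B_{c,\epsilon}^{(\ell-1)}$. When we now pair against a fixed $\chi_j \in C^\infty_c(M_{a,b})$ supported away from $\{a,b\}$, the boundary-layer term is $\mathcal{O}(\epsilon^\infty)$ by points \ref{item:bl5}--\ref{item:bl6} of Proposition~\ref{proposition:boundary_layers} (inherited by $u^{\#}, v^{\#}$ in Proposition~\ref{proposition:boundary_layers_bis}), while $\langle (\partial_x^2 - \alpha^2) B_{c,\epsilon}^{(\ell)} f_i, \chi_j\rangle$ has a Taylor expansion in $\epsilon$ to the same order $\ell$, uniformly in $c$ on $\partial\mathbb{D}(c_1,\nu)$, hence so does its contour integral. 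Letting $\ell$ range over all integers, each matrix entry $M(\epsilon)_{ij}$ agrees with a polynomial in $\epsilon$ up to $\mathcal{O}(\epsilon^\ell)$ for every $\ell$, which is exactly the definition of having a Taylor expansion; for $L(\epsilon)$ one argues identically, using that $Q_{0,\epsilon}\Pi_{c_1,\epsilon} = (\partial_x^2-\alpha^2)\bigl(I - U''(\partial_x^2-\alpha^2)^{-1} - i\alpha^{-1}\epsilon^2(\partial_x^2-\alpha^2)\bigr)\Pi_{c_1,\epsilon}$ modulo the analogous manipulations, or more simply by observing that $L(\epsilon)_{ij} = \langle \Pi_{c_1,\epsilon} f_i, Q_{0,\epsilon}^\top \chi_j\rangle$ with $Q_{0,\epsilon}^\top \chi_j$ still a fixed-support smooth quantity depending polynomially on $\epsilon$, and reusing the pairing argument.

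The main obstacle is bookkeeping rather than conceptual: one must make sure the Sobolev indices line up so that Lemma~\ref{lemma:description_approximation} applies (it requires $k \in (1/2, 3/2)$ for the $\mathcal{O}(\epsilon^\ell)$ statement, and $f_i \in C^\infty(\overline{M}_{a,b})$ is fine since smooth functions sit in every $H^{k+2\ell}$), and that the error terms $\mathcal{O}(\epsilon^\ell)$ and $\mathcal{O}(\epsilon^\infty)$ are uniform in $c$ over the compact contour $\partial\mathbb{D}(c_1,\nu)$ so that integrating in $c$ preserves them — this uniformity is precisely what is asserted at the end of Lemma~\ref{lemma:description_approximation} and in Propositions~\ref{proposition:boundary_layers}--\ref{proposition:boundary_layers_bis}. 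Once these are in place, the conclusion is formal: a function that is approximated to order $\ell$ by a polynomial for every $\ell$, with coefficients independent of $\ell$ (which one checks from the fact that $B^{(\ell+1)}_{c,\epsilon} - B^{(\ell)}_{c,\epsilon} = \mathcal{O}(\epsilon^{\ell+1})$, so the low-order Taylor coefficients stabilize), has a Taylor expansion at $\epsilon = 0$, and hence so does $\mathcal{Q}(\epsilon) = L(\epsilon)M(\epsilon)^{-1}$.
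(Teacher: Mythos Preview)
Your approach is essentially the same as the paper's: reduce to Taylor-expandability of $M(\epsilon)$ and $L(\epsilon)$, decompose $P_{c,\epsilon}^{-1}$ via Lemma~\ref{lemma:description_approximation} into a $B$-part with a genuine $\epsilon$-expansion, a boundary-layer part killed by the interior support of $\chi_j$, and $\mathcal{O}(\epsilon^\ell)$ remainders. The bookkeeping (Sobolev indices, uniformity in $c$ over the contour, stabilization of coefficients via $B^{(\ell+1)}-B^{(\ell)} = \mathcal{O}(\epsilon^{\ell+1})$) is exactly right.

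There is one genuine gap, in your treatment of $L(\epsilon)$. Your ``simpler'' suggestion, writing $L(\epsilon)_{ij} = \langle \Pi_{c_1,\epsilon} f_i, Q_{0,\epsilon}^\top \chi_j\rangle$ and claiming $Q_{0,\epsilon}^\top \chi_j$ is still compactly supported away from $\{a,b\}$, does not work: $Q_{0,\epsilon}$ contains the nonlocal term $U''(\partial_x^2-\alpha^2)^{-1}$, whose adjoint applied to $\chi_j$ is \emph{not} compactly supported, so you lose precisely the localization needed to kill the boundary-layer contribution. The paper avoids this by observing that, since functions in the range of $P_{c,\epsilon}^{-1}$ satisfy Dirichlet boundary conditions, one has $Q_{0,\epsilon}(\partial_x^2-\alpha^2)P_{c,\epsilon}^{-1} = P_{0,\epsilon}P_{c,\epsilon}^{-1}$, so that
\[
-\chi_j Q_{0,\epsilon}\Pi_{c_1,\epsilon} f_i = \frac{1}{2i\pi}\int_{\partial\mathbb{D}(c_1,\nu)} \chi_j\, P_{0,\epsilon}\, P_{c,\epsilon}^{-1} f_i\,\mathrm{d}c,
\]
and $P_{0,\epsilon}$ is a genuine differential operator with coefficients polynomial in $\epsilon$, hence local and compatible with the same decomposition as for $M(\epsilon)$. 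Your first, vaguer suggestion for $L(\epsilon)$ can be made to work along these lines, but the adjoint route should be dropped.
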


\begin{proof}
We only need to prove that $M(\epsilon)$ and $L(\epsilon)$ have Taylor expansions at $\epsilon = 0$. We will prove for instance that $M(\epsilon)$ has a Taylor expansion at $\epsilon = 0$. The case of $L(\epsilon)$ is similar. 

Let us fix $i$ and $j$ in $\set{1,\dots,N}$ and let $\tilde{\chi} \in C_c^\infty(M_{a,b})$ be such that $\chi \equiv 1$ on the support of $\chi_j$. Notice that 
\begin{equation*}
\langle \Pi_{c_1,\epsilon} f_i, \chi_j \rangle = \langle \chi_j \Pi_{c_1,\epsilon} f_i, \tilde{\chi} \rangle. 
\end{equation*}
Let us write, using the operators\footnote{These operators are a priori only defined locally in $c$, but we can cover $\partial \mathbb{D}(c_1,\epsilon)$ by a finite number of disks on which these operators are defined, and eventually use a partition of unity (in $c$) to glue the operators defined on different disks in order to work with operators that depend continuously on $c$.} from Lemma \ref{lemma:description_approximation}, that for $\ell \in \mathbb{N}$,
\begin{equation}\label{eq:expansion_projector}
\begin{split}
- \chi_j \Pi_{c_1,\epsilon} f_i 
= & \frac{1}{2 i \pi} \int_{\partial \mathbb{D}(c_1,\nu)} \chi_j (\partial_x^2 - \alpha^2) (P_{c,\epsilon}^{-1} - A_{c,\epsilon}^{(\ell)}) f_i \mathrm{d} c \\ 
& + \frac{1}{2 i \pi} \int_{\partial \mathbb{D}(c_1,\nu)} \chi_j (\partial_x^2 - \alpha^2) (A_{c,\epsilon}^{(\ell)} - B_{c,\epsilon}^{(\ell)} + N_{c,\epsilon} B_{c,\epsilon}^{(\ell - 1)}) f_i \mathrm{d} c \\ 
&  + \frac{1}{2 i \pi} \int_{\partial \mathbb{D}(c_1,\nu)} \chi_j (\partial_x^2 - \alpha^2) B_{c,\epsilon}^{(\ell)} f_i \mathrm{d} c \\ 
& + \frac{1}{2 i \pi} \int_{\partial \mathbb{D}(c_1,\nu)} \chi_j (\partial_x^2 - \alpha^2) N_{c,\epsilon} B_{c,\epsilon}^{(\ell - 1)} f_i \mathrm{d} c.
\end{split}
\end{equation}
Recalling that $f_i \in C^\infty(\overline{M}_{a,b})$, it follows from Lemma \ref{lemma:description_approximation} that the first two terms in the right hand side are $\mathcal{O}(\epsilon^\ell)$ in $H^{-1}(M_{a,b})$, and that the third term admits a Taylor expansion at $\epsilon = 0$. To deal with the last term, notice that the range of $N_{c,\epsilon}$ is made of $C^\infty$ functions that decay exponentially fast as $\epsilon$ goes to $0$ away from $a$ and $b$. Since $\chi_j$ is compactly supported in $M_{a,b}$, we find that this last term is actually a $\mathcal{O}(\epsilon^\infty)$ in $C^\infty(\overline{M}_{a,b})$. Since $\tilde{\chi}$ belongs to $C_c^\infty(M_{a,b})$, we find that $\langle \Pi_{c_1,\epsilon} f_i, \chi_j \rangle $ has a Taylor expansion at $\epsilon = 0$ up to order $\ell-1$, which ends the proof of the lemma since $\ell$ is arbitrary.

When dealing with $L(\epsilon)$, the only difference is the presence of the operator $Q_{0,\epsilon}$. In order to deal with it, one just needs to notice that 
\begin{equation*}
    - \chi_j Q_{0,\epsilon} \Pi_{c_1,\epsilon} f_i = \frac{1}{2 i \pi} \int_{\partial \mathbb{D}(c_1,\nu)} \chi_j Q_{0,\epsilon} (\partial_x^2 - \alpha^2) P_{c,\epsilon}^{-1} f_i \mathrm{d}c = \frac{1}{2 i \pi} \int_{\partial \mathbb{D}(c_1,\nu)} \chi_j P_{0,\epsilon} P_{c,\epsilon}^{-1} f_i \mathrm{d}c.
\end{equation*}
Here, we used the fact that functions in the range of $P_{c,\epsilon}^{-1}$ satisfy Dirichlet boundary condition. One can then perform the same decomposition as in \eqref{eq:expansion_projector} and studies each term as above. Indeed, since $P_{0,\epsilon}$ is a differential operator (with coefficients that are polynomials in $\epsilon$), it preserves the localization property of the boundary layers that appear in the definition of $N_{c,\epsilon}$.
\end{proof}

There is little left to conclude the proof of Theorem \ref{theorem:limit}.

\begin{proof}[Proof of Theorem \ref{theorem:limit}]
The operator $\Pi_{c_1,\epsilon}$ is the spectral projector of $Q_{0,\epsilon}$ associated to the spectrum within $\mathbb{D}(c_1,\nu)$. With the point of view from \S \ref{subsection:basic_properties}, one can also find that $(\partial_x^2 - \alpha^2)^{-1} \Pi_{c_1,\epsilon}(\partial_x^2 - \alpha^2)$ is a projector with range $\oplus_{c \in \mathbb{D}(c_1,\nu)} \mathfrak{S}_{c,\epsilon}(\overline{M}_{a,b})$, where there are only finitely many non-trivial terms in the sum. Hence, the spectrum of the matrix $\mathcal{Q}(\epsilon)$ is just the spectrum of $Q_{0,\epsilon}$ within $\mathbb{D}(c_1,\nu)$. Here, we mean the spectrum of $Q_{0,\epsilon}$ on $M_{a,b}$ (with the ``boundary'' condition $\partial_x (\partial_x^2 - \alpha^2)^{-1} u(a) = \partial_x (\partial_x^2 - \alpha^2)^{-1}u(b) = 0$ in the case $\epsilon > 0$). It follows then from Proposition \ref{proposition:elliptic_eigenvalues} (see also Remark \ref{remark:nonspectral_spectral_theory}) that, for $\epsilon > 0$ small, the spectrum of $\mathcal{Q}(\epsilon)$ is just $\Sigma_\epsilon \cap \mathbb{D}(c_1,\nu)$, multiplicities taken into account. At $\epsilon = 0$, the spectrum of $\mathcal{Q}(0)$ is just $c_1$, with multiplicity the multiplicity of $c_1$ as an element of $\mathcal{R}$, see Remark \ref{remark:multiplicity_resonances}.

It follows then from Lemma \ref{lemma:existence_expansion} that the polynomial
\begin{equation*}
\mathcal{P}(\epsilon) = \det( X - \mathcal{Q}(\epsilon)) = \prod_{c \in \Sigma_\epsilon \cap \mathbb{D}(c_1,\nu)} (X - c)
\end{equation*}
has a Taylor expansion at $\epsilon = 0$. In this formula, the elements of $\Sigma_\epsilon$ are obviously repeated according to multiplicity. Since $\mathcal{P}(0) = (X - c_1)^m$ where $m$ is the multiplicity of $c_1$ as an element of $\mathcal{R}$ (in particular $\mathcal{P}(\epsilon) = 1$ for $\epsilon$ small if $m =0$), Theorem \ref{theorem:limit} follows. 
\end{proof}

\subsection{First order perturbation of a simple resonance}

The goal of this section is to prove the following formula for the first order perturbation of a simple resonance.

\begin{proposition}\label{proposition:first_order_perturbation}
Assume that $c_1 \in \mathcal{R}$ is simple and for $\epsilon > 0$ small, let $c(\epsilon)$ denote the element of $\Sigma_\epsilon$ close to $c_1$ given by Theorem \ref{theorem:limit}. Let $\dot{c}(0)$ denote the first order perturbation of $c_1$, i.e. we have $c(\epsilon) \underset{\epsilon \to 0}{=} c_1+ \dot{c}(0) \epsilon + \mathcal{O}(\epsilon^2)$. Then,
\begin{equation*}
    \dot{c}(0) = \frac{\lambda_{c_1,0} (\partial_x \psi_0(a))^2 - \mu_{c_1,0} (\partial_x \psi_0(b))^2}{\int_{M_{a,b}} \psi_0 \frac{(\partial_x^2 - \alpha^2) \psi_0}{U - c_1}\mathrm{d}x}
\end{equation*}
where 
\begin{itemize}
    \item $\psi_0$ is any non-zero resonant state associated to the resonance $c_1$ (it implies that the integral in the denominator is non-zero);
    \item $\lambda_{c_1,0}$ and $\mu_{c_1,0}$ are the limits of $\lambda_{c_1,\epsilon}$ and $\mu_{c_1,\epsilon}$ as $\epsilon$ goes to $0$, that is $\lambda_{c_1,0} = \frac{1}{\sqrt{i \alpha(U(a) - c_1)}}$ and $\mu_{c_1,0} = \frac{1}{\sqrt{i \alpha (U(b) - c_1)}}$, where the square roots have respectively a negative and a positive real part.
\end{itemize}
\end{proposition}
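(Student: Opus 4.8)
The plan is to pair the eigenvalue equation with a fixed ``dual resonant state'' and to keep track of the boundary contributions produced by the boundary layers. Since $c_1\in\mathcal R$ is simple, Remark~\ref{remark:multiplicity_resonances} shows that $\ker\bigl(P_{c_1,0}\colon H^2_{\D}(M_{a,b})\to L^2(M_{a,b})\bigr)$ is one--dimensional and that $c\mapsto P_{c,0}^{-1}$ has a simple pole at $c_1$; fix a nonzero resonant state $\psi_0\in C^\infty(\overline M_{a,b})$. Since $U-c_1$ does not vanish on $\overline M_{a,b}$, the function $\eta_0:=\psi_0/(U-c_1)$ is smooth, satisfies $\eta_0(a)=\eta_0(b)=0$ and, using the Rayleigh equation $(\partial_x^2-\alpha^2)\psi_0=U''\psi_0/(U-c_1)$, also satisfies $P_{c_1,0}^{\top}\eta_0=0$; hence $\eta_0$ spans the one--dimensional cokernel of $P_{c_1,0}$. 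For $\epsilon>0$ small let $\phi_\epsilon\in H^4_{\DN}(M_{a,b})$ solve $P_{c(\epsilon),\epsilon}\phi_\epsilon=0$, normalized by $\langle\phi_\epsilon,\chi\rangle=\langle\psi_0,\chi\rangle$ for a fixed $\chi\in C_c^\infty(M_{a,b})$ with $\langle\psi_0,\chi\rangle\neq0$ (here $\langle f,g\rangle=\int_{M_{a,b}}fg\,\mathrm dx$ is the bilinear pairing, with respect to which $^{\top}$ is taken). Expanding $\langle P_{c(\epsilon),\epsilon}\phi_\epsilon,\eta_0\rangle=0$, writing $P_{c(\epsilon),0}=P_{c_1,0}-(c(\epsilon)-c_1)(\partial_x^2-\alpha^2)$, and integrating by parts in the term $\langle P_{c_1,0}\phi_\epsilon,\eta_0\rangle$ (which vanishes, since $P_{c_1,0}^{\top}\eta_0=0$ and all boundary terms vanish by the Dirichlet conditions on $\phi_\epsilon$ and $\psi_0$), we obtain the exact identity, valid for all small $\epsilon>0$,
\begin{equation*}
(c(\epsilon)-c_1)\,\bigl\langle(\partial_x^2-\alpha^2)\phi_\epsilon,\eta_0\bigr\rangle=i\alpha^{-1}\epsilon^2\,\bigl\langle(\partial_x^2-\alpha^2)^2\phi_\epsilon,\eta_0\bigr\rangle .
\end{equation*}

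The next step is to describe $\phi_\epsilon$ as $\epsilon\to0$. By the proof of Theorem~\ref{theorem:limit}, $\mathbb C\,\phi_\epsilon=\mathfrak S_{c(\epsilon),\epsilon}(\overline M_{a,b})$ is the range of $(\partial_x^2-\alpha^2)^{-1}\Pi_{c_1,\epsilon}(\partial_x^2-\alpha^2)$, which by Lemma~\ref{lemma:regularity_projector} varies continuously down to $\epsilon=0$, where it is $\mathbb C\,\psi_0$; together with the normalization this gives $\phi_\epsilon\to\psi_0$ in $H^s(M_{a,b})$ for some $s\in(\tfrac12,\tfrac32)$ and in $C^\infty_{\mathrm{loc}}(a,b)$. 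Moreover the Vishik--Lyusternik description of $P_{c,\epsilon}^{-1}$ from \S\ref{subsection:LVmethod} (with $N_{c,\epsilon}$ built from the layers of Proposition~\ref{proposition:boundary_layers_bis}) shows that near each endpoint $\phi_\epsilon$ differs from $\psi_0$ by a boundary layer whose leading coefficient at $a$ is fixed by $\phi_\epsilon(a)=\phi_\epsilon'(a)=0$ together with the bulk convergence $\phi_\epsilon\to\psi_0$; this yields
\begin{equation*}
\epsilon\,\phi_\epsilon''(a)\;\underset{\epsilon\to0}{\to}\;-\psi_0'(a)\sqrt{i\alpha(U(a)-c_1)}=-\frac{\psi_0'(a)}{\lambda_{c_1,0}},\qquad \epsilon\,\phi_\epsilon''(b)\;\underset{\epsilon\to0}{\to}\;-\psi_0'(b)\sqrt{i\alpha(U(b)-c_1)}=-\frac{\psi_0'(b)}{\mu_{c_1,0}},
\end{equation*}
with the square roots having negative, respectively positive, real part. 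Since $\eta_0(a)=\eta_0(b)=0$ and the layers concentrate at the endpoints, $\bigl\langle(\partial_x^2-\alpha^2)\phi_\epsilon,\eta_0\bigr\rangle\to\int_{M_{a,b}}\psi_0\,\frac{(\partial_x^2-\alpha^2)\psi_0}{U-c_1}\,\mathrm dx$; this limit is nonzero, since pairing $\eta_0$ with the order--$0$ term $P_{c_1,0}R_0-(\partial_x^2-\alpha^2)R_{-1}=I$ of the Laurent expansion $P_{c,0}^{-1}=(c-c_1)^{-1}R_{-1}+R_0+\cdots$, and using $P_{c_1,0}^{\top}\eta_0=0$ and $\mathrm{Ran}\,R_{-1}=\mathbb C\,\psi_0$, forces $\langle(\partial_x^2-\alpha^2)\psi_0,\eta_0\rangle\neq0$. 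In particular the denominator in the statement is nonzero.

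For the numerator, one more integration by parts---using additionally $\phi_\epsilon'(a)=\phi_\epsilon'(b)=0$---leaves only a boundary term,
\begin{equation*}
\bigl\langle(\partial_x^2-\alpha^2)^2\phi_\epsilon,\eta_0\bigr\rangle=\bigl\langle\phi_\epsilon,(\partial_x^2-\alpha^2)^2\eta_0\bigr\rangle-\bigl[\phi_\epsilon''\,\eta_0'\bigr]_a^b .
\end{equation*}
Dividing the identity above by $\epsilon$: the term $i\alpha^{-1}\epsilon\,\langle\phi_\epsilon,(\partial_x^2-\alpha^2)^2\eta_0\rangle$ is $\mathcal O(\epsilon)$ (as $\phi_\epsilon$ is bounded in $L^2$), while using $\eta_0'(a)=\psi_0'(a)/(U(a)-c_1)$, $\eta_0'(b)=\psi_0'(b)/(U(b)-c_1)$, the limits of the previous paragraph, and $\lambda_{c_1,0}^2=\bigl(i\alpha(U(a)-c_1)\bigr)^{-1}$, $\mu_{c_1,0}^2=\bigl(i\alpha(U(b)-c_1)\bigr)^{-1}$, one gets
\begin{equation*}
-\,i\alpha^{-1}\,\epsilon\,\bigl[\phi_\epsilon''\,\eta_0'\bigr]_a^b\;\underset{\epsilon\to0}{\to}\;\lambda_{c_1,0}\,(\partial_x\psi_0(a))^2-\mu_{c_1,0}\,(\partial_x\psi_0(b))^2 .
\end{equation*}
Passing to the limit in the divided identity and dividing by the (nonzero) limit of the denominator gives $\dot c(0)=\lim_{\epsilon\to0}(c(\epsilon)-c_1)/\epsilon$ equal to the asserted quotient.

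The main obstacle is the boundary--layer analysis of the eigenfunction $\phi_\epsilon$: one must extract from the Vishik--Lyusternik construction not only $\phi_\epsilon\to\psi_0$ in the bulk but the sharp endpoint asymptotics of $\epsilon\phi_\epsilon''$, i.e.\ the precise leading coefficient of the boundary layer forced by the Dirichlet and Neumann conditions on $\phi_\epsilon$. Everything else is integration by parts and elementary algebra with the square roots defining $\lambda_{c_1,0}$ and $\mu_{c_1,0}$.
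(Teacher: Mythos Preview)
Your argument is correct and takes a different, more direct route than the paper. The paper proceeds in two steps: first it establishes (Lemma~\ref{lemma:perturbation_eigenvector}) that the eigenfunction $\psi_\epsilon$ has a \emph{bulk} expansion $\psi_0+\epsilon\dot\psi_0+\mathcal O(\epsilon^2)$ in $C^\infty$ away from the endpoints, and identifies $P_{c_1,0}\dot\psi_0=\Pi_{c_1,0}P_{c_1,0}E_{c_1,0}\psi_0$; then (Lemma~\ref{lemma:formula_projector}) it derives an explicit integral formula for the rank-one projector $\Pi_{c_1,0}$, and reads off $\dot c(0)$ by comparing $P_{c_1,0}\dot\psi_0=\dot c(0)(\partial_x^2-\alpha^2)\psi_0$ with the projector formula and then integrating by parts. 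Your approach bypasses both lemmas by pairing the exact identity $P_{c(\epsilon),\epsilon}\phi_\epsilon=0$ against the explicit dual state $\eta_0=\psi_0/(U-c_1)$: the resulting \emph{exact} formula $(c(\epsilon)-c_1)\langle(\partial_x^2-\alpha^2)\phi_\epsilon,\eta_0\rangle=i\alpha^{-1}\epsilon^2\langle(\partial_x^2-\alpha^2)^2\phi_\epsilon,\eta_0\rangle$ is very clean, and after integration by parts reduces the numerator to a single boundary term $-i\alpha^{-1}\epsilon[\phi_\epsilon''\eta_0']_a^b$. What your route buys is transparency (one sees directly that the boundary layers are the sole source of the $\mathcal O(\epsilon)$ correction) at the cost of needing \emph{endpoint} rather than \emph{bulk} asymptotics for the eigenfunction.

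That said, you correctly flag the one nontrivial step: the limits $\epsilon\phi_\epsilon''(a)\to-\psi_0'(a)/\lambda_{c_1,0}$ and $\epsilon\phi_\epsilon''(b)\to-\psi_0'(b)/\mu_{c_1,0}$ are asserted rather than proved. These do follow from the paper's Vishik--Lyusternik expansion, but not quite for free: Lemma~\ref{lemma:description_approximation} only controls $P_{c,\epsilon}^{-1}-A_{c,\epsilon}^{(\ell)}$ in $H^k$ with $k<3/2$, which does not directly control second derivatives at the endpoints. To close the gap you should argue as follows: by Lemma~\ref{lemma:description_approximation}(ii) and the contour integral, $\psi_\epsilon=(\text{bulk})-N_{c_1,\epsilon}\psi_0+\mathcal O(\epsilon^2)$ in $H^1$, where the bulk is uniformly $C^\infty(\overline{M}_{a,b})$ and equals $\psi_0+\mathcal O(\epsilon)$ there, while $N_{c_1,\epsilon}\psi_0=\epsilon\lambda_{c_1,\epsilon}\psi_0'(a)u_{c_1,\epsilon}^{\#}+\epsilon\mu_{c_1,\epsilon}\psi_0'(b)v_{c_1,\epsilon}^{\#}$; differentiating the explicit WKB form $u_{c_1,\epsilon}^{\#}=g_{c_1,\epsilon}e^{\phi_{c_1}/\epsilon}$ at $x=a$ gives $(u_{c_1,\epsilon}^{\#})''(a)=\epsilon^{-2}(\phi_{c_1}'(a))^2(1+\mathcal O(\epsilon))=\epsilon^{-2}\lambda_{c_1,0}^{-2}(1+\mathcal O(\epsilon))$, and hence $\epsilon\psi_\epsilon''(a)\to -\psi_0'(a)\lambda_{c_1,0}^{-1}$ as claimed. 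The remaining point---that the $H^1$ remainder does not spoil the pointwise second derivative---follows by applying $P_{c(\epsilon),\epsilon}$ to the remainder, which is $\mathcal O(\epsilon^2)$ in $H^{-3}$ near $a$, and then using interior elliptic regularity for the fourth-order operator on a fixed neighbourhood of $a$ (where there is no loss of ellipticity) together with the vanishing Dirichlet and Neumann data. With this addition your proof is complete.
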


From now on and until the end of the subsection, we assume that $c_1 \in \mathcal{R}$ is simple. In order to prove Proposition \ref{proposition:first_order_perturbation}, let us choose a (non-zero) resonant state $\psi_0$ associated to $c_1$, and define for $\epsilon  > 0$ small
\begin{equation}\label{eq:definition_psi_epsilon}
    \psi_\epsilon = (\partial_x^2 - \alpha^2)^{-1}\Pi_{c_1,\epsilon} (\partial_x^2 - \alpha^2) \psi_0 = - \frac{1}{2i \pi} \int_{\partial \mathbb{D}(c_1,\nu)} P_{c,\epsilon}^{-1}(\partial_x^2 - \alpha^2)\psi_0 \mathrm{d}c.
\end{equation}
Here, we are using the spectral projector for $(\partial_x^2 - \alpha^2) P_{0,\epsilon}$ instead of $\Pi_{c_1,\epsilon}$ because we want $\psi_\epsilon$ to converge to $\psi_0$ as $\epsilon$ goes to $0$. We start by giving a first order approximation of $\psi_\epsilon$.

\begin{lemma}\label{lemma:perturbation_eigenvector}
There is $\dot{\psi}_0 \in C^\infty(\overline{M}_{a,b})$ such that, for every compact subset $L$ of $M_{a,b}$, the function $\psi_\epsilon - \psi_0 - \epsilon \dot{\psi}_0$ and all its derivatives are $\mathcal{O}(\epsilon^2)$ on $L$ as $\epsilon$ goes to $0$. Moreover, we have
\begin{equation}\label{eq:perturbation_eigenvector}
    P_{c_1,0} \dot{\psi}_0 = \Pi_{c_1,0} P_{c_1,0} E_{c_1,0} \psi_0.
\end{equation}
\end{lemma}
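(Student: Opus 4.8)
The strategy is to extract the first-order term in $\epsilon$ from the expansion machinery of \S\ref{subsection:LVmethod}, applied to the specific vector $(\partial_x^2-\alpha^2)\psi_0$. Recall from \eqref{eq:definition_psi_epsilon} that $\psi_\epsilon = -\frac{1}{2i\pi}\int_{\partial\mathbb{D}(c_1,\nu)} P_{c,\epsilon}^{-1}(\partial_x^2-\alpha^2)\psi_0\,\mathrm{d}c$. The first step is to replace $P_{c,\epsilon}^{-1}$ by the approximation $A_{c,\epsilon}^{(\ell)}$ with, say, $\ell = 2$: by Lemma~\ref{lemma:description_approximation}\ref{item:quality_approximation} the error is $\mathcal{O}(\epsilon^2)$ in $H^k$ for $k\in(1/2,3/2)$ (here we use that $(\partial_x^2-\alpha^2)\psi_0 \in C^\infty$), and hence $\mathcal{O}(\epsilon^2)$ in $C^\infty_{\mathrm{loc}}$ on compact subsets $L\Subset M_{a,b}$ after applying local elliptic regularity (since the error solves an equation with $\mathcal{O}(\epsilon^2)$ right-hand side away from the boundary layers). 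Using Lemma~\ref{lemma:description_approximation}\ref{item:description_approximation}, on compact subsets of $M_{a,b}$ away from $a,b$ the term $N_{c,\epsilon}B_{c,\epsilon}^{(\ell-1)}$ contributes only $\mathcal{O}(\epsilon^\infty)$ (the boundary layers are $\mathcal{O}(\epsilon^\infty)$ off the endpoints by Proposition~\ref{proposition:boundary_layers_bis}), so it suffices to expand $B_{c,\epsilon}^{(\ell)}$ to first order in $\epsilon$.

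The second step is to compute the $\epsilon^0$ and $\epsilon^1$ coefficients of $B_{c,\epsilon}^{(\ell)}$ from the recursion \eqref{eq:formula_for_B}. At order $\epsilon^0$ we get $B_{c,\epsilon}^{(\ell)} = P_{c,0}^{-1} + \mathcal{O}(\epsilon)$, so the $\epsilon^0$ term of $\psi_\epsilon$ is $-\frac{1}{2i\pi}\int_{\partial\mathbb{D}(c_1,\nu)} P_{c,0}^{-1}(\partial_x^2-\alpha^2)\psi_0\,\mathrm{d}c = \psi_0$ (this is just $(\partial_x^2-\alpha^2)^{-1}\Pi_{c_1,0}(\partial_x^2-\alpha^2)\psi_0$, and since $c_1$ is simple and $\psi_0$ is the resonant state, this equals $\psi_0$). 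At order $\epsilon^1$, from \eqref{eq:formula_for_B}, the $i\alpha^{-1}\epsilon^2 P_{c,0}^{-1}(\partial_x^2-\alpha^2)^2$ piece contributes nothing to first order, and the relevant first-order contribution comes from $\epsilon(I - P_{c,0}^{-1}P_{c,0})E_{c,\epsilon}$ acting on $B_{c,\epsilon}^{(\ell-1)} = P_{c,0}^{-1} + \mathcal{O}(\epsilon)$. Recalling that $E_{c,\epsilon}$ has a Taylor expansion at $\epsilon = 0$ with leading coefficient $E_{c,0}$ (because $\lambda_{c,\epsilon},\mu_{c,\epsilon}$ do, by Proposition~\ref{proposition:boundary_layers_bis}), and that $(I - P_{c,0}^{-1}P_{c,0})$ kills the range of $P_{c,0}^{-1}$, the first-order term of $B_{c,\epsilon}^{(\ell)}$ applied to a fixed $f$ is $\epsilon\,(I - P_{c,0}^{-1}P_{c,0}) E_{c,0} P_{c,0}^{-1} f + \mathcal{O}(\epsilon^2)$. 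Applying this with $f = (\partial_x^2-\alpha^2)\psi_0$ and integrating over $\partial\mathbb{D}(c_1,\nu)$, then using $P_{c_1,0}^{-1}(\partial_x^2-\alpha^2)\psi_0$-type identities together with the fact that $\Pi_{c_1,0}$ is the spectral projector, one is led to set $\dot\psi_0 := -\frac{1}{2i\pi}\int_{\partial\mathbb{D}(c_1,\nu)}(I - P_{c,0}^{-1}P_{c,0})E_{c,0}(\partial_x^2-\alpha^2)^{-1}(\partial_x^2-\alpha^2)\psi_0\,\mathrm{d}c$ — more precisely the coefficient obtained by matching; this is a smooth function on $\overline M_{a,b}$ by the mapping properties of the operators involved.

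The third step is to verify the identity \eqref{eq:perturbation_eigenvector}. Apply $P_{c_1,0}$ to $\dot\psi_0$. Because $P_{c_1,0} P_{c,0}^{-1}$ differs from the identity only by a term holomorphic in $c$ near $c_1$ apart from the pole, and because $P_{c_1,0}(I - P_{c,0}^{-1}P_{c,0})$ can be rewritten using the resolvent identity, contour integration picks out precisely the residue at $c = c_1$, which by definition of the spectral projector yields $\Pi_{c_1,0}P_{c_1,0}E_{c_1,0}\psi_0$. Concretely: $P_{c_1,0}\dot\psi_0 = -\frac{1}{2i\pi}\int_{\partial\mathbb{D}(c_1,\nu)} P_{c_1,0}(I - P_{c,0}^{-1}P_{c,0})E_{c,0}\psi_0\,\mathrm{d}c$; writing $P_{c_1,0} = P_{c,0} + (c_1 - c)(\partial_x^2 - \alpha^2)$ and using $P_{c,0}(I - P_{c,0}^{-1}P_{c,0}) = 0$, this becomes $-\frac{1}{2i\pi}\int_{\partial\mathbb{D}(c_1,\nu)}(c_1-c)(\partial_x^2-\alpha^2)(I - P_{c,0}^{-1}P_{c,0})E_{c,0}\psi_0\,\mathrm{d}c = \frac{1}{2i\pi}\int_{\partial\mathbb{D}(c_1,\nu)}(c_1-c)(\partial_x^2-\alpha^2)P_{c,0}^{-1}P_{c,0}E_{c,0}\psi_0\,\mathrm{d}c$, and the factor $(c_1-c)$ against the simple pole of $(\partial_x^2-\alpha^2)P_{c,0}^{-1}$ (whose residue is $-\Pi_{c_1,0}$) extracts exactly $\Pi_{c_1,0}P_{c_1,0}E_{c_1,0}\psi_0$, giving \eqref{eq:perturbation_eigenvector}. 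The main obstacle is bookkeeping: keeping careful track of which operators have genuine Taylor expansions versus only $\mathcal{O}(\epsilon^\infty)$-localized error, ensuring the contour-integral manipulations are legitimate (the integrand is meromorphic with only the pole at $c_1$ inside $\mathbb{D}(c_1,\nu)$), and confirming the $\mathcal{O}(\epsilon^2)$ remainder survives in $C^\infty_{\mathrm{loc}}$ on compacta of $M_{a,b}$ — which it does precisely because the boundary-layer contributions are negligible there.
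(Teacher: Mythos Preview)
Your overall plan follows the paper's approach closely (expand $P_{c,\epsilon}^{-1}$ via the $B_{c,\epsilon}^{(\ell)}$, discard boundary layers on compact $L\Subset M_{a,b}$, read off $f_0=\psi_0$ and $\dot\psi_0=f_1$), and your idea in Step~3 of computing $P_{c_1,0}\dot\psi_0$ directly by a contour manipulation is a legitimate alternative to the paper's use of the Laurent expansion $P_{c,0}^{-1}=-\frac{(\partial_x^2-\alpha^2)^{-1}\Pi_{c_1,0}}{c-c_1}+H+\cdots$ together with $I-P_{c_1,0}H=\Pi_{c_1,0}$. However, there is a genuine error in your formula for $\dot\psi_0$ that breaks Step~3.

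You correctly identify in Step~2 that the first-order term of $B_{c,\epsilon}^{(\ell)}f$ is $\epsilon\,(I-P_{c,0}^{-1}P_{c,0})E_{c,0}P_{c,0}^{-1}f$, so with $f=(\partial_x^2-\alpha^2)\psi_0$ the correct expression is
\[
\dot\psi_0=-\frac{1}{2i\pi}\int_{\partial\mathbb D(c_1,\nu)}(I-P_{c,0}^{-1}P_{c,0})\,E_{c,0}\,P_{c,0}^{-1}(\partial_x^2-\alpha^2)\psi_0\,\mathrm dc,
\]
exactly as in the paper. You then silently replace $P_{c,0}^{-1}(\partial_x^2-\alpha^2)\psi_0$ by $\psi_0$ (writing $(\partial_x^2-\alpha^2)^{-1}(\partial_x^2-\alpha^2)\psi_0$), and in Step~3 you work with $E_{c,0}\psi_0$ in the integrand. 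But $P_{c,0}^{-1}(\partial_x^2-\alpha^2)\psi_0$ has a \emph{pole} at $c=c_1$ with residue $-\psi_0$; it is not $\psi_0$. With your incorrect integrand, the expression $(c-c_1)(\partial_x^2-\alpha^2)P_{c,0}^{-1}P_{c,0}E_{c,0}\psi_0$ that you reach is holomorphic at $c_1$ (a simple pole killed by the prefactor), so the contour integral vanishes rather than yielding $\Pi_{c_1,0}P_{c_1,0}E_{c_1,0}\psi_0$. (There is also a sign slip: $P_{c_1,0}=P_{c,0}+(c-c_1)(\partial_x^2-\alpha^2)$, not $(c_1-c)$.)

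With the correct $\dot\psi_0$, your contour argument can be repaired: the integrand now contains \emph{two} factors of $P_{c,0}^{-1}$, and the product of the simple pole of $(\partial_x^2-\alpha^2)P_{c,0}^{-1}$ (residue $-\Pi_{c_1,0}$) with the holomorphic function $(c-c_1)P_{c,0}^{-1}(\partial_x^2-\alpha^2)\psi_0$ (value $-\psi_0$ at $c_1$) produces the residue $\Pi_{c_1,0}P_{c_1,0}E_{c_1,0}\psi_0$, as desired. Alternatively, follow the paper: expand $P_{c,0}^{-1}$ in a Laurent series with holomorphic part $H$, compute $\dot\psi_0$ as a sum of four explicit terms, then apply $P_{c_1,0}$ and use $I-P_{c_1,0}H=\Pi_{c_1,0}$.
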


\begin{remark}
In \eqref{eq:perturbation_eigenvector}, the operator $E_{c_1,0}$ is obtained by replacing $\lambda_{c_1,\epsilon}$ and $\mu_{c_1,\epsilon}$ in the definition of $E_{c_1,\epsilon}$ by their limit as $\epsilon$ goes to $0$. Notice that the image of a smooth function $f$ by the operator $\Pi_{c_1,0} P_{c_1,0}$ only depends on the values of $f$ at $a$ and $b$, since if $f(a) = f(b) = 0$, then $f$ satisfies Dirichlet boundary condition and thus $\Pi_{c_1,0} P_{c_1,0} f = \Pi_{c_1,0} Q_{c_1,0} (\partial_x^2 - \alpha^2) f = 0$.
\end{remark}

\begin{proof}[Proof of Lemma \ref{lemma:perturbation_eigenvector}]
Let $L$ be a compact subset of $M_{a,b}$. In order to get an approximation for $\psi_\epsilon$ on $L$, we work as in the proof of Lemma \ref{lemma:existence_expansion}: we use the approximation for $P_{c,\epsilon}^{-1}$ given in Lemma \ref{lemma:description_approximation} in the formula \eqref{eq:definition_psi_epsilon}. Then, we notice that all the terms that include boundary layers (and their derivatives) are $\mathcal{O}(\epsilon^\infty)$ on $L$, and can consequently be neglected.

Hence, we find that there is a sequence $(f_j)_{j \geq 0}$ of elements of $C^\infty(\overline{M}_{a,b})$ such that for every $\ell \geq 0$ the function $\psi_\epsilon - \sum_{j = 0}^\ell \epsilon^j f_j$ is a $\mathcal{O}(\epsilon^{\ell + 1})$ in $H^1$ on a neighbourhood of $L$. Here, the approximation a priori only holds in $H^1$ due to the restriction in Lemma \ref{lemma:description_approximation}. However, since $\psi_\epsilon - \sum_{j = 0}^\ell \epsilon^j f_j$ is uniformly smooth as $\epsilon$ goes to $0$, we can get an approximation in a smaller Sobolev space if we accept to reduce a little the quality of the approximation. But, since we have an approximation at any polynomial order, we get this way that $\psi_\epsilon - f_0 - \epsilon f_1$ and all its derivatives are $\mathcal{O}(\epsilon^2)$ on $L$ as $\epsilon$ goes to $0$.

Let us compute $f_0$. Recalling Lemma \ref{lemma:description_approximation} and the formula for the $B_{c,\epsilon}^{(\ell)}$'s given in \eqref{eq:formula_for_B}, we find that
\begin{equation*}
    f_0 = (\partial_x^2 - \alpha^2)^{-1} \Pi_{c_1,0} (\partial_x^2 - \alpha^2) \psi_0 = \psi_0.
\end{equation*}
Here, we used the fact that $(\partial_x^2 - \alpha^2)^{-1} \Pi_{c_1,0} (\partial_x^2 - \alpha^2)$ is a spectral projector for $(\partial_x^2 - \alpha^2)^{-1} P_{0,0}$ and that $\psi_0$ is an eigenvector for this operator. Let us set $\dot{\psi}_0 = f_1$, we find as for $f_0$ that
\begin{equation*}
    \dot{\psi}_0 = - \frac{1}{2 i \pi} \int_{\partial \mathbb{D}(c_1,\nu)} (I - P_{c,0}^{-1} P_{c,0}) E_{c,0} P_{c,0}^{-1}(\partial_x^2 - \alpha^2) \psi_0 \mathrm{d}c,
\end{equation*}
where $E_{c,0}$ is obtained by replacing $\lambda_{c,\epsilon}$ and $\mu_{c,\epsilon}$ in the definition of $E_{c,\epsilon}$ by their limits as $\epsilon$ goes to $0$.

In order to compute $\dot{\psi}_0$, let us recall that we assume that $c_1$ is a simple resonance, which implies that $c \mapsto P_{c,0}^{-1}$ has a simple pole at $c = c_1$. Hence, we can write
\begin{equation}\label{eq:laurent_series}
    P_{c,0}^{-1} \underset{c \to c_1}{=} - \frac{(\partial_x^2 - \alpha^2)^{-1}\Pi_{c_1,0}}{c - c_1} + H + \mathcal{O}(|c- c_1|),
\end{equation}
where $H$ maps $C^\infty(\overline{M}_{a,b})$ into itself. With this notation, it follows that
\begin{equation*}
\begin{split}
    \dot{\psi}_0 & = E_{c_1,0} (\partial_x^2 - \alpha^2)^{-1}\Pi_{c_1,0}(\partial_x^2 - \alpha^2) \psi_0 \\ & \qquad - (\partial_x^2 - \alpha^2)^{-1} \Pi_{c_1,0} P_{c_1,0} E_{c_1,0}H (\partial_x^2 - \alpha^2) \psi_0 \\ & \qquad - H P_{c_1,0} E_{c_1,0} (\partial_x^2 - \alpha^2)^{-1} \Pi_{c_1,0} (\partial_x^2 - \alpha^2) \psi_0 \\ & \qquad + (\partial_x^2 - \alpha^2)^{-1} \Pi_{c_1,0} \frac{\partial}{\partial c} \left( P_{c,0} E_{c,0} \right)_{|c = c_1} (\partial_x^2 - \alpha^2)^{-1} \Pi_{c_1,0} (\partial_x^2 - \alpha^2) \psi_0.
\end{split}
\end{equation*}
It follows then that 
\begin{equation*}
    P_{c_1,0} \dot{\psi}_0 = ( I - P_{c_1,0}H)P_{c_1,0}E_{c_1,0} \psi_0.
\end{equation*}
Plugging \eqref{eq:laurent_series} in $P_{c,0} P_{c,0}^{-1} = I$, we find that $I - P_{c_1,0} H = \Pi_{c_1,0}$, so that
\begin{equation*}
    P_{c_1,0} \dot{\psi}_0 = \Pi_{c_1,0} P_{c_1,0} E_{c_1,0} \psi_0.
\end{equation*}
This completes the proof.
\end{proof}

In order to apply Lemma \ref{lemma:perturbation_eigenvector}, let us identify $\Pi_{c_1,0}$.

\begin{lemma}\label{lemma:formula_projector}
The integral
\begin{equation}\label{eq:non_zero_integral}
    \int_{M_{a,b}} (\partial_x^2 - \alpha^2)\psi_0 \frac{\psi_0}{U-c_1}\mathrm{d}x
\end{equation}
is non-zero and for every $f \in C^\infty(\overline{M}_{a,b})$ we have
\begin{equation*}
    \Pi_{c_1,0} f = \frac{\int_{M_{a,b}} f \frac{\psi_0}{U - c_1}\mathrm{d}x}{\int_{M_{a,b}} \psi_0 \frac{(\partial_x^2 - \alpha^2) \psi_0}{U - c_1}\mathrm{d}x} (\partial_x^2 - \alpha^2) \psi_0.
\end{equation*}
\end{lemma}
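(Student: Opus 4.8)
The plan is to use that $\Pi_{c_1,0}$ is the (rank one, since $c_1$ is simple) spectral projector of $Q_{0,0}$ at the eigenvalue $c_1$, together with an explicit description of the ranges of $\Pi_{c_1,0}$ and of its transpose. Set $v_0 = (\partial_x^2-\alpha^2)\psi_0$ and $w_0 = \psi_0/(U-c_1)$, both smooth on $\overline{M}_{a,b}$ since $U-c_1$ does not vanish there. First I would recall, as in \S\ref{subsection:basic_properties} and Remark \ref{remark:multiplicity_resonances}, that a non-zero resonant state $\psi_0$ solves $P_{c_1,0}\psi_0=0$ with Dirichlet boundary condition; since $\psi_0$ satisfies this condition, $(\partial_x^2-\alpha^2)^{-1}v_0=\psi_0$ and hence $Q_{0,0}v_0 = Uv_0 - U''\psi_0 = P_{0,0}\psi_0 = c_1 v_0$, with $v_0\neq 0$. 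As $c_1$ is simple, $\Pi_{c_1,0}$ has rank one and range $\ker(Q_{0,0}-c_1)=\mathbb{C}v_0$, so $\Pi_{c_1,0}f=\lambda(f)\,v_0$ for some linear functional $\lambda$ on $C^\infty(\overline{M}_{a,b})$.

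Next I would identify the transpose. With the bilinear pairing $\langle f,g\rangle=\int_{M_{a,b}}fg$, two integrations by parts along $M_{a,b}$ show that $(\partial_x^2-\alpha^2)^{-1}$ is self-transpose (the boundary terms vanish since it takes values in functions satisfying Dirichlet boundary condition), whence $Q_{0,0}^\top g = Ug - (\partial_x^2-\alpha^2)^{-1}(U''g)$. A direct substitution, using $\frac{U''\psi_0}{U-c_1}=(\partial_x^2-\alpha^2)\psi_0$ (which is exactly $P_{c_1,0}\psi_0=0$) and once more that $\psi_0$ satisfies Dirichlet boundary condition, gives
\[
(Q_{0,0}^\top-c_1)w_0 = \psi_0 - (\partial_x^2-\alpha^2)^{-1}(\partial_x^2-\alpha^2)\psi_0 = 0 .
\]
Since $\Pi_{c_1,0}^\top$ is the transpose of a Cauchy-integral projector, it is the rank-one spectral projector of $Q_{0,0}^\top$ at $c_1$; its range is therefore $\ker(Q_{0,0}^\top-c_1)=\mathbb{C}w_0$, and in particular $\Pi_{c_1,0}^\top w_0=w_0$.

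Finally, for $f\in C^\infty(\overline{M}_{a,b})$ I would pair:
\[
\langle f,w_0\rangle = \langle f,\Pi_{c_1,0}^\top w_0\rangle = \langle \Pi_{c_1,0}f,w_0\rangle = \lambda(f)\,\langle v_0,w_0\rangle .
\]
If $\langle v_0,w_0\rangle$ vanished this would force $\langle f,w_0\rangle=0$ for all smooth $f$, hence $w_0=0$, contradicting $\psi_0\neq 0$; so $\langle v_0,w_0\rangle=\int_{M_{a,b}}(\partial_x^2-\alpha^2)\psi_0\,\tfrac{\psi_0}{U-c_1}$ is non-zero, which is the assertion about \eqref{eq:non_zero_integral}, and $\lambda(f)=\langle f,w_0\rangle/\langle v_0,w_0\rangle$. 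Substituting $v_0$ and $w_0$ yields the stated formula. I expect no serious obstacle here; the only points requiring care are to use consistently the Dirichlet inverse $(\partial_x^2-\alpha^2)^{-1}$ on $M_{a,b}$ from Lemma \ref{lemma:inverse_laplace_segment}, so that the boundary terms in the integrations by parts genuinely vanish, and to invoke the Dirichlet condition satisfied by the resonant state $\psi_0$ (recorded in Remark \ref{remark:multiplicity_resonances}) at each of the steps above.
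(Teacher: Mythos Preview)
Your argument is correct. The identification of $v_0=(\partial_x^2-\alpha^2)\psi_0$ and $w_0=\psi_0/(U-c_1)$ as eigenvectors of $Q_{0,0}$ and of its bilinear transpose, together with the standard eigenvector/co-eigenvector pairing, gives both the formula for $\Pi_{c_1,0}$ and the non-vanishing of the integral in one stroke. The only step that needs a word of justification is $\Pi_{c_1,0}^\top w_0=w_0$: you should note that $Q_{0,0}^\top-c_1$ is invertible on the range of the complementary projector $I-\Pi_{c_1,0}^\top$, so that $(Q_{0,0}^\top-c_1)w_0=0$ forces $(I-\Pi_{c_1,0}^\top)w_0=0$.

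The paper proceeds differently. For the non-vanishing it constructs an auxiliary solution $\phi$ of the Cauchy problem $P_{c_1,0}\phi=(\partial_x^2-\alpha^2)\psi_0$, $\phi(a)=0$, $\phi'(a)=1$, and integrates by parts to obtain the explicit value $-\phi(b)\,\partial_x\psi_0(b)$; simplicity of $c_1$ forces $\phi(b)\neq 0$, and $\psi_0(b)=0$, $\psi_0\not\equiv 0$ force $\partial_x\psi_0(b)\neq 0$. For the formula it exploits instead the symmetry $\int_{M_{a,b}}P_{c,0}^{-1}f\cdot g/(U-c)=\int_{M_{a,b}}f\cdot P_{c,0}^{-1}g/(U-c)$ and reads off residues at $c=c_1$. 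Your route is more structural and economical; the paper's route has the advantage of producing an explicit closed form for the integral in terms of boundary data, which is closer in spirit to the Wronskian computations used elsewhere in the paper.
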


\begin{proof}
The proof that the integral is non-zero is inspired from  \cite[Lemma 1]{stepin_rayleigh}. Let us consider the function $\phi \in C^\infty(\overline{M}_{a,b})$ on $\overline{M}_{a,b}$ that solves the Cauchy problem
\begin{equation*}
    \begin{cases} P_{c_1,0} \phi = (\partial_x^2 - \alpha^2) \psi_0, \\ \phi(a) = 0, \ \partial_x \phi(a) = 1. \end{cases}
\end{equation*}
Since $c_1$ is a simple eigenvalue, we must have $\phi(b) \neq 0$. Moreover, integrating by parts, we find that
\begin{equation*}
\begin{split}
     \int_{M_{a,b}} (\partial_x^2 - \alpha^2) \psi_0 \frac{\psi_0}{U-c_1}\mathrm{d}x & = \int_{M_{a,b}} P_{c_1,0} \phi \frac{\psi_0}{U-c_1}\mathrm{d}x \\ & = - \phi(b) \partial_x \psi_0(b) + \int_{M_{a,b}} \phi \frac{P_{c_1,0} \psi_0}{U - c_1} \mathrm{d}x = - \phi(b) \partial_x \psi_0(b).
\end{split}
\end{equation*}
Since $\psi_0(b) = 0$ and $\psi_0$ is non-zero, we must have $\partial_x \psi_0(b) \neq 0$, which proves that the integral \eqref{eq:non_zero_integral} is non-zero.

Since $\Pi_{c_1,0}$ is a projector on the one-dimensional eigenspace of $Q_{0,0}$ associated to the eigenvalue $c_1$, we know that there is a continuous linear functional $\ell$ on $C^\infty(\overline{M}_{a,b})$ such that 
\begin{equation*}
    \Pi_{c_1,0} f = \ell(f) (\partial_x^2 - \alpha^2) \psi_0
\end{equation*}
for every $f \in C^\infty(\overline{M}_{a,b})$. Now, if $f, g\in C^\infty(\overline{M}_{a,b})$ and $c \in \mathbb{D}(c_1,\nu) \setminus \set{c_1}$, then $P_{c,0}^{-1}f$ and $P_{c,0}^{-1}g$ satisfy Dirichlet boundary condition, and thus an integration by part as above give
\begin{equation*}
    \int_{M_{a,b}} P_{c,0}^{-1} f \frac{g}{U - c}\mathrm{d}x = \int_{M_{a,b}} P_{c,0}^{-1} f \frac{P_{c,0} P_{c,0}^{-1}g}{U - c}\mathrm{d}x = \int_{M_{a,b}}f \frac{P_{c,0}^{-1} g }{U - c}\mathrm{d}x.
\end{equation*}
Identifying the residue at $c = c_1$ of both sides of this equality, we get
\begin{equation*}
    \int_{M_{a,b}} (\partial_x^2 - \alpha^2)^{-1} \Pi_{c_1,0} f \frac{g}{U - c_1}\mathrm{d}x = \int_{M_{a,b}} f \frac{(\partial_x^2 - \alpha^2)^{-1} \Pi_{c_1,0} g}{U - c_1}\mathrm{d}x.
\end{equation*}
Taking $g = (\partial_x^2 - \alpha^2) \psi_0$ in this equality, we get
\begin{equation*}
    \ell(f) \int_{M_{a,b}} \psi_0 \frac{(\partial_x^2 - \alpha^2) \psi_0}{U - c_1}\mathrm{d}x = \int_{M_{a,b}} f \frac{\psi_0}{U - c_1}\mathrm{d}x,
\end{equation*}
and the result follows.
\end{proof}

We are now ready to prove Proposition \ref{proposition:first_order_perturbation}.

\begin{proof}[Proof of Proposition \ref{proposition:first_order_perturbation}]
For $\epsilon > 0$ small, we have $P_{c(\epsilon),\epsilon} \psi_\epsilon = 0$. Using Lemma \ref{lemma:perturbation_eigenvector}, we find that, in $M_{a,b}$ away from $a$ and $b$, we have
\begin{equation*}
    P_{c(\epsilon),\epsilon}\psi_\epsilon \underset{\epsilon \to 0}{=} \epsilon \left( P_{c_1,0} \dot{\psi}_0 - \dot{c}(0) (\partial_x^2 - \alpha^2) \psi_0 \right) + \mathcal{O}_{C^\infty}(\epsilon^2).
\end{equation*}
Hence, we find that $P_{c_1,0} \dot{\psi}_0 = \dot{c}(0) (\partial_x^2 - \alpha^2) \psi_0$ in $M_{a,b}$. Comparing with Lemmas \ref{lemma:perturbation_eigenvector} and \ref{lemma:formula_projector}, we find that
\begin{equation*}
    \dot{c}(0) = \frac{\int_{M_{a,b}} P_{c_1,0} E_{c_1,0} \psi_0 \frac{\psi_0}{U - c_1}\mathrm{d}x}{\int_{M_{a,b}} \psi_0 \frac{(\partial_x^2 - \alpha^2) \psi_0}{U - c_1}\mathrm{d}x}.
\end{equation*}
Since $E_{c_1,0} \psi_0(a) = \lambda_{c_1,0} \partial_x \psi_0(a)$ and $E_{c_1,0} \psi_0(b) = \mu_{c_1,0} \partial_x \psi_0(b)$, the result follows integrating by parts.
\end{proof}

\section{Description of resonant states}\label{section:description_resonant_states}

In this section, we explain how the resonant states, that have been defined as smooth functions on $\overline{M}_{a,b}$ in \S \ref{subsection:definition_resonances}, can be interpreted as distributions on $(a,b)$ when the associated parameter $c$ has a non-negative imaginary part, proving Theorem \ref{theorem:resonances}.

For $c \in (c_0 - \delta,c_0 + \delta)$, we start by studying the possible singularity of solutions to $P_{c,0} \psi = 0$ near $U^{-1}(\set{c})$ that extend to some complex region in \S \ref{subsection:ODE_lemma}. Then, we use this knowledge in \S \ref{subsection:description_resonant_states} to relate the elements of $\Omega(c)$ with the resonant states defined in \S \ref{subsection:definition_resonances}.

\subsection{A singular ODE}\label{subsection:ODE_lemma}

In order to understand the singularities of resonant states, we will study in this subsection operators of the form $P = z \partial_z^2 - w(z)$ near $0$ in the complex plane (notice that, dividing by $z$, we get a Fuchsian ODE). We start with a standard fact, see for instance \cite[p.20]{schmid_henningson_book}.

\begin{lemma}\label{lemma:fundamental_solutions}
Let $w$ be a holomorphic function defined on a neighbourhood of zero and such that $w(0) \neq 0$. Let $P$ denote the differential operator $P = z \partial_z^2 - w(z)$. There are two holomorphic functions $\varphi$ and $\psi$ on a neighbourhood of zero such that $\varphi(0) = 0, \varphi'(0) = 1, P \varphi = 0$ and $P (\psi +  \varphi \log) = 0$ (for any determination of the logarithm on a slitted disk). Moreover $\psi(0) \neq 0$.
\end{lemma}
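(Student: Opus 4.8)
The equation $P\varphi = 0$, i.e. $z\varphi'' = w(z)\varphi$, is a linear second-order ODE with a regular singular point at $z=0$: dividing by $z$ gives $\varphi'' - \frac{w(z)}{z}\varphi = 0$, and $z\cdot\frac{w(z)}{z}\cdot z = z\,w(z)$ is holomorphic. The plan is to run the classical Frobenius method. The indicial equation is obtained by plugging $\varphi = z^s(1 + \mathcal O(z))$ into $z\varphi'' - w(z)\varphi = 0$; since $z\cdot s(s-1)z^{s-2} = s(s-1)z^{s-1}$ and $w(z)z^s = w(0)z^s + \mathcal O(z^{s+1})$, the lowest-order term is $s(s-1)z^{s-1}$, forcing $s(s-1)=0$, so the indicial roots are $s=0$ and $s=1$, differing by the integer $1$. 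This is exactly the resonant case of Frobenius theory where a logarithm may (and here does) appear.

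\textbf{Construction of $\varphi$.} For the larger root $s=1$ I would look for $\varphi(z) = z + \sum_{n\geq 2} a_n z^n$ and plug into $z\varphi'' = w(z)\varphi$. Writing $w(z) = \sum_{k\geq 0} w_k z^k$ with $w_0 = w(0)\neq 0$, comparing coefficients of $z^{n-1}$ gives a recursion of the form $n(n-1)a_n = \sum_{k} w_k a_{n-1-k}$ (with $a_1 = 1$), which determines all $a_n$ uniquely since $n(n-1)\neq 0$ for $n\geq 2$. Convergence of the resulting power series on a neighbourhood of $0$ is the standard majorant-series argument (or one can simply invoke the holomorphic Cauchy--Kovalevskaya/Fuchs theorem for regular singular points). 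This yields $\varphi$ holomorphic near $0$ with $\varphi(0)=0$, $\varphi'(0)=1$, $P\varphi=0$.

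\textbf{Construction of the second solution.} For the second solution I would seek it in the form $\Psi(z) = \psi(z) + c\,\varphi(z)\log z$ with $\psi(z) = \sum_{n\geq 0} b_n z^n$ holomorphic, $b_0 \neq 0$ (and $c$ a constant to be determined, which one expects to be nonzero). Substituting into $z\Psi'' = w(z)\Psi$ and using $z(\varphi\log z)'' = z\varphi''\log z + 2\varphi' - \varphi/z$, the $\log z$ terms cancel because $\varphi$ solves the equation, leaving
\begin{equation*}
z\psi'' - w(z)\psi = -c\Bigl(2\varphi' - \tfrac{\varphi}{z}\Bigr).
\end{equation*}
Since $\varphi(0)=0$, the right-hand side is holomorphic. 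Comparing coefficients: the $z^{-1}$-order terms force a condition that determines $c$ (one checks $c = -w_0 \neq 0$, up to normalization), and then $b_1, b_2, \dots$ are determined recursively; the coefficient of $z^0$ in $z\psi'' - w\psi$ is $-w_0 b_0$, which can be matched by choosing $b_0$ appropriately, and crucially $b_0$ is \emph{free} at the step where $s=0$ would normally fail — after fixing $c$ the obstruction disappears. One then normalizes, e.g., $b_0 = 1$ so that $\psi(0) = 1 \neq 0$. Convergence of $\sum b_n z^n$ follows by the same majorant argument. After rescaling $\Psi$ by $1/c$ one obtains $P(\psi + \varphi\log) = 0$ with $\psi(0)\neq 0$, as claimed, and the statement that any branch of the logarithm works is immediate since two branches differ by a constant multiple of $\varphi$, which lies in the kernel of $P$.

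\textbf{Main obstacle.} The only delicate point is the bookkeeping at the resonant step: verifying that the coefficient $c$ of $\varphi\log z$ is forced to be nonzero (so that a genuine logarithmic singularity is present, which is needed for $\psi(0)\neq 0$ to be meaningful relative to $\varphi$) and that, once $c$ is chosen correctly, the recursion for the $b_n$ is solvable with $b_0\neq 0$. Everything else — convergence, uniqueness up to kernel elements, independence of the branch of $\log$ — is routine Fuchsian ODE theory, and can alternatively be cited from a standard reference such as \cite[p.20]{schmid_henningson_book} or a text on ODEs in the complex domain.
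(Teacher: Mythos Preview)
Your approach is correct and is essentially the same Frobenius-method argument that the paper gives: both construct $\varphi$ and $\psi$ via power-series recursion and prove convergence by a majorant estimate. The paper organizes the computation slightly differently, using a single ansatz $p(z) + q(z)\log z$ with $q(0)=0$ and solving for all coefficients at once; this yields directly $\psi(0) = p_0 = q_1/w_0 = 1/w(0) \neq 0$. One small correction to your bookkeeping: from $-w_0 b_0 = -c$ at order $z^0$ you get $c = w_0 b_0$ (not $-w_0$), so setting $c=1$ forces $b_0 = 1/w_0$; the genuinely free coefficient at the resonant step is $b_1$, not $b_0$, and the paper simply sets it to zero.
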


\begin{proof}
Let us search for holomorphic functions $p$ and $q$ near $0$ with $q(0) = 0$ and $P( p(z) + q(z) \log z) = 0$. Let us write
\begin{equation*}
p(z) = \sum_{n \geq 0} p_n z^n \textup{ and } q(z) = \sum_{n \geq 1} q_n z^n \textup{ and } w(z) = \sum_{n \geq 0} w_n z^n.
\end{equation*}
We compute
\begin{equation*}
\begin{split}
& P (p (z) + q(z) \log (z) ) \\ 
& = \sum_{n \geq 1} \left( p_{n+1} n(n+1) + q_{n+1} (2n +1) - \sum_{k + \ell = n} w_k p_\ell\right) z^n  + q_1 - w_0 p_0 \\ 
& \qquad + \left( \sum_{n \geq 1 } \left(q_{n+1}n (n+1) - \sum_{k + \ell = n} w_k q_\ell\right)z^n \right) \log z.
\end{split}
\end{equation*}
Hence, the equation $P( p(z) + q(z) \log z) = 0$ may be rewritten as 
\begin{equation*}
\begin{cases}
q_{n+1}  = \frac{1}{n(n+1)} \sum_{k + \ell = n} w_k q_\ell \textup{ for } n \geq 1 \\
p_0  = \frac{q_1}{w_0} \\
p_{n+1}  =  \frac{1}{n(n+1)}\left( (2n+1) q_{n+1} + \sum_{k + \ell = n} w_k p_\ell \right) \textup{ for } n \geq 1.
\end{cases}
\end{equation*}
This is a triangular systems of equations (the equations may be solved inductively in the order we have written them). Notice that we may impose the values of $q_1$ and $p_1$. 

Let us prove that the resulting power series has a positive radius of convergence. Let $C, R  > 0$ be constants such that $|w_n | \leq C R^n$ for every $n \geq 0$. Let $A = |q_1|$ and $\rho > \max(R,1)$ be such that $C/(\rho - R) \leq 1$. Let us prove by induction that we have $|q_n| \leq A \rho^n$ for every $n \geq 1$. It follows from the choice of $A$ that it is true for $n = 1$. Now, let $n \geq 1$ be such that $|q_k| \leq A \rho^k$ for $k = 0,\dots, n$ and compute
\begin{equation*}
|q_{n+1}| \leq \frac{CA}{n(n+1)} \sum_{k = 0}^{n} R^k \rho^{n-k}  \leq C A \frac{\rho^n}{1 - \frac{R}{\rho}} \leq A \rho^{n+1} \frac{C}{\rho - R} \leq A \rho^{n+1}.
\end{equation*}
This ends the proof by induction: the power series $\sum_{n \geq 1} q_n z^n$ has a positive radius of convergence. Let then $\varrho > \max(\rho,1)$ be such that $C/ (\varrho - R) \leq 1/2$ and $B \geq \max(1, |p_0|,|p_1|)$ be such that $2A/ B + 1/2 \leq 1$. We prove as above that $|p_n| \leq B \varrho^n$ for every $n \geq 0$, which proves that the series $\sum_{n \geq 0} p_n z^n$ has a positive radius of convergence. 

In order to construct the function $\varphi$, we take $q_1 = 0$ and $p_1 = 1$ in the construction above, then it follows from the inductive scheme that the resulting function $q$ is zero and that $p(0) = 0$. The function $p$ in that case is the solution $\varphi$ we are searching for. To construct $\psi$, set $q_1 = 1$ and  $p_1 = 0$. We see from the inductive scheme that the resulting functions $p$ and $q$ satisfy $q = \varphi$, so that we can set $p = \psi$. Notice then that $\psi(0) = w(0)^{-1} \neq 0$.
\end{proof}

In order to deal with resonant states that are not in the kernel of $P_{c,0}$, we will need to sharpen Lemma \ref{lemma:fundamental_solutions}. We start with a useful remark:

\begin{lemma}\label{lemma:primitive_log_polynomial}
Let $n_0$ be an integer. Let $f_0,\dots, f_{n_0}$ be meromorphic functions near $0$. There are meromorphic functions $g_0,\dots,g_{n_0 +1}$ near $0$ such that 
\begin{equation*}
\partial_z \left( \sum_{k = 0}^{n_0+1} g_k(z) (\log z)^k \right) = \sum_{k = 0}^{n_0} f_k(z) (\log z)^k
\end{equation*}
in  the intersection of a neighbourhood of zero with any slitted disk on which the logarithm is well-defined.
\end{lemma}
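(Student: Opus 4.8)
The plan is to argue by induction on $n_0$ and to reduce everything to a single basic computation: finding a primitive of a function of the form $h(z)(\log z)^k$, where $h$ is meromorphic near $0$. First I would settle the base case $n_0 = 0$: given a meromorphic $f_0$, I want meromorphic $g_0, g_1$ with $\partial_z(g_0 + g_1 \log z) = f_0$. Since $\partial_z(g_1 \log z) = g_1' \log z + g_1/z$, the coefficient of $\log z$ forces $g_1' = 0$, so take $g_1 = 0$ and then $g_0$ is just a meromorphic primitive of $f_0$ — which exists near $0$ because any meromorphic function near $0$ is of the form $\sum_{n \geq -N} a_n z^n$ and such a Laurent series has a meromorphic primitive $\sum_{n \neq -1} \frac{a_n}{n+1} z^{n+1}$ precisely when $a_{-1} = 0$. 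If $a_{-1} \neq 0$ the primitive of $a_{-1}/z$ is $a_{-1} \log z$, which is exactly where a new $\log z$ term is allowed to appear. So the base case already needs the observation that a primitive of a meromorphic function is meromorphic plus a multiple of $\log z$; I would state that as the key sublemma.

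Next, for the inductive step, suppose the statement holds for $n_0 - 1$ and let $f_0, \dots, f_{n_0}$ be given. The idea is to kill the top term $f_{n_0}(z)(\log z)^{n_0}$ first. Let $F$ be a primitive of $f_{n_0}$ of the form $F(z) = \widetilde{F}(z) + \lambda \log z$ with $\widetilde{F}$ meromorphic and $\lambda \in \mathbb{C}$ (by the sublemma). Then
\begin{equation*}
\partial_z\!\left( \frac{F(z)(\log z)^{n_0}}{1} \right) = f_{n_0}(z)(\log z)^{n_0} + n_0 F(z) \frac{(\log z)^{n_0-1}}{z},
\end{equation*}
wait — I should instead integrate by parts in the cleaner direction: I want a function whose derivative is $f_{n_0}(z)(\log z)^{n_0}$ modulo lower-order-in-$\log$ terms. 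Take the candidate $\widetilde{F}(z)(\log z)^{n_0}$; its derivative is $f_{n_0}(z)(\log z)^{n_0} - \lambda \frac{(\log z)^{n_0}}{z} \cdot 0 + n_0 \widetilde{F}(z) \frac{(\log z)^{n_0-1}}{z}$; more carefully, $\partial_z(\widetilde F (\log z)^{n_0}) = \widetilde F' (\log z)^{n_0} + n_0 \widetilde F z^{-1}(\log z)^{n_0-1}$ and $\widetilde F' = f_{n_0} - \lambda z^{-1}$. Hence
\begin{equation*}
f_{n_0}(z)(\log z)^{n_0} = \partial_z\!\big( \widetilde F(z)(\log z)^{n_0} \big) - \big( n_0 \widetilde F(z) - \lambda \big) z^{-1}(\log z)^{n_0-1}.
\end{equation*}
Therefore $\sum_{k=0}^{n_0} f_k (\log z)^k - \partial_z(\widetilde F (\log z)^{n_0})$ is a sum of meromorphic functions times $(\log z)^k$ for $k = 0, \dots, n_0 - 1$ only, so the inductive hypothesis applies and yields meromorphic $h_0, \dots, h_{n_0}$ with $\partial_z(\sum_{k=0}^{n_0} h_k (\log z)^k)$ equal to that remainder. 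Setting $g_{n_0+1} = 0$, $g_{n_0} = \widetilde F + h_{n_0}$, and $g_k = h_k$ for $k < n_0$ finishes the induction. (In fact one sees $g_{n_0+1}$ can always be taken to be $0$, but the statement allows it to be present, which is harmless.)

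I expect no serious obstacle here; this is a bookkeeping lemma. The only point requiring a moment's care is the sublemma that a meromorphic function near $0$ has a primitive that is meromorphic plus a constant multiple of $\log z$ — this is immediate from the Laurent expansion, the constant being the residue. The rest is an integration-by-parts recursion that strictly lowers the top power of $\log z$ at each stage, and the meromorphy is preserved at every step because we only ever multiply meromorphic functions, differentiate them, and take primitives via the sublemma. I would present the base case and the sublemma explicitly and then give the one-line inductive reduction as above.
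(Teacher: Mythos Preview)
Your overall strategy (induction on $n_0$, with the sublemma that a meromorphic function has a primitive of the form ``meromorphic plus constant times $\log z$'') is exactly the paper's approach. However, there is a genuine algebraic error in your inductive step that breaks the argument.

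You write
\[
f_{n_0}(z)(\log z)^{n_0} = \partial_z\big( \widetilde F(z)(\log z)^{n_0} \big) - \big( n_0 \widetilde F(z) - \lambda \big) z^{-1}(\log z)^{n_0-1},
\]
but this is false when $\lambda \neq 0$. Since $\widetilde F' = f_{n_0} - \lambda z^{-1}$, expanding the derivative gives
\[
\partial_z\big(\widetilde F(\log z)^{n_0}\big) = f_{n_0}(\log z)^{n_0} - \lambda z^{-1}(\log z)^{n_0} + n_0 \widetilde F z^{-1}(\log z)^{n_0-1},
\]
so the difference $\sum_{k=0}^{n_0} f_k(\log z)^k - \partial_z(\widetilde F(\log z)^{n_0})$ still contains the term $\lambda z^{-1}(\log z)^{n_0}$, which is of top degree $n_0$ in $\log z$. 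The induction hypothesis (which only handles degrees up to $n_0-1$) does not apply, and your conclusion that ``$g_{n_0+1}$ can always be taken to be $0$'' is wrong. You already saw this in your own base case: for $f_0 = 1/z$ you needed $g_1 = 1 \neq 0$.

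The fix is precisely what the paper does: also subtract $\partial_z\big(\tfrac{\lambda}{n_0+1}(\log z)^{n_0+1}\big) = \lambda z^{-1}(\log z)^{n_0}$, which kills the leftover top-degree term at the cost of introducing the constant $g_{n_0+1} = \tfrac{\lambda}{n_0+1}$. With that correction your argument and the paper's coincide.
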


\begin{proof}
Let us prove the result by induction. We start with the case $n_0 = 0$. Let $a$ denote the residue of $f_0$ at $0$. Developing $f_0(z) - a/z$ in Laurent series at $z = 0$, we find that there is a meromorphic function $g_0$ near $0$ such that $\partial_z g_0(z) = f_0(z) - a/z$ and thus $\partial_z( g_0(z) + a \log z) = f_0(z)$. This ends the proof in the case $n_0 = 0$.

Now take $n_0 > 0$ and assume that the result holds true for smaller values of $n_0$. Let $f_0,\dots, f_{n_0}$ be meromorphic functions near $0$. Let $a$ be the residue of $f_{n_0}$ at $z_0$. As above, we find a meromorphic function $g_{n_0}$ near $0$ such that $\partial_z g_{n_0}(z) = f_{n_0}(z) - a/z$. Notice then that
\begin{equation*}
\partial_z \left( g_{n_0} (\log z)^{n_0} + a \frac{(\log z)^{n_0+1}}{n_0 + 1} \right) = f_{n_0}(z) (\log z)^{n_0} + n_0 g_{n_0}(z) (\log z)^{n_0 - 1}. 
\end{equation*}
The induction hypothesis implies that there are meromorphic functions $g_0,\dots,g_{n_0 - 1}$ near $0$ such that
\begin{equation*}
\partial_z \left( \sum_{k = 0}^{n_0 - 1} g_k(z) (\log z)^k \right) = \sum_{k = 0}^{n_0 - 1} f_k(z) (\log z)^k - n_0 g_{n_0}(z)(\log z)^{n_0 - 1}.
\end{equation*}
Hence, we have
\begin{equation*}
\partial_z \left( \sum_{k = 0}^{n_0+1} g_k(z) (\log z)^k \right) = \sum_{k = 0}^{n_0} f_k(z) (\log z)^k.
\end{equation*}
This completes the proof.
\end{proof}

We can now state a result that will allow us to deal with resonant states that are not in the kernel of $P_{c,0}$.

\begin{lemma}\label{lemma:log_generalized_eigenvectors}
Let $w$ be a holomorphic function near zero such that $w(0) \neq 0$ and let $P$ be the differential operator $P = z \partial_z^2 - w(z)$. Let $n \geq 0$ be an integer and $u_0,\dots, u_{n}$ be holomorphic functions on $\mathbb{D}(0,\delta) \cap \mathbb{H}$ for some small $\delta > 0$. Assume that $P u_0 = 0$ and $P u_k = (\partial_z^2 - \alpha^2) u_{k-1}$ for $k = 1,\dots,n$.

There is an integer $n_0$ and meromorphic functions $g_0,\dots,g_{n_0}$ such that 
\begin{equation}\label{eq:log_singularity}
u_n(z) = \sum_{k = 0}^{n_0} g_k(z) (\log z)^k
\end{equation}
for $z$ near $0$ in $\mathbb{D}(0,\delta) \cap \mathbb{H}$ .
\end{lemma}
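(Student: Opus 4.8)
The plan is to prove the statement by induction on $n$, using Lemma \ref{lemma:fundamental_solutions} as the base case and Lemma \ref{lemma:primitive_log_polynomial} to handle the inhomogeneous term at the inductive step. For $n = 0$ the hypothesis is $P u_0 = 0$, so by Lemma \ref{lemma:fundamental_solutions} the solution space of $P v = 0$ on $\mathbb{D}(0,\delta)\cap\mathbb{H}$ is spanned by $\varphi$ and $\psi + \varphi\log$, where $\varphi,\psi$ are holomorphic near $0$; hence $u_0 = a\varphi + b(\psi + \varphi\log z)$ for constants $a,b$, which is of the form \eqref{eq:log_singularity} with $n_0 = 1$, $g_0 = a\varphi + b\psi$ holomorphic, $g_1 = b\varphi$ holomorphic. (Here I am using that a holomorphic function on $\mathbb{D}(0,\delta)\cap\mathbb{H}$ annihilated by the second-order Fuchsian operator $P/z$ must be a combination of the two local solutions, since the Cauchy data at an interior point determine it and the two explicit solutions are linearly independent.)

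For the inductive step, assume the claim holds for $n-1$, so that $u_{n-1}(z) = \sum_{k=0}^{m} g_k(z)(\log z)^k$ with $g_k$ meromorphic near $0$. Then $(\partial_z^2 - \alpha^2)u_{n-1}$ is again of the form $\sum_{k=0}^{m} h_k(z)(\log z)^k$ with $h_k$ meromorphic near $0$: differentiating $g_k(z)(\log z)^k$ produces terms $g_k''(\log z)^k$, $g_k'(\log z)^{k-1}/z$, and $g_k(\log z)^{k-2}/z^2$ up to constants, all of which have meromorphic coefficients. So the task reduces to solving $P u_n = f$ where $f = \sum_{k=0}^{m} h_k(z)(\log z)^k$ has meromorphic coefficients. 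I will produce a particular solution of this form and then add the general solution of $P v = 0$ from the $n=0$ analysis. To build the particular solution, write $P = z\partial_z^2 - w(z)$ and use the standard variation-of-parameters formula built from the two fundamental solutions $\varphi$ and $\Psi := \psi + \varphi\log z$ of Lemma \ref{lemma:fundamental_solutions}: a particular solution of $z\partial_z^2 u - w u = f$ is
\begin{equation*}
u_n(z) = -\,\varphi(z)\int^z \frac{\Psi(t) f(t)}{t\,W(t)}\,\mathrm{d}t \;+\; \Psi(z)\int^z \frac{\varphi(t) f(t)}{t\,W(t)}\,\mathrm{d}t,
\end{equation*}
where $W = \varphi \Psi' - \varphi' \Psi$ is the Wronskian. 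A short computation gives $W(z) = C \exp(\int^z 0\,\mathrm{d}t) $ up to the Abel formula for $z\partial_z^2 - w$: since the equation has no first-order term, $W$ is constant along $\partial_z$, and from the explicit $\Psi = \psi + \varphi\log z$ one finds $W(z) = -\varphi(z)\varphi(z)/z \cdot(\ldots)$; more simply, $W = \varphi\Psi' - \varphi'\Psi = \varphi(\psi' + \varphi'\log z + \varphi/z) - \varphi'(\psi + \varphi\log z) = (\varphi\psi' - \varphi'\psi) + \varphi^2/z$, so $z W(z)$ is holomorphic near $0$ with value $\varphi'(0)^2 \psi(0)/? $ — in any case $zW$ is holomorphic and $zW(0) = \psi(0)\varphi'(0)^2 \ne 0$ wait, let me just record that $zW(z)$ is holomorphic and nonvanishing at $0$, so $1/(tW(t))$ is holomorphic near $0$.

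Granting that $1/(t W(t))$ is holomorphic near $0$, each integrand $\varphi(t)f(t)/(t W(t))$ and $\Psi(t)f(t)/(tW(t))$ is of the form $\sum_k (\text{meromorphic})(\log t)^k$ (using $\Psi = \psi + \varphi \log t$ to absorb the extra $\log$), so by Lemma \ref{lemma:primitive_log_polynomial} the antiderivatives are again of the form $\sum_k (\text{meromorphic})(\log t)^k$; multiplying by $\varphi(z)$ (holomorphic) or $\Psi(z) = \psi(z) + \varphi(z)\log z$ (meromorphic-coefficients-times-powers-of-log) keeps us in the class of finite sums $\sum_k (\text{meromorphic})(\log z)^k$. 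Hence $u_n$ has the desired form \eqref{eq:log_singularity}, completing the induction. The only real subtlety — and the step I'd expect to write out most carefully — is the reduction that the homogeneous solution one must add to match $u_n$'s Cauchy data at an interior point is itself a combination of $\varphi$ and $\Psi$, i.e. that there are no further logarithmic branches beyond those already accounted for; this follows because the homogeneous equation $Pv=0$ is a single second-order ODE whose full solution space is two-dimensional and spanned by $\varphi,\Psi$, and variation of parameters gives a genuine solution of $Pu_n = f$, so $u_n$ minus that particular solution solves $Pv = 0$ and hence lies in $\mathrm{span}\{\varphi,\Psi\}$.
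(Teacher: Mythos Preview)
Your approach is essentially the paper's: induction on $n$, base case via Lemma~\ref{lemma:fundamental_solutions}, inductive step by variation of parameters on the fundamental pair $\varphi,\ \Psi=\psi+\varphi\log$ and then Lemma~\ref{lemma:primitive_log_polynomial} to integrate. One slip to fix: the Wronskian $W=\varphi\Psi'-\varphi'\Psi$ is the nonzero \emph{constant} $-\psi(0)$ (Abel's identity for $u''-(w/z)u=0$, which has no first-order term), so $1/(tW(t))$ has a simple pole at $0$ rather than being holomorphic---but this is harmless, since ``meromorphic'' is exactly what Lemma~\ref{lemma:primitive_log_polynomial} requires.
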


\begin{proof}
Let us make a proof by induction. We start with the case $n = 0$. Let $\varphi$ and $\psi$ be the holomorphic functions from Lemma \ref{lemma:fundamental_solutions}. Let us consider the Wronskian
\begin{equation*}
W(z) = \varphi \partial_z( \psi(z) + \varphi(z) \log z) - \partial_z \varphi(z)(\psi(z) + \varphi(z) \log z).
\end{equation*}
Differentiating the definition of $W$, we find that $W$ is a constant function, and letting $z$ goes to $0$, we find that this constant value is $- \psi(0) \neq 0$. Hence, $\varphi$ and $\psi + \varphi. \log$ form a system of fundamental solutions for the equation $Pu = 0$. Hence $u_0$ is a linear combination of $\varphi$ and $\psi + \varphi. \log$, proving the result in the case $n = 0$.

Let us move to the case of $n > 0$, assuming that the results hold for smaller values of $n$. The induction hypothesis implies that there is an integer $n_0 \geq 0$ and meromorphic functions $g_0,\dots ,g_{n_0}$ such that 
\begin{equation*}
(\partial_z^2 - \alpha^2) u_{n - 1}(z) = \sum_{k = 0}^{n_0} g_k(z) (\log z)^k 
\end{equation*} 
for $z$ near $0$ in the domain of definition of $u_{n - 1}$. Since $\varphi$ and $\psi + \varphi \log$ form a system of fundamental solutions for the equation $P u = 0$, we know that there are holomorphic functions $p$ and $q$ on $\mathbb{D}(0,\delta) \cap \mathbb{H}$ (up to making $\delta$ smaller) such that 
\begin{equation*}
u_{n}(z) = p(z) \varphi(z) + q(z) (\psi(z) + \varphi(z) \log z)
\end{equation*}
and 
\begin{equation*}
\partial_z u_{n}(z) = p(z) \partial_z \varphi(z) + q(z) \partial_z (\psi(z) + \varphi(z) \log z).
\end{equation*}
We have then 
\begin{equation*}
\begin{pmatrix}
\varphi(z) & \psi(z) + \varphi(z) \log z \\
z \partial_z \varphi(z) & z \partial_z(\psi(z) + \varphi(z) \log z)
\end{pmatrix} \begin{pmatrix}
\partial_z p(z) \\ \partial_z q(z)
\end{pmatrix} = \begin{pmatrix}
0 \\ P u_{n}(z)
\end{pmatrix}.
\end{equation*}
Inverting this system of equation, we find that
\begin{equation*}
p'(z) = z^{-1} \psi(0)^{-1} (\psi(z) + \varphi(z) \log z) (\partial_z^2 - \alpha^2) u_{n - 1}(z) 
\end{equation*}
and
\begin{equation*}
q'(z) = - z^{-1} \psi(0)^{-1} \varphi(z) (\partial_z^2 - \alpha^2) u_{n - 1}(z).
\end{equation*}
The result follows then from Lemma \ref{lemma:primitive_log_polynomial}.
\end{proof}

\subsection{Description of elements of \texorpdfstring{$\Omega(c)$}{Omega(c)} near \texorpdfstring{$U^{-1}(\set{c})$}{U{-1}(set{c})}}\label{subsection:description_resonant_states}

Let us fix $c \in (c_0 - \delta, c_0 + \delta)$. In order to prove Theorem \ref{theorem:resonances}, we want to compare the elements of $\Omega(c)$ and the resonant states of $P_{c,0}$ defined in \S \ref{subsection:definition_resonances} (that are smooth functions on $\overline{M}_{a,b}$). 

\begin{remark}\label{remark:distribution_theory}
Let us make a reminder of distribution theory that will be useful. Let $I$ be an open interval of $\mathbb{R}$ and $\varrho$ be a positive real number. Assume that $F$ is a holomorphic function on $I + i (0,\varrho)$ and that there are $C,N > 0$ such that for every $z \in I + i (0,\varrho)$ we have $|F(z)| \leq C |\im z|^{-N}$. Then the family of smooth functions $x \mapsto F(x + i \rho)$ on $I$ converges as $\rho$ goes to $0$ to a distribution on $I$ that we denote by $F(\cdot + i 0)$. Moreover, the analytic wave front set of $F (\cdot + i 0)$ is contained in $I \times \mathbb{R}_+^*$. See for instance \cite[pp.41-42]{sjostrand82}. Of course, if $G$ is a holomorphic function defined on $I + i (- \varrho, 0)$ satisfying the same bound as $F$, then $G(\cdot - i \rho)$ converges as $\rho$ goes to $0$ to a distribution on $I$ that we denote by $G(\cdot - i 0)$. The wave front set of $G(\cdot - i 0)$ is contained in $I \times \mathbb{R}_-^*$.

Any distribution on an open subset of $\mathbb{R}$ may be written locally as the sum of distributions of the type $F(\cdot + i 0)$ and $G(\cdot - i 0)$. The possibility (or not) to write $u$ locally as $F(\cdot + i0)$ or $G(\cdot - i 0)$ (instead of a sum of two such terms) may be used to define the analytic wave front set of $u$. This is Sato's definition of the wave front set, one may refer for instance to \cite[pp.41-42]{sjostrand82} for a discussion of this definition and a proof that it is equivalent with the other standard definition of analytic wave front set.
\end{remark}

We will start by giving a detailed description of the elements of $\Omega(c)$ near the points of $U^{-1}(\set{c})$. We begin with the simplest case:

\begin{lemma}\label{lemma:inflexion_point}
Let $\psi \in \Omega(c)$. Let $x_0 \in (a,b)$ be such that $U(x_0) = c$ and $U''(x_0) = 0$. Then $\psi$ is analytic near $x_0$.
\end{lemma}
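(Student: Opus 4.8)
The plan is to work near $x_0$ and exploit that the vanishing $U''(x_0)=0$ turns $P_{c,0}$ into a Fuchsian operator whose singular factor splits off, leaving an \emph{elliptic} operator with analytic coefficients. Since $c\in(c_0-\delta,c_0+\delta)$ is a regular value of $U$ and $x_0\in U^{-1}(\set c)\cap(a,b)$, after shrinking $\delta$ we may assume $U$ is analytic near $x_0$ with $U'(x_0)\neq 0$, so $U-c=(x-x_0)v$ with $v$ analytic and $v(x_0)=U'(x_0)\neq 0$; and because $U''(x_0)=0$ we may also write $U''=(x-x_0)\widetilde U$ with $\widetilde U$ analytic. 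Hence near $x_0$
\[
P_{c,0}=(U-c)(\partial_x^2-\alpha^2)-U''=(x-x_0)\,L,\qquad L:=v(\partial_x^2-\alpha^2)-\widetilde U,
\]
and $L$ is elliptic with analytic coefficients, in particular analytic hypoelliptic.

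\textbf{The case $n=0$.} This is the heart of the matter. If $P_{c,0}\psi=0$ near $x_0$, then $(x-x_0)L\psi=0$, so $L\psi$ is a distribution supported at $\set{x_0}$, i.e.\ a finite combination of $\delta_{x_0}$ and its derivatives. On the other hand $L$ is a differential operator, so $\WF(L\psi)\subseteq\WF(\psi)$, which by the definition of $\Omega(c)$ is contained in the half-line $\Lambda:=\set{(x_0,\xi):U'(x_0)\xi<0}$ over $x_0$. Since the wave front set of any nonzero distribution supported at a point is the whole punctured fibre, we get $L\psi=0$, and analytic hypoellipticity of $L$ shows $\psi$ is real-analytic near $x_0$.

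\textbf{Induction on $n$.} Given $\psi\in\Omega(c)$ with $Q_{c,0}^nP_{c,0}\psi=0$, set $\psi_1:=(\partial_x^2-\alpha^2)^{-1}P_{c,0}\psi$. Then $\psi_1$ satisfies Dirichlet conditions, is smooth off $U^{-1}(\set c)$, has $\WF(\psi_1)=\WF(P_{c,0}\psi)\subseteq\WF(\psi)$ (because $(\partial_x^2-\alpha^2)^{-1}$ is elliptic pseudodifferential and $P_{c,0}$ is differential), and $Q_{c,0}^{n-1}P_{c,0}\psi_1=Q_{c,0}^nP_{c,0}\psi=0$; thus $\psi_1\in\Omega(c)$ with exponent $n-1$, so by induction $\psi_1$ is analytic near $x_0$. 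Hence $P_{c,0}\psi=(\partial_x^2-\alpha^2)\psi_1=:f$ is analytic near $x_0$, and it remains to pass from $(x-x_0)L\psi=f$ and $\WF(\psi)\subseteq\Lambda$ to analyticity of $\psi$. Writing $f=f(x_0)+(x-x_0)g$ with $g$ analytic, the equation is $(x-x_0)(L\psi-g)=f(x_0)$; the wave-front argument again kills any component of $L\psi-g$ supported at $x_0$ and selects for it the one-sided regularization $f(x_0)(x-x_0\pm i0)^{-1}$ of $f(x_0)/(x-x_0)$ dictated by the sign of $U'(x_0)$ (cf.\ Remark \ref{remark:distribution_theory}). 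Solving this by the Fuchsian analysis of Lemmas \ref{lemma:fundamental_solutions}--\ref{lemma:log_generalized_eigenvectors}, now in the degenerate regime where the indicial roots are $0$ and $1$ and the coefficient ``$w(0)$'' vanishes, exhibits $\psi$ near $x_0$ as a one-sided boundary value of (holomorphic)$\,+\,$(holomorphic, vanishing at $x_0$)$\cdot\log(\cdot-x_0)$; the last point is that the logarithmic coefficient is forced to vanish, which — using that $f$ lies in the image of $P_{c,0}$ on analytic germs, together with the double zero $U(x_0)-c=U''(x_0)=0$ — I expect to come down to $f(x_0)=(\partial_x^2-\alpha^2)\psi_1(x_0)=0$.

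\textbf{Main obstacle.} The delicate point is exactly this last step: ruling out a mild, $(x-x_0)\log(x-x_0)$-type logarithmic singularity of $\psi$ at $x_0$. The wave-front condition by itself permits such a term; what must be genuinely used is the full chain condition $Q_{c,0}^nP_{c,0}\psi=0$, which constrains the forcing $f$ beyond mere analyticity, together with the explicit local normal form of Lemma \ref{lemma:fundamental_solutions} in the degenerate ($U''(x_0)=0$) case.
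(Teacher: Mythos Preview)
Your $n=0$ case is correct and essentially identical to the paper's. Your induction scheme is also set up correctly and matches the paper's chain $\psi_k = ((\partial_x^2-\alpha^2)^{-1}P_{c,0})^{n-k}\psi$.

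However, the proof is incomplete precisely where you yourself flag the ``main obstacle'': ruling out the logarithmic singularity in the inductive step. You correctly arrive at $L\psi - g = f(x_0)(x-x_0\mp i0)^{-1}$ modulo Diracs (with the sign fixed by $\operatorname{sgn}U'(x_0)$), but neither the wave-front constraint nor the Fuchsian analysis of Lemmas \ref{lemma:fundamental_solutions}--\ref{lemma:log_generalized_eigenvectors} will by itself force $f(x_0)=0$. Your hope that $f(x_0)=(\partial_x^2-\alpha^2)\psi_1(x_0)=0$ is in fact \emph{a posteriori} true (once $\psi$ is known to be analytic, $(x-x_0)L\psi=f$ forces $f(x_0)=0$), but there is no a priori handle on it from the chain condition alone, and the Fuchsian lemmas you cite are calibrated to the nondegenerate case $w(0)\neq 0$, so they do not directly apply here.

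The paper's resolution is a short trick you are missing: since the forcing $f=(\partial_x^2-\alpha^2)\psi_{k-1}$ is \emph{analytic} near $x_0$ by induction, one is free to divide by $U-c$ using \emph{either} regularization $(U-c\pm i0)^{-1}$. Choosing $+i0$ constrains the analytic wave front set of $\partial_x^2\psi_k+(\tfrac{U''}{U-c}-\alpha^2)\psi_k$ to one half-line over $x_0$; choosing $-i0$ constrains it to the opposite half-line. The Dirac ambiguity is killed each time by the one-sided $\WF(\psi_k)$ hypothesis, exactly as in your $n=0$ step. Intersecting the two constraints gives empty analytic wave front set, hence analyticity of this expression, and then elliptic regularity for $\partial_x^2+(\tfrac{U''}{U-c}-\alpha^2)$ finishes. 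So the missing idea is simply: exploit both $\pm i0$ regularizations of the analytic right-hand side, not only the one compatible with $\WF(\psi)$.
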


\begin{proof}
By assumption, there is $n \geq 0$ such that $P_{c,0} ((\partial_x^2 - \alpha^2)^{-1} P_{c,0})^n \psi =0$. For $k = 0,\dots, n$ let $\psi_k = ((\partial_x^2 - \alpha^2)^{-1} P_{c,0})^{n-k} \psi$. Let us prove first that $\psi_0$ is analytic near $x_0$. We have
\begin{equation*}
0 = P_{c,0} \psi_0 = (U - c)\left( \partial_x^2 \psi_0 + \left( \frac{U''}{U - c} - \alpha^2 \right) \psi_0 \right),
\end{equation*}
where the function $U''/ (U - c)$ is analytic near $x_0$. Hence, the function
\begin{equation*}
\partial_x^2 \psi_0 + \left( \frac{U''}{U - c} - \alpha^2 \right) \psi_0
\end{equation*}
is a multiple of the Dirac mass at $x_0$ (since we have $U'(x_0) \neq 0$). Since its wave front set near $x_0$ is contained in a half-line, we find that
\begin{equation*}
\partial_x^2 \psi_0 + \left( \frac{U''}{U - c} - \alpha^2 \right) \psi_0 = 0.
\end{equation*}
Indeed, the wave front set of a Dirac mass is a full line. By elliptic regularity (which in dimension one may just be seen as a consequence of the holomorphic Cauchy--Lipschitz theorem), we find that $\psi_0$ is analytic near $x_0$. Let us now prove by induction that $\psi_k$ is analytic for $k = 1,\dots,n$.

Let $k \in \set{1,\dots,n}$ and assume that $\psi_\ell$ is analytic for $\ell = 0,\dots, k-1$. Notice that $P_{c,0} \psi_k = (\partial_x^2 - \alpha^2) \psi_{k-1}$. As above
\begin{equation*}
(U - c)\left( \partial_x^2 \psi_k + \left( \frac{U''}{U - c} - \alpha^2 \right) \psi_k \right) = (\partial_x^2 - \alpha^2) \psi_{k-1}
\end{equation*}
implies that there is $\lambda \in \mathbb{C}$ such that
\begin{equation*}
\partial_x^2 \psi_k + \left( \frac{U''}{U - c} - \alpha^2 \right) \psi_k = \lambda \delta_{x_0} +  \frac{(\partial_x^2 - \alpha^2) u_{k-1}}{U - c+ i0}.
\end{equation*}
We use again the fact that the wave front set of the left hand side is contained in a half line to find that $\lambda = 0$. Thus, we have
\begin{equation*}
\partial_x^2 \psi_k + \left( \frac{U''}{U - c} - \alpha^2 \right) \psi_k = \frac{(\partial_x^2 - \alpha^2) \psi_{k-1}}{U - c+ i0},
\end{equation*}
which implies that the analytic wave front set of $\partial_x^2 \psi_k + \left( \frac{U''}{U - c} - \alpha^2 \right) \psi_k $ near $x_0$ is contained in $\set{x_0} \times \mathbb{R}_-^*$. The same reasoning replacing $(U - c + i 0)^{-1}$ by $(U - c - i0)^{-1}$ implies that the analytic wave front set of $\partial_x^2 \psi_k + \left( \frac{U''}{U - c} - \alpha^2 \right) \psi_k $ near $x_0$ is contained in $\set{x_0} \times \mathbb{R}_+^*$, and thus that $\partial_x^2 \psi_k + \left( \frac{U''}{U - c} - \alpha^2 \right) \psi_k$ is analytic near $x_0$. By elliptic regularity, we find that $\psi_k$ is analytic near $x_0$. This ends the induction. We proved in particular that $\psi_n = \psi$ is analytic.
\end{proof}

Let us describe now the elements of $\Omega(c)$ near points of $U^{-1}(\set{c})$ that are not inflexion points.

\begin{lemma}\label{lemma:singularity_positive_derivative}
Let $\psi \in \Omega(c)$. Let $x_0 \in (a,b)$ be such that $U(x_0) = c_0$ and $U'(x_0) > 0$. There is an integer $n_0 \geq 0$, meromorphic functions $f_0,\dots,f_{n_0}$ near $x_0$ and $\varrho > 0$ such that
\begin{equation*}
\psi_{|(x_0 - \varrho, x_0 + \varrho)} =  F(\cdot - i 0)
\end{equation*}
where\footnote{The choice of the determination of the logarithm is irrelevant, but the functions $f_0,\dots, f_{n_0}$ depend on this choice.}
\begin{equation}\label{eq:singularity_solution}
F(z) = \sum_{k = 0}^{n_0} f_k(z) (\log (z-x_0))^k.
\end{equation}
\end{lemma}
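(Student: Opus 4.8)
The statement says that near a turning point $x_0$ with $U'(x_0)>0$, an element $\psi\in\Omega(c)$ is the boundary value from below of a function of the form \eqref{eq:singularity_solution}. The plan is to combine the defining properties of $\Omega(c)$ with the Fuchsian ODE analysis of \S\ref{subsection:ODE_lemma}. Fix $n\geq 0$ with $Q_{c,0}^n P_{c,0}\psi=0$ and set $\psi_k = ((\partial_x^2-\alpha^2)^{-1}P_{c,0})^{n-k}\psi$ for $k=0,\dots,n$, so that $P_{c,0}\psi_0=0$, $P_{c,0}\psi_k=(\partial_x^2-\alpha^2)\psi_{k-1}$, and $\psi_n=\psi$. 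Away from $x_0$ (and the rest of $U^{-1}(\{c\})$) each $\psi_k$ is smooth, and the wave front set hypothesis in Definition~\ref{definition:resonant_states} tells us that near $x_0$, where $U'(x_0)\xi<0$ forces $\xi<0$, the wave front set of $\psi$ is contained in $\{x_0\}\times\mathbb{R}_-^*$. By Sato's characterization recalled in Remark~\ref{remark:distribution_theory}, this is exactly the condition that $\psi$ is locally a boundary value $F(\cdot-i0)$ of a function $F$ holomorphic on $(x_0-\varrho,x_0+\varrho)+i(-\varrho,0)$ with polynomial bounds in $|\im z|$. The same applies to each $\psi_k$: one checks inductively (just as in the proof of Lemma~\ref{lemma:inflexion_point}, using that the equations $P_{c,0}\psi_k=(\partial_x^2-\alpha^2)\psi_{k-1}$ propagate the one-sided wave front set condition, possibly with a Dirac contribution that must vanish because its wave front set would be a full line) that each $\psi_k$ has analytic wave front set near $x_0$ contained in $\{x_0\}\times\mathbb{R}_-^*$, hence each $\psi_k = F_k(\cdot-i0)$ for some $F_k$ holomorphic on the lower half-disk with polynomial growth.

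Next I would transfer the ODE to the complex side. On the lower half-disk $\mathbb{D}(x_0,\varrho)\cap\{\im z<0\}$ the holomorphic functions $F_k$ satisfy $P_{c,0}F_0=0$ and $P_{c,0}F_k=(\partial_z^2-\alpha^2)F_{k-1}$, where now $P_{c,0}=(U(z)-c)(\partial_z^2-\alpha^2)-U''(z)$ is understood via the holomorphic continuation of $U$ (which exists by hypothesis~\ref{hypothesis}). Since $U'(x_0)\neq 0$, after the analytic change of variables $z\mapsto U(z)-c$ the leading coefficient $U(z)-c$ vanishes simply at $x_0$, so $P_{c,0}$ becomes, up to an invertible holomorphic factor, an operator of the form $\zeta\partial_\zeta^2 - w(\zeta)$ with $w$ holomorphic and $w(0)\neq 0$ — precisely the situation of Lemma~\ref{lemma:log_generalized_eigenvectors}. (One has to be a little careful: the change of variables maps the lower half-disk to a region that is a slit neighborhood of $0$, which is fine since the logarithm is single-valued there.) Applying Lemma~\ref{lemma:log_generalized_eigenvectors} to $F_0,\dots,F_n=F$ gives an integer $n_0$ and meromorphic functions $g_0,\dots,g_{n_0}$ near $0$ in the $\zeta$-variable with $F(z)=\sum_{k=0}^{n_0}g_k(\zeta)(\log\zeta)^k$; pulling back through $\zeta=U(z)-c$ and absorbing the resulting composition (note $\log(U(z)-c)=\log\bigl((z-x_0)\cdot\text{(holomorphic, nonvanishing)}\bigr)=\log(z-x_0)+\text{holomorphic}$, and $(\log(z-x_0)+h(z))^k$ expands into a combination of $(\log(z-x_0))^j$ with meromorphic coefficients) yields the desired form \eqref{eq:singularity_solution} with meromorphic $f_0,\dots,f_{n_0}$ near $x_0$. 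Finally, the polynomial growth bound on $F$ near $x_0$ justifies that $F(\cdot-i0)$ makes sense as a distribution and equals $\psi$ on $(x_0-\varrho,x_0+\varrho)$, as in Remark~\ref{remark:distribution_theory}.

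\textbf{Main obstacle.} The delicate point is the bookkeeping at the interface between the real-distribution side and the complex-holomorphic side: one must argue that the wave front set condition in Definition~\ref{definition:resonant_states} genuinely produces a \emph{holomorphic} representative $F$ on a one-sided neighborhood (Sato's theorem / Remark~\ref{remark:distribution_theory}), that this $F$ satisfies the holomorphically-continued ODE there (which follows because the distributional equation $Q_{c,0}^nP_{c,0}\psi=0$ continues analytically, using that $U$ is analytic near $x_0$ and $(\partial_x^2-\alpha^2)^{-1}$ has an explicit kernel), and that the polynomial bounds are preserved under the successive operations. The combinatorial/inductive step showing the auxiliary Dirac masses vanish — already done in Lemma~\ref{lemma:inflexion_point} — transfers verbatim but needs to be repeated here with the turning point $U'(x_0)>0$ rather than an inflexion point; this is routine. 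The genuinely new input is Lemma~\ref{lemma:log_generalized_eigenvectors}, and once the ODE is correctly set up in the $\zeta$-variable the conclusion is essentially immediate, so the real work is in the first two paragraphs: justifying the passage to holomorphic functions and checking that the equation survives that passage.
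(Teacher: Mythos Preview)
Your outline follows the paper's proof closely: define the chain $\psi_k$, establish the one-sided wave front set for each $\psi_k$, pass to holomorphic boundary values $F_k$, verify the ODE on the lower half-disk, and invoke Lemma~\ref{lemma:log_generalized_eigenvectors}. However, there is a genuine gap at the first step. Definition~\ref{definition:resonant_states} imposes only a \emph{$C^\infty$} wave front set condition (it cites H\"ormander's Definition~8.1.2), whereas Sato's characterization in Remark~\ref{remark:distribution_theory} requires the \emph{analytic} wave front set to be one-sided in order to produce a holomorphic representative $F_k$. The paper bridges this as follows: first, $(\partial_x^2-\alpha^2)^{-1}P_{c,0}$ is a pseudodifferential operator, so the $C^\infty$ wave front set condition propagates from $\psi$ to each $\psi_k$; next, analytic elliptic regularity makes each $\psi_k$ real-analytic on the punctured interval; finally, the analytic hypoellipticity result \cite[Theorem~2]{galkowski_zworski_hypoellipticity} upgrades the $C^\infty$ wave front set at $x_0$ to the analytic one. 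Your appeal to the Dirac-mass argument of Lemma~\ref{lemma:inflexion_point} does not accomplish this upgrade: that argument shows full analyticity when $U''(x_0)=0$, but here $U''(x_0)\neq 0$ in general and $\psi_k$ genuinely has a singularity, so one needs the finer analytic wave front set statement rather than analyticity.

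A smaller correction: your change of variable $\zeta=U(z)-c$ produces, after dividing by $(U')^2$, an operator $\zeta\partial_\zeta^2 + \zeta\,\tfrac{U''}{(U')^2}\partial_\zeta - \tfrac{\zeta\alpha^2+U''}{(U')^2}$, which carries a first-order term and is not literally of the form $\zeta\partial_\zeta^2 - w(\zeta)$ assumed in Lemma~\ref{lemma:log_generalized_eigenvectors}. The paper instead keeps $z-x_0$ as the variable and divides $P_{c,0}$ by the nonvanishing holomorphic factor $(U(z)-c)/(z-x_0)$, yielding exactly $(z-x_0)\partial_z^2 - w(z)$ with $w(x_0)=U''(x_0)/U'(x_0)$.
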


\begin{proof}
We can assume $U''(x_0) \neq 0$, since otherwise the result follows from Lemma \ref{lemma:inflexion_point}. Let $n \geq 0$ be such that $Q_{c,0}^n P_{c,0} \psi = 0$. For $k = 0,\dots, n$, set $\psi_k =((\partial_x^2 - \alpha^2)^{-1} P_{c_0,0})^{n-k} \psi$. Since $(\partial_x^2 - \alpha^2)^{-1} P_{c,0}$ is a pseudo-differential operator, we find that for $k = 0,\dots, n-1$, the wave front set of $\psi_{k|(x_0 - \varrho, x_0 + \varrho)}$ is contained in $\set{x_0} \times \mathbb{R}_-^*$, if $\varrho$ is small enough. By elliptic regularity, we deduce from $P_{c,0} \psi_0 = 0$ that $\psi_0$ is analytic on $(x_0 - \varrho, x_0 + \varrho) \setminus \set{x_0}$. We deduce then from \cite[Theorem 2]{galkowski_zworski_hypoellipticity} (take $\Lambda = \set{x_0} \times \mathbb{R}_+^*$), that the analytic wave front set of $\psi_{0|(x_0 - \varrho, x_0 + \varrho)}$ is contained in $\set{x_0} \times \mathbb{R}_-^*$. Using the relation $P_{c,0} \psi_k = (\partial_x^2 - \alpha^2) \psi_{k-1}$, valid for $k = 1,\dots, n$, we find by induction, using the same reasoning, that the wave front set of $\psi_{k|(x_0 - \varrho, x_0 + \varrho)}$ is contained in $\set{x_0} \times \mathbb{R}_-^*$ for $k = 0,\dots,n$.

It follows then from \cite[Theorem 6.5]{sjostrand82} that, up to making $\varrho$ smaller, for $k = 0,\dots,n$, there is a holomorphic function $F_k : (x_0 - \varrho, x_0 + \varrho) + i (- \varrho, 0) \to \mathbb{C}$ such that 
\begin{equation*}
\psi_{k|(x_0 - \varrho, x_0 + \varrho)} = F_k(\cdot - i 0).
\end{equation*}
Since $\psi_k$ is analytic on $(x_0 - \varrho, x_0 + \varrho) \setminus \set{x_0}$, we find that $F_k$ has a holomorphic extension to a neighbourhood of $(x_0 - \varrho, x_0 + \varrho) + i (- \varrho, 0] \setminus \set{x_0}$ whose restriction to $(x_0 - \varrho, x_0 + \varrho) \setminus \set{x_0}$ coincides with $\psi_k$. Thus, the analytic continuation principle implies that $P_{c,0} F_0 = 0$ and $P_{c,0} F_k = (\partial_x^2 - \alpha^2)F_{k-1}$ for $k = 1,\dots,n$.

We can consequently apply Lemma \ref{lemma:log_generalized_eigenvectors} (after an affine change of variables and a division by $(U(z) - c)/z$) to find that $F_n$ is of the form \eqref{eq:singularity_solution}. Since $\psi_n = \psi$, the result follows.
\end{proof}

Similarly, we have:

\begin{lemma}\label{lemma:singularity_negative_derivative}
Let $\psi \in \Omega(c_0)$. Let $x_0 \in (a,b)$ be such that $U(x_0) = c_0$ and $U'(x_0) < 0$. There is an integer $n_0 \geq 0$, meromorphic functions $f_0,\dots,f_{n_0}$ near $x_0$ and $\varrho > 0$ such that
\begin{equation*}
\psi_{|(x_0 - \varrho, x_0 + \varrho)} =  F(\cdot + i 0)
\end{equation*}
where 
\begin{equation}\label{eq:singularity_solution_bis}
F(z) = \sum_{k = 0}^{n_0} f_k(z) (\log (z-x_0))^k.
\end{equation}
\end{lemma}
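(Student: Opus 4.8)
The statement is the exact mirror image of Lemma~\ref{lemma:singularity_positive_derivative}, with the two half-lines in the wave front set and the two sides of the real axis interchanged, the interchange being dictated by the sign condition $U'(x_0)<0$. So the plan is to replay the proof of Lemma~\ref{lemma:singularity_positive_derivative} verbatim while tracking the signs. First I reduce to the case $U''(x_0)\neq 0$: if $U''(x_0)=0$ then $\psi$ is already analytic near $x_0$ by Lemma~\ref{lemma:inflexion_point}, and the conclusion is trivial (take $n_0=0$, $f_0=\psi$). Assuming $U''(x_0)\neq 0$, I fix $n\geq 0$ with $Q_{c,0}^{n}P_{c,0}\psi=0$ and set $\psi_k=((\partial_x^2-\alpha^2)^{-1}P_{c,0})^{n-k}\psi$ for $k=0,\dots,n$, so that $P_{c,0}\psi_0=0$, $P_{c,0}\psi_k=(\partial_x^2-\alpha^2)\psi_{k-1}$ for $k=1,\dots,n$, and $\psi_n=\psi$.

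Next comes the wave front set analysis. Because $U'(x_0)<0$, the defining condition of $\Omega(c)$ now puts the $C^\infty$ wave front set of $\psi$ near $x_0$ inside $\{x_0\}\times\mathbb{R}_+^*$; since $(\partial_x^2-\alpha^2)^{-1}P_{c,0}$ is a pseudodifferential operator, the same is true of each $\psi_k$ with $k=0,\dots,n-1$, after shrinking $\varrho$. Elliptic regularity applied to $P_{c,0}\psi_0=0$ on $(x_0-\varrho,x_0+\varrho)\setminus\{x_0\}$ (where $U-c\neq 0$) shows all the $\psi_k$ are analytic off $x_0$, and then I invoke \cite[Theorem~2]{galkowski_zworski_hypoellipticity}, this time with $\Lambda=\{x_0\}\times\mathbb{R}_-^*$, to upgrade this to: the analytic wave front set of $\psi_0$ near $x_0$ lies in $\{x_0\}\times\mathbb{R}_+^*$. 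Propagating through the relations $P_{c,0}\psi_k=(\partial_x^2-\alpha^2)\psi_{k-1}$ by the same reasoning yields, by induction on $k$, that the analytic wave front set of every $\psi_k$ near $x_0$ is contained in $\{x_0\}\times\mathbb{R}_+^*$.

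Finally I pass to boundary values from above. By \cite[Theorem~6.5]{sjostrand82}, shrinking $\varrho$ once more, there are holomorphic functions $F_k$ on $(x_0-\varrho,x_0+\varrho)+i(0,\varrho)$ with $\psi_{k|(x_0-\varrho,x_0+\varrho)}=F_k(\cdot+i0)$; since $\psi_k$ is analytic away from $x_0$, each $F_k$ extends holomorphically across $(x_0-\varrho,x_0+\varrho)\setminus\{x_0\}$, and the analytic continuation principle propagates the chain of equations, giving $P_{c,0}F_0=0$ and $P_{c,0}F_k=(\partial_x^2-\alpha^2)F_{k-1}$. Here is the one point where the negative-derivative case is slightly cleaner than its companion: the $F_k$ already live on the \emph{upper} half-disk, which is precisely the setting of Lemma~\ref{lemma:log_generalized_eigenvectors}, so after translating $x_0$ to $0$ and dividing $P_{c,0}$ by the holomorphic non-vanishing function $z\mapsto(U(z)-c)/(z-x_0)$ (whose value at $x_0$ is $U'(x_0)$, so the resulting operator has the form $\zeta\partial_\zeta^2-\tilde w(\zeta)$ with $\tilde w(0)=U''(x_0)/U'(x_0)\neq 0$), Lemma~\ref{lemma:log_generalized_eigenvectors} applies \emph{directly}, with no reflection, and shows that $F_n$ has the form \eqref{eq:singularity_solution_bis}. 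Since $\psi_n=\psi$, this proves the lemma.

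All the genuine content is imported from the proof of Lemma~\ref{lemma:singularity_positive_derivative}; the only thing that requires attention is the bookkeeping of signs — verifying that the $\Omega(c)$ condition really does swing the relevant half-line to $\mathbb{R}_+^*$, that \cite[Theorem~2]{galkowski_zworski_hypoellipticity} and \cite[Theorem~6.5]{sjostrand82} are invoked with the correctly oriented half-line and half-plane, and that the orientation of the half-disk carrying the $F_k$ matches the hypotheses of Lemma~\ref{lemma:log_generalized_eigenvectors}. No step presents a real obstacle.
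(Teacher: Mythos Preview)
Your proposal is correct and follows exactly the approach the paper intends: the paper itself does not write out a separate proof for this lemma but simply prefaces it with ``Similarly, we have:'', leaving the reader to mirror the proof of Lemma~\ref{lemma:singularity_positive_derivative} with the signs reversed. You have carried out that mirroring carefully and correctly, including the observation that in this orientation the $F_k$ already live on the upper half-disk so Lemma~\ref{lemma:log_generalized_eigenvectors} applies without a reflection.
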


We are now ready to prove Theorem \ref{theorem:resonances}. 

\begin{figure}[b]
   \centering
   \includegraphics[width=0.7\textwidth]{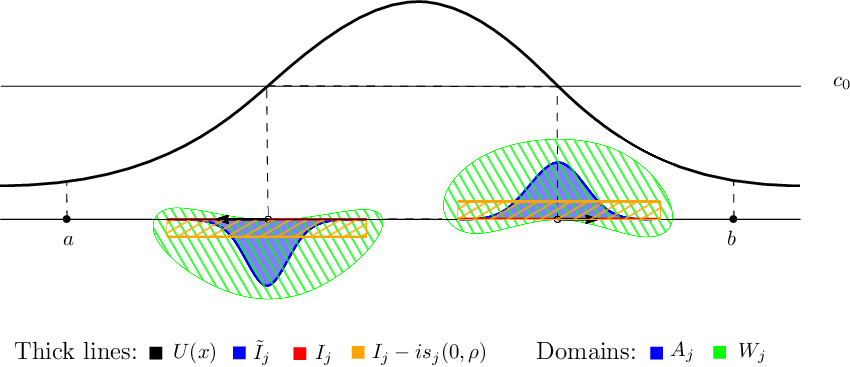}
    \caption{Complex deformations used in the proof of Theorem \ref{theorem:resonances}. Arrows indicate the wave front sets of resonant states. Starting with an element of $\Omega(c)$, we use Lemmas \ref{lemma:singularity_positive_derivative} and \ref{lemma:singularity_negative_derivative} to extend it as a holomorphic function on the interior of the orange rectangles. We use then Lemma \ref{lemma:elliptic_extension} to extend this holomorphic function to the green region. The resulting function may be restricted to the blue lines, and thus define a smooth function on $M_{a,b}$}
   \label{fig:deform}
\end{figure}

\begin{proof}[Proof of Theorem \ref{theorem:resonances}]
Let us start with the case $c \in (c_0 - \delta,c_0 + \delta)$. Our goal is to construct a linear isomorphism between the space of resonant states for $P_{c,0}$ (as defined in Remark \ref{remark:multiplicity_resonances}, these are smooth functions on $\overline{M}_{a,b}$) and $\Omega(c)$ (this is a space of distributions on $(a,b)$). This will be done using Lemma \ref{lemma:elliptic_extension} by ``extending'' the elements of $\Omega(c)$ up to $\overline{M}_{a,b}$, and the resonant states up to $[a,b]$.

Recall the compact set $K$ and the function $m$ from \S \ref{section:escape_function} and let $K_0 \subseteq (a,b)$ denote the support of the function $m$ defined in \S \ref{section:escape_function}. Cover $K_0$ by the interiors of a finite number of disjoint segment $I_1,\dots, I_N$ such that $I_j \subseteq \set{x \in (a,b) : d(x,K) < r}$ and $U'$ does not vanish on $I_j$ for $j= 1,\dots,N$ (see Figure \ref{fig:deform}).

Let $\psi$ be an element of $\Omega(c)$. Assume first that $P_{c,0} \psi = 0$. Let $j \in \set{1,\dots,N}$, and consider the set 
\begin{equation*}
A_j = \set{ x + i t m(x) : x \in I_j, t \in [0,1]} \setminus U^{-1}(\set{c}).
\end{equation*}
It follows from \ref{item:non_zero_ii} in Lemma \ref{lemma:non_zero_function} that there is a neighbourhood $W_j$ of $A_j$ in $\mathbb{C}$ such that $U$ has a holomorphic extension to $W_j$ that never takes the value $c$. Moreover, up to making $W_j$ smaller, we may assume that it is simply connected. Let $s_j$ be the sign of $U'$ on $I_j$. There is a holomorphic function $F_j$ on $I_j - i s_j (0, \rho)$ for some small $\rho > 0$ such that $\psi_{|I_j} = F(\cdot - i s_j 0)$. Indeed, the extension is obtained near points of $U^{-1}(\set{c}) \cap I_j$ by Lemma \ref{lemma:singularity_positive_derivative} or Lemma \ref{lemma:singularity_negative_derivative} (depending on the sign of $s_j$) and near the other points by elliptic regularity. Notice that, up to making $\rho$ smaller, we may assume that $I_j - i s_j (0, \rho) \subseteq W_j$. It follows then from Lemma \ref{lemma:elliptic_extension} that $F_j$ extends to a holomorphic function on $W_j$, since $P_{c,0}$ is elliptic on $W_j$. Let then $\phi_j$ be the restriction of $F_j$ on $\widetilde{I}_j = \set{x + i m(x) : x \in I_j}$, and notice that $\phi_j$ coincides with $\psi$ near the extremities of $I_j$ ($\widetilde{I}_j$ and $I_j$ coincide near their extremities). Notice that we have $P_{c,0} \phi_j = 0$.

Since the smooth functions $\psi_{|[a,b] \setminus K_0}, \phi_1,\dots,\phi_N$ coincide where they are commonly defined, and the interiors of their domains cover $\overline{M}_{a,b}$, they define a smooth function $\phi$ on $\overline{M}_{a,b}$ that satisfies $P_{c,0} \phi = 0$ and coincides with $\psi$ where $\overline{M}_{a,b}$ and $[a,b]$ coincide. We write $\phi = B \psi$. We want now to define $B\psi$ for any $\psi \in \Omega(c)$. To do so, we let $m \geq 0$ be such that $Q_{c,0}^m P_{c,0} \psi = 0$ and write $\psi_k = ((\partial_x^2 - \alpha^2)^{-1} P_{c,0})^{m-k}{\psi}$ for $k = 0,\dots,m$. Since $P_{c,0} \psi_0 = 0$, we may extend $\psi_0$ as a holomorphic function on the $W_j$'s as above. Then, using the equation $P_{c,0} \psi_1 = (\partial_x^2 - \alpha^2) \psi_0$ instead of $P_{c,0} \psi_0$, we may extend $\psi_1$ to the $W_j$'s. Iterating this procedure we end up extending $\psi = \psi_m$, and we can consequently define $B\psi \in \mathfrak{S}_{c,0}(\overline{M}_{a,b})$ as above.

We want now to prove that the linear map $B : \Omega(c) \to \mathfrak{S}_{c,0}(\overline{M}_{a,b})$ is an isomorphism. It follows from the analytic continuation principle and the construction of $B$ above that $B$ is injective. Let us construct a right inverse for $B$. Pick $\phi \in \mathfrak{S}_{c,0}(\overline{M}_{a,b})$. As above, we start by dealing with the case in which $P_{c,0} \phi = 0$. Let $j \in \set{1,\dots,N}$. It follows from Lemma \ref{lemma:elliptic_extension} that there is a holomorphic function $F_j$ on $W_j$ that coincides $\phi$ on $\widetilde{I}_j$ and satisfies $P_{c,0} F_j = 0$. It follows from Lemma \ref{lemma:log_generalized_eigenvectors} that the singularities of $F_j$ near the points of $U^{-1}(\set{c})$ are of the kind \eqref{eq:log_singularity}. In particular, the distribution $F_j(\cdot -i s_j 0)$ is well-defined on the interior of $I_j$, and its wave front set is contained in $\set{ (x,\xi): x \in I_j \cap U^{-1}(\set{c}), s_j \xi > 0}$. Notice that this distribution coincides with $\phi$ near the boundary points of $I_j$. By gluing the distributions $F_j(\cdot - i s_j 0)$ for $j = 1,\dots, N$ and $\phi_{|[a,b] \setminus K_0}$, we define an element $G \phi$ of $\Omega(c)$ that coincides with $\phi$ when $\overline{M}_{a,b}$ and $[a,b]$ coincide. We extend this definition to a general element of $\mathfrak{S}_{c,0}(\overline{M}_{a,b})$ as in the construction of $B$. It follows from the constructions of $B$ and $G$ that $B G \phi = \phi$, and thus $B$ is surjective.

We proved that $B$ is a linear isomorphism between $\Omega(c)$ and $\mathfrak{S}_{c,0}(\overline{M}_{a,b})$, and \ref{item:resonances_real} follow. The proof of \ref{item:resonances_easy} follows a similar strategy but is easier. Indeed, if $\im c > 0$, then the operator $P_{c,0}$ is elliptic on $[a,b]$, and we can consequently do the same proof as above but with the $W_j$'s that contain fully the segments $I_j$'s.
\end{proof}

\section{Alternative characterization of resonances}\label{section:alternative_characterizations}

In this section, we explicit the relation between resonances introduced in Theorem~\ref{theorem:limit} and related notions in the literature. We start by explaining in \S \ref{subsection:jost_functions} how to recover resonances using ODE theory. We prove then that, under the assumptions of Theorem~\ref{theorem:limit}, the real resonances coincide with the ``embedding eigenvalues'' from \cite{generalized_eigenvalue_segment} (an adaptation of this notion to the circle case appears in \cite{generalized_eigenvalue_circle}).

\subsection{Resonances and Wronskian determinant}\label{subsection:jost_functions}

It follows from Cauchy--Lipschitz Theorem and Lemma \ref{lemma:non_zero_function} that for every $c \in (c_0 - \delta, c_0 + \delta) + i (- \delta,+ \infty)$ there are $C^\infty$ functions $f_c^+, f_c^- : \overline{M}_{a,b} \to \mathbb{C}$ such that $P_{c,0} f_c^+ = P_{c,0} f_c^{-} = 0, f_c^+(a) = f_c^{-}(b) = 0$ and $\partial_x f_c^+ (a) = \partial_x f_c^{-}(b) = 1$. Moreover, $f_c^+$ and $f_c^-$ are depend holomorphically on $c$. A standard argument in ODE theory implies that the Wronskian of $f_c^+$ and $f_c^-$ is a constant function. We denote by $W(c)$ its constant value, this is a holomorphic function of $c$ on $(c_0 - \delta, c_0 + \delta) + i (- \delta, + \infty)$. The holomorphic continuation of the Wronskian determinant appears for instance in \cite{rosencrans_sattinger_spectrum, stepin_rayleigh}. The function $W(c)$ may also be used to locate resonances:

\begin{proposition}
Let $c_1 \in (c_0 - \delta,c_0 + \delta) + i (- \delta, + \infty)$. Then, $W(c_1) = 0$ if and only if $c_1 \in \mathcal{R}$. When it happens, the order of vanishing of $W$ at $c_1$ coincides with the multiplicity of $c_1$ as an element of $\mathcal{R}$.
\end{proposition}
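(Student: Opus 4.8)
The plan is to reduce the whole statement to the behaviour of the single holomorphic function $W(c)$ through a shooting‑type formula for the resolvent $P_{c,0}^{-1}$. Fix once and for all some $k>-1/2$, so that $\delta_b$ is bounded on $H^{k+2}(M_{a,b})$ and $\partial_x^2-\alpha^2\colon H^{k+2}_{\D}(M_{a,b})\to H^k(M_{a,b})$ is invertible (Lemma~\ref{lemma:inverse_laplace_segment}). First I would record that, evaluating the (constant) Wronskian of $f_c^+$ and $f_c^-$ at the endpoint $b$, where $M_{a,b}$ coincides with the real segment and $f_c^-(b)=0$, $\partial_x f_c^-(b)=1$, one gets $W(c)=f_c^+(b)$ (and, symmetrically, $W(c)=-f_c^-(a)$). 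From this the equivalence is immediate: if $W(c_1)=0$ then $f_{c_1}^+\in H^{k+2}_{\D}(M_{a,b})$ is a nonzero element of $\ker P_{c_1,0}$ (nonzero since $\partial_x f_{c_1}^+(a)=1$), and since $P_{c_1,0}$ is Fredholm of index $0$ (Lemma~\ref{lemma:discrete_resonances}) it is then not invertible, i.e. $c_1\in\mathcal{R}$; conversely, any nonzero element of $\ker(P_{c_1,0}|_{H^{k+2}_{\D}})$ is smooth by elliptic regularity, vanishes at $a$, hence by uniqueness in the Cauchy--Lipschitz theorem for the second order equation $P_{c_1,0}\psi=0$ it is a nonzero multiple of $f_{c_1}^+$, forcing $f_{c_1}^+(b)=W(c_1)=0$. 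I note in passing that this also shows $\dim\ker(P_{c_1,0}|_{H^{k+2}_{\D}})=1$ for every $c_1\in\mathcal{R}$, which will be used below.

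For the multiplicities I would introduce the Cauchy solution operator $S_c\colon H^k(M_{a,b})\to H^{k+2}(M_{a,b})$ sending $g$ to the solution of $P_{c,0}\phi=g$ with $\phi(a)=\partial_x\phi(a)=0$ (well defined and holomorphic in $c$ by standard ODE theory, after dividing by the non‑vanishing factor $U-c$), together with the functional $\ell_c=\delta_b\circ S_c$, again holomorphic in $c$. Since the solutions of $P_{c,0}\phi=g$ vanishing at $a$ form the affine line $S_cg+\mathbb{C}f_c^+$, imposing the second Dirichlet condition and using $f_c^+(b)=W(c)$ yields, for $c\notin\mathcal{R}$,
\begin{equation*}
P_{c,0}^{-1}=S_c-\frac{1}{W(c)}\,\ell_c\otimes f_c^+ .
\end{equation*}
Here $f_{c_1}^+\not\equiv 0$, and $\ell_{c_1}\neq 0$ — otherwise $S_{c_1}$ would take values in $H^{k+2}_{\D}(M_{a,b})$ and $P_{c_1,0}S_{c_1}=\mathrm{Id}$ would make $P_{c_1,0}$ surjective, contradicting its non‑injectivity together with index $0$. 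Since $S_c$ is holomorphic near $c_1$, this formula shows that $c\mapsto P_{c,0}^{-1}$ has a pole at $c_1$ of order exactly $\mathrm{ord}_{c_1}W$.

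It then remains to identify $\mathrm{ord}_{c_1}W$ with the multiplicity $d$ of $c_1$ as an element of $\mathcal{R}$. Writing $P_{c,0}=(Q_{0,0}-c)(\partial_x^2-\alpha^2)$ with $\partial_x^2-\alpha^2\colon H^{k+2}_{\D}(M_{a,b})\to H^k(M_{a,b})$ invertible, one has $P_{c,0}^{-1}=(\partial_x^2-\alpha^2)^{-1}(Q_{0,0}-c)^{-1}$, so the resolvent of the bounded operator $Q_{0,0}$ on $H^k(M_{a,b})$ also has a pole of order exactly $\mathrm{ord}_{c_1}W$ at $c_1$ (precomposition by the fixed injective operator $(\partial_x^2-\alpha^2)^{-1}$ cannot lower the order of the leading Laurent coefficient). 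On the other hand, by Remark~\ref{remark:multiplicity_resonances} the multiplicity $d$ is the algebraic multiplicity of $c_1$ as an eigenvalue of $Q_{0,0}$, and since $\ker(Q_{0,0}-c_1)$ is isomorphic — via $\partial_x^2-\alpha^2$ — to the one‑dimensional space $\ker(P_{c_1,0}|_{H^{k+2}_{\D}})$, the generalized $c_1$‑eigenspace of $Q_{0,0}$ is a single Jordan block of size $d$, so the resolvent $(Q_{0,0}-c)^{-1}$ has a pole of order exactly $d$ at $c_1$. Comparing the two, $\mathrm{ord}_{c_1}W=d$, which is the last assertion.

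The main obstacle I anticipate is twofold. On the technical side, one must carefully set up $S_c$ and $\ell_c$ as bounded operators depending holomorphically on $c$ and then check that $\ell_{c_1}\otimes f_{c_1}^+$ is genuinely nonzero, since this is precisely what prevents the pole of $W(c)^{-1}$ in the shooting formula from being cancelled; on the conceptual side, it is the observation that every resonance has \emph{geometric} multiplicity one (a single Jordan block) that makes the order of the resolvent pole equal to, rather than merely a lower bound for, the algebraic multiplicity. If one prefers to avoid the Jordan‑block discussion, an alternative is to insert the shooting formula into the Riesz projector $\Pi_{c_1,0}=-\tfrac{1}{2\pi i}\oint_{\partial\mathbb{D}(c_1,\nu)}(\partial_x^2-\alpha^2)P_{c,0}^{-1}\,dc$ of \S\ref{subsection:perturbation_resonances}: only the rank‑one term contributes to the contour integral, and since $\partial_c f_c^+$ solves the same Cauchy problem as $S_c\big((\partial_x^2-\alpha^2)f_c^+\big)$ one gets $\ell_c\big((\partial_x^2-\alpha^2)f_c^+\big)=\partial_c f_c^+(b)=W'(c)$, whence $d=\operatorname{tr}\Pi_{c_1,0}=\tfrac{1}{2\pi i}\oint_{\partial\mathbb{D}(c_1,\nu)}\tfrac{W'(c)}{W(c)}\,dc=\mathrm{ord}_{c_1}W$.
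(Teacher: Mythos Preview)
Your argument is correct and complete, but it follows a genuinely different route from the paper for the multiplicity statement. The paper works entirely at the level of ODE: it differentiates the relation $P_{c,0}f_c^+=0$ in $c$ to produce an explicit Jordan chain $f_{c_1}^+,\partial_cf_{c_1}^+,\dots,\partial_c^{\,n-1}f_{c_1}^+$ in $\mathfrak{S}_{c_1,0}(\overline{M}_{a,b})$, then shows directly that every element of $\mathfrak{S}_{c_1,0}(\overline{M}_{a,b})$ lies in their span. You instead pass through operator theory: the shooting formula $P_{c,0}^{-1}=S_c-\tfrac{1}{W(c)}\,\ell_c\otimes f_c^+$ identifies the pole order of the resolvent with $\mathrm{ord}_{c_1}W$, and the observation that $\ker(Q_{0,0}-c_1)$ is one--dimensional forces a single Jordan block, so the resolvent pole order equals the algebraic multiplicity. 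The paper's approach is more elementary (no appeal to the Jordan--block/pole--order correspondence) and yields an explicit basis of $\mathfrak{S}_{c_1,0}$; yours is more structural and makes the mechanism transparent. Your alternative ending via $\operatorname{tr}\Pi_{c_1,0}=\tfrac{1}{2\pi i}\oint W'(c)/W(c)\,dc$ is especially clean and in fact bypasses the Jordan--block discussion entirely; the identity $\ell_c\big((\partial_x^2-\alpha^2)f_c^+\big)=W'(c)$ you use there is exactly the paper's computation $P_{c,0}\,\partial_c f_c^+=(\partial_x^2-\alpha^2)f_c^+$ read through the Cauchy operator $S_c$. One terminological slip: in ``precomposition by $(\partial_x^2-\alpha^2)^{-1}$'' you mean postcomposition (it acts on the left), though the conclusion is unchanged since the operator is an isomorphism.
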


\begin{proof}
Notice that $W(c_1) = f_{c_1}^+(b)$. Hence, if $W(c_1) = 0$, we find that $f_{c_1}^+$ satisfies Dirichlet boundary condition and thus $c_1 \in \mathcal{R}$. Reciprocally, if $c_1 \in \mathcal{R}$, there is a non-zero $g \in C^\infty(\overline{M}_{a,b})$ such that $g(a) = g(b) = 0$ and $P_{c_1,0} g= 0$. Since $g$ vanishes at $a$, it is a non-zero multiple of $f_{c_1}^+$, and thus $f_{c_1}^+(b) = 0$, i.e. $W(c_1) = 0$.

Let us now assume that $W(c_1) = 0$ and let $n$ be the order of vanishing of $W$ at $c_1$. Let $n'$ be the multiplicity of $c_1$ as an element of $\mathcal{R}$, and recall that $n'$ is the dimension of 
\begin{equation*}
\mathfrak{S}_{c_1,0}(\overline{M}_{a,b}) = \set{ u \in C^\infty(\overline{M}_{a,b}) : u(a) = u(b) = 0 \textup{ and } \exists p \geq 0, \ Q_{c,0}^p P_{c,0} u = 0}.
\end{equation*}
For every integer $k \geq 1$, differentiating $P_{c,0} f_c^+ = 0$, we find that
\begin{equation}\label{eq:generalized_eigenvector}
P_{c,0} \frac{\mathrm{d}^k}{\mathrm{d}c^k} f_c^+ = k (\partial_x^2 - \alpha^2)\frac{\mathrm{d}^{k-1}}{\mathrm{d}c^{k-1}} f_c^+.
\end{equation}
Since $W$ vanishes at order $n$ at $c_1$, we have
\begin{equation*}
f_{c_1}^+ (b) = \frac{\mathrm{d}}{\mathrm{d}c}f_{c_1}^+(b) = \dots = \frac{\mathrm{d}^{n-1}}{\mathrm{d}c^{n-1}} f_{c_1}^+(b) = 0.
\end{equation*}
Hence, we find that
\begin{equation*}
Q_{c_1,0}^{k} P_{c_1,0} \left( \frac{\mathrm{d}^k}{\mathrm{d}c^k}f_{c_1}^+ \right) = 0
\end{equation*}
for $k = 0,\dots,n-1$. Thus, $f_{c_1}^+,\frac{\mathrm{d}}{\mathrm{d}c}f_{c_1}^+, \dots , \frac{\mathrm{d}^{n-1}}{\mathrm{d}c^{n-1}}f_{c_1}^+$ belong to $\mathfrak{S}_{c_1,\epsilon}(\overline{M}_{a,b})$. Moreover, it follows from \eqref{eq:generalized_eigenvector} that $f_{c_1}^+,\frac{\mathrm{d}}{\mathrm{d}c}f_{c_1}^+, \dots , \frac{\mathrm{d}^{n-1}}{\mathrm{d}c^{n-1}}f_{c_1}^+$ are linearly independent (starting with a non-trivial linear combination of them, apply $Q_{c_1,0}^k P_{c_1,0}$ with a well-chosen $k$ to get a non-zero multiple of $f_{c_1}^+$). Hence, $n' \geq n$.

Let us prove the reverse inequality. Let $\psi \in \mathfrak{S}_{c_1,0}(\overline{M}_{a,b})$ and let $p$ be the smallest integer such that $Q_{c_1,0}^p P_{c_1,0} \psi = 0$. Notice that $P_{c_1,0} ((\partial_x^2 - \alpha^2)^{-1} P_{c_1,0})^p \psi = 0$. Hence, there is $\lambda \in \mathbb{C}$ such that $((\partial_x^2 - \alpha^2)^{-1} P_{c_1,0})^p \psi = \lambda f_c^+$. If $p > 0$, then we have
\begin{equation*}
P_{c_1,0} ((\partial_x^2 - \alpha^2)^{-1} P_{c_1,0})^{p-1} \psi = \lambda (\partial_x^2 - \alpha^2) f_{c_1}^+.
\end{equation*}
It follows that $((\partial_x^2 - \alpha^2)^{-1} P_{c_1,0})^{p-1} \psi$ is a linear combination of $f_{c_1}^+$ and $\frac{\mathrm{d}}{\mathrm{d}c} f_{c_1}^+$. Using the same argument, we find by induction that for $k = 0,\dots, p$, the function $((\partial_x^2 - \alpha^2)^{-1} P_{c_1,0})^{p-k} \psi$ is a linear combination of $f_{c_1}^+,\frac{\mathrm{d}}{\mathrm{d}c}f_{c_1}^+, \dots , \frac{\mathrm{d}^{k}}{\mathrm{d}c^{k}}f_{c_1}^+$. Notice that the coefficient of $\frac{\mathrm{d}^{k}}{\mathrm{d}c^{k}}f_{c_1}^+$ in this linear combination must be non-zero (otherwise we would contradict the minimality of $p$). Hence, we must have $p \leq n-1$ (otherwise, by taking $k = p-n$, we would get $\frac{\mathrm{d}^m}{\mathrm{d}c^m} f_{c_1,+}(b) = 0$, which contradicts the definition of $n$). Finally, we found that $\mathfrak{S}_{c_1,0}(\overline{M}_{a,b})$ is contained in the span of $f_{c_1}^+,\frac{\mathrm{d}}{\mathrm{d}c}f_{c_1}^+, \dots , \frac{\mathrm{d}^{m-1}}{\mathrm{d}c^{m-1}}f_{c_1}^+$. By the previous point, this is actually an equality, and thus $n = n'$.
\end{proof}

\subsection{Generalized embedded eigenvalues following 
\texorpdfstring{\cite{generalized_eigenvalue_segment}}{Wei et al. (202X)}}\label{subsection:generalized_eigenvalue}

In \cite[Definition 5.1]{generalized_eigenvalue_segment}, Wei, Zhang and Zhao introduces a generalization of the notion of embedded eigenvalues in order to discuss linear inviscid damping. This notion has also been used in the circle case in \cite{generalized_eigenvalue_circle}. The following result proves that, in the context of Theorems \ref{theorem:limit} and \ref{theorem:resonances}, the generalized embedded eigenvalues from \cite{generalized_eigenvalue_segment} coincide with the real resonances.

\begin{proposition}\label{proposition:generalized_embedded_eigenvalues}
Let $c \in \mathbb{R}$. Assume that $c$ satisfies the hypothesis \ref{hypothesis}. Then, the following are equivalent :
\begin{enumerate}[label = (\roman*)]
\item the space $\Omega(c)$ is non-trivial;\label{item:our_definition}
\item there is a non-zero $\psi \in H_0^1(a,b)$ such that \label{item:their_definition}
\begin{equation}\label{eq:generalized_embedded_eigenvalues}
(\partial_x^2 - \alpha^2)\psi - \pv \left(\frac{U'' \psi}{U - c}\right) - i \pi \sum_{x \in U^{-1}(\set{c})} \frac{U''(x) \psi(x)}{|U'(x)|} \delta_x = 0.
\end{equation}
\end{enumerate}
\end{proposition}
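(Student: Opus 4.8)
\textbf{Proof strategy for Proposition \ref{proposition:generalized_embedded_eigenvalues}.}
The plan is to show that the two conditions single out exactly the same objects, by producing a dictionary between elements of $\Omega(c)$ and the functions $\psi\in H_0^1(a,b)$ solving \eqref{eq:generalized_embedded_eigenvalues}. The key point is to understand, near each point $x_0\in U^{-1}(\set c)$, the precise distributional meaning of the equation $P_{c,0}\psi = 0$ once we allow the mild singularities described in \S \ref{subsection:description_resonant_states}. Recall that an element $\psi\in\Omega(c)$ is by definition smooth away from $U^{-1}(\set c)$, vanishes at $a,b$, has wave front set contained in $\{(x,\xi): U(x)=c,\ U'(x)\xi<0\}$, and satisfies $Q_{c,0}^n P_{c,0}\psi = 0$ for some $n\ge 0$. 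First I would prove that for such $\psi$ one in fact has $n=0$, i.e.\ $P_{c,0}\psi = 0$ in $\mathcal D'(a,b)$, and moreover $\psi\in H^1_0(a,b)$ (indeed $H^s_{\mathrm{loc}}$ for $s<3/2$ near each turning point): this follows from Lemmas \ref{lemma:singularity_positive_derivative} and \ref{lemma:singularity_negative_derivative}, which give the explicit form $F(\cdot\mp i0)$ with $F(z)=\sum_k f_k(z)(\log(z-x_0))^k$, together with the observation that if $n\ge 1$ then applying $(\partial_x^2-\alpha^2)$ to a genuinely singular $\psi_{n-1}$ would force a stronger singularity in $\psi_n$ than is allowed by the wave front set condition (this is exactly the argument already carried out in the proof of Lemma \ref{lemma:inflexion_point}, where a Dirac-mass term is killed because its wave front set is a full line). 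In fact the cleanest route is: since $B:\Omega(c)\to\mathfrak S_{c,0}(\overline M_{a,b})$ of Theorem \ref{theorem:resonances} is an isomorphism, $\Omega(c)\neq\set 0$ iff $c\in\mathcal R$ iff $P_{c,0}:H^2(a,b)\cap H^1_0(a,b)\to L^2(a,b)$ is not injective — but that last operator requires smooth solutions, so I still need to handle the relation between $\Omega(c)$ and genuine $H^1_0$ solutions of \eqref{eq:generalized_embedded_eigenvalues}, which is the substantive content.

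\textbf{From $\Omega(c)$ to \eqref{eq:generalized_embedded_eigenvalues}.} Given $\psi\in\Omega(c)$, write near a turning point $x_0$ with $s=\sign U'(x_0)$ the representation $\psi = F(\cdot - is0)$. The equation $P_{c,0}\psi=0$ reads $(U-c)(\partial_x^2-\alpha^2)\psi = U''\psi$; away from $U^{-1}(\set c)$ this is pointwise, and I claim that as a distributional identity near $x_0$ it becomes precisely $(\partial_x^2-\alpha^2)\psi = \frac{U''\psi}{U-c-is0}$, where the right side is the boundary value of $\frac{U''(z)F(z)}{U(z)-c}$ from the side $\mp i0$ dictated by the wave front set. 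Then I use the Plemelj/Sokhotski formula: since $U$ has a simple zero of $U-c$ at $x_0$ with derivative $U'(x_0)$, one has
\[
\frac{1}{U(x)-c \mp i0} = \pv\frac{1}{U(x)-c} \pm i\pi\,\frac{\delta_{x_0}}{|U'(x_0)|},
\]
where the sign is $+$ when the contour passes below ($s>0$) and accordingly; summing the contributions over all $x_0\in U^{-1}(\set c)$ and checking the signs are uniform (the wave front set condition $U'(x)\xi<0$ forces consistently the $-i0$ side relative to $U-c$ when $U'>0$ and $+i0$ when $U'<0$, which in both cases produces $+i\pi\frac{U''(x)\psi(x)}{|U'(x)|}$ on the left after moving terms around — this sign bookkeeping is the place to be careful) yields exactly \eqref{eq:generalized_embedded_eigenvalues}. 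The term $\frac{U''\psi}{U-c}$ makes sense as a principal value because $\psi$ is continuous (it lies in $H^1_0$, hence $C^0$) and $U''(x_0)\psi(x_0)$ is the residue; if $U''(x_0)=0$ then by Lemma \ref{lemma:inflexion_point} $\psi$ is analytic there and no singular term appears, consistent with the formula. Conversely, given a nonzero $\psi\in H^1_0(a,b)$ solving \eqref{eq:generalized_embedded_eigenvalues}, I run the argument backwards: away from $U^{-1}(\set c)$ it solves Rayleigh's equation, and near each $x_0$ the combination of the principal value and the $\delta_{x_0}$ with the correct coefficient is exactly the statement that $\psi$ extends holomorphically from the appropriate side, hence has the correct wave front set, so $\psi\in\Omega(c)$; the $H^1$ regularity plus ellipticity on $(a,b)\setminus U^{-1}(\set c)$ gives smoothness there and the Dirichlet condition at $a,b$ is the hypothesis $\psi\in H^1_0$.

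\textbf{Main obstacle.} I expect the delicate step to be the rigorous justification that the distributional equation satisfied by $\psi\in\Omega(c)$ near a turning point is $(\partial_x^2-\alpha^2)\psi = \frac{U''\psi}{U-c\mp i0}$ with the boundary value taken on the side prescribed by the wave front set, together with the verification that the Plemelj formula applies to $\frac{1}{U-c}$ (not $\frac{1}{x-x_0}$) with the right constant $|U'(x_0)|^{-1}$ and the right sign. Concretely, one writes $U(z)-c = U'(x_0)(z-x_0)(1+O(z-x_0))$, reduces to the model $\frac{1}{x-x_0\mp i0}$ by an analytic change of variables, and tracks how the change of variables interacts with the choice of half-plane; the sign of $U'(x_0)$ and the sign of the imaginary shift must be combined correctly so that the coefficient of $\delta_{x_0}$ comes out as $-i\pi\frac{U''(x_0)\psi(x_0)}{|U'(x_0)|}$ on the same side as $(\partial_x^2-\alpha^2)\psi$, i.e.\ $+i\pi\frac{U''(x_0)\psi(x_0)}{|U'(x_0)|}$ after transposing — matching \eqref{eq:generalized_embedded_eigenvalues}. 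Everything else (the equivalence $\Omega(c)\neq\set0 \Leftrightarrow c\in\mathcal R$ via Theorem \ref{theorem:resonances}, elliptic regularity off the turning points, and the fact that the explicit singularity expansions of Lemmas \ref{lemma:singularity_positive_derivative}--\ref{lemma:singularity_negative_derivative} have at worst a $\log$ singularity so that $\psi\in H^s_{\mathrm{loc}}$ for all $s<3/2$ and in particular $\psi\in H^1_0$) is then routine.
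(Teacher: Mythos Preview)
Your direction (i)$\Rightarrow$(ii) is essentially the paper's argument: pick $\psi\in\Omega(c)$ with $P_{c,0}\psi=0$ (such an element exists whenever $\Omega(c)\neq\{0\}$; it is not true that \emph{every} element has $n=0$, but that does not matter here), use the boundary-value representation from Lemmas~\ref{lemma:singularity_positive_derivative}--\ref{lemma:singularity_negative_derivative} to get $(\partial_x^2-\alpha^2)\psi=\frac{U''\psi}{U-c\mp i0}$ near each turning point, and unwind with Plemelj. The paper phrases it as ``the left-hand side of \eqref{eq:generalized_embedded_eigenvalues} is a combination of Dirac masses, and the half-line wave front set forces each coefficient to vanish,'' but that is the same computation.

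The gap is in (ii)$\Rightarrow$(i). You assert that ``the combination of the principal value and the $\delta_{x_0}$ with the correct coefficient is exactly the statement that $\psi$ extends holomorphically from the appropriate side.'' That is precisely what must be \emph{proved}, and it is not Plemelj run in reverse. The equation reads $(\partial_x^2-\alpha^2)\psi = U''\psi\cdot(U-c\mp i0)^{-1}$, with $\psi$ appearing on both sides; since $\psi$ is only in $H^1\subset C^{1/2}$, the product $U''\psi\cdot(U-c\mp i0)^{-1}$ has no a priori one-sided wave front set, so elliptic regularity for $\partial_x^2-\alpha^2$ gives nothing. The paper closes this with a genuine bootstrap: after the change of variable $y=U(x)-c$ the equation becomes $B\phi=\phi/(y-i0)$ with $B$ elliptic of order two; projecting onto positive frequencies by the Fourier multiplier $E=\mathbf{1}_{\xi>0}(D)$ and using that $E\!\left(h/(y-i0)\right)\in H^{k}$ whenever $Eh\in H^{\ell}$ and $k<\ell-1$, one iterates microlocal ellipticity of $EB$ on $\{\xi>0\}$ to push $E\phi$ through every Sobolev space, hence $\WF(\phi)\subset\{0\}\times\mathbb{R}_-^*$. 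This is the substantive step in the converse and is absent from your sketch.

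If you prefer to avoid the microlocal bootstrap, an alternative is a local dimension count: near each $x_0$ the $H^1$ distributional solutions of $P_{c,0}\psi=0$ form a three-dimensional space (choose the two Fuchsian fundamental solutions of Lemma~\ref{lemma:fundamental_solutions} independently on each side of $x_0$, subject only to continuity), and one checks by an explicit residue computation that \eqref{eq:generalized_embedded_eigenvalues} cuts this down to the same two-dimensional subspace as the one-sided wave front set condition. Either way, a nontrivial argument is required; ``run backwards'' is not one.
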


\begin{remark}
Let us explain why the principal value (denoted by $\pv$) in \eqref{eq:generalized_embedded_eigenvalues} makes sense. From hypothesis \ref{hypothesis}, we know that $c$ is a regular value of $U$, and thus the principal value of $1/(U - c)$ makes sense. It is a distribution of any positive order on $[a,b]$. If $\psi \in H_0^1(a,b)$, then $U'' \psi$ is $\frac{1}{2}$-H\"older-continuous, and thus the distribution $\pv \left(\frac{U'' \psi}{U - c}\right) = U'' \psi \pv \left(\frac{1}{U - c}\right)$ makes sense.
\end{remark}

\begin{proof}[Proof of Proposition \ref{proposition:generalized_embedded_eigenvalues}]
Assume that \ref{item:our_definition} holds, and take a non-trivial $\psi \in \Omega(c)$ such that $P_{c,0} \psi = 0$. It follows from the Lemmas \ref{lemma:singularity_positive_derivative} and \ref{lemma:singularity_negative_derivative} that the singularities of $u$ are at most of the type $x \log(x \pm i 0)$, and thus we find that $u$ belongs to $H_0^1(a,b)$. Now, let us consider the distribution 
\begin{equation*}
\phi \coloneqq (\partial_x^2 - \alpha^2)\psi - \pv \left(\frac{U'' \psi}{U - c}\right) - i \pi \sum_{x \in U^{-1}(\set{c})} \frac{U''(x) \psi(x)}{|U'(x)|} \delta_x.
\end{equation*}
We have $(U - c) \phi = P_{c,0} \psi = 0$. Hence, since $c$ is a regular value of $U$, for every $x \in U^{-1}(\set{c})$ there is a complex number $\lambda_x$ such that $\phi = \sum_{x \in U^{-1}(\set{c})} \lambda_x \delta_x$. Take $x_0 \in U^{-1}(\set{c})$, and assume for instance that $U'(x_0) > 0$. We know then that there is $\varrho > 0$ and a holomorphic function $F$ on $(x_0 + \varrho, x_0 - \varrho) + i (- \varrho, 0)$ such that $\psi_{|(x_0 - \varrho, x_0 + \varrho)} = F(\cdot - i 0)$. Up to taking $\varrho$ smaller, we may assume that $U- c$ does not vanish on $(x_0 - \varrho, x_0 + \varrho) + i (- \varrho, 0)$ and define $G = F/(U- c)$. The distribution $G(\cdot- i 0)$ on $(x_0 - \varrho,x_0 + \varrho)$ has its wave front set contained in $(x_0 - \varrho, x_0 + \varrho) \times \mathbb{R}_-^*$. Moreover, using the formula $(x - i 0)^{-1} = \pv(1/x) + i \pi \delta_0$ (see for instance \cite[(2.2.10)]{friedlander_joshi_1998}), we find that $G(\cdot - i0) = \pv(\psi /(U - c)) + i \pi/U'(x_0) \delta_{x_0}$. Hence, we find that $\phi$ is given near $x_0$ by $(\partial_x^2 - \alpha^2)\psi - U'' G(\cdot - i0)$. Thus, the wave front set of $\phi$ near $x_0$ is contained in $(x_0 - \varrho, x_0 + \varrho) \times \mathbb{R}_-^*$, which imposes $\lambda_{x_0} = 0$ (the wave front set of a Dirac mass is a full line). We can work similarly near each point of $U^{-1}(\set{c})$, and thus we have $\phi = 0$. Recalling the definition of $\phi$, we get \ref{item:their_definition}.

Let us now assume that \ref{item:their_definition} holds, and let $\psi$ be as in \ref{item:their_definition}. We have $P_{c,0} \psi = 0$, and consequently we only need to check that wave front set condition to prove that $\psi \in \Omega(c_0)$. From $P_{c,0} \psi = 0$, we get that $\psi$ is smooth away from $U^{-1}(\set{c})$. Pick a point $x_0 \in U^{-1}(\set{c})$. Notice that the map $x \mapsto U(x) - c$ induces a real-analytic diffeomorphism between a neighbourhood of $x_0$ and a neighbourhood of $0$. Let $g$ be the inverse diffeomorphism. We shall consider the distribution $\phi = \psi \circ g$ near $0$. Near $x_0$, the equation satisfied by $\psi$ writes
\begin{equation*}
(\partial_x^2 - \alpha^2) \psi - \pv \left(\frac{U'' \psi}{U - c}\right) - i \pi \frac{U''(x_0) \psi(x_0)}{|U'(x_0)|} \delta_{x_0} = (\partial_x^2 - \alpha^2) \psi - \frac{U'' \psi}{U - c - i s_{x_0} 0} = 0, 
\end{equation*}
where $s_{x_0}$ is the sign of $U'(x_0)$. It follows that the distribution $\phi$ satisfies near $0$ the equation 
\begin{equation}\label{eq:non_standard_equation}
A \phi  - U'' \circ g \frac{\phi}{x - i 0} = 0,
\end{equation}
where $A$ is an elliptic differential operator of order $2$. Here, the sign $s_{x_0}$ disappeared because if $s_{x_0} = - 1$ then $g$ is orientation reversing. For the same reason, the wave front set condition that we want to prove for $\phi$ is always $\WF(\phi) \subseteq \set{0} \times \mathbb{R}_-^*$. If $U''(x_0) = 0$, then we find that $U'' \circ g/(x- i 0)$ is a smooth function and thus $\phi$ is in the kernel of an elliptic differential operator, proving that $\phi$ is smooth near zero. We will consequently assume that $U''(x_0) \neq 0$, which allows us to divide \eqref{eq:non_standard_equation} by $U'' \circ g$ and find that we have near zero
\begin{equation}\label{eq:better_non_standard_equation}
B \phi - \frac{\phi}{x - i 0} = 0,
\end{equation}
where $B$ is also an elliptic differential operator of order $2$. Let $\chi$ be a $C^\infty$ function supported near $0$ and such that $\chi \equiv 1$ on a small neighbourhood of zero. Let $E$ denote the Fourier multiplier associated to the characteristic function of $\mathbb{R}_+$:
\begin{equation*}
Ef(x) = \frac{1}{2 \pi} \int_{0}^{+ \infty} e^{i x \xi} \hat{f}(\xi) \mathrm{d}\xi.
\end{equation*}
Let $h = \chi \phi$ and notice that the Fourier transform of $E(h/(x - i 0))$ is
\begin{equation*}
\xi \mapsto i  \mathds{1}_{\mathbb{R}_+}(\xi) \int_{\xi}^{+ \infty} \hat{h}(\eta) \mathrm{d}\eta.
\end{equation*}
Here, we used the formula for the Fourier transform of $(x - i 0)^{-1}$ given in \cite[Example 7.1.17]{hormander_book1}. Hence, we find that if $Eh \in H^\ell(\mathbb{R})$ for some $\ell > 1/2$ and $k < \ell - 1$ then $E(h/(x- i 0))$ belongs to $H^k(\mathbb{R})$. By assumption $h$ belongs to $H^1(\mathbb{R})$ and thus $Eh$ does too. It follows then from \eqref{eq:better_non_standard_equation} that
\begin{equation}\label{eq:bootstrap}
E B(h) = E [\chi, B] \phi + E\left( \frac{h}{x - i 0}\right) \in H^{-1/2}.
\end{equation}
Indeed, $[\chi, B]$ is a differential operator with coefficients supported away from zero and thus $[\chi, B] \phi$ is smooth. Notice that $EB$ is a pseudodifferential operator of order $2$ elliptic on $\set{0} \times \mathbb{R}_+^*$. By elliptic regularity, it follows then from \eqref{eq:bootstrap} that $E h \in H^{3/2}(\mathbb{R})$. But now, formula \eqref{eq:bootstrap} implies that $EB(h) \in L^2(\mathbb{R})$, and thus that $Eh \in H^2(\mathbb{R})$. Iterating this argument, we end up finding that $Eh$ is smooth, and it follows that the wave front set of $h$ is contained in $\set{0} \times \mathbb{R}_-^*$. It implies that $\psi \in \Omega(c)$, and thus $\Omega(c)$ is non-trivial.
\end{proof}

\section{Further results}\label{section:further_results}

In this section, we explain how the proofs of Theorems \ref{theorem:limit} and \ref{theorem:resonances} may be adapted to prove related results. In \S \ref{subsection:circle_case}, we describe how to adapt the analysis above to study Orr--Sommerfeld equation on the circle (instead of the segment). In \S \ref{subsection:other_pertubrations}, we discuss the dependence of the set $\mathcal{R}$ from Theorem \ref{theorem:limit} on $U$ and $\alpha$.

\subsection{Orr--Sommerfeld equation on the circle}\label{subsection:circle_case}

If one starts from Navier--Stokes equation on a cylinder $\mathbb{S}^1 \times \mathbb{R}$ (instead of $[a,b] \times \mathbb{R}$), then the process described in the introduction produces Orr--Sommerfeld equation on a circle. Let $U$ be a $C^\infty$ function on a circle $\mathbb{S}^1 = \mathbb{R}/ L \mathbb{Z}$ for some $L > 0$. Let $\alpha > 0$. For $c \in \mathbb{C}$ and $\epsilon \geq 0$, we introduce the operator
\begin{equation*}
    P_{c,\epsilon} = i\alpha^{-1}\epsilon^2(\partial_x^2 - \alpha^2)^2 + (U-c)(\partial_x^2 - \alpha^2) - U''
\end{equation*}
as in the segment case. For $\epsilon > 0$, we let $\Sigma_\epsilon$ be the set of $c \in \mathbb{C}$ such that the operator $P_{c,\epsilon} : H^4(\mathbb{S}^1) \to L^2(\mathbb{S}^1)$ is not invertible. We define the notion of multiplicity for elements of $\Sigma_\epsilon$ as in the segment case. We will work hypothesis \ref{hypothesis} with the second assumption removed (it does not make sense in the absence of boundary points). We also remove the boundary condition in the definition of $\Omega(c)$ for $c \in \mathbb{R}$. With these modification, Theorem \ref{theorem:limit} and \ref{theorem:resonances} also hold in the circle case.

Let us explain briefly how to deduce the proofs of Theorems \ref{theorem:limit} and \ref{theorem:resonances} in the circle case from the proofs in the segment case. The ideas exposed in \S \ref{section:complex_scaling} are used in the same way, except that in the circle case there is no $M_{a,b}$ and we work directly on $M$. Concerning the definition of the function $m$ (and thus of the deformation $M$ of the circle), we require the same property as in \S \ref{section:escape_function}, except that we want them to hold on the full circle. 

Concerning the technical estimates from \S \ref{subsection:modified_operator_circle}, there is not much to be changed, the only thing is that we take $q = 0$, this function was just here in the segment case to control $P_{c,\epsilon}$ away from $[a,b]$. The subsection \S \ref{subsection:modified_operator_segment} has no analogue in the circle case, as this subsection mostly deals with boundary conditions issues (that do not appear in the circle case). The main estimate Proposition \ref{proposition:invertible_compact_perturbation} is already given in the circle case by Lemma \ref{lemma:invertibility_P_full_circle}. Notice in particular, that we do not need to assume $k \in ( - 3/2, - 1/2)$ to get uniform invertibility estimates as $\epsilon$ goes to $0$ in the circle case. 

There are very few changes in \S \ref{subsection:definition_resonances}, the main difference is that we can take any value of $k \in \mathbb{R}$ for the definition of resonances and in Lemma \ref{lemma:invertibility_a_priori}. However, the asymptotic expansion for $P_{c,\epsilon}^{-1}$ as $\epsilon$ goes to $0$ from \S \ref{subsection:LVmethod} is now easier to establish. Indeed, in the absence of boundary conditions to fit, we do not need the factor $I - N_{c,\epsilon}$ in the inductive scheme \eqref{eq:inductive_approximation} that produces the approximation for $P_{c,\epsilon}^{-1}$. It yields the explicit asymptotic expansion for the inverse
\begin{equation}\label{eq:approximate_inverse_circle}
    P_{c,\epsilon}^{-1} \underset{\epsilon \to 0}{\simeq} P_{c,0}^{-1} \sum_{m \geq 0} \epsilon^{2m} \left(-i\alpha^{-1}(\partial_x^2 - \alpha^2)^2 P_{c,0}^{-1}\right)^m.
\end{equation}
Notice the absence of odd power of $\epsilon$ in this expansion. This approximation is valid as described in Lemma \ref{lemma:description_approximation} in the segment case (except that there is no restriction on $k$ in the circle case): we need to see $P_{c,\epsilon}^{-1}$ as a deregularizing operator if we want to get a good approximation in terms of powers of $\epsilon$. The discussion from \S \ref{subsection:perturbation_resonances} is also simpler in the circle case, since we do not need to get rid of boundary layers in the projector $\Pi_{c_1,\epsilon}$.

Finally, the analysis from \S \ref{section:description_resonant_states} is mostly local, and thus there is no crucial difference between the circle and the segment case.

\begin{remark}
As mentioned above, in the circle case, there is no boundary layer in the approximation formula \eqref{eq:approximate_inverse_circle}. Consequently, the analogue of the projector $\Pi_{c_1,\epsilon}$ from \S \ref{subsection:perturbation_resonances} has a Taylor expansion in power of $\epsilon^2$ at $\epsilon = 0$ (if one accepts to lose some derivatives with each term in the expansion). Hence, the Taylor expansions from Theorem \ref{theorem:limit} in the circle case only contains even powers of $\epsilon$.

If $c_1 \in \mathcal{R}$ with multiplicity $1$, we can use this fact to derive a formula for the first term in the Taylor expansion for the perturbation of $c_1$ given by Theorem \ref{theorem:limit}. Let $\psi_0 \in \mathfrak{S}_{c_1,0}(\overline{M}_{a,b}) \setminus \set{0}$, then we can define for $\epsilon > 0$ small $\psi_\epsilon = (\partial_x^2 - \alpha^2)^{-1}\Pi_{c_1,\epsilon}(\partial_x^2 - \alpha^2) \psi_0$. Notice that there is a Taylor expansion in powers of $\epsilon^2$ for $\psi_\epsilon$, valid in $C^\infty(M)$. Moreover, if $c(\epsilon)$ is the element of $\Sigma_\epsilon$ close to $c_1$, then we have $P_{c(\epsilon),\epsilon} \psi_\epsilon = 0$. Writing $c(\epsilon) = c_1 + \epsilon^2 \tilde{c} + \mathcal{O}(\epsilon^4)$ and $\psi_\epsilon = \psi_0 + \epsilon^2 \phi + \mathcal{O}(\epsilon^4)$, and expanding $P_{c(\epsilon),\epsilon} \psi_\epsilon = 0$, we find that
\begin{equation*}
    i\alpha^{-1}(\partial_x^2 - \alpha^2)^2 \psi_0 - \tilde{c}(\partial_x^2 - \alpha^2) \psi_0 + P_{c_1,0} \phi = 0.
\end{equation*}
Integrating this formula on $M$ against the density $(U-c_1)^{-1} \psi_0 \mathrm{d}z$, we find
\begin{equation*}
    i\alpha^{-1} \int_M \frac{\psi_0}{U-c_1}(\partial_x^2 - \alpha^2)^2 \psi_0 \mathrm{d}z = \tilde{c} \int_{M} \frac{\psi_0}{U - c_1} (\partial_x^2 - \alpha^2) \psi_0 \mathrm{d}z = \tilde{c} \int_{M} U'' \left(\frac{\psi_0}{U-c_1}\right)^2 \mathrm{d}z.
\end{equation*}
Of course, we can shift contours in this formula (but a priori, we cannot replace the contour by $\mathbb{S}^1$ as integrability may fail). This formula allows to compute $\tilde{c}$ since the simplicity of $c_1$ implies that
\begin{equation*}
    \int_{M} U'' \left(\frac{\psi_0}{U-c_1}\right)^2 \mathrm{d}z \neq 0.
\end{equation*}
Let us prove this fact (the following argument is inspired by the proof of \cite[Lemma 1]{stepin_rayleigh}). Let us consider the lift $\widetilde{M}$ of $M$ to $\mathbb{C}$ by the canonical covering $\mathbb{C} \to \mathbb{C}/ L \mathbb{Z}$. Let $\tilde{\psi}_0,\widetilde{U}$ be the lifts respectively of $\psi_0$ and $U$ to $\widetilde{M}$. Let $\widetilde{P}_{c_1,0}$ be the lift of $P_{c_1,0}$ to $\widetilde{M}$ and consider a non-trivial solution $\tilde{\phi}$ to the equation  $\widetilde{P}_{c_1,0} \tilde{\phi} = (\partial_x^2 - \alpha^2) \tilde{\psi}_0$. Such a solution exists because $\widetilde{U} - c_1$ does not vanish on $\widetilde{M}$. Consider then a point $a \in \widetilde{M}$ such that $\tilde{\psi}_0(a) \neq 0$ and let $\gamma$ be the path from $a$ to $a + L$ along $\widetilde{M}$. We compute then
\begin{equation*}
    I \coloneqq \int_\gamma (\widetilde{P}_{c_1,0} \tilde{\phi}) \frac{\tilde{\psi}_0}{\widetilde{U} - c_1} \mathrm{d}z.
\end{equation*}
We use first the equation $\widetilde{P}_{c_1,0} \tilde{\phi} = (\partial_x^2 - \alpha^2) \tilde{\psi}_0 = \frac{\widetilde{U}''}{\widetilde{U}-c_1} \tilde{\psi}_0$ to find
\begin{equation*}
    I = \int_M U'' \left( \frac{\psi_0}{U - c_1} \right)^2 \mathrm{d}z.
\end{equation*}
On the other hand, integrations by parts yield
\begin{equation*}
    I = \psi_0(a)(\partial_x \tilde{\phi}(a+L) - \partial_x \tilde{\phi}(a)) - \partial_x \psi_0(a)(\tilde{\phi}(a+L) - \tilde{\phi}(a)).
\end{equation*}
Let now $\tilde{u}$ be a solution to $\widetilde{P}_{c_1,0} \tilde{u} = 0$ linearly independent of $\tilde{\psi}_0$. Notice that $\tilde{u}$ is not $L$ periodic (otherwise the corresponding function on $\widetilde{M}$ would be in the kernel of $P_{c_1,0}$, which would contradict the simplicity of $c_1$). Consequently, we must have $\tilde{u}(a) \neq \tilde{u}(a+L)$. Otherwise, since the Wronskian determinant of $\tilde{\psi}_0$ and $\tilde{u}$ is constant and $\tilde{\psi}_0$ is $L$ periodic, we would get that $\partial_x \tilde{u}(a) = \partial_x \tilde{u}(a+L)$, and thus $\tilde{u}$ would be periodic. Consequently, we may add a multiple of $\tilde{u}$ to $\tilde{\phi}$ to impose $\tilde{\phi}(a+L) = \tilde{\phi}(a)$. But then, $\tilde{\phi}(a)$ must be distinct from $\tilde{\phi}(a+L)$ (the simplicity of $c_1$ forbids $\tilde{\phi}$ to be $L$-periodic), and thus $I = \psi_0(a) (\tilde{\phi}(a+L) - \tilde{\phi}(a)) \neq 0$, which is what we wanted to prove.

\end{remark}

\subsection{Other kinds of perturbations}\label{subsection:other_pertubrations}

\begin{figure}[t]
   \centering
   \begin{subfigure}{0.45\textwidth}
       \centering
       \includegraphics[width=\textwidth]{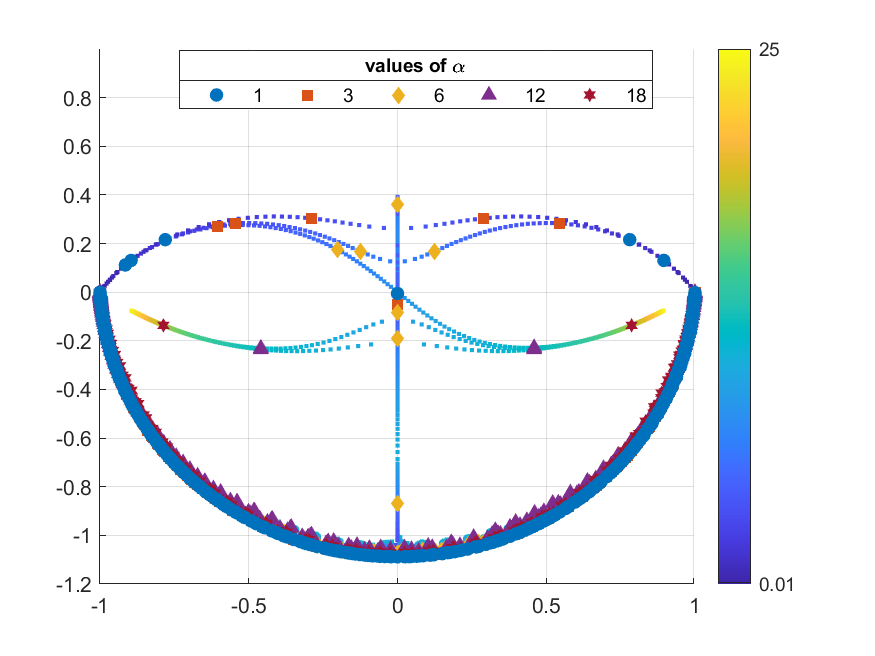}
       \caption{Resonances for a segment}
       \label{fig:sinusoid_segment_alpha}
   \end{subfigure}
   \begin{subfigure}{0.45\textwidth}
       \centering
       \includegraphics[width=\textwidth]{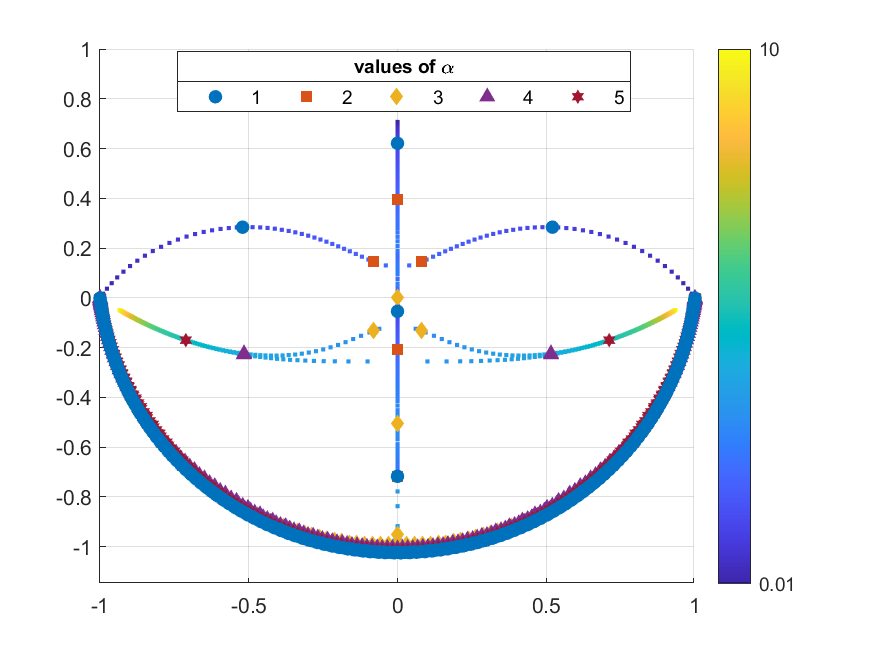}
       \caption{Resonances for a circle}
       \label{fig:sinusoid_circle_alpha}
   \end{subfigure}
   \caption{ Numerical computation of $\mathcal{R}_{\alpha}$ (there is no parameter $t$ in these examples). Markers and dots are colored according to values of $\alpha$, but using different colormaps. The lower curves represent the values of the parameter $c$ for which Rayleigh equation is not elliptic on the spaces defined by complex deformation (they do not belong to $\mathcal{R}_\alpha$).
   (A) $U(x)=\cos(3\pi x)$, $x\in [-1,1]$, with $\alpha$ varies from $0.01$ to 25.
   (B) $U(x)=\sin(3x)$, $x\in \mathbb R/2\pi\mathbb Z$, with $\alpha$ varies from $0.01$ to 10.
   }
   \label{fig:alpha}
\end{figure}

Theorem \ref{theorem:limit} associates to the family of operator $c \mapsto P_{c,0}$ a set of resonances $\mathcal{R}$. It is then natural to wonder how $\mathcal{R}$ depends on the parameters $U$ and $\alpha$ that appear in the definition of the operator $P_{c,0}$. Let us discuss this topic in the segment case, it is understood that the same thing could be done on the circle, with the approach from \S \ref{subsection:circle_case}.

Let $a < b$ be real number. We embed $[a,b]$ in a circle $\mathbb{S}^1 = \mathbb{R} / L \mathbb{Z}$ as in \S \ref{section:complex_scaling}. Let $N$ be a $C^\infty$ manifold. For each $t \in N$, let $U_t$ be a $C^\infty$ function from $[a,b]$ to $\mathbb{R}$. We assume that there is a compact subset $K$ of $(a,b)$, a real number $r > 0$ and for each $t$ an extension\footnote{The existence of these extensions is an artificial requirement that we make in order to spare the definition of an analogue of the space $\mathcal{B}_{K,r}$ on the segment. The main point here is that we control $U_t$ in $C^\infty$, and as an analytic functions near the points of $U_t^{-1}(\set{c_0})$.} of $U_t$ to $\mathbb{S}^1$ such that $t \mapsto U_t$ defines a $C^\infty$ map from $N$ to $\mathcal{B}_{K,r}$. We have then the following regularity results for resonances (see also Figure \ref{fig:alpha}).

\begin{theorem}\label{theorem:variation_resonances}
Let $\alpha_0 > 0, t_0 \in N$ and $c_0 \in \mathbb{R}$. Assume that $c_0$ is a regular value for $U_{t_0}$ and that $U_{t_0}^{-1}(\set{c_0})$ is a subset of $K$. Then, there is a neighbourhood $I$ of $\alpha_0$ in $\mathbb{R}$, a neighbourhood $V$ of $t_0$ in $N$ and $\delta_ 0 > 0$ such that:
\begin{itemize}
    \item for every $t \in V$ and $\alpha_1 \in I$, the $\delta$ in Theorem \ref{theorem:limit} applied with $U = U_{t}$ and $\alpha =\alpha_1$ may be chosen to be $\delta_0$, we let $\mathcal{R}_{t,\alpha_1} \subseteq (c_0 - \delta_0, c_0 + \delta_0) + i(- \delta_0, + \infty)$ denote the resulting set of resonances;
    \item the set $\mathcal{R}_{t,\alpha_1}$ depends smoothly\footnote{As $\Sigma_\epsilon$ does at $\epsilon = 0$ in Theorem \ref{theorem:limit}.} on $t$ and $\alpha_1$.
\end{itemize}
\end{theorem}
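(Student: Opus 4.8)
The plan is to reduce the statement to standard smooth perturbation theory for a family of bounded operators on a fixed Banach space; the only genuine work is to check that the complex deformation of \S\ref{section:escape_function}, and everything built on top of it, can be carried out uniformly for $(t,\alpha_1)$ near $(t_0,\alpha_0)$.

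\textbf{Step 1: a deformation valid for all nearby profiles.} Since $c_0$ is a regular value of $U_{t_0}$ and $U_{t_0}^{-1}(\{c_0\})\subseteq K$, the quantity $|U_{t_0}-c_0|$ is bounded below away from a small neighbourhood of the finite set $U_{t_0}^{-1}(\{c_0\})$; using the $C^\infty$ convergence $U_t\to U_{t_0}$ and the implicit function theorem, I would first shrink $N$ to a neighbourhood $V_1$ of $t_0$ on which $U_t^{-1}(\{c_0\})$ remains a finite set of regular points contained in $K$, varies continuously with $t$, and satisfies $U_t(a),U_t(b)\neq c_0$. One can then fix once and for all the data $K$, $r$, a function $m_0$ supported where $U_{t_0}'\neq0$ (hence where $U_t'\neq0$ for $t\in V_1$, after shrinking $V_1$) with $m_0U_t'\le0$ on $K$ and $m_0U_t'<0$ on $U_t^{-1}(\{c_0\})$ for all $t\in V_1$ --- possible because the sign of $U_t'$ on the support of $m_0$ is locally constant in $t$ --- together with $q$ and $F$ as in \S\ref{section:escape_function}. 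Running the proof of Lemma \ref{lemma:non_zero_function} with these choices gives $\tau>0$ and $\delta_1>0$, both independent of $t\in V_1$, so that for $m=\tau m_0$ the curves $M$ and $M_{a,b}$ satisfy, for every $t\in V_1$, that $U_t-c+iq$ does not vanish on $M$ and $U_t-c$ does not vanish on $\overline{M}_{a,b}$ whenever $c\in(c_0-\delta_1,c_0+\delta_1)+i(-\delta_1,+\infty)$. Note that $\alpha_1$ never enters the construction of $M$: it is governed only by the profile and by $c_0$, and the ellipticity of $P_{c,0}$ on $\overline{M}_{a,b}$ uses only $U_t-c\neq0$.

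\textbf{Step 2: the resonance set as a spectrum, with uniform $\delta_0$.} Working on $L^2(M_{a,b})$, Lemma \ref{lemma:inverse_laplace_segment} gives that $\partial_x^2-\alpha_1^2:H^2_{\D}(M_{a,b})\to L^2(M_{a,b})$ is invertible, with inverse depending smoothly (indeed holomorphically) on $\alpha_1\in I$ as a bounded operator $L^2(M_{a,b})\to H^2_{\D}(M_{a,b})$. Therefore $Q_{c,0}^{(t,\alpha_1)}:=U_t-c-U_t''(\partial_x^2-\alpha_1^2)^{-1}$ is a bounded operator on $L^2(M_{a,b})$, holomorphic in $c$, $C^\infty$ in $(t,\alpha_1)$, and differs from $U_t-c$ by a compact operator. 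Using $P_{c,0}^{(t,\alpha_1)}=Q_{c,0}^{(t,\alpha_1)}(\partial_x^2-\alpha_1^2)$ and repeating the argument of Lemma \ref{lemma:discrete_resonances} and Remark \ref{remark:multiplicity_resonances}, the set $\mathcal{R}_{t,\alpha_1}$ --- the $c$ in the admissible region where $P_{c,0}^{(t,\alpha_1)}$ fails to be invertible --- coincides, with multiplicities, with the set of eigenvalues of $Q_{0,0}^{(t,\alpha_1)}$ in that region. Tracing the proof of item \ref{item:Fredholm_inviscid} in Proposition \ref{proposition:invertible_compact_perturbation} with the uniform deformation of Step 1 (it uses only $U_t-c\neq0$ on $\overline{M}_{a,b}$, invertibility of $\partial_x^2-\alpha_1^2$, and $U_t(a),U_t(b)\notin(c_0-\delta,c_0+\delta)$) shows that one may take a single $\delta_0\le\delta_1$ as ``the $\delta$'' of Theorem \ref{theorem:limit} for all $(t,\alpha_1)\in V_1\times I$, which is the first bullet.

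\textbf{Step 3: smooth dependence via Riesz projectors.} Fix a resonance $c_1\in\mathcal{R}_{t_0,\alpha_0}$ of multiplicity $d$ and, using Lemma \ref{lemma:discrete_resonances}, a radius $\nu>0$ with $\overline{\mathbb{D}}(c_1,\nu)\subseteq(c_0-\delta_0,c_0+\delta_0)+i(-\delta_0,+\infty)$ and $\overline{\mathbb{D}}(c_1,\nu)\cap\mathcal{R}_{t_0,\alpha_0}=\{c_1\}$. Since $(t,\alpha_1)\mapsto Q_{0,0}^{(t,\alpha_1)}$ is continuous into bounded operators and $(c-Q_{0,0}^{(t_0,\alpha_0)})^{-1}$ is uniformly bounded for $c\in\partial\mathbb{D}(c_1,\nu)$, there is a neighbourhood $V\times I'$ of $(t_0,\alpha_0)$ on which $\partial\mathbb{D}(c_1,\nu)$ avoids $\mathcal{R}_{t,\alpha_1}$ and the finite rank projector
\begin{equation*}
\Pi(t,\alpha_1)=\frac{1}{2i\pi}\int_{\partial\mathbb{D}(c_1,\nu)}(c-Q_{0,0}^{(t,\alpha_1)})^{-1}\,\mathrm{d}c
\end{equation*}
is well-defined and $C^\infty$ in $(t,\alpha_1)$; its rank is then locally constant, equal to $d$, by \cite[Problem 3.21 p.156]{kato_perturbation_theory}. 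Choosing a basis $f_1,\dots,f_d$ of the range of $\Pi(t_0,\alpha_0)$, the vectors $\Pi(t,\alpha_1)f_1,\dots,\Pi(t,\alpha_1)f_d$ span the range of $\Pi(t,\alpha_1)$ nearby, and the matrix $\mathcal{Q}(t,\alpha_1)$ of $Q_{0,0}^{(t,\alpha_1)}$ on this invariant subspace, in that basis, depends $C^\infty$ on $(t,\alpha_1)$ --- the analogue of \S\ref{subsection:perturbation_resonances}, but now with a regular perturbation, so no boundary layers and no loss of derivatives intervene. Its eigenvalues with multiplicity are exactly $\mathcal{R}_{t,\alpha_1}\cap\mathbb{D}(c_1,\nu)$, so $\det(X-\mathcal{Q}(t,\alpha_1))=\prod_{c\in\mathcal{R}_{t,\alpha_1}\cap\mathbb{D}(c_1,\nu)}(X-c)$ has $C^\infty$ coefficients in $(t,\alpha_1)$; this is the asserted smoothness of $\mathcal{R}_{t,\alpha_1}$ in the sense of Theorem \ref{theorem:limit}, and a finite cover of a compact part of parameter space finishes the argument.

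The main obstacle is Step 1: producing one complex deformation that works for a whole family of profiles, which forces the uniform control of $U_t^{-1}(\{c_0\})$ and of the sign of $U_t'$. Once that is secured, $\alpha_1$ enters only through the invertible elliptic operator $\partial_x^2-\alpha_1^2$ and the problem collapses to textbook smooth perturbation theory for bounded operators, strictly simpler than the singular limit $\epsilon\to0$ treated in Theorem \ref{theorem:limit}.
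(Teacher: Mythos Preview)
Your proposal is correct and follows essentially the same route as the paper's (sketch of) proof: fix a single deformation $M$ that works uniformly for $(t,\alpha_1)$ near $(t_0,\alpha_0)$, identify $\mathcal{R}_{t,\alpha_1}$ with the spectrum of the bounded operator $U_t-U_t''(\partial_x^2-\alpha_1^2)^{-1}$ on $\overline{M}_{a,b}$, and then invoke standard smooth perturbation theory via Riesz projectors and the characteristic polynomial of a finite matrix. You have in fact supplied more of the details the paper leaves implicit in Step~1, in particular the observation that $\alpha_1$ plays no role in the construction of $M$ and the uniform sign control on $U_t'$ near the support of $m_0$.
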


\begin{proof}[Sketch of proof of Theorem \ref{theorem:variation_resonances}.]
One just needs to notice that the constructions from the proof of Theorem \ref{theorem:limit} may be performed uniformly for $t$ close to $t_0$ and $\alpha_1$ close to $\alpha_0$. Then, the resonances are just the eigenvalues of the operator
\begin{equation*}
    A_{t,\alpha_1} = U_t - U''_t (\partial_x^2 - \alpha_1^2)^{-1}
\end{equation*}
on $\overline{M}_{a,b}$. Since $(c,t,\alpha_1) \mapsto A_{t,\alpha_1}-c$ is a smooth family, holomorphic in $c \in (c_0 - \delta_0,c_0 + \delta_0)+ i(- \delta,+ \infty)$, of elliptic operators (hence with compact resolvent), the result then follows from standard perturbation theory: we can use a spectral projection to reduce locally (in the spectral parameter) to the finite dimensional case \cite[Theorem 6.29 p.187 and IV.3.5]{kato_perturbation_theory} and end the proof by computing the determinant of a matrix, as in the proof of Theorem \ref{theorem:limit}.
\end{proof}

\begin{remark}
Theorem \ref{theorem:variation_resonances} only deals with variation of $\alpha$ within a compact subset of $(0, + \infty)$. However, it is tempting to consider the limits $\alpha \to 0$ and $\alpha \to + \infty$. Concerning the latter, notice that the complex deformation of $[a,b]$ defined in \S \ref{section:escape_function} does not depend on $\alpha$. Hence, in the setting of Theorem \ref{theorem:limit}, that fact that $c_0$ is a resonance or not can be checked by looking at the spectrum of the operator $U - U''(\partial_x^2 - \alpha^2)^{-1}$ on $L^2(M_{a,b})$ for instance. Since $U-c_0$ does not vanish on $M_{a,b}$, we find that $c_0$ is never a resonance when $\alpha$ is large. Indeed, using $h = \alpha^{-1}$ as a semiclassical parameter, we may prove that the operator norm of $(\partial_x^2 - \alpha^2)^{-1}$ on $L^2(M)$ is a $\mathcal{O}(\alpha^{-2})$ as $\alpha$ goes to $+ \infty$. Working then with the boundary layers $e^{\alpha(a-x)}$ and $e^{\alpha(x-b)}$, one find that the same estimate holds on $L^2(M_{a,b})$, and it follows that $U-c_0 - U''(\partial_x^2 - \alpha^2)^{-1}$ is invertible when $\alpha$ is large.
\end{remark}

\section{Examples}\label{section:examples}

In this section, we give some examples of profile $U$ to which Theorems \ref{theorem:limit} and \ref{theorem:resonances} apply, both on the segment (\S\S \ref{subsection:example_couette}--\ref{subsection:example_segment}) and the circle (\S \ref{subsection:example_circle}).

\subsection{Example with no resonance: Couette flow}\label{subsection:example_couette}

\begin{figure}[t]
   \centering
   \includegraphics[width=.8\textwidth]{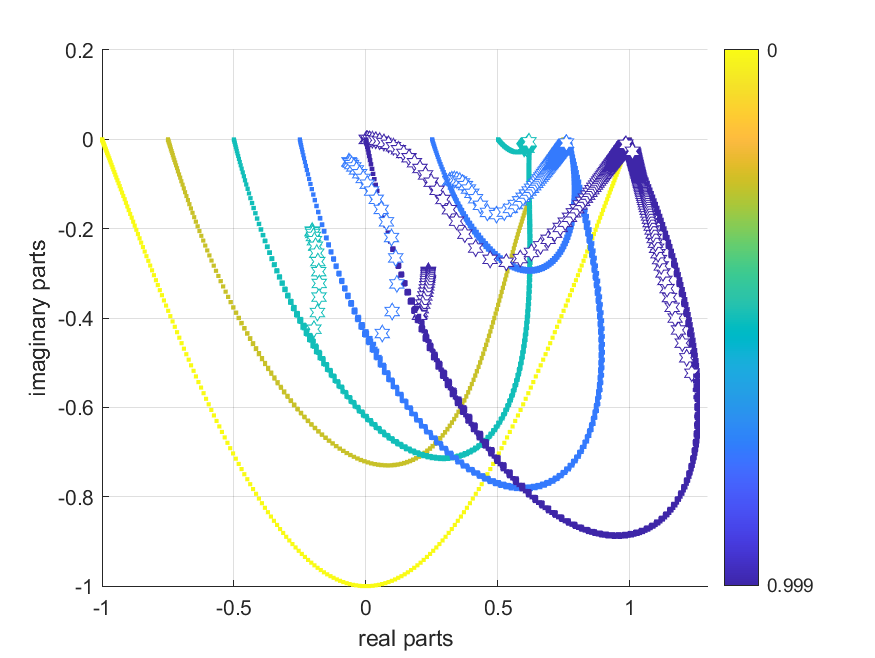}
   \caption{Couette--Poiseuille flows $U_{\theta}(x)=(1-\theta) x + \theta (1-x^2)$, $x\in [-1,1]$, $\theta\in [0,1]$.  Colored according to $\theta$. For each choice of $\theta$, snowflakes represent numerically computed resonances for $U_{\theta}$ with $\alpha$ varies from $10^{-5}$ to  $10$. Other points close to smooth curves represent the range of $U_{\theta}$ on the complex deformation (parameters for which the operator is not elliptic).
   }
   \label{fig:cp}
\end{figure}

The simplest example that we can think of is the Couette flow, given by the profile $U(x) = x$ on the segment $[-1,1]$. Let us fix $\alpha > 0$. We can apply Theorem \ref{theorem:limit} for any $c_0 \in \mathbb{R}\setminus \set{-1,1}$. Notice that there are no resonances in this case. Indeed, there was an element $c$ in the set $\mathcal{R}$ from Theorem \ref{theorem:limit}, then one could find a non-trivial solution to
\begin{equation*}
    (x - c) (\partial_x^2 \psi(x) - \alpha^2 \psi(x)) = 0, \ x \in M, \ \psi(-1) = \psi(1) = 0,
\end{equation*}
where $M$ is the complex deformation from the proof of Theorem \ref{theorem:limit}. Since $x-c$ do not vanish on $M$ (as follows from the proof of Theorem \ref{theorem:limit}, since $c \in \mathcal{R}$), we find that $\psi$ is a linear combination of $x \mapsto e^{\alpha x}$ and $x \mapsto e^{- \alpha x}$, and the boundary condition forces $\psi$ to be identically zero.

Hence, the set $\mathcal{R}$ from Theorem \ref{theorem:limit} is empty in this case. Notice that for Couette flow, global informations are known on the set $\Sigma_\epsilon$. Indeed, it is proven in \cite{romanov_couette} that there is a constant $C > 0$ that does not depend on $\epsilon$ and $\alpha$ such that for every $\epsilon > 0$ the set $\Sigma_\epsilon$ is contained in the half-plane $\set{ c \in  \mathbb{C}: \im c \leq - C \alpha^{-1} \epsilon^2}$. This estimates may be improved by working on a channel $[-1,1] \times \mathbb{S}^1$ instead of $[-1,1] \times \mathbb{R}$, see \cite{better_couette}. With Theorem \ref{theorem:limit}, we know in addition that as $\epsilon$ goes to $0$, the only places on the real axis where elements of $\Sigma_\epsilon$ can accumulate are at $1$ and $-1$.

Notice that Theorem \ref{theorem:variation_resonances} allow to consider small perturbation of Couette flow, such as the Couette--Poiseuille flow defined by $U(x) = p x + q(1- x^2)$ for some real numbers $p$ and $q$. The analysis above prove that if $p > 0$ and $K$ is a compact subset of $\mathbb{R} \setminus \set{ - p,p}$, then for $q$ small enough, the Couette--Poiseuille flow has no resonance in $K$.

\subsection{Examples on a segment}\label{subsection:example_segment}

\begin{figure}[t]
   \centering
   \begin{subfigure}{0.45\textwidth}
       \centering
       \includegraphics[width=\textwidth]{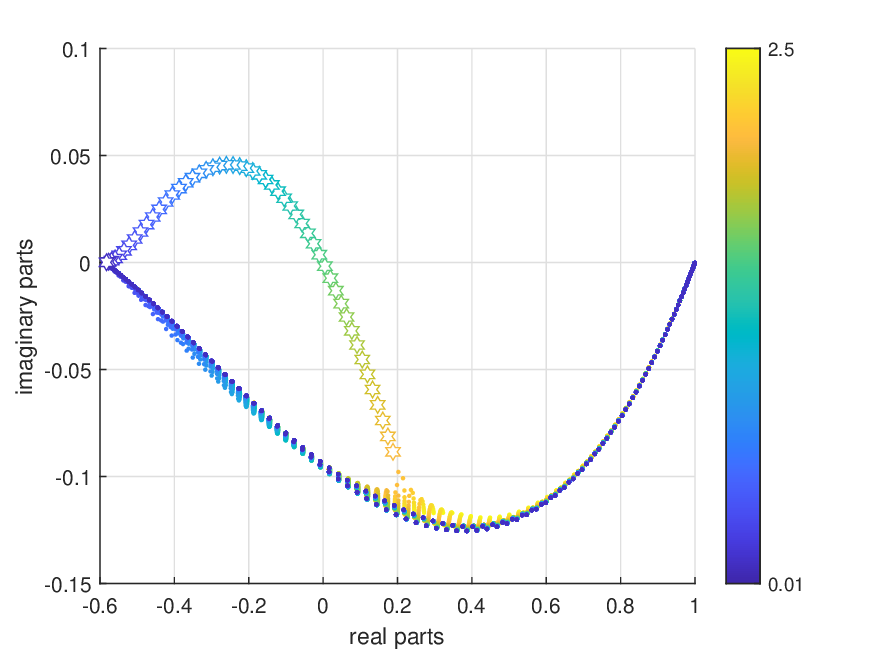}
       \caption{Resonances for $\alpha\in [0.01, 2.5]$}
       \label{fig:cos_07pi_alpha}
   \end{subfigure}
   \begin{subfigure}{0.45\textwidth}
       \centering
       \includegraphics[width=\textwidth]{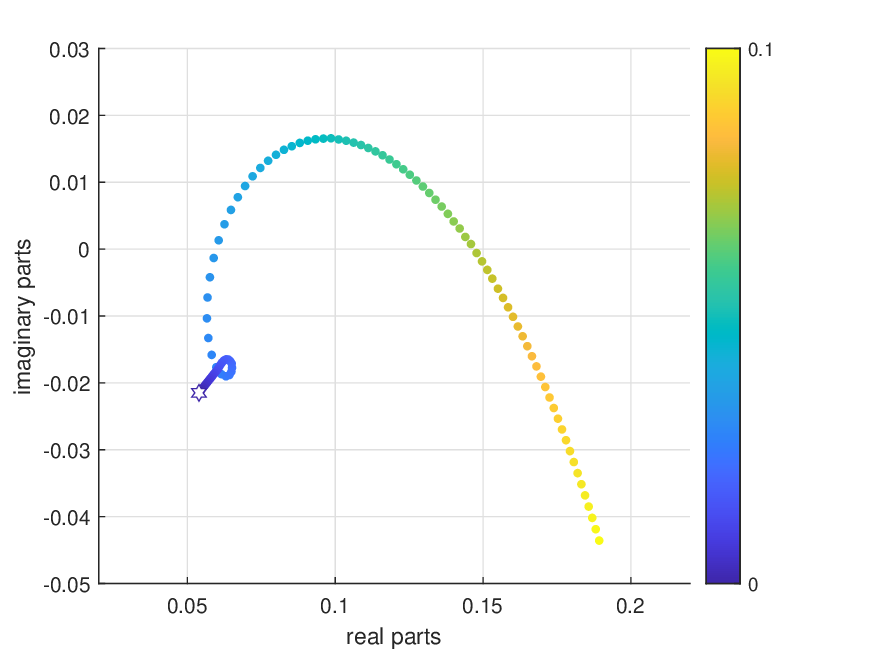}
       \caption{Viscous perturbation of a resonance}
       \label{fig:cos_07pi_045}
   \end{subfigure}
   \caption{ Shear profile $U(x)=\cos(0.7\pi x)$, $x\in [-1,1]$. (A) Numerical computation of $\mathcal{R}_{\alpha}$ (snowflakes, there is no parameter $t$ in this example). Colored according to values of $\alpha$. The lower curve represents the values of the parameter $c$ for which Rayleigh equation is not elliptic on the spaces defined by complex deformation.
   (B) Numerically computed viscous perturbations of the resonance (snowflake near $0.05-0.02i$) for $\alpha=\sqrt{0.7^2-0.45^2}\pi\approx 1.68$ and $\epsilon\in [0, 0.1]$ predicted by Theorem \ref{theorem:limit}. For some small $\epsilon$ (numerically, $0.035<\epsilon<0.07$), the resonance seems to become an unstable eigenvalue. Colored according to $\epsilon$.
   }
   \label{fig:cos07_alpha}
\end{figure}

Consider shear profiles
\[ U_{\omega,\theta}(x) := \sin(\omega x + \theta), \ x\in [-1,1], \text{ with } \omega, \theta\in \mathbb R, \ \pm \omega+\theta\notin \pi\mathbb Z. \]
The Rayleigh equation for $c = 0$ then reads
\[ U_{\omega,\theta}(\partial_x^2-\alpha^2)\psi-U_{\omega,\theta}''\psi = U_{\omega,\theta}(\partial_x^2+\omega^2-\alpha^2)\psi=0, \ \psi(-1)=\psi(1)=0. \]
Notice that $U_{\omega,\theta}$ is analytic and that $0$ is a regular value of $U_{\omega,\theta}$ (so that Theorem \ref{theorem:limit} applies). Moreover, if $x \in [-1,1]$ is such that $U_{\omega,\theta}(x) = 0$ then $U_{\omega,\theta}''(x) = \omega^2 U_{\omega,\theta}(x) = 0$. It follows then from Lemma \ref{lemma:inflexion_point} that the elements of $\Omega(0)$ are analytic. Consequently, $0$ is a resonance for Rayleigh equation in this case if and only if the equation 
\begin{equation}\label{eq:simple_rayleigh} 
(\partial_x^2+\omega^2-\alpha^2)\psi=0, \ \psi(-1)=\psi(1)=0 
\end{equation}
has an analytic non-trivial solution on $[-1,1]$. It happens if and only if $\omega^2 - \alpha^2 = \frac{\pi^2 k^2}{4}$ for some non-zero integer $k$, in which case the solutions to \eqref{eq:simple_rayleigh} are spanned by $x \mapsto \sin(\frac{k \pi}{2} x)$ (if $k$ is even) or $x \mapsto \cos(\frac{k \pi}{2} x)$ (if $k$ is odd).

It follows then from Theorem \ref{theorem:resonances} that $0$ is a resonance for the profile $U_{\omega,\theta}$ if and only if $\omega^2 - \alpha^2 = \frac{\pi^2 k^2}{4}$ for some non-zero integer $k$. Moreover, when this condition is met, the resonance zero is simple. By contradiction, if $0$ was not a simple resonance, then, according to Theorem \ref{theorem:limit} there would be an analytic function $\phi$ such that
\begin{equation*} 
U_{\omega,\theta}(\partial_x^2+\omega^2-\alpha^2)\phi=(\partial_x^2 - \alpha^2)\psi, \ \phi(-1)=\phi(1)=0 ,
\end{equation*}
where $\psi$ is a non-trivial solution to \eqref{eq:simple_rayleigh}. The existence of $\phi$ implies in particular that $\psi$ vanishes whenever $U_{\omega,\theta}$ vanishes (because $(\partial_x^2 - \alpha^2) \psi = -\omega^2 \psi$). From the condition $\omega^2 - \alpha^2 = \frac{\pi^2 k^2}{4}$, we deduce that $\omega$ is strictly larger than $\pi k/2$, and thus $U_{\omega,\theta}$ vanishes at least $k$ times in $(-1,1)$. However, the function $\psi$ vanishes only $k-1$ times in $(-1,1)$, a contradiction. 

\begin{figure}[t]
   \centering
       \includegraphics[width=.8\textwidth]{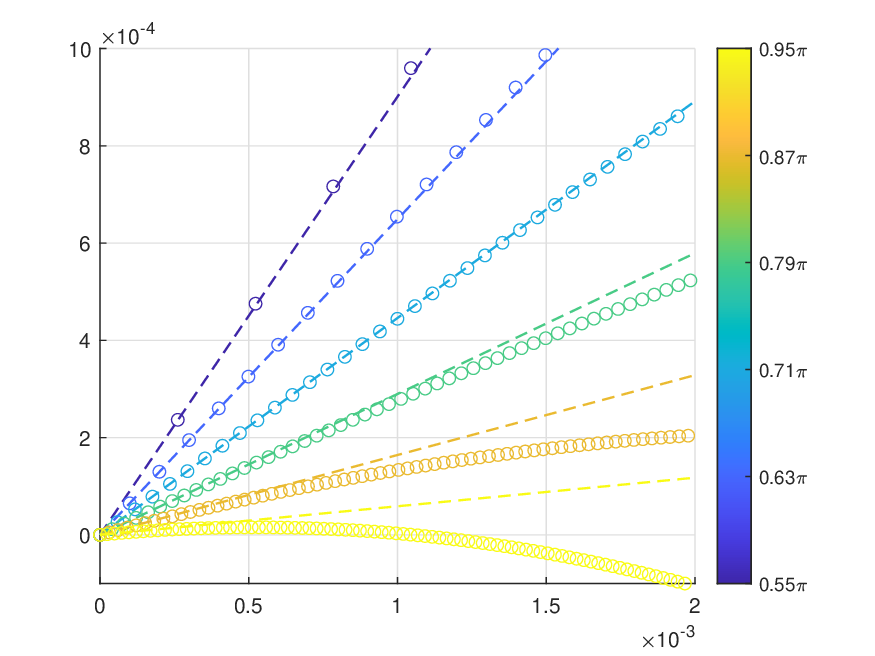}
   \caption{ Shear profile $U_{\omega,\frac{\pi}{2}}(x)=\cos(\omega x)$, $x\in [-1,1]$, $\alpha=\sqrt{\omega^2-\frac{\pi^2}{4}}$, $\omega\in (0.5\pi, \pi)$. Colored according to $\omega$. For each $\omega$ and the corresponding $\alpha$, $c=0$ is a resonance of the flow. Circles are numerically computed viscous perturbations of the resonance $c=0$. Dashed lines are the predicted first order perturbations.
   }
   \label{fig:first_order}
\end{figure}

A particularly interesting family of profiles are $U_{\omega,\frac{\pi}{2}}(x)=\cos(\omega x)$ for $\omega\in (\frac{\pi}{2}, \pi)$ and $\alpha = \sqrt{\omega^2-\frac{\pi^2}{4}}$. In these cases, $c_1=0$ is a simple resonance of the shear flow with a resonant state $\psi_0(x)=\cos(\frac{\pi x}{2})$. Let $c(\epsilon)$ be the viscous perturbation of $c_1$, then by Proposition \ref{proposition:first_order_perturbation}, the first order approximation of $c(\epsilon)$ is given by $c(\epsilon) = \dot{c}(0)\epsilon + \mathcal O(\epsilon^2)$ where 
\[\begin{split} 
\dot{c}(0) = & \frac{\pi^2 e^{\frac{\pi i}{4}}}{2\omega^2 \sqrt{\alpha |\cos(\omega)|}} \left( \int_{M_{-1,1}} \frac{ (\cos(\frac{\pi x}{2}))^2}{ \cos(\omega x)} \mathrm{d}x \right)^{-1} \\
= & \frac{ \pi^2 e^{\frac{\pi i}{4}} }{2\omega^2 \sqrt{\alpha|\cos(\omega)|} } \left( \mathrm{p.v.} \int_{-1}^1 \frac{(\cos(\frac{\pi x}{2}))^2}{\cos(\omega x)} \mathrm{d}x + \frac{2\pi i}{\omega} \left( \cos\left( \frac{\pi^2}{4\omega}\right) \right)^2 \right)^{-1}.
\end{split}\]
The comparison between the numerically computed viscous perturbations and our first order approximation prediction is exhibited in Figure \ref{fig:first_order}. In particular, we see that the imaginary parts of the viscous perturbations are positive for $\omega\in (\frac{\pi}{2},\pi)$.

\subsection{Examples on the circle}\label{subsection:example_circle}

Let us now discuss examples on the circle $\mathbb{R}/ 2 \pi \mathbb{Z}$. An important family of periodic flows are Kolmogorov flows given by 
\[ U_k(x)=\sin(kx), \ k>0, \ k\in \mathbb Z.\]
Putting $c=0$ in the Rayleigh equation for $U_k$ and we have 
\[ \sin(kx)\left( \psi''+(k^2-\alpha^2)\psi\right)=0. \]
This shows that 
\begin{enumerate}
    \item[1.] If $\alpha=k$, then $c=0$ is a simple resonance for the Rayleigh equation and the associated solutions to Rayleigh equation are constant functions;
    \item[2.] If $\alpha=\sqrt{k^2-\ell^2}$ for $1\leq \ell\leq k-1$, $\ell\in \mathbb Z$, then $c=0$ is a resonance for the Rayleigh equation with multiplicity $2$ and the associated solutions are the linear combinations of $x \mapsto e^{\pm i\ell x}$.
\end{enumerate}
As in the segment case, we can prove that there is no extra multiplicity due to ``generalized resonant states'' by comparing the number of zeros of $U_k$ and of the solutions to Rayleigh equation. Inviscid Kolmogorov flows are known to be unstable (meaning that for all $k$, there exists $\alpha$ such that the Rayleigh equation has eigenvalues with positive imaginary parts). A detailed study on the unstable spectrum can be found in \cite{belenkaya_friedlander_yudovich_1999}. 

\appendix

\section{FBI transform approach to complex deformation}\label{section:FBI_transform}

In a previous version of this work, instead of the complex deformation method, we were using an FBI transform based approach, in the spirit of \cite{FBI_bonthonneau_jezequel,galkowski_zworski_viscosity,galkowski_zworski_hypoellipticity,bonthonneau_guillarmou_jezequel, asymptotically_hyperbolic_jezequel}, which are themselves inspired by the work of Helffer and Sj\"ostrand \cite{helffer_sjostrand,sjostrand96}. It was already noticed in \cite[Appendix B]{galkowski_zworski_viscosity} that the simpler complex deformation method can sometimes be used to replace the FBI transform approach from the references above. In loose terms, this replacement is possible when ``the escape function is linear in $\xi$''. In our context, this condition is met, but we found it more convenient to keep working with FBI transform, until we noticed that the spaces defined by the two methods are actually the same. In the current version of this work, all references to FBI transform have been removed from the core of the paper. However, we thought that it would be interesting to sketch the proof of the equivalence of the two approaches. Indeed, we think that in some situation it could be useful to think of the complex deformation method in terms of FBI transform.

We will work here on the circle, as in \S \ref{subsection:generalities}, and rely on the exposition from \cite{galkowski_zworski_viscosity} (working on more general closed manifolds than torus may be done using \cite{FBI_bonthonneau_jezequel}, one may also work on a segment using some results from \cite{asymptotically_hyperbolic_jezequel}). For the sake of simplicity, we will work on a circle $\mathbb{S}^1 = \mathbb{R} / 2 \pi \mathbb{Z}$ of length $2 \pi$. The FBI transform that we will use is a map $T : \mathcal{D}'(\mathbb{S}^1) \to C^\infty(T^* \mathbb{S}^1)$. It is defined using the kernel
\begin{equation*}
    K_T(x,\xi,y) = h^{-\frac{3}{4}} \sum_{k \in  \mathbb{Z}} e^{\frac{i \Phi(x,\xi, y - 2 \pi k)}{h}} \langle \xi \rangle^{\frac{1}{4}}, \ (x,\xi,y) \in \mathbb{S}^1 \times \mathbb{R} \times \mathbb{S}^1.
\end{equation*}
In this formula, $h$ is a small implicit parameter and the phase $\Phi$ is defined by the formula
\begin{equation*}
    \Phi(x,\xi,y) = (x -y)\xi + \frac{i}{2}\langle \xi \rangle (x-y)^2
\end{equation*}
Moreover, the Japanese bracket is defined by $\langle \xi \rangle = \sqrt{1 + \xi^2}$. For $u \in \mathcal{D}'(\mathbb{S}^1)$, the map $Tu$ is defined by
\begin{equation*}
    Tu(x,\xi) = \int_{\mathbb{S}^1} K_T(x,\xi,y) u(y) \mathrm{d}y \textup{ for } (x,\xi) \in T^* \mathbb{S}^1 \simeq \mathbb{S}^1 \times \mathbb{R}.
\end{equation*}
Since the kernel $K_T$ is analytic in $y$, we see that the definition of $Tu$ above actually makes sense if $u \in \mathcal{B}_{\mathbb{S}^1,r}'$ for $r > 0$ small enough (the space $\mathcal{B}_{\mathbb{S}^1,r}'$ is defined in \S \ref{subsection:generalities}). Moreover, $K_T$ is also analytic in $(x,\xi)$, and $Tu$ extends consequently as a holomorphic function on a complex neighbourhood of $T^* \mathbb{S}^1$ in $(\mathbb{C}/ 2 \pi \mathbb{Z}) \times \mathbb{C}$. Let then $G : T^* \mathbb{S}^1 \to \mathbb{R}$ be a small enough symbol of order $1$, that we will call an escape function, and define
\begin{equation*}
    \Lambda = \set{ (x + i \partial_{\xi}G(x,\xi) , \xi - i \partial_x G(x,\xi)) : x \in \mathbb{S}^1, \xi \in \mathbb{R}} \subseteq T^* (\mathbb{C}/ 2 \pi \mathbb{Z}) \simeq (\mathbb{C}/ 2 \pi \mathbb{Z}) \times \mathbb{C}.
\end{equation*}
We say that $\Lambda$ is an I-Lagrangian: it is Lagrangian for the imaginary part of the canonical complex symplectic form on $T^* (\mathbb{C}/ 2 \pi \mathbb{Z})$.  On $\Lambda$, we will use the function $H$ defined by $H(x,\xi) = G(\re x, \re \xi) - \xi \partial_{\xi}G(\re x,\re \xi)$. Now, if $r$ and $G$ are small enough, for $u \in \mathcal{B}_{\mathbb{S}^1,r}'$, the holomorphic function $Tu$ is defined on a neighbourhood of $\Lambda$, and we denote by $T_\Lambda u$ the restriction of $Tu$ to $\Lambda$. To $\Lambda$, we associate then the following scale of spaces
\begin{equation*}
    \mathcal{H}_\Lambda^k = \set{ u \in \mathcal{B}_{\mathbb{S}^1,r}' : T_\Lambda u \in L^2\left(\Lambda,e^{- \frac{2H}{h}} \langle |\xi| \rangle^{2k} \re(\mathrm{d}\xi\mathrm{d}x)\right)} \textup{ for } k \in \mathbb{R}.
\end{equation*}
Here, $\re(\mathrm{d}\xi \mathrm{d}x)$ is our notation for the real part of the canonical complex symplectic form on $T^* (\mathbb{C}/ 2 \pi \mathbb{Z})$. We refer to \cite{galkowski_zworski_viscosity,FBI_bonthonneau_jezequel} for the basic properties of these spaces.

Now, let us consider a smooth function $m_0$ from $\mathbb{S}^1$ to $\mathbb{R}$. For some small $\tau > 0$, we define the function $m = \tau m_0$ on $\mathbb{S}^1$ and the symbol $G(x,\xi) = \tau m_0(x) \xi$ on $T^* \mathbb{S}^1$. Using the symbol $G$, we define the scale of spaces $(\mathcal{H}_\Lambda^k)_{k \in \mathbb{R}}$ as above, while we use the function $m$ to define a deformation $M$ of the circle as in \S \ref{section:complex_scaling}. 

Let us fix a small $r > 0$, and recall from \S \ref{subsection:generalities} that for every $k \in \mathbb{R}$ we may see the Sobolev space $H^k(M)$ as a subspace of $\mathcal{B}_{\mathbb{S}^1,r}'$, provided $\tau$ is small enough. Since the space $\mathcal{H}_\Lambda^k$ is also a subspace of $\mathcal{B}_{\mathbb{S}^1,r}'$, it makes sense to compare these spaces. Our claim is that, for $\tau$ and $h$ small enough, the spaces $H^k(M)$ and $\mathcal{H}_\Lambda^k$ coincide for every $k \in \mathbb{R}$, with equivalent norm, uniformly as $h$ goes to $0$ if we work with the semiclassical Sobolev spaces on $M$.

In order to justify this claim, let us start by pointing out that 
\begin{equation}\label{eq:identification_cotangent}
\begin{split}
    \Lambda & = \set{ (x + i m(x), (1 - i m'(x)) \xi) : (x,\xi) \in T^* \mathbb{S}^1} \\
            & = \set{ (x,\xi) \in T^* (\mathbb{C}/ 2 \pi \mathbb{Z}) : x \in M, \forall v \in T_x M, \xi v \in \mathbb{R}}.
\end{split}
\end{equation}
Hence, $\Lambda$ identifies with the cotangent bundle of $M$. Consequently, we may use the transform $T_\Lambda$ to define a FBI transform on $M$ in the following way. For $u \in \mathcal{D}'(M)$, we see it as an element of $\mathcal{B}_{\mathbb{S}^1,r}'$ as in \S \ref{subsection:generalities} and compute $T_\Lambda u$, which is a function on $\Lambda$. Using the identification $\Lambda \simeq T^* M$, the transform $T_\Lambda u$ defines a function $\mathcal{T}u$ on $T^* M$. Using the explicit formula for $T$ given above and the injection of $\mathcal{D}'(M)$ in $\mathcal{B}_{\mathbb{S}^1,r}'$ given by \eqref{eq:identification_hyperfunctions} we find that $\mathcal{T}$ is a ($C^\infty$) FBI transform on $M$, in the sense\footnote{There is a small difference: in \cite{wunsch_zworski_fbi}, there is a cut-off near the diagonal $\set{x = y}$ in the definition of an FBI transform. We do not need it here since the kernel is exponentially small as a smooth function away from the diagonal. We could introduce this cut-off artificially to fit exactly in \cite[Definition 2.4]{wunsch_zworski_fbi}, which would only change the transform $\mathcal{T}$ by a negligible operator.} of \cite[Definition 2.4]{wunsch_zworski_fbi}. The main point to be checked is that the phase $\Phi$ is admissible (\cite[Definition 2.1]{wunsch_zworski_fbi}). Most of the definition is easily verified, but the crucial point is that, because of \eqref{eq:identification_cotangent}, for every $x \in M$, the function $y \mapsto \im \Phi(x,\xi,y)$, well-defined on a neighbourhood of $x$, vanishes at order $2$ at $y = x$. Moreover, if $\tau$ is small enough, then the Hessian of this map is positive definite, which proves the second and the fourth point in \cite[Definition 2.1]{wunsch_zworski_fbi}.

Hence, it follows from \cite[Proposition 3.1]{wunsch_zworski_fbi} that $T_\Lambda$ is bounded from $H^k(M)$ to $L^2(\Lambda,\langle |\xi| \rangle^{2k} \re(\mathrm{d}\xi\mathrm{d}x))$. Notice that the fact that $G$ is linear in $\xi$ also implies that $H = 0$, and thus we find that $H^k(M)$ is contained in $\mathcal{H}_\Lambda^k(M)$, with continuous inclusion.

To prove the reciprocal inclusion, let us introduce the inverse transform $S_\Lambda$ defined by
\begin{equation*}
    S_\Lambda v(x) = \int_{\Lambda} K_S(x,y,\xi) v(y,\xi) \mathrm{d}y \mathrm{d}\xi
\end{equation*}
where the kernel $K_S$ is defined by
\begin{equation*}
   K_S (x,y,\xi) = \frac{h^{- \frac{3}{4}}}{(2 \pi)^{\frac{3}{2}}}  \sum_{k \in \mathbb{Z}} e^{ - \frac{i}{h} \Phi^*(x - 2 \pi k,y,\xi)} \left( 1 + \frac{i}{2} \left\langle x - y - 2 \pi k, \frac{\xi}{\langle \xi \rangle} \right\rangle \right)\langle \xi \rangle^{\frac{1}{4}},
\end{equation*}
where
\begin{equation*}
    \Phi^*(x,y,\xi) = (x-y) \xi - \frac{i}{2}\langle \xi \rangle (x-y)^2.
\end{equation*}
The transform $S_\Lambda v$ is a priori defined when $v$ is a rapidly decaying function on $\Lambda$. However, $S_\Lambda$ extends as a bounded map from $L^2(\Lambda,\langle |\xi| \rangle^{2k} \re(\mathrm{d}\xi\mathrm{d}x))$ to $\mathcal{H}_\Lambda^k$, and we have the relation $S_\Lambda T_\Lambda = I$.

Using the identification $\Lambda \simeq T^* M$, the transform $S_\Lambda$ defines a map $\mathcal{S}$ from $C_c^\infty(T^* M)$ to $C^\infty(M)$. As above, we find that the adjoint of $S$ is a $C^\infty$ FBI transform, and it follows from \cite[Proposition 3.1]{wunsch_zworski_fbi} that $S_\Lambda$ maps $L^2(\Lambda,\langle |\xi| \rangle^{2k} \re(\mathrm{d}\xi\mathrm{d}x))$ inside $H^k(M)$. Hence, if $u \in \mathcal{H}_\Lambda^k$, then $u = S_\Lambda T_\Lambda u \in H^k(M)$. Thus, we proved that $\mathcal{H}_\Lambda^k(M) = H^k(M)$.

One reason for which we think that it is interesting to know that, under the condition above, the spaces defined using FBI transform and complex deformation are the same is that, since the FBI transform construction is more general, there are some manipulations that are possible in this setting that could also be useful when working with complex deformations. For instance, with the notations above, if one wants to ``deform'' $L^2(M)$ into $L^2(\mathbb{S}^1)$, a natural way to do it is to deform $M$ continuously into $\mathbb{S}^1$. But there is another way to do it, which is to see $L^2(M)$ as $\mathcal{H}_\Lambda^0$ and then to deform the escape function $G$ into $0$ (maybe losing the linearity in $\xi$ along the way). This can be done for instance in such a way that $G$ is only changed within a compact set at any finite time of the deformation (except the final one), so that the space $\mathcal{H}_\Lambda^k$ does not change along the deformation, only its norm does. A similar deformation argument is used in \cite{galkowski_zworski_hypoellipticity}.

\section{Matlab programs}\label{section:matlab}

\subsection{Orr--Sommerfeld equation on a segment}\label{subsection:channel_matlab}
Let us recall the numerical discretization for fourth order boundary value problems designed in \cite[\S 14]{trefethen_spectral}.
For $N>0$, let 
$$x_j = \cos(j\pi/N), \ 0\leq j\leq N,$$ 
be Chebyshev points.
We define maps 
\[\begin{split}
& [\bullet]: H^1([-1,1])\to \mathbb C^{N+1}, \ [u]:=(u(x_0), u(x_1), \cdots, u(x_N))^T, \\
& \mathcal I: \mathbb C^{N+1}\to \mathbb P^{N}, \ \mathcal I \mathbf (v_0,v_2,\cdots, v_N)^T = \sum_{0\leq j\leq N}\left(\prod_{0\leq k\leq N, k\neq j}\frac{x-x_k}{x_j-x_k}\right) v_j.
\end{split}\]
Here $\mathbb P^{N}$ is the space of polynomials of order less than or equal to $N$.
For a differential operator $P$, the discretization of $P$, which we denote by $[P]$, is the matrix defined by 
\begin{equation*}
    [P][u]=[P \mathcal I[u]].
\end{equation*}
For two differential operators $P_1$, $P_2$, we also use approximation
\[ [P_1 P_2] \approx [P_1][P_2]. \] 

\begin{proposition}[{\cite[Theorem 7]{trefethen_spectral}}]
Define the discretization of a differential operator as above. Then
\begin{enumerate}
    \item[1.] Entries of the $(N+1)\times (N+1)$ matrix $[\partial_x]$ are
    \[\begin{split} 
        & [\partial_x]_{00}=\frac{2N^2+1}{6}, \ [\partial_x]_{NN} = -\frac{2N^2+1}{6}, \\ & [\partial_x]_{jj} = -\frac{x_j}{2(1-x_j^2)}, \ 1\leq j\leq N-1, \\
        & [\partial_x]_{jk} = \frac{(-1)^{j+k}c_j}{c_k(x_j-x_k)}, \ 0\leq j,k\leq N, \ j\neq k.
    \end{split}\]
    with $c_0=c_N=2$, $c_j=1$, $1\leq j\leq N-1$.
    
    \item[2.] Let $M_f$ be the multiplication operator by a function $f$, then 
    \[ [M_f] = \mathrm{diag}([f]).\]
\end{enumerate}
\end{proposition}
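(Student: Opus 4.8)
The statement decomposes into two independent parts, and the plan is to dispatch the multiplication operator in one line and the differentiation matrix by the classical barycentric calculus on the Chebyshev extrema grid.

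For assertion~2 there is essentially nothing to prove beyond unwinding definitions: for any $u$ the polynomial $\mathcal I[u]$ interpolates $u$ at $x_0,\dots,x_N$, so $f\,\mathcal I[u]$ takes the values $f(x_j)u(x_j)$ at these nodes, whence $[M_f][u]=[M_f\mathcal I[u]]=[f\,\mathcal I[u]]=\mathrm{diag}([f])\,[u]$ for every $u\in\mathbb C^{N+1}$; this is exactly $[M_f]=\mathrm{diag}([f])$.

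For assertion~1, I would first record that $\mathcal I[u]=\sum_{k=0}^N \ell_k\,u(x_k)$ with $\ell_k(x)=\prod_{m\ne k}\frac{x-x_m}{x_k-x_m}$, so that the $(j,k)$ entry of $[\partial_x]$ is $\ell_k'(x_j)$ (apply the definition of $[\partial_x]$ to the $k$-th coordinate vector). Writing $\ell_k$ in barycentric form with the node polynomial $\omega(x)=\prod_{m=0}^N(x-x_m)$ and weights $w_k=1/\omega'(x_k)$, namely $\ell_k(x)=w_k\,\omega(x)/(x-x_k)$, one gets at once $\ell_k'(x_j)=(w_k/w_j)\,(x_j-x_k)^{-1}$ for $j\ne k$ and $\ell_k'(x_k)=\omega''(x_k)/\bigl(2\omega'(x_k)\bigr)$. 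The key computational input is that $x_0,\dots,x_N$ are precisely the zeros of $(1-x^2)U_{N-1}(x)$, where $U_{N-1}=N^{-1}T_N'$ is the Chebyshev polynomial of the second kind; hence $\omega$ is a constant multiple of $(1-x^2)U_{N-1}$. Using $U_{N-1}(\cos\theta)=\sin(N\theta)/\sin\theta$ one finds $w_k$ proportional to $(-1)^k/c_k$, the factor $c_0=c_N=2$ coming from the simple zeros of $1-x^2$ at $\pm1$; this turns the off-diagonal formula into $[\partial_x]_{jk}=(-1)^{j+k}c_j/(c_k(x_j-x_k))$. For the diagonal, I would substitute $\omega\propto(1-x^2)U_{N-1}$ into $\ell_k'(x_k)=\omega''(x_k)/(2\omega'(x_k))$: for $1\le k\le N-1$ one uses $U_{N-1}(x_k)=0$, $(1-x_k^2)U_{N-1}'(x_k)=-N(-1)^k$ and the Chebyshev equation $(1-x^2)U_{N-1}''-3xU_{N-1}'+(N^2-1)U_{N-1}=0$ to obtain $-x_k/\bigl(2(1-x_k^2)\bigr)$, while for $k=0$ one uses $U_{N-1}(1)=N$ and $U_{N-1}'(1)=\tfrac13(N-1)N(N+1)$ to obtain $(2N^2+1)/6$, and $k=N$ follows by the symmetry $x\mapsto -x$. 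As an alternative, the diagonal entries can be recovered from the off-diagonal ones via the negative-sum identity $[\partial_x]_{jj}=-\sum_{k\ne j}[\partial_x]_{jk}$, which holds because $\mathcal I$ reproduces the constant polynomial and $\partial_x$ annihilates it, so $[\partial_x]$ kills the all-ones vector; one is then left with evaluating a finite cotangent sum.

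The only point that needs genuine care is the corner entries $[\partial_x]_{00}=-[\partial_x]_{NN}=(2N^2+1)/6$: the endpoints $\pm1$ are the singular points of the Sturm--Liouville operator $(1-x^2)\partial_x^2-x\partial_x$, so the weight $(1-x^2)U_{N-1}$ has a simple zero there and one must extract $\omega'(\pm1)$ and $\omega''(\pm1)$ by a short Taylor expansion in $\theta$ near $0$ (equivalently, justify $U_n'(1)=\tfrac13 n(n+1)(n+2)$). Everything else is routine bookkeeping with barycentric weights and Chebyshev identities; in the paper one simply invokes \cite[Chapter~6]{trefethen_spectral}, and the sketch above indicates how a self-contained argument would run.
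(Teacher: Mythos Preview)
Your proposal is correct and self-contained; the paper itself gives no proof and simply cites the result from Trefethen's book. Your barycentric derivation is exactly the standard one (and essentially what Trefethen does), so there is no genuine difference in approach to discuss---you have merely filled in what the paper leaves as a reference.
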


To implement the Dirichlet boundary condition, we introduce $\Pi_{\mathrm D}$ and its adjoint $\Pi_{\mathrm D}^*$ as follows
\[\begin{split}
    & \Pi_{\mathrm D}: \mathbb C^{N+1}\to \mathbb C^{N-1}, \ \Pi_{\mathrm D}(v_0, v_1, \cdots, v_N)^T = (v_1, \cdots, v_{N-1})^T, \\
    & \Pi_{\mathrm D}^*: \mathbb C^{N-1}\to \mathbb C^{N+1}, \ \Pi_{\mathrm D}^*(v_1,\cdots, v_{N})^T = (0,v_1, \cdots, v_{N-1},0)^T.
\end{split}\]
We represent a differential operator acting on functions satisfying Dirichlet conditions by
\[ [P]_{\mathrm D}:=\Pi_{\mathrm D}[P]\Pi_{\mathrm D}^*. \]
The definition of $[P]_{\mathrm D}$ is justified by the following equality 
\[ [P]_{\mathrm D}[u] = \Pi_{\mathrm D}[P \mathcal I \Pi_{\mathrm D}^* [u] ]. \]

Recall that we would like to solve the eigenvalue problem for the complex deformed Orr--Sommerfeld equation:
\begin{equation*}\begin{split}
    & \left(i\alpha^{-1}\epsilon^2 \left( D(\tau)^4-2\alpha^2 D(\tau)^2 +\alpha^4 \right) +U(\gamma_{\tau}(x))(D(\tau)^2-\alpha^2)-U''(\gamma_{\tau}(x)) \right) \phi \\
    & = c(D(\tau)^2-\alpha^2)\phi
\end{split}\end{equation*}
where 
\[ \gamma_{\tau}(x) = x+i\tau m_0(x), \ D(\tau) = (1+i\tau m_0'(x))^{-1}\partial_x.\]
Thus it suffices to compute
\begin{enumerate}
    \item[1.]  $[D(\tau)^2]_{\mathrm D}$ for $D(\tau)^2$ acting on functions satisfying the Dirichlet boundary condition;
    \item[2.] $[D(\tau)^4]_{\mathrm{DN}}$ for $D(\tau)^4$ acting on functions satisfying both Dirichlet and Neumann boundary conditions.
\end{enumerate}

The first matrix $[D(\tau)^2]_{\mathrm D}$ is straightforward to compute:
\[ [D(\tau)^2]_{\mathrm D} = \Pi_{\mathrm D}[D(\tau)]^2\Pi_{\mathrm D}^* = \Pi_{\mathrm D}\left( \mathrm{diag}( [(1+i\tau m_0(x))^{-1}]) [D] \right)^2 \Pi_{\mathrm D}^*. \]

\begin{figure}[t]
   \centering
   \begin{subfigure}{0.45\textwidth}
       \centering
       \includegraphics[width=\textwidth]{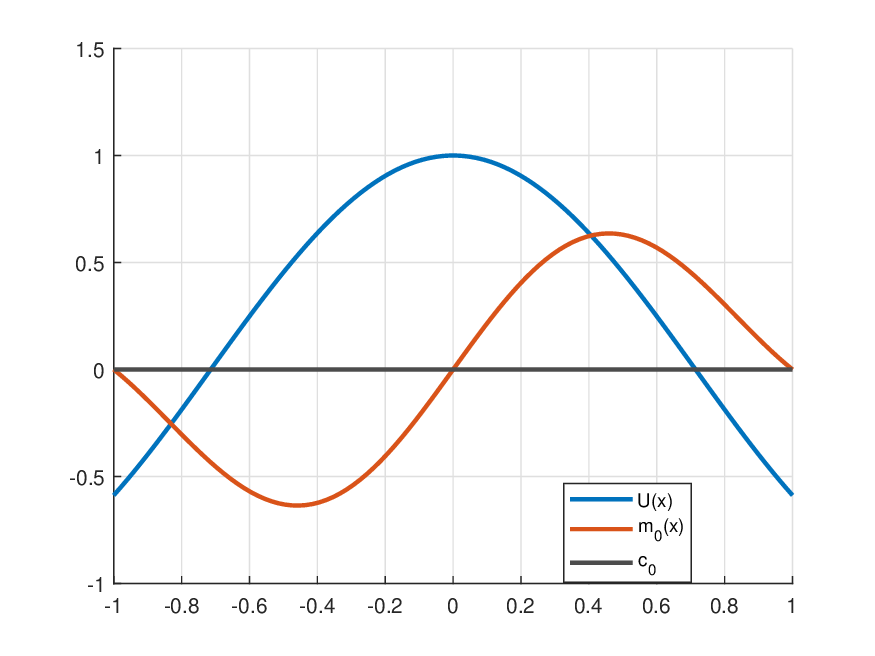}
       \caption{}
       \label{fig:cos_07pi_05_function}
   \end{subfigure}
   \begin{subfigure}{0.45\textwidth}
       \centering
       \includegraphics[width=\textwidth]{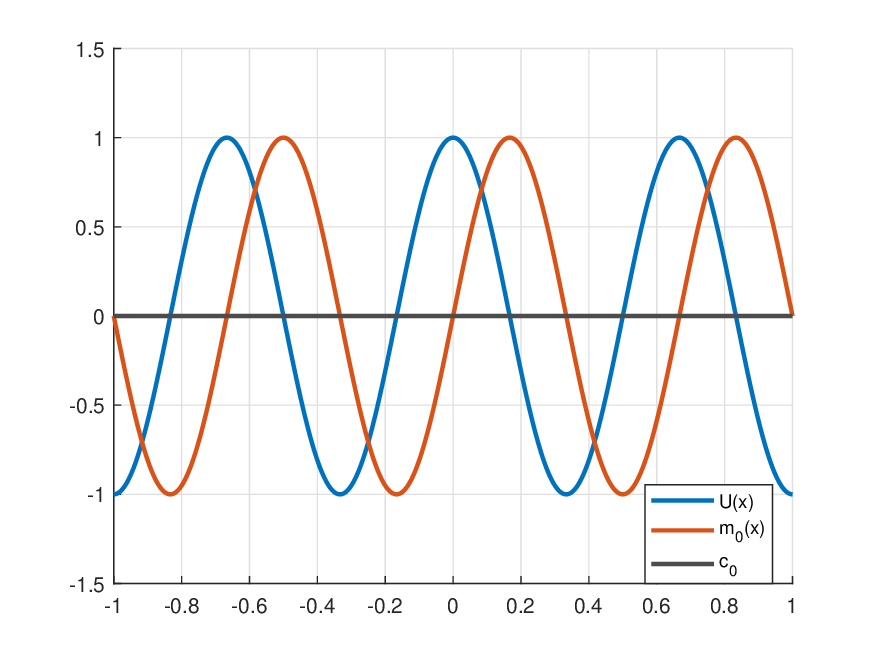}
       \caption{}
       \label{fig:sinusoid_segment_functions}
   \end{subfigure}\\
   \begin{subfigure}{0.45\textwidth}
       \centering
       \includegraphics[width=\textwidth]{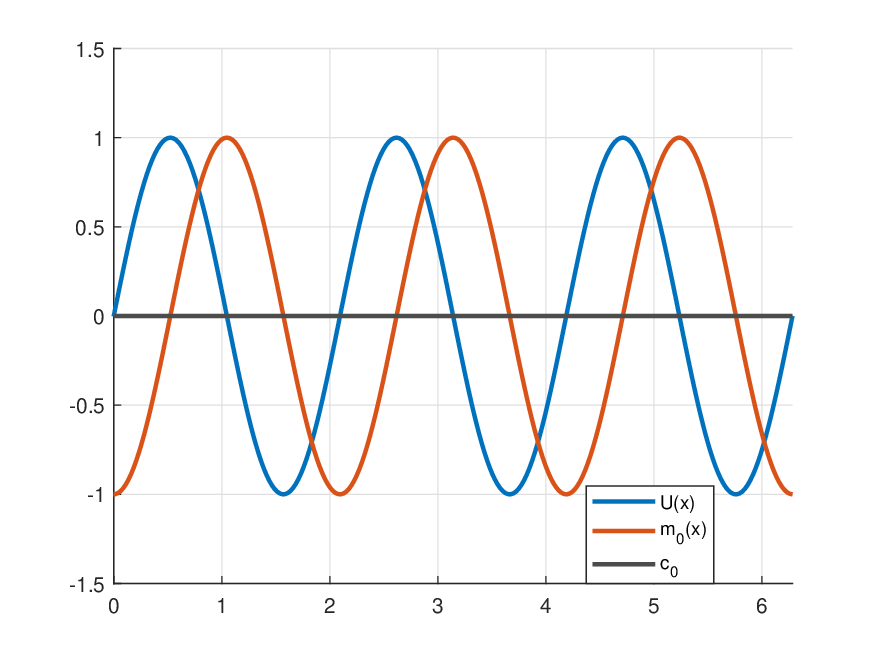}
       \caption{}
       \label{fig:sinusoid_circle_functions}
   \end{subfigure}
   \begin{subfigure}{0.45\textwidth}
       \centering
       \includegraphics[width=\textwidth]{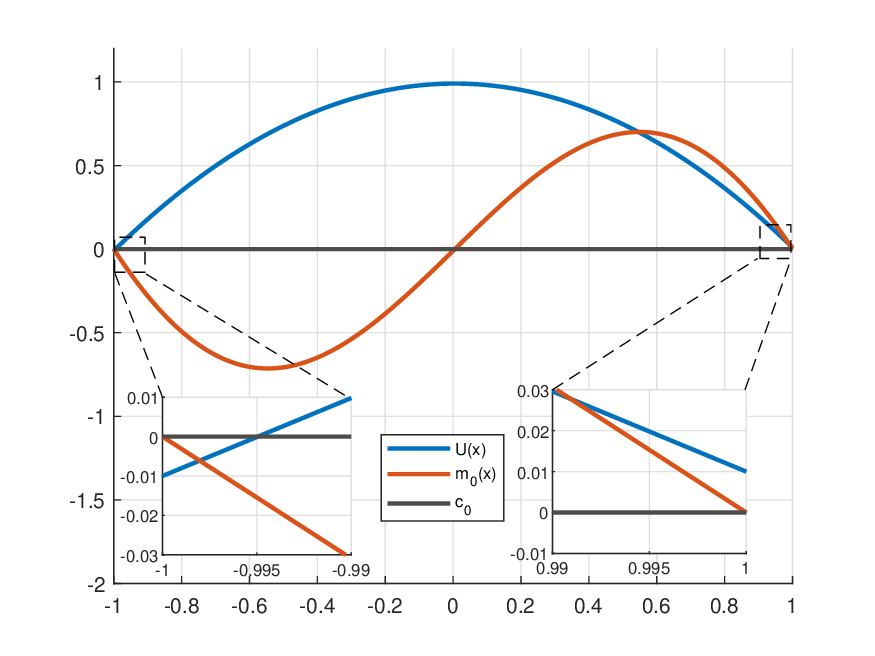}
       \caption{}
       \label{fig:cp_functions_wide}
   \end{subfigure}
   \caption{Shear profiles $U$ and the corresponding escape functions $m_0$. (A) $U(x)=\cos(0.7\pi x)$, $x\in [-1,1]$.  (B) $U(x)=\cos(3\pi x)$, $x\in [-1,1]$. (C) $U(x)=\sin(3x)$, $x\in \mathbb R/2\pi\mathbb Z$. (D) $U_{\theta}(x)=(1-\theta) x+\theta (1-x^2)$, $x\in [-1,1]$, $\theta=0.99$.}
   \label{fig:functions}
\end{figure}

Let us now consider $[D(\tau)^4]_{\mathrm{DN}}$. For a given function $u$ satisfying Dirichlet and Neumann boundary conditions, we use interpolation polynomials of the form $p(x)=(1-\gamma_{\tau}(x)^2)q(x)$, $q(\pm 1)=0$ to approximate $u$. Then 
\[ D(\tau)^4 p = (1-\gamma_{\tau}(x)^2) D(\tau)^4 q - 8\gamma_{\tau}(x)D(\tau)^3 q -12 D(\tau)^2q. \]
Notice that 
\[ q = \mathcal I \Pi_{\mathrm D} [(1-\gamma_{\tau}(x))^{-1}]\Pi_{\mathrm D}^* [u].\]
Thus the discretization of $D(\tau)^4$ with both Dirichlet and Neumann boundary conditions is
\[\begin{split} 
[D(\tau)^4]_{\mathrm{DN}} = & \Pi_{\mathrm D}\left( \mathrm{diag}([1-\gamma_{\tau}(x)^2])[D(\tau)]^4 - 8\mathrm{diag}([\gamma_{\tau}(x)])[D(\tau)]^3-12[D(\tau)]^2 \right)\Pi_{\mathrm D}^* \\
& \times \Pi_{\mathrm D} \mathrm{diag}( [(1-\gamma_{\tau}(x))^{-1}] )\Pi_{\mathrm D}^*.
\end{split}\]

In Figures \ref{fig:cos}, \ref{fig:cos07_alpha} and \ref{fig:first_order}, we considered $U(x)=\cos(0.7\pi x)$, $x\in [-1,1]$. We used the escape function\footnote{This function does not satisfy all the assumptions from \S \ref{section:escape_function}. However, it follows from Lemma~\ref{lemma:elliptic_extension} that it does not affect the spectrum of Rayleigh equation.} $m_0(x) = \sin(0.7\pi x)\cos(0.5\pi x)$. This choice of $m_0$ vanishes at $\pm 1$ and has correct signs at zeros of $U$. See Figure \ref{fig:cos_07pi_05_function}. In Figure \ref{fig:cos}, we set $\tau=0.1$.

In Figures \ref{fig:sinusoid_segment_tau} and \ref{fig:sinusoid_segment_alpha}, we consider $U(x)=\cos(3\pi x)$, $x\in [-1,1]$. We used the escape function $m_0(x) = \sin(3\pi x)$. See Figure \ref{fig:sinusoid_segment_functions}. In Figure \ref{fig:sinusoid_segment_alpha}, we set $\tau=0.1$.

In Figures \ref{fig:cp}, we considered the Couette--Poiseuille flow
\[ U_{\theta}(x):=(1-\theta)x+\theta(1-x^2), \ x\in [-1,1], \ \theta\in [0,1].\]
We used the following deformation
\begin{equation*}
 m_0(\theta,x) = \left( 2\theta x+\theta-1 \right)\cos(0.5\pi x). 
\end{equation*}
 See Figure \ref{fig:cp_functions_wide}. The value of $\tau$ in Figure \ref{fig:cp} is set to $1$. 

The following Matlab program, modified from \cite[Program 40]{trefethen_spectral}, computes the eigenvalue with the smallest modulus of the Orr--Sommerfeld equation for the shear profile $U(x)=\cos(\omega x)$ on $[-1,1]$ based on the discretization introduced above.

\begin{lstlisting}[frame=single,numbers=left,style=Matlab-Pyglike]
% Spectrum of Orr-Sommerfeld operator on the deformed segment with Dirichlet and Neumann boundary conditions

function OS_channel = OS_channel(eps,alp,omega,N,tau)

% Couette-Poiseuille flow and its second order derivative
U = @(x) cos(omega*x);
D2U = @(x) -omega^2*cos(omega*x);

% choice of escape function and its derivative
m_0 = @(x) sin(omega*x).*cos(.5*pi*x);
Dm_0 = @(x) omega*cos(omega*x)*cos(.5*pi*x)...
            -.5*pi*sin(omega*x).*sin(.5*pi*x);

% complex deformation and its derivative
gamma = @(x) x + 1i*tau*m_0(x); 
Dgamma = @(x) 1 + 1i*tau*Dm_0(x); 

% deformed the second D2_theta and the fourth order D4_theta differentiation matrix, with Dirichlet and Neumann boundary conditions
[D,x] = cheb(N); 
A = diag(1./Dgamma(x));
D_tau = A*D;
D2_tau = D_tau^2; D2_tau = D2_tau(2:N,2:N);
S_tau = diag([0; 1 ./(1-gamma(x(2:N)).^2); 0]);
D4_tau = (diag(1-gamma(x).^2)*D_tau^4 - 8*diag(gamma(x))*D_tau^3 - 12*D_tau^2)*S_tau;
D4_tau = D4_tau(2:N,2:N);

% deformed profile and its derivative
x = x(2:N);
U_tau = U(gamma(x));
D2U_tau = D2U(gamma(x));

% spectrum of the deformed operator
I = eye(N-1);
A = 1i*eps^2/alp*( D4_tau-2*alp^2*D2_tau+alp^4*I )...
    +diag(U_tau)*(D2_tau-alp^2*I)-diag(D2U_tau);
B = D2_tau-alp^2*I;
OS_channel = eigs(A,B,1,'smallestabs');

end

% Chebyshev differentiation matrix by Trefethen (2000)
function [D,x] = cheb(N)

if N==0, D = 0; x = 1; return, end
x = cos(pi*(0:N)/N)';
c = [2; ones(N-1,1); 2].*(-1).^(0:N)';
X = repmat(x,1,N+1);
dX = X-X';
D = (c*(1./c)')./(dX+(eye(N+1)));
D = D - diag(sum(D'));

end
\end{lstlisting}

\subsection{Orr--Sommerfeld equation on the circle}

In the circle case, we use Discrete Fourier transform (DFT) to discretize the operator. For an even integer $N>0$, let $\mathcal F_N$ be the DFT matrix 
\[ (\mathcal F_N)_{k\ell} = -\frac{e^{-\frac{2\pi i k\ell}{N}}}{\sqrt N}, \ -\frac{N}{2}\leq k\leq \frac{N}{2}-1, \ 0\leq \ell \leq N-1.\]
We also denote
\[\begin{split} 
& \{ \bullet \}: C^{\infty}(\mathbb S^1)\to \mathbb C^N, \ \{u\} = \left(u(0), u(\tfrac{2\pi}{N}), \cdots, u(\tfrac{2\pi(N-1)}{N}) \right)^T, \\
& \mathcal F^{-1}: \mathbb C^N\to C^{\infty}(\mathbb S^1), \ \mathcal F^{-1}\left(u_{-\frac{N}{2}},\cdots, u_{\frac{N}{2}-1} \right)^T = \sum_{-\frac{N}{2}\leq k\leq \frac{N}{2}-1} u_{k} e^{ikx}. 
\end{split}\]
We define the discretization of a differential $P$ through DFT by 
\[ \left([P]_{\mathcal F}\right) v:= \mathcal F_N \{ P\mathcal F^{-1} v \} \text{ for } v\in \mathbb C^N.  \]
For two differential operators $P_1$, $P_2$, as before, we use the approximation
\[ [P_1 P_2]_{\mathcal F} \approx [P_1]_{\mathcal F} [P_2]_{\mathcal F}.\]

Direct computations give
\begin{proposition}
Let $[P]_{\mathcal F}$ be defined as above. Then 
\begin{enumerate}
    \item[1.] $[\partial_x]_{\mathcal F} =  \mathrm{diag}( ik )_{-\frac{N}{2}\leq k\leq \frac{N}{2}-1}$;
    
    \item[2.] For a function $f$, let $M_f$ be the multiplication operator by $f$, then 
    \[ [M_f]_{\mathcal F} = \mathcal F_N \left(\mathrm{diag}( \{f\} )\right) \mathcal F_N^*. \]
    Here $\mathcal F_N^*$ is the conjugate transpose of $\mathcal F_N$.
\end{enumerate}
\end{proposition}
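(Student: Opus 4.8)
The plan is a direct unwinding of the definitions, whose only substantive inputs are that $\partial_x$ acts diagonally on the Fourier modes $e^{ikx}$, that multiplication by $f$ acts pointwise on grid values, and the discrete orthogonality relations for roots of unity.

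First I would record the elementary fact that $\{\bullet\}\circ\mathcal F^{-1}$ inverts $\mathcal F_N$. For $v=(v_k)_{-N/2\le k\le N/2-1}\in\mathbb C^N$ one has $(\mathcal F^{-1}v)(x)=\sum_k v_k e^{ikx}$, so evaluation at the grid points $x_\ell=2\pi\ell/N$ gives $\{\mathcal F^{-1}v\}_\ell=\sum_k v_k e^{2\pi i k\ell/N}$. Composing with $\mathcal F_N$ and using the orthogonality relation
\[
\frac1N\sum_{\ell=0}^{N-1}e^{2\pi i (k-k')\ell/N}=\delta_{kk'},
\]
valid because $k$ and $k'$ both range over $N$ consecutive integers and hence $|k-k'|<N$ (no aliasing), one obtains $\mathcal F_N\{\mathcal F^{-1}v\}=v$, i.e. $\{\bullet\}\circ\mathcal F^{-1}=\mathcal F_N^{-1}$. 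The same computation applied to $\mathcal F_N\mathcal F_N^{*}$ shows $\mathcal F_N$ is unitary, so $\mathcal F_N^{-1}=\mathcal F_N^{*}$.

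For item 1, I would use that $e^{ikx}$ is an eigenfunction of $\partial_x$ with eigenvalue $ik$, whence $\partial_x\mathcal F^{-1}v=\sum_k (ik)v_k e^{ikx}=\mathcal F^{-1}\bigl(\mathrm{diag}(ik)\,v\bigr)$. Then
\[
[\partial_x]_{\mathcal F}v=\mathcal F_N\{\partial_x\mathcal F^{-1}v\}=\mathcal F_N\{\mathcal F^{-1}(\mathrm{diag}(ik)\,v)\}=\mathrm{diag}(ik)\,v
\]
by the inversion identity from the previous paragraph, which is the claimed formula. For item 2, $M_f$ acting pointwise gives $(f\cdot\mathcal F^{-1}v)(x_\ell)=f(x_\ell)(\mathcal F^{-1}v)(x_\ell)$, i.e. $\{f\cdot\mathcal F^{-1}v\}=\mathrm{diag}(\{f\})\,\{\mathcal F^{-1}v\}$, so
\[
[M_f]_{\mathcal F}v=\mathcal F_N\{f\cdot\mathcal F^{-1}v\}=\mathcal F_N\,\mathrm{diag}(\{f\})\,\{\mathcal F^{-1}v\}=\mathcal F_N\,\mathrm{diag}(\{f\})\,\mathcal F_N^{-1}v,
\]
and replacing $\mathcal F_N^{-1}$ by $\mathcal F_N^{*}$ via unitarity yields the stated identity. (The approximation $[P_1P_2]_{\mathcal F}\approx[P_1]_{\mathcal F}[P_2]_{\mathcal F}$ is not needed for the Proposition itself and would follow, on trigonometric polynomials of degree $<N/2$, from the exactness of the interpolation, with the error coming solely from the truncation of $P_2\mathcal F^{-1}v$ to band-limited modes.)

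There is essentially no obstacle in this argument: it is an elementary computation with discrete Fourier sums. The only point requiring care is bookkeeping the normalization constant in $\mathcal F_N$ so that it is consistent with the conventions chosen for $\{\bullet\}$ and $\mathcal F^{-1}$, ensuring that $\{\bullet\}\circ\mathcal F^{-1}$ comes out exactly equal to $\mathcal F_N^{-1}$ rather than merely proportional to it — this is precisely the place where the band-limiting to $k\in\{-N/2,\dots,N/2-1\}$ enters.
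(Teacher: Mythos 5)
Your argument is the same direct computation that the paper leaves implicit (``Direct computations give''), and it is correct in substance: diagonality of $\partial_x$ on the modes $e^{ikx}$, pointwise action of $M_f$ on grid values, and discrete orthogonality with no aliasing because $k$ ranges over $N$ consecutive integers. One caveat at exactly the point you flagged: with the constants as printed in the paper, $(\mathcal F_N)_{k\ell}=-e^{-2\pi i k\ell/N}/\sqrt N$ while $\mathcal F^{-1}v=\sum_k v_k e^{ikx}$ carries no compensating factor, so the orthogonality computation gives $\mathcal F_N\{\mathcal F^{-1}v\}=-\sqrt N\,v$, i.e. $\{\bullet\}\circ\mathcal F^{-1}=-\sqrt N\,\mathcal F_N^{-1}$ rather than $\mathcal F_N^{-1}$; the band-limiting removes aliasing but cannot repair the normalization, contrary to the last sentence of your proposal. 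Thus your key identity (and hence items 1 and 2 exactly as stated) requires either rescaling $\mathcal F^{-1}$ by $-1/\sqrt N$ or reading $\mathcal F_N$ as the unnormalized, sign-free DFT — this is a normalization slip in the paper's conventions (the Matlab code is insensitive to it, since there only the unitary conjugation $F\,\mathrm{diag}(\{f\})\,F^{*}$ and the diagonal matrix $\mathrm{diag}(ik)$ are used), but your write-up should state explicitly which convention makes the equality exact rather than asserting it follows from band-limiting.
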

Using these facts, we can compute the matrix for 
\[ Q_{0,\epsilon}(\tau) = i\alpha^{-1}\epsilon^2(D(\tau)^2-\alpha^2) + U(\gamma_{\tau}(x)) - U''(\gamma_{\tau}(x))(D(\tau)^2-\alpha^2)^{-1} \]
where $\gamma_{\tau}$ and $D(\tau)$ are as in \S \ref{subsection:channel_matlab},
and compute its eigenvalues.

In Figure \ref{fig:sin3x}, \ref{fig:sinusoid_circle_tau}, and \ref{fig:sinusoid_circle_alpha}, we consider $U(x)=\sin(3x)$, $x\in \mathbb R/2\pi\mathbb Z$. In the numerical experiments, we used the escape function $m_0(x) = -\cos(3x)$. See Figure~\ref{fig:sinusoid_circle_functions}. In Figure~\ref{fig:sin3x}, the parameter $\tau$ is set to be $0.15$. 

The following Matlab program, modified from \cite[Appendix B]{galkowski_zworski_viscosity}, computes eigenvalues of the Orr--Sommerfeld equation on $\mathbb R/2\pi\mathbb Z$ for the profile $U(x)=\sin(3x)$.

\begin{lstlisting}[frame=single,numbers=left,style=Matlab-Pyglike]
% Spectrum of the complex deformed Orr-Sommerfeld on the circle

function OS_circle = OS_circle(eps,alp,N,tau)

% periodic shear profile and its second derivative
a = 3; U = @(x) sin(a*x); 
D2U = @(x) -a^2*sin(a*x);

% choice of escape function and its derivative
m_0 = @(x) -cos(a*x); 
Dm_0 = @(x) a*sin(a*x); 

% complex deformation and its derivative
gamma = @(x) x + 1i*tau*m_0(x);
Dgamma = @(x) 1 + 1i*tau*Dm_0(x);

% deformed Fourier differentiation matrix
x = (2*pi/N*(0:N-1))';
F = fft(eye(N))/sqrt(N);
D = diag([0:N/2-1,-N/2:-1]);
A = diag(1./Dgamma(x));
D = F*A*F'*D;

% inverse of \partial_x^2-\alpha^2
Lap = -D^2-alp^2*eye(N); 
Lap_inv = Lap^(-1);

% complex deformed profile and multiplication operator
U_tau = diag(U(gamma(x))); U_tau = F*U_tau*F';
D2U_tau = diag(D2U(gamma(x))); D2U_tau = F*D2U_tau*F';
    
% spectrum of the deformed operator
Q_tau = 1i*eps^2/alp*Lap + U_tau  - D2U_tau*Lap_inv;
OS_circle = eig(Q_tau);

end
\end{lstlisting}

\bibliographystyle{alpha}
\bibliography{biblio.bib}

\end{document}